\newcommand{\Z}{\mathbf{Z}}
\newcommand{\Q}{\mathbf{Q}}
\newcommand{\R}{\mathbf{R}}
\newcommand{\C}{\mathbf{C}}
\newcommand{\h}{\mathcal{H}}
\newcommand{\p}{\mathbf{P}}
\newcommand{\un}{\mathbf{1}}
\newcommand{\imaxis}{\left]0,i\infty\right[}
\newcommand{\bolda}{{\boldsymbol a}}
\newcommand{\boldb}{{\boldsymbol b}}
\newcommand{\boldc}{{\boldsymbol c}}
\newcommand{\boldd}{{\boldsymbol d}}
\newcommand{\boldu}{{\boldsymbol u}}
\newcommand{\boldv}{{\boldsymbol v}}
\newcommand{\boldx}{{\boldsymbol x}}
\newcommand{\boldy}{{\boldsymbol y}}
\newcommand{\boldz}{{\boldsymbol z}}
\newcommand{\Sha}{\shuffle}
\DeclareMathOperator{\dlog}{dlog}
\DeclareMathOperator{\darg}{darg}
\DeclareMathOperator{\Eis}{Eis}
\DeclareMathOperator{\im}{Im}
\DeclareMathOperator{\ord}{ord}
\DeclareMathOperator{\re}{Re}
\DeclareMathOperator{\SL}{SL}
\newtheorem{thm}{Theorem}
\newtheorem*{thm*}{Theorem}
\newtheorem{lem}[thm]{Lemma}
\newtheorem{pro}[thm]{Proposition}
\newtheorem{cor}[thm]{Corollary}
\newtheorem*{cor*}{Corollary}
\theoremstyle{definition}
\newtheorem{definition}[thm]{Definition}
\theoremstyle{remark}
\newtheorem{remark}[thm]{\bf Remark}
\newtheorem{example}[thm]{\bf Example}
\newtheorem{question}[thm]{\bf Question}
\begin{document}

\author[F. Brunault and W. Zudilin]{Fran\c cois Brunault and Wadim Zudilin}


\date{\today}

\address{\'ENS Lyon, Unit\'e de math\'ematiques pures et appliqu\'ees, 46 all\'ee d'Italie, 69007 Lyon, France}

\email{francois.brunault@ens-lyon.fr}
\urladdr{http://perso.ens-lyon.fr/francois.brunault}

\address{IMAPP, Radboud University Nijmegen, PO Box 9010,
6500\,GL Nijmegen, The Netherlands}

\email{w.zudilin@math.ru.nl}
\urladdr{https://www.math.ru.nl/~wzudilin/}

\title[Modular regulators and multiple Eisenstein values]{Modular regulators and multiple Eisenstein values}

\dedicatory{In memoriam: Professor Yuri Ivanovich Manin}

\subjclass[2020]{Primary 19F27; Secondary 11F67, 11G16, 11G55}
\keywords{Regulators; modular units; elliptic curves; modular curves; $L$-functions}

\begin{abstract}
We compute explicitly the Goncharov regulator integral associated to $K_4$ classes on modular curves in terms of $L$-values of modular forms. We use this expression to connect it with the Beilinson regulator integral.
\end{abstract}

\maketitle

\section{Introduction}

The principal goal of this paper is to give an explicit relation between the integrals of two regulators defined on the $K$-group $H_{\mathcal M}^2(Y(N),\Q(n))\cong K_{2n-2}^{(n)}(Y(N))$ in the motivic cohomology of the modular curve $Y(N)$ of full level~$N$, in the case $n=3$.

In the recent work \cite{Bru20}, Brunault constructs explicit motivic cohomology classes $\xi(\bolda,\boldb)$ in $H_{\mathcal M}^2(Y(N),\Q(3))$ enumerated by $\bolda,\boldb\in(\Z/N\Z)^2$;
the classes are the images of degree 2 cocycles $\tilde\xi(\bolda,\boldb)$ in the Goncharov polylogarithmic complex $\Gamma(Y(N),3)$ under De Jeu's map \cite{Jeu95,Jeu96,Jeu00}. The construction uses the so-called Siegel units $g_\boldx \in \mathcal{O}(Y(N))^\times \otimes \Q$, $\boldx \in (\Z/N\Z)^2$, and certain relations analogous to modular symbols involving Milnor symbols $\{g_\boldx,g_\boldy\}$ in $K_2(Y(N)) \otimes \Q$.

The nontriviality of $\xi(\bolda,\boldb)$ for small values of $N$ is shown in \cite{Bru20} via computing numerically their images under the Goncharov regulator map $r_3(2)$ defined in \cite{Gon02}.
It is harder to compute the integral of $r_3(2)(\tilde\xi(\bolda,\boldb))$ theoretically; the existing literature lacks any such explicit calculations for the weight $3$ polylogarithmic complex of curves.
At the same time\,---\,and this serves as a natural motivation for these calculations\,---\,such integrals are related to (longstanding conjectural evaluations of) the Mahler measure of three-variable polynomials \cite[Chapter~6]{BZ20}. As an example, Lal\'in \cite{Lal15} has made an explicit connection between the Mahler measure of $(1+x)(1+y)+z$ and the Goncharov regulator for the elliptic curve $(1+x)(1+y)(1+\frac{1}{x})(1+\frac{1}{y}) = 1$.

Another motivation for computing the Goncharov regulator integrals comes from a conjecture of the first author \cite[Conjecture 9.3]{Bru20} predicting the proportionality of the Goncharov type elements $\xi((0,a),(0,b))$ and the Beilinson elements \cite{Bei86} in the motivic cohomology of the modular curve $Y_1(N)$. This conjecture is based on numerical computations of the associated regulator integrals. This suggests, more generally, the existence of a relation between the two integrals in the case of $Y(N)$, not just $Y_1(N)$, and possibly at the level of cocycles, not just cohomology classes. It is this task that we perform in the present paper.

In order to compare the Goncharov and Beilinson regulator integrals
\[
\mathcal{G}(\bolda,\boldb) = \int_0^\infty r_3(2)(\tilde\xi(\bolda,\boldb))
\quad\text{and}\quad
\mathcal{B}(\bolda,\boldb) = \int_0^\infty \Eis^{0,0,1}_{\mathcal{D}}(\bolda,\boldb),
\]
where $\bolda,\boldb\in(\Z/N\Z)^2$,
we first express $\mathcal{G}(\bolda,\boldb)$ in terms of multiple (in fact, triple) modular values (MMV)\,---\,more specifically, multiple Eisenstein values (MEV).
This step requires defining the latter objects and the corresponding regularisation of integrals along the imaginary axis $\imaxis$, and setting up numerous properties and rules for MMVs.
This part follows closely Brown's expositions \cite{Bro17,Bro19} which we complement with our needs in Sections~\ref{reg ints} and \ref{mmvs}; Section~\ref{baby} serves a toy model for expressing the regulator integral on $K_2^{(2)}(Y(N))$ as a double modular value (a fact that seems to escape the literature).
The MMV expression for the regulator integral $\mathcal{G}(\bolda,\boldb)$ is computed in Section~\ref{G-tmv} for generic $\bolda,\boldb\in(\Z/N\Z)^2$; the result can be interpreted in terms of interpolated Eisenstein series, when each $\bolda\in(\Z/N\Z)^2$ is rescaled to $\bolda/N\in(\frac1N\Z/\Z)^2$ and the latter interpolates to a function of $\bolda$ on $(\R/\Z)^2$. 
This line famously settled by A.~Weil in \cite{Wei76} allows us to differentiate with respect to the (real) \emph{elliptic} parameters $\bolda,\boldb$; more specifically, we choose to differentiate with respect to~$a_2$.
The differentiation of the Goncharov regulator integral in Section~\ref{G-reg} is preceded, in Section~\ref{BG-rels}, by derivation of auxiliary Borisov--Gunnells relations for pairwise products of Eisenstein series, and followed by reduction, in Section~\ref{RZ}, of the resulting expression of $\frac{\partial}{\partial a_2}\mathcal{G}(\bolda,\boldb)$ using the Rogers--Zudilin method.
Note that our proof of the Borisov--Gunnells relations requires the level $N$ structure to be used, so that we make several switches between interpolated and non-interpolated Eisenstein series.
Finally, in Section \ref{L-value} we deduce an $L$-value expression for $\mathcal{G}(\bolda,\boldb)$ by integrating its $a_2$-derivative; this brings us to the comparison of $\mathcal{G}(\bolda,\boldb)$ with $\mathcal{B}(\bolda,\boldb)$ in Section~\ref{B-reg}.

Our main results can be stated precisely as follows. We need the following Eisenstein series. Given a level $N \geq 1$, a weight $k \geq 1$ and an elliptic parameter $\boldx = (x_1,x_2)$ in $(\Z/N\Z)^2$, we define as in \cite[Section 10.4]{BZ20}
\begin{equation} \label{def GkN}
G^{(k);N}_\boldx(\tau) = a_0(G^{(k);N}_\boldx) + \sum_{\substack{m,n \geq 1 \\ (m,n) \equiv \boldx \bmod{N}}} m^{k-1} q^{mn/N} +(-1)^k \sum_{\substack{m,n \geq 1 \\ (m,n) \equiv -\boldx \bmod{N}}} m^{k-1} q^{mn/N},
\end{equation}
where the constant term is given by
\begin{equation*}
a_0(G^{(1);N}_\boldx) = \begin{cases} -B_1(\{\frac{x_2}{N}\}) & \textrm{if } x_1=0 \textrm{ and } x_2 \neq 0,\\
-B_1(\{\frac{x_1}{N}\}) & \textrm{if } x_1 \neq 0 \textrm{ and } x_2=0,\\
0 & \textrm{otherwise,}
\end{cases}
\end{equation*}
and for $k \geq 2$,
\begin{equation*}
a_0(G^{(k);N}_\boldx) = \begin{cases} - N^{k-1} B_k(\{\frac{x_1}{N}\})/k & \textrm{if } x_2=0,\\
0 & \textrm{if } x_2 \neq 0.
\end{cases}
\end{equation*}
Here $B_k(t)$ is the $k$-th Bernoulli polynomial (in particular $B_1(t) = t-\frac12$), and $\{ \,\cdot\, \}$ stands for the fractional part. The function $G^{(k);N}_\boldx$ is an Eisenstein series of weight $k$ and level $\Gamma(N)$, except for the case $k=2$ and $x_1=0$. Given a modular form $f = \sum_{n \geq 0} a_n q^{n/N}$ on $\Gamma(N)$, we write $L(f,s) = \sum_{n \geq 1} a_n (n/N)^{-s}$ for (the analytic continuation of) the $L$-function of $f$.

\begin{thm} \label{main thm 1}
    For any $\bolda=(a_1,a_2)$, $\boldb=(b_1,b_2)$ in $(\Z/N\Z)^2$ such that the coordinates of $\bolda$, $\boldb$ and $\bolda+\boldb$ are non-zero, we have
    \begin{align*}
    \mathcal{G}(\bolda,\boldb) & = \frac{3\pi^2}{N} L'\big(G^{(1);N}_{a_1,b_2} G^{(1);N}_{b_1,-a_2} + G^{(1);N}_{a_1,-b_2} G^{(1);N}_{b_1,a_2}, -1\big) \\
    \nonumber & \quad - \frac{\zeta(3)}{4} \big(B_2(\{\tfrac{a_1}{N}\}) + B_2(\{\tfrac{b_1}{N}\}) + 4 B_1(\{\tfrac{a_1}{N}\}) B_1(\{\tfrac{b_1}{N}\}) \\
    \nonumber & \qquad \qquad \quad - B_2(\{\tfrac{a_2}{N}\}) - B_2(\{\tfrac{b_2}{N}\}) - 4 B_1(\{\tfrac{a_2}{N}\}) B_1(\{\tfrac{b_2}{N}\}) \big).
    \end{align*}
\end{thm}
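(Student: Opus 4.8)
The plan is to follow a chain of reductions: rewrite the triple integral defining $\mathcal{G}(\bolda,\boldb)$ as a multiple Eisenstein value, differentiate it in one elliptic parameter to collapse its iterated-integral structure, recognise the outcome as an $L$-value by the Rogers--Zudilin method, and finally integrate back. First I would invoke the computation of Section~\ref{G-tmv}, which expresses $\int_0^\infty r_3(2)(\tilde\xi(\bolda,\boldb))$ as a regularised iterated integral of weight-one Eisenstein series $G^{(1);N}_\boldx$ along the imaginary axis $\imaxis$, i.e.\ as a (triple) multiple Eisenstein value. Since these series have non-trivial constant terms $a_0$, the integral diverges at both cusps $0$ and $i\infty$, so the regularisation set up in Sections~\ref{reg ints}--\ref{mmvs} must be carried consistently throughout.

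The crucial idea is to treat the elliptic parameters as real: rescaling $\bolda \mapsto \bolda/N \in (\R/\Z)^2$, Weil's interpolation \cite{Wei76} makes the MEV expression a smooth function of $(\bolda,\boldb)$, which I would then differentiate with respect to the single coordinate $a_2$ (Section~\ref{G-reg}). Differentiation is what makes the problem tractable: it lowers the length of the iterated integral, reducing the triple modular value to a Mellin-type transform of a \emph{product of two} weight-one Eisenstein series. To bring this product into a form amenable to evaluation I would apply the Borisov--Gunnells relations of Section~\ref{BG-rels}; this is exactly the place where the genuine level-$N$ structure is indispensable, forcing the back-and-forth passage between interpolated and non-interpolated series flagged in the introduction.

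The heart of the argument is the Rogers--Zudilin reduction of Section~\ref{RZ}. Expanding the two Eisenstein factors into their $q$-expansions and interchanging the roles of the inner and outer summation variables, the transform collapses to an $L$-function of the product series; the extra logarithmic weight carried by the regulator one-form is precisely what upgrades the bare $L$-value to its $s$-derivative, yielding the factor $\frac{3\pi^2}{N} L'(\,\cdot\,,-1)$. The symmetric combination $G^{(1);N}_{a_1,b_2} G^{(1);N}_{b_1,-a_2} + G^{(1);N}_{a_1,-b_2} G^{(1);N}_{b_1,a_2}$ arises naturally from the $\boldx \mapsto -\boldx$ symmetry built into \eqref{def GkN}. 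This gives a closed expression for $\frac{\partial}{\partial a_2}\mathcal{G}(\bolda,\boldb)$.

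Finally (Section~\ref{L-value}) I would integrate this derivative in $a_2$ to recover $\mathcal{G}(\bolda,\boldb)$. The integrated $L'$-term reproduces the first line of the statement, while the constant of integration --- independent of $a_2$ --- I would determine by evaluating at a convenient base value of $a_2$ or by exploiting the $\bolda \leftrightarrow \boldb$ symmetry; this constant is exactly the $-\frac{\zeta(3)}{4}$ combination of Bernoulli values, traceable to the constant terms of the Eisenstein series. I expect the main obstacle to be the consistent bookkeeping of regularisation and constant terms through both the differentiation and the Rogers--Zudilin exchange: any slip in the cusp regularisation or in the Bernoulli contributions would corrupt precisely the delicate $\zeta(3)$ term, whose antisymmetry between the first and second coordinates must emerge correctly from the integration constant.
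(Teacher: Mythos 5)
Your proposal follows the same route as the paper: the triple-MEV expression of Section \ref{G-tmv}, differentiation in $a_2$ with the length drop (Section \ref{G-reg}), the Borisov--Gunnells and Rogers--Zudilin reductions (Sections \ref{BG-rels}--\ref{RZ}), and integration back in $a_2$ (Section \ref{L-value}). Two local statements are off, though they do not derail the plan: the iterated-integral expression of Theorem \ref{thm r32 formula} involves the weight-two series $E^{(2)}_\boldx$ (i.e.\ $\dlog g_\boldx$), not the weight-one series $G^{(1);N}_\boldx$, which only enter after the Rogers--Zudilin step; and the derivative $L'(\cdot,-1)$ does not come from any ``logarithmic weight'' in the regulator form --- it comes from Lemma \ref{lem diff Gk}, which rewrites $\mathcal{M}(G^{(1)}G^{(2)},0)$ as an $a_2$-derivative of $\mathcal{M}(G^{(1)}G^{(1)},-1)$, combined with \eqref{eq Mellin GkN} (the trivial zero of the $L$-function at $s=-1$ against the pole of $\Gamma(s)$).

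The genuine gap is in your last step, the determination of the ``constant of integration''. First, not all of the $\zeta(3)$ term is such a constant: its $a_2,b_2$-dependent half, $B_2(\{a_2\})+B_2(\{b_2\})+4B_1(\{a_2\})B_1(\{b_2\})$ with positive sign, arises from integrating the terms $\mathcal{M}(G^{(3)}_{0,a_2},0)$ and $\mathcal{M}(G^{(3)}_{0,b_2},0)=-2\zeta'(-2)B_1(\cdot)$ (Lemma \ref{lem Lambda G3}), which are correction terms produced by the Borisov--Gunnells relation of Theorem \ref{thm BG Gk 2}; only the $a_1,b_1$-half is an integration constant. Second, ``evaluating at a convenient base value of $a_2$'' is not available: the derivative identity \eqref{eq G 7} holds only on a domain with four connected components $D_{\pm\pm}$, each carrying its own constant, and there is no point at which $\mathcal{G}$ is independently known as a function of $a_2$. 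The paper pins the constants down with three structural inputs used in combination: the symmetry $\mathcal{G}(\bolda,\boldb)=\mathcal{G}(\boldb,\bolda)$ (so the constant is independent of $b_2$ as well), the antisymmetry $\mathcal{G}(\bolda\sigma,\boldb\sigma)=-\mathcal{G}(\bolda,\boldb)$ under $\sigma=(\begin{smallmatrix}0&-1\\1&0\end{smallmatrix})$, which permutes the components $D_\square$ and is what forces the antisymmetric $(a_1,b_1)$ versus $(a_2,b_2)$ shape of the Bernoulli term, and finally the diagonal vanishing $\mathcal{G}(\bolda,\bolda)=0$ (from $\tilde\xi(\bolda,\bolda)=0$) to kill the residual constants $C''_\square$. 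You invoke only the first of these; without the other two, the $-\frac{\zeta(3)}{4}$ Bernoulli combination cannot be recovered, which is precisely the part you flag as delicate.
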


In his PhD thesis, Weijia Wang has made explicit Beilinson's theorem, by computing $\mathcal{B}(\bolda, \boldb)$ using the Rogers--Zudilin method \cite[Th\'eor\`eme 0.1.3]{Wan20}. The resulting $L$-value turns out to match the one in Theorem \ref{main thm 1}. We deduce our second main result, which is an explicit connection between $\mathcal{G}(\bolda, \boldb)$ and $\mathcal{B}(\bolda,\boldb)$.

\begin{thm} \label{main thm 2}
For any $\bolda=(a_1,a_2)$, $\boldb=(b_1,b_2)$ in $(\Z/N\Z)^2$ such that the coordinates of $\bolda$, $\boldb$ and $\bolda+\boldb$ are non-zero, we have
\begin{align*}
\mathcal{G}(\bolda,\boldb) = \frac{N^2}{6}\mathcal{B}(\bolda,\boldb)
& - \frac{\zeta(3)}{4} \big(B_2(\{\tfrac{a_1}{N}\}) + B_2(\{\tfrac{b_1}{N}\}) + 4 B_1(\{\tfrac{a_1}{N}\}) B_1(\{\tfrac{b_1}{N}\}) \\
    \nonumber & \qquad \qquad - B_2(\{\tfrac{a_2}{N}\}) - B_2(\{\tfrac{b_2}{N}\}) - 4 B_1(\{\tfrac{a_2}{N}\}) B_1(\{\tfrac{b_2}{N}\}) \big).
\end{align*}
\end{thm}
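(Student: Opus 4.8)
The proof of Theorem \ref{main thm 2} is essentially immediate once Theorem \ref{main thm 1} and Wang's computation are in hand, so the plan is to assemble two independent $L$-value formulas for the same quantity and compare. First I would recall Wang's result \cite[Th\'eor\`eme 0.1.3]{Wan20}, which expresses the Beilinson regulator integral $\mathcal{B}(\bolda,\boldb)$ as an $L$-value of a product of weight-one Eisenstein series, obtained via the Rogers--Zudilin method. The key point is that this $L$-value is precisely (up to an explicit constant) the same $L'(\cdots,-1)$ appearing in the leading term of Theorem \ref{main thm 1}; that is, one has an identity of the shape
\begin{equation*}
\frac{3\pi^2}{N} L'\big(G^{(1);N}_{a_1,b_2} G^{(1);N}_{b_1,-a_2} + G^{(1);N}_{a_1,-b_2} G^{(1);N}_{b_1,a_2}, -1\big) = \frac{N^2}{6}\mathcal{B}(\bolda,\boldb).
\end{equation*}

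Concretely, I would carry out the following steps. First, state the precise form of $\mathcal{B}(\bolda,\boldb)$ coming from Wang's thesis, being careful to match her normalisation of the Eisenstein series, the weight, and the level $N$ conventions to those of \eqref{def GkN}; this matching of normalisations is where one must be most vigilant, since discrepancies in the definition of Siegel units, the scaling $\bolda \mapsto \bolda/N$, and the functional-equation conventions for $L(f,s)$ can each introduce powers of $N$ and of $2\pi i$. Second, identify the modular form appearing in Wang's $L$-value with the symmetric combination $G^{(1);N}_{a_1,b_2} G^{(1);N}_{b_1,-a_2} + G^{(1);N}_{a_1,-b_2} G^{(1);N}_{b_1,a_2}$ of Theorem \ref{main thm 1}; both computations route through the Rogers--Zudilin method applied to the same product of Eisenstein series, so the agreement should be structural rather than coincidental. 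Third, read off the constant: comparing $\frac{3\pi^2}{N}$ (Goncharov side) against whatever prefactor Wang obtains (Beilinson side) yields the factor $\tfrac{N^2}{6}$ relating $\mathcal{G}$ and $\mathcal{B}$ through their common leading term.

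With the leading terms identified, subtracting gives the result: Theorem \ref{main thm 1} writes $\mathcal{G}(\bolda,\boldb)$ as this common $L$-value minus the Bernoulli correction term, so substituting $\frac{3\pi^2}{N} L'(\cdots,-1) = \frac{N^2}{6}\mathcal{B}(\bolda,\boldb)$ produces exactly the statement of Theorem \ref{main thm 2}, with the very same Bernoulli polynomial combination carried over verbatim. The hypotheses on $\bolda,\boldb,\bolda+\boldb$ are inherited unchanged from Theorem \ref{main thm 1}, and no further analysis is required on the correction term since it is purely combinatorial in the $B_k(\{\cdot\})$.

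I expect the genuine obstacle to be entirely bookkeeping: reconciling the constants and the precise Eisenstein-series normalisations between our conventions and those of \cite{Wan20}. Because the two regulator integrals are computed in independent formalisms (Goncharov's polylogarithmic complex versus Beilinson's Eisenstein symbol) and only the final $L$-values are compared, any off-by-a-factor-of-$N$ or off-by-a-power-of-$2\pi$ error would propagate directly into the $\frac{N^2}{6}$ coefficient. The conceptual content\,---\,that both sides reduce, via Rogers--Zudilin, to the same $L'$-value of a product of weight-one Eisenstein series\,---\,is what makes the comparison possible, but verifying that the two reductions land on identical normalisations is the delicate part and the one I would check most carefully.
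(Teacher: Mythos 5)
Your proposal is correct and follows essentially the same route as the paper: the paper quotes Wang's explicit formula $\mathcal{B}(\bolda,\boldb) = \frac{9\pi}{N^3}\mathcal{M}(G^{(1);N}_{a_2,-b_1}G^{(1);N}_{b_2,a_1} + G^{(1);N}_{a_2,b_1}G^{(1);N}_{b_2,-a_1},-1)$, uses the symmetries $G^{(1);N}_{x_1,x_2} = G^{(1);N}_{x_2,x_1} = -G^{(1);N}_{-x_1,-x_2}$ together with the rescaling identity \eqref{eq Mellin GkN} to identify this with the $L$-value in Theorem \ref{main thm 1}, and reads off the factor $\frac{N^2}{6}$ by comparing prefactors, exactly as you outline. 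The normalisation bookkeeping you flag as the delicate point is indeed the entire content of the paper's argument.
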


This gives some evidence for \cite[Conjecture 9.3]{Bru20} asserting the proportionality of the motivic cohomology classes $\xi(\bolda,\boldb)$ and $\Eis^{0,0,1}(\bolda,\boldb)$ --- this was formulated for $Y_1(N)$, but we expect it to hold also for $Y(N)$. The discrepancy appearing with the rational multiple of $\zeta(3)$ may come from the particular choices of representatives of the Deligne--Beilinson cohomology classes, since $\imaxis$ is not a closed path in $Y(N)(\C)$.

Our strategy and its execution reveal several interesting arithmetic phenomena and prospects for the general $K$-groups $K_{2n-2}^{(n)}(Y(N))$ with $n\ge2$.
First of all, we find the theory of multiple modular values developed by Brown \cite{Bro17,Bro19}, specifically of multiple Eisenstein values (MEVs), intrinsic to dealing with both the $L$-values $L(E,n)$ of modular elliptic curves $E$ and regulators of Beilinson and Goncharov types.
One may hope that if $E$ has conductor $N$, then $L(E,n)$ can be always written as a
$\mathbf Q$-linear combination of length $n$ MEVs with Eisenstein series of weight 2 and level $N$. This should be explained by a relation between the Goncharov regulator $r_n(2)$ and iterated integrals of length $n$.

In contrast, the Beilinson regulator produces MEVs of length 2, with weights of Eisenstein series depending on~$n$, and this corresponds to a representation of $L(E,n)$ as a $\mathbf Q$-linear combination of length 2 MEVs.
The difference in production from the two regulators suggests the existence of intermediate regulators in the case $n \geq 4$, to cover the entire spectrum of possibilities of MEVs.
At the moment we can only speculate in this direction.
Notice that representativeness of $L(E,n)$ by different length MEVs seems to be part of some general structure; this indicates existence of possible `length drops' for MMVs themselves.
Our calculation of $\frac{\partial}{\partial a_2}\mathcal{G}(\bolda,\boldb)$ in Sections~\ref{BG-rels}--\ref{RZ} gives an example of such a length drop by~1. Are there identities of MMVs in which the length drops by 2 or more? Does a general theory for length reduction exist? Answering such questions will help to understand the cases with $n \geq 4$.

Most of our results in Sections~\ref{BG-rels}--\ref{RZ} are limited to the situations required for dealing with the Goncharov regulator $r_n(2)$ when $n=3$ but can be potentially generalised.
Our Theorem~\ref{thm RZ Iuv} below is already more general than needed in this paper but can be generalised further; the Borisov--Gunnells relations exist in arbitrary weight.
Differentiation of such relations with respect to elliptic parameters was already used by Borisov and Gunnells in \cite[Section 3]{BG03}, though with no connection to computing regulators or MMVs.

Our final remark is that writing $r_3(2)$ in terms of MMVs provides one with an efficient way for computing the Goncharov regulator, which is faster when compared with the method used in~\cite{Bru20}.

\medskip
This project greatly benefited from discussions at the International Groupe de Travail on differential equations in Paris. The first author thanks the participants of the group, especially Spencer Bloch, Vasily Golyshev, Rob de Jeu and Matt Kerr, for illuminating perspectives. We are also grateful to our colleagues whose feedback on several aspects of this work have been instrumental, to 
Francis Brown, Kamal Khuri-Makdisi, Matilde Lal\'\i n, Riccardo Pengo and Weijia Wang.

\medskip
Iterated integrals of modular forms appear intrinsically in the study of modular regulators and we feel appropriate to dedicate our work to Yuri Manin, who pioneered this topic in \cite{Man05,Man06}. We would benefit from discussing our results with him. But he passed away unexpectedly, full of many ideas that our mathematics world could have grown further on.

\section{Regularised iterated integrals} \label{reg ints}

\subsection{Admissible functions}

We define the class of functions and differential forms that we wish to integrate. Let $\h = \{\tau \in \C : \im(\tau)>0\}$ be the upper half-plane, and $\imaxis = \{iy : \, y > 0\}$ the imaginary axis.

\begin{definition}[Admissibility at infinity] \label{Defi 1}
A $C^\infty$ function $f \colon \imaxis \to \C$ is called \emph{admissible at $\infty$} if it can be written $f(\tau) = f^\infty(\tau) + f^0(\tau)$, where $f^\infty(\tau) \in \C[\tau]$ is a polynomial, and $f^0(\tau)$ has exponential decay as $\im(\tau) \to +\infty$: there exists $0<c<1$ such that $f^0(\tau) = O_{\tau \to \infty}(c^{\im(\tau)})$. In this case, the \emph{regularised value of $f$ at infinity}, denoted by $f(\infty)$, is defined as the constant term of the polynomial $f^\infty$.
\end{definition}

Note that the decomposition $f = f^\infty + f^0$ is unique, hence $f(\infty)$ is well defined.

\begin{definition}
A $C^\infty$ differential form $\omega = f(\tau) \, d\tau$ on $\imaxis$ is called \emph{admissible at $\infty$} if $f$ is admissible at $\infty$. We then write $\omega = \omega^\infty + \omega^0$ with $\omega^\infty = f^\infty(\tau) \, d\tau$ and $\omega^0 = f^0(\tau) \, d\tau$.
\end{definition}

As an example, if $f$ is a modular form of weight $k \geq 1$ on some finite index subgroup of $\SL_2(\Z)$, then $\omega = f(\tau) \tau^m \, d\tau$ is admissible at $\infty$ for any integer $m \geq 0$. Note that if a form $\omega$ is admissible at $\infty$, then so are $\re(\omega) = \frac12 (\omega + \bar{\omega})$ and $\im(\omega) = \frac{1}{2i} (\omega - \bar{\omega})$. 


\begin{lem} \label{lem f omega}
If a function $f$ and a form $\omega$ on $\imaxis$ are admissible at $\infty$, then so is $f\omega$.
\end{lem}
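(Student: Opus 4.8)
The plan is to unwind both admissibility hypotheses into their defining decompositions, expand the product, and sort the resulting terms into a polynomial piece and an exponentially decaying piece.

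First I would write $\omega = h(\tau)\,d\tau$, so that both $f$ and $h$ are admissible at $\infty$ by hypothesis, and decompose $f = f^\infty + f^0$ and $h = h^\infty + h^0$ as in Definition~\ref{Defi 1}; here $f^\infty, h^\infty \in \C[\tau]$ while $f^0 = O(c_1^{\im(\tau)})$ and $h^0 = O(c_2^{\im(\tau)})$ for some constants $0 < c_1, c_2 < 1$. Since $f$ and $h$ are $C^\infty$, so is $fh$, whence $f\omega = (fh)(\tau)\,d\tau$ is a $C^\infty$ form; it remains only to exhibit the required decomposition. Expanding gives
\[
fh = f^\infty h^\infty + \big(f^\infty h^0 + f^0 h^\infty + f^0 h^0\big),
\]
and the first summand, being a product of polynomials, lies in $\C[\tau]$ and will serve as $(fh)^\infty$.

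The substance of the argument is to show that the bracketed remainder decays exponentially, so that it may play the role of $(fh)^0$. On $\imaxis$ one has $\tau = iy$ with $y = \im(\tau)$, so every polynomial in $\tau$ is bounded by $y^d$ for some $d \geq 0$. The one point needing care is the elementary fact that exponential decay absorbs polynomial growth: for any $0 < b < c' < 1$ we have $y^d b^y = O(c'^y)$, because $y^d (b/c')^y \to 0$ as $y \to +\infty$. Granting this, $f^\infty h^0$ and $f^0 h^\infty$ are each $O(c'^y)$ for a suitable $c' < 1$ (the polynomial factors being absorbed into the exponential bounds $c_2^y$ and $c_1^y$), while $f^0 h^0 = O((c_1 c_2)^y)$ with $c_1 c_2 < 1$; taking $c$ to be the largest of the three resulting bases, still strictly below $1$, bounds the whole remainder by $O(c^{\im(\tau)})$.

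This furnishes the decomposition $fh = (fh)^\infty + (fh)^0$, so $fh$ is admissible at $\infty$ and therefore so is $f\omega$. The main (and really only nonformal) obstacle is the absorption of polynomial factors into the exponential estimate, which forces the slight enlargement of the decay constant from $b$ to $c'$; the rest is routine bookkeeping of the four terms in the product, and uniqueness of the decomposition (noted after Definition~\ref{Defi 1}) guarantees that the two pieces are well defined.
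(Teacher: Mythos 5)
Your proof is correct: the expansion of the product of the two decompositions, with the polynomial--times--polynomial term as $(fh)^\infty$ and the remaining three terms absorbed into an exponential bound $O(c^{\im(\tau)})$ (enlarging the decay constant to swallow polynomial growth), is exactly the routine verification intended here. The paper states Lemma~\ref{lem f omega} without proof, treating it as immediate, and your argument fills in precisely that implicit reasoning.
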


\subsection{Regularisation at infinity}

We now come to regularisation of iterated integrals. We follow Brown's definition \cite[Section 4.1]{Bro17} and show how it can be expressed via successive one-variable regularisations.

Let us first consider the case of a single integral from $\tau$ to $\infty$. Let $\omega$ be a differential form on $\imaxis$ which is admissible at $\infty$. Brown's definition translates to
\begin{equation} \label{def reg single}
\int_\tau^\infty \omega := \lim_{p \to \infty} \int_\tau^p \omega + \int_p^0 \omega^\infty \qquad (p = iy, \; y \to +\infty).
\end{equation}

We can actually get rid of the limit in \eqref{def reg single}.

\begin{lem} \label{lem 1}
Let $\omega$ be a differential form on $\imaxis$ which is admissible at $\infty$. The limit in \eqref{def reg single} exists, and we have
\begin{equation} \label{lem 1 eq}
\int_\tau^\infty \omega = \int_\tau^\infty \omega^0 + \int_\tau^0 \omega^\infty.
\end{equation}
Moreover, the error term in the convergence of \eqref{def reg single} is $O_{p \to \infty}(c^{\im(p)})$ with $0 < c < 1$, the constant $c$ being uniform with respect to $\tau$ on domains of the form $\{\im(\tau) \geq y_0 > 0\}$.
\end{lem}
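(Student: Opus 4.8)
The plan is to split $\omega$ into its two admissible pieces and exploit the additivity of integration along the imaginary axis, reserving the exponential decay of $f^0$ for the very end. Write $\omega = \omega^\infty + \omega^0$ as in the definition, so that $\omega^\infty = f^\infty(\tau)\,d\tau$ with $f^\infty \in \C[\tau]$ and $\omega^0 = f^0(\tau)\,d\tau$ with $f^0(\tau) = O(c^{\im(\tau)})$ for some $0 < c < 1$. Inside the limit in \eqref{def reg single} I would first isolate the polynomial contribution: since $\tau$, $p$ and $0$ all lie on $\imaxis$, path additivity gives $\int_\tau^p \omega^\infty + \int_p^0 \omega^\infty = \int_\tau^0 \omega^\infty$, a quantity independent of $p$. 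Hence
\begin{equation*}
\int_\tau^p \omega + \int_p^0 \omega^\infty = \int_\tau^0 \omega^\infty + \int_\tau^p \omega^0 .
\end{equation*}

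Next I would deal with the remaining term $\int_\tau^p \omega^0$. Parametrising the imaginary axis by $it$, so that $\omega^0 = i f^0(it)\,dt$, the decay bound $|f^0(it)| \leq C c^t$ (valid for $t$ large) shows that $\int_\tau^\infty \omega^0$ converges absolutely; consequently $\int_\tau^p \omega^0 \to \int_\tau^\infty \omega^0$ as $p = iy \to \infty$. Passing to the limit in the displayed identity therefore establishes both that the limit in \eqref{def reg single} exists and that it equals $\int_\tau^0 \omega^\infty + \int_\tau^\infty \omega^0$, which is precisely \eqref{lem 1 eq}.

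For the error estimate I would simply subtract the limiting value from the $p$-truncated expression. By the identity above, this difference equals $\int_\tau^p \omega^0 - \int_\tau^\infty \omega^0 = -\int_p^\infty \omega^0$, which crucially does not involve $\tau$ at all. Bounding the tail directly gives $\bigl|\int_p^\infty \omega^0\bigr| \leq \int_y^\infty C c^t\,dt = \frac{C}{|\ln c|}\,c^{y} = O(c^{\im(p)})$, yielding the claimed rate with the same constant $c$. The asserted uniformity in $\tau$ on $\{\im(\tau) \geq y_0 > 0\}$ is then automatic, precisely because the error term is independent of $\tau$.

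I do not anticipate a genuine obstacle here. The one point requiring care is that $\int_\tau^\infty \omega$ must \emph{not} be split naively as $\int_\tau^\infty \omega^\infty + \int_\tau^\infty \omega^0$, since the polynomial integral diverges; the regularisation has to be kept intact until after the $\omega^\infty$-contributions have been telescoped by path additivity. Once this bookkeeping is arranged correctly, the exponential decay of $f^0$ does all of the analytic work.
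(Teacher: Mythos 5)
Your proof is correct and follows essentially the same route as the paper: the paper's entire proof is the one-line identity $\int_\tau^p \omega + \int_p^0 \omega^\infty = \int_\tau^p \omega^0 + \int_\tau^0 \omega^\infty$ obtained by splitting $\omega$ and telescoping the polynomial parts by path additivity, with the convergence and error bound left to the exponential decay of $\omega^0$ exactly as you spell out. Your additional observation that the error $-\int_p^\infty \omega^0$ is independent of $\tau$, making the claimed uniformity automatic, is precisely the point implicit in the paper's statement.
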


\begin{proof} Indeed,
\begin{equation*}
\int_\tau^p \omega + \int_p^0 \omega^\infty = \int_\tau^p \omega^0 + \int_\tau^p \omega^\infty + \int_p^0 \omega^\infty = \int_\tau^p \omega^0 + \int_\tau^0 \omega^\infty. \qedhere
\end{equation*}
\end{proof}

We refer to the right-hand side of \eqref{lem 1 eq} as the practical regularised integral. Note that the regularised integral recovers the classical integral in the case $\omega$ is integrable on $\left[\tau, i\infty\right[$ (which happens if and only if $\omega^\infty = 0$). Lemma \ref{lem 1} has the following consequence.

\begin{lem} \label{lem diff int tau infty}
Let $\omega$ be a differential form on $\imaxis$ which is admissible at $\infty$. Then the function $F(\tau) = -\int_\tau^\infty \omega$ is admissible at $\infty$. Moreover, $F$ is the unique primitive of $\omega$ whose regularised value at $\infty$ is zero.
\end{lem}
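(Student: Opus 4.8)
The plan is to feed $\omega = \omega^\infty + \omega^0$ through Lemma~\ref{lem 1}, which already rewrites $F$ as a sum whose two pieces are transparently the polynomial part and the exponentially decaying part of $F$; admissibility and the value $F(\infty) = 0$ can then be read off, and the primitive and uniqueness claims follow by differentiation and a connectedness argument. Concretely, Lemma~\ref{lem 1} gives
\[
F(\tau) = -\int_\tau^\infty \omega = -\int_\tau^\infty \omega^0 - \int_\tau^0 \omega^\infty,
\]
and I would analyse the two summands separately. For the second, I write $\omega^\infty = f^\infty(\tau)\,d\tau$ with $f^\infty \in \C[\tau]$, choose a polynomial primitive $P$ of $f^\infty$, and obtain $-\int_\tau^0 \omega^\infty = P(\tau) - P(0)$, a polynomial with zero constant term. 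For the first, since $f^0$ decays exponentially the integral $G(\tau) := -\int_\tau^\infty \omega^0$ converges absolutely, and integrating the bound $f^0(u) = O(c^{\im u})$ over $[\tau, i\infty[$ shows $G(\tau) = O(c^{\im \tau})$.

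This exhibits $F = G + (P - P(0))$ as an exponentially decaying function plus a polynomial, which is precisely the (unique) admissibility decomposition of Definition~\ref{Defi 1}; hence $F$ is admissible at $\infty$, with $F^0 = G$ and $F^\infty = P - P(0)$. The regularised value $F(\infty)$ is by definition the constant term of $F^\infty = P - P(0)$, which vanishes, so $F(\infty) = 0$. To see that $F$ is a primitive of $\omega$, I would differentiate the two summands along $\imaxis$: the fundamental theorem of calculus yields $d\bigl(-\int_\tau^\infty \omega^0\bigr) = \omega^0$ and $d\bigl(-\int_\tau^0 \omega^\infty\bigr) = \omega^\infty$, whence $dF = \omega^0 + \omega^\infty = \omega$.

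For uniqueness, let $\tilde F$ be any primitive of $\omega$. Since $\imaxis$ is connected, $\tilde F - F$ is a constant $c$, so $\tilde F = F + c$ is again admissible, and because the regularised value at $\infty$ is additive and equals $c$ on the constant function $c$, we get $\tilde F(\infty) = F(\infty) + c = c$. Thus requiring $\tilde F(\infty) = 0$ forces $c = 0$ and $\tilde F = F$.

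The steps are all routine bookkeeping with the polynomial-plus-decay decomposition; the one point needing genuine care is the claim that $G(\tau)$ \emph{decays exponentially} rather than merely tending to $0$. This is exactly where I would use the uniform exponential bound on $f^0$ together with the fact that the regularisation integrates $\omega^0$ over the whole ray $[\tau, i\infty[$, so that the tail estimate propagates the decay rate from $f^0$ to its primitive $G$.
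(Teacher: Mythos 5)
Your proof is correct and follows exactly the route the paper intends: the paper gives no explicit proof but presents the lemma as a direct consequence of Lemma~\ref{lem 1}, and your argument is precisely that deduction, reading off the decomposition $F = F^0 + F^\infty$ from \eqref{lem 1 eq}, differentiating termwise, and settling uniqueness by a constant-difference argument. The one subtle point, that the primitive of the exponentially decaying part again decays exponentially, is handled correctly by integrating the tail bound.
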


In particular, if a form $\omega$ is admissible at $\infty$, then any primitive of $\omega$ is again admissible at $\infty$. On the other hand, the differential of an admissible function $f$ need not be admissible, because there is no control on the derivative of $f^0$.


\begin{lem} \label{lem int df infty}
Let $f \colon \imaxis \to \C$ be a function such that $df$ is admissible at $\infty$. Then $f$ is admissible at $\infty$ and $\int_\tau^\infty df = f(\infty)-f(\tau)$, where $f(\infty)$ is the regularised value at $\infty$ as in Definition~\textup{\ref{Defi 1}}.
\end{lem}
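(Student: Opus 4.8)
The plan is to reduce the statement to Lemma~\ref{lem diff int tau infty} applied to the admissible form $\omega = df$, and then to exploit that any two primitives of the same form on the connected set $\imaxis$ differ only by a constant. So first I would set $\omega = df$, which is admissible at $\infty$ by hypothesis. Lemma~\ref{lem diff int tau infty} then produces the function $F(\tau) = -\int_\tau^\infty df$, which is admissible at $\infty$, is a primitive of $\omega$ (so that $dF = df$), and is normalised by $F(\infty) = 0$.

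Next I would compare $f$ with $F$. Since $d(f-F) = df - dF = 0$ and $\imaxis$ is connected, the function $f - F$ is a constant, say $f - F = C \in \C$. Writing $f = F + C$ exhibits $f$ as the sum of an admissible function and a constant polynomial, so $f$ is itself admissible at $\infty$, with decomposition $f^\infty = F^\infty + C$ and $f^0 = F^0$. Reading off the constant term of $f^\infty$ and using that the constant term of $F^\infty$ is $F(\infty) = 0$, I obtain $f(\infty) = C$ in the sense of Definition~\ref{Defi 1}. The claimed identity is then immediate from the definition of $F$: indeed $\int_\tau^\infty df = -F(\tau) = C - f(\tau) = f(\infty) - f(\tau)$.

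As for difficulty, there is really no serious obstacle: all the analytic content (existence of the limit, admissibility of the regularised primitive, its vanishing at $\infty$) has already been packaged into Lemma~\ref{lem diff int tau infty}. The only points requiring a little care are that $d(f-F)=0$ yields a \emph{global} constant precisely because $\imaxis$ is connected, and that the regularised value at $\infty$ simply adds this constant $C$ to $F(\infty) = 0$; beyond that the argument is a direct bookkeeping of the decomposition $f = f^\infty + f^0$.
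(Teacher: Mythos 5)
Your proposal is correct and follows essentially the same route as the paper, whose entire proof reads ``This follows from Lemma~\ref{lem diff int tau infty} applied to $\omega = df$.'' You have simply made explicit the bookkeeping the paper leaves implicit: that $f$ and the regularised primitive $F(\tau) = -\int_\tau^\infty df$ differ by a constant on the connected set $\imaxis$, and that this constant is precisely the regularised value $f(\infty)$.
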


\begin{proof}
This follows from Lemma \ref{lem diff int tau infty} applied to $\omega = df$.
\end{proof}

One should be careful that in general $\int_\tau^\infty \omega$ does not converge to zero as $\tau \to \infty$: this can be seen from \eqref{lem 1 eq}. For example, if $f(\tau) = \sum_{n \geq 0} a_n q^n$ is a modular form, then
\begin{equation*}
\int_\tau^\infty f(\tau_1) \, d\tau_1 = -\frac{1}{2\pi i} \sum_{n \geq 1} \frac{a_n}{n} q^n - a_0 \tau.
\end{equation*}

One outcome of Lemma \ref{lem int df infty} is the following formula for integration by parts: if the forms $df$ and $dg$ are admissible, then $f$ and $g$ are admissible as well, and
\begin{equation}
\int_\tau^\infty\frac{df}{d\tau}(\tau_1)g(\tau_1)\,d\tau_1
=f(\infty)g(\infty)-f(\tau)g(\tau)
-\int_\tau^\infty f(\tau_1)\frac{dg}{d\tau}(\tau_1)\,d\tau_1.
\label{IPoo}
\end{equation}
Once again, here $f(\infty)$ and $g(\infty)$ are the regularised values at $\infty$ as in Definition~\ref{Defi 1}.

Now let us consider the case of iterated integrals. Brown's definition \cite[Section 4.1]{Bro17} uses a tangential base point at $\infty$. This intrinsic definition has the advantage of giving naturally the shuffle relations for the regularised iterated integrals. Unraveling Brown's definition gives:

\begin{definition}
Let $\omega_1,\ldots,\omega_n$ be differential forms on $\imaxis$ which are admissible at $\infty$. Define
\begin{equation} \label{def reg multiple}
\int_\tau^\infty \omega_1 \ldots \omega_n := \lim_{p \to \infty} \sum_{k=0}^n \int_\tau^p \omega_1 \ldots \omega_k \times \int_p^0 \omega_{k+1}^\infty \ldots \omega_n^\infty.
\end{equation}
\end{definition}

We will justify below the convergence in \eqref{def reg multiple}. For certain computations, we will need to express the regularised iterated integral as a succession of one-variable regularised integrals. We introduce the following `na\"\i ve' regularisation:
\begin{equation} \label{notation practical reg int}
\int_\tau^{\infty,*} \omega_1 \ldots \omega_n := \int_\tau^{\infty} \omega_1(\tau_1) \int_{\tau_1}^{\infty} \omega_2(\tau_2) \cdots \int_{\tau_{n-1}}^{\infty} \omega_n(\tau_n),
\end{equation}
where the right-hand integrals are understood as \eqref{lem 1 eq}.

\begin{lem} \label{lem nested adm}
The na\"\i ve regularised integral $\int_\tau^{\infty,*} \omega_1 \ldots \omega_n$ is well-defined and is admissible at $\infty$ as a function of $\tau$. Its regularised value at $\infty$ is zero.
\end{lem}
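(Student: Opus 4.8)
The plan is to proceed by induction on $n$, peeling off one form at a time from the inside of the nested integral and repeatedly invoking Lemmas~\ref{lem f omega} and~\ref{lem diff int tau infty}. The single recurring fact I will lean on is that, for any admissible form $\omega$, the primitive $\int_\tau^\infty\omega$ is itself admissible at $\infty$ with regularised value zero at $\infty$: this is exactly Lemma~\ref{lem diff int tau infty}, since that lemma identifies $-\int_\tau^\infty\omega$ as the unique primitive of $\omega$ vanishing, in the regularised sense, at $\infty$.

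Concretely, write $\omega_k = f_k(\tau)\,d\tau$ and set, for $k = n+1, n, \dots, 1$,
\[
h_k(\tau) = \int_\tau^{\infty,*} \omega_k \omega_{k+1} \cdots \omega_n,
\]
with the convention $h_{n+1}(\tau) = 1$. These satisfy the recursion $h_k(\tau) = \int_\tau^\infty f_k(\tau_k) h_{k+1}(\tau_k)\,d\tau_k = \int_\tau^\infty h_{k+1}\,\omega_k$, and the quantity to analyse is $h_1(\tau) = \int_\tau^{\infty,*}\omega_1\cdots\omega_n$. I would prove by descending induction on $k$ the claim that $h_k$ is well-defined and admissible at $\infty$, with $h_k(\infty) = 0$ for $k \le n$, while $h_{n+1}\equiv 1$ is trivially admissible.

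For the base case, $h_{n+1} \equiv 1$ is a constant polynomial, hence admissible at $\infty$. For the inductive step, suppose $h_{k+1}$ is admissible at $\infty$. Then $h_{k+1}$ is an admissible function and $\omega_k$ an admissible form, so by Lemma~\ref{lem f omega} the product $h_{k+1}\,\omega_k$ is again admissible at $\infty$; in particular Lemma~\ref{lem 1} guarantees that the defining integral for $h_k(\tau) = \int_\tau^\infty h_{k+1}\,\omega_k$ converges, so $h_k$ is well-defined. Applying Lemma~\ref{lem diff int tau infty} to the admissible form $h_{k+1}\,\omega_k$ shows that $h_k$ is admissible at $\infty$ and that its regularised value at $\infty$ is zero. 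Taking $k=1$ yields the statement.

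I expect no genuine obstacle here: the content is carried entirely by the preparatory lemmas, and the only point demanding care is the bookkeeping that keeps each layer's integrand admissible, so that the outer integral inherits both admissibility and the vanishing of its regularised value at $\infty$. In particular it is essential that Lemma~\ref{lem diff int tau infty} delivers not merely admissibility of $\int_\tau^\infty\omega$ but the specific fact that its regularised value at $\infty$ is zero, since it is precisely this vanishing that propagates through the induction and gives $h_1(\infty)=0$.
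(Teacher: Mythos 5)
Your proof is correct and is exactly the paper's argument: the paper's proof consists of the single sentence ``This follows from inductive application of Lemmas~\ref{lem f omega} and \ref{lem diff int tau infty},'' and your descending induction on $h_k$ is precisely the detailed unpacking of that sentence. The bookkeeping you supply (product admissibility via Lemma~\ref{lem f omega}, then admissibility and vanishing regularised value of the primitive via Lemma~\ref{lem diff int tau infty}) is the intended content.
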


\begin{proof}
This follows from inductive application of Lemmas \ref{lem f omega} and \ref{lem diff int tau infty}.
\end{proof}

\begin{pro} \label{main pro}
Let $\omega_1,\ldots,\omega_n$ be differential forms which are admissible at $\infty$. Then
\begin{equation*}
\int_\tau^\infty \omega_1 \ldots \omega_n = \int_\tau^{\infty,*} \omega_1 \ldots \omega_n.
\end{equation*}
\end{pro}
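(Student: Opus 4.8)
The plan is to prove the identity by induction on $n$, peeling off the first form $\omega_1$ at the level of Brown's finite-$p$ approximations and then passing to the limit. Write $S_n(\tau,p)=\sum_{k=0}^n \int_\tau^p\omega_1\cdots\omega_k\cdot\int_p^0\omega_{k+1}^\infty\cdots\omega_n^\infty$ for the quantity under the limit in \eqref{def reg multiple}, so the left-hand side is $\lim_{p\to\infty}S_n(\tau,p)$. The base case $n=1$ is exactly Lemma \ref{lem 1}, which also supplies the uniform error estimate driving the induction. To make things close, I would carry along two statements at once: (A) the asserted equality $\lim_{p\to\infty}S_n(\tau,p)=\int_\tau^{\infty,*}\omega_1\cdots\omega_n$, and (B) the quantitative refinement $S_n(\tau,p)-\int_\tau^{\infty,*}\omega_1\cdots\omega_n=O(c^{\im p})$ with $0<c<1$, uniformly for $\tau$ in each region $\{\im\tau\ge y_0>0\}$. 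Statement (B) is precisely what justifies the convergence in \eqref{def reg multiple} and controls the limit at the next level.

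The algebraic heart of the step is the path-composition identity for ordinary iterated integrals, $\int_\tau^p\omega_1\cdots\omega_k=\int_\tau^p\omega_1(\tau_1)\int_{\tau_1}^p\omega_2\cdots\omega_k$. Substituting this into every term with $k\ge1$ and reindexing by $j=k-1$, the inner sum reassembles into Brown's finite-$p$ approximation $S'_{n-1}(\tau_1,p)$ for the shorter family $\omega_2,\ldots,\omega_n$, giving the clean recursion
\[
S_n(\tau,p)=\int_p^0\omega_1^\infty\cdots\omega_n^\infty+\int_\tau^p\omega_1(\tau_1)\,S'_{n-1}(\tau_1,p).
\]
Here the first term is exactly $\int_p^0\omega_1^\infty G_{n-1}^\infty$ where $G_{n-1}:=\lim_p S'_{n-1}$: by induction (A), $G_{n-1}=\int_\cdot^{\infty,*}\omega_2\cdots\omega_n$ has polynomial part $\int_\cdot^0\omega_2^\infty\cdots\omega_n^\infty$ (the single-variable computation of Lemma \ref{lem 1}, iterated), and one more application of the peeling identity for the all-polynomial integral yields $\int_p^0\omega_1^\infty\cdots\omega_n^\infty=\int_p^0\omega_1^\infty G_{n-1}^\infty$.

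With this I would write $S'_{n-1}(\tau_1,p)=G_{n-1}(\tau_1)+E(\tau_1,p)$ and split
\[
S_n(\tau,p)=\Big(\int_\tau^p\omega_1 G_{n-1}+\int_p^0\omega_1^\infty G_{n-1}^\infty\Big)+\int_\tau^p\omega_1(\tau_1)\,E(\tau_1,p).
\]
The parenthesised bracket is precisely the defining approximation \eqref{def reg single} for the single regularised integral $\int_\tau^\infty\omega_1 G_{n-1}$ of the admissible form $\omega_1 G_{n-1}$ (admissible by induction (A) together with Lemmas \ref{lem nested adm} and \ref{lem f omega}); hence by Lemma \ref{lem 1} it converges to $\int_\tau^\infty\omega_1 G_{n-1}=\int_\tau^{\infty,*}\omega_1\cdots\omega_n$ with uniform error $O(c^{\im p})$, which is the naïve regularisation by definition and gives (A) at level $n$.

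The main obstacle is the leftover term $\int_\tau^p\omega_1(\tau_1)\,E(\tau_1,p)$, where $p$ occurs both as the endpoint of integration and inside the integrand through $E$. Here the uniform estimate (B) at level $n-1$ is essential: it gives $E(\tau_1,p)=O(c^{\im p})$ uniformly for $\tau_1$ on the whole segment from $\tau$ up to $p$, and since $\omega_1$ has at most polynomial growth the integral is bounded by a polynomial in $\im p$ times $c^{\im p}$, hence tends to $0$ and, after slightly enlarging $c$, also meets the bound (B) at level $n$. Securing and propagating this uniformity\,---\,in particular that the constant $c$ in Lemma \ref{lem 1} stays uniform as the lower endpoint $\tau_1$ runs all the way up to $p$\,---\,is the only genuinely analytic point; everything else is bookkeeping of the peeling identity. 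Closing the induction simultaneously establishes the convergence in \eqref{def reg multiple} and the claimed equality.
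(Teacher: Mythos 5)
Your proposal is correct and follows essentially the same route as the paper: the paper also proves the quantitative refinement (its Proposition \ref{pro 1}, with the uniform $O(c^{\im(p)})$ error on $\{\im(\tau) \geq y_0\}$) by induction, peels off $\omega_1$ via path composition, and applies Lemma \ref{lem 1} to the admissible form $\alpha(\tau_1) = \omega_1(\tau_1)\int_{\tau_1}^{\infty,*}\omega_2\ldots\omega_n$, which is exactly your $\omega_1 G_{n-1}$. The identification of the polynomial part of $G_{n-1}$ that you obtain by iterating Lemma \ref{lem 1} is isolated in the paper as Lemma \ref{lem 2}, and your handling of the leftover term $\int_\tau^p \omega_1(\tau_1)E(\tau_1,p)$\,---\,uniformity of the inductive error along the segment plus polynomial growth of $\omega_1$\,---\,is the same estimate the paper uses.
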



To prove this, we need the following lemma.

\begin{lem} \label{lem 2}
The polynomial part of the na\"\i ve regularised integral is given by
\begin{equation*}
\Bigl( \int_\tau^{\infty,*} \omega_1 \ldots \omega_n \Bigr)^\infty = \int_\tau^0 \omega_1^\infty \ldots \omega_n^\infty,
\end{equation*}
where the right-hand side is the usual (absolutely convergent) iterated integral.
\end{lem}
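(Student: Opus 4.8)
The plan is to argue by induction on the length $n$, peeling off the outermost integration and using the single-variable regularisation formula \eqref{lem 1 eq} at each stage. For $n=1$, Lemma~\ref{lem 1} gives $\int_\tau^\infty \omega_1 = \int_\tau^\infty \omega_1^0 + \int_\tau^0 \omega_1^\infty$. The second summand is visibly a polynomial in $\tau$, while the first is the tail integral of an exponentially decaying form and hence, by Lemma~\ref{lem diff int tau infty}, has vanishing polynomial part; thus the polynomial part is $\int_\tau^0 \omega_1^\infty$, as claimed.

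For the inductive step, set $G(\tau) = \int_\tau^{\infty,*} \omega_2 \ldots \omega_n$. By Lemma~\ref{lem nested adm} this function is admissible at $\infty$, and by the induction hypothesis its polynomial part is $G^\infty(\tau) = \int_\tau^0 \omega_2^\infty \ldots \omega_n^\infty$. Writing $\omega_1 = f_1(\tau)\,d\tau$, the definition \eqref{notation practical reg int} gives $\int_\tau^{\infty,*}\omega_1\ldots\omega_n = \int_\tau^\infty f_1(\tau_1) G(\tau_1)\,d\tau_1$, that is, the single regularised integral of the form $\omega_1 G$. By Lemma~\ref{lem f omega} this form is admissible at $\infty$, so Lemma~\ref{lem 1} applies to it.

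The key observation is the identification of the polynomial part of $\omega_1 G$. Expanding $\omega_1 G = (\omega_1^\infty+\omega_1^0)(G^\infty+G^0)$ into four terms, the cross terms $\omega_1^\infty G^0$, $\omega_1^0 G^\infty$ and $\omega_1^0 G^0$ each carry a factor of exponential decay; since a polynomial times $c^{\im(\tau)}$ is $O\bigl((c')^{\im(\tau)}\bigr)$ for any $c<c'<1$, all three are exponentially decaying, and therefore $(\omega_1 G)^\infty = \omega_1^\infty G^\infty$. Applying \eqref{lem 1 eq} now yields $\int_\tau^\infty \omega_1 G = \int_\tau^\infty (\omega_1 G)^0 + \int_\tau^0 \omega_1^\infty G^\infty$, in which the first term is exponentially decaying and the second is a polynomial in $\tau$. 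Hence the polynomial part equals $\int_\tau^0 \omega_1^\infty(\tau_1)\bigl(\int_{\tau_1}^0 \omega_2^\infty\ldots\omega_n^\infty\bigr)$, which by the very definition of the iterated integral is $\int_\tau^0 \omega_1^\infty\ldots\omega_n^\infty$, completing the induction.

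The only genuinely delicate point is this absorption of polynomial growth into exponential decay when isolating $(\omega_1 G)^\infty$; everything else is a bookkeeping application of \eqref{lem 1 eq} together with the fact, recorded in Lemma~\ref{lem diff int tau infty}, that tail integrals of exponentially decaying forms contribute nothing to the polynomial part.
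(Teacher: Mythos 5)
Your proof is correct and follows essentially the same route as the paper's: induction on $n$, peeling off $\omega_1$, identifying the polynomial part of $\omega_1(\tau_1)\int_{\tau_1}^{\infty,*}\omega_2\ldots\omega_n$ as $\omega_1^\infty(\tau_1)\int_{\tau_1}^0\omega_2^\infty\ldots\omega_n^\infty$ via the induction hypothesis, and then concluding with Lemma~\ref{lem 1}. The only difference is presentational: you spell out why $(\omega_1 G)^\infty = \omega_1^\infty G^\infty$ by expanding the product and absorbing polynomial growth into exponential decay, a point the paper leaves implicit.
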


\begin{proof}
We proceed by induction on $n$. The case $n=1$ follows from Lemma \ref{lem 1}. For $n \geq 2$, we have
\begin{equation*}
\int_\tau^{\infty,*} \omega_1 \ldots \omega_n = \int_\tau^{\infty,*} \omega_1(\tau_1) \int_{\tau_1}^{\infty,*} \omega_2 \ldots \omega_n.
\end{equation*}
By the induction hypothesis applied to $\omega_2 \ldots \omega_n$, we have
\begin{equation*}
\Bigl(\omega_1(\tau_1) \int_{\tau_1}^{\infty,*} \omega_2 \ldots \omega_n\Bigr)^\infty = \omega_1^\infty(\tau_1) \Bigl(\int_{\tau_1}^{\infty,*} \omega_2 \ldots \omega_n\Bigr)^\infty = \omega_1^\infty(\tau_1) \int_{\tau_1}^0 \omega_2^\infty \ldots \omega_n^\infty.
\end{equation*}
Therefore, using Lemma \ref{lem 1},
\begin{equation} \label{lem 2 eq 3}
\int_\tau^{\infty,*} \omega_1 \ldots \omega_n = \int_\tau^\infty \Bigl(\omega_1(\tau_1) \int_{\tau_1}^{\infty,*} \omega_2 \ldots \omega_n\Bigr)^0 + \int_\tau^0 \omega_1^\infty(\tau_1) \int_{\tau_1}^0 \omega_2^\infty \ldots \omega_n^\infty.
\end{equation}
The first term in \eqref{lem 2 eq 3} decays exponentially as $\tau \to \infty$, and the second term is a polynomial in $\tau$, which finishes the proof.
\end{proof}

Proposition \ref{main pro} is now a consequence of the following finer result, which controls the convergence as $p \to \infty$.

\begin{pro} \label{pro 1}
We have
\begin{equation} \label{pro 1 eq}
\sum_{k=0}^n \int_\tau^p \omega_1 \ldots \omega_k \times \int_p^0 \omega_{k+1}^\infty \ldots \omega_n^\infty = \int_\tau^{\infty,*} \omega_1 \ldots \omega_n + O_{p \to \infty}(c^{\im(p)}) \qquad (0 < c < 1),
\end{equation}
the constant $c$ being uniform with respect to $\tau$ on domains of the form $\{\im(\tau) \geq y_0 > 0\}$.
\end{pro}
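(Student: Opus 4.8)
The plan is to prove \eqref{pro 1 eq} by induction on $n$, peeling off the first form $\omega_1$. Write $S_n(\tau,p)$ for the left-hand side of \eqref{pro 1 eq}. For the base case $n=1$ only the terms $k=0,1$ survive, and $S_1(\tau,p) = \int_\tau^p \omega_1 + \int_p^0 \omega_1^\infty$; the claim is then exactly the convergence statement of Lemma~\ref{lem 1}, together with the fact that for a single form the regularisations \eqref{def reg single} and \eqref{notation practical reg int} coincide, so that $\int_\tau^\infty \omega_1 = \int_\tau^{\infty,*}\omega_1$.

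For the inductive step I would isolate the $k=0$ term $\int_p^0 \omega_1^\infty \ldots \omega_n^\infty$ and, in each remaining term ($1 \le k \le n$), apply the recursive identity $\int_\tau^p \omega_1 \ldots \omega_k = \int_\tau^p \omega_1(\tau_1) \int_{\tau_1}^p \omega_2 \ldots \omega_k$ to factor $\omega_1$ outside the integral. Collecting the inner sum, these terms become $\int_\tau^p \omega_1(\tau_1)\, T_{n-1}(\tau_1,p)$, where $T_{n-1}(\tau_1,p) = \sum_{j=0}^{n-1} \int_{\tau_1}^p \omega_2 \ldots \omega_{j+1} \times \int_p^0 \omega_{j+2}^\infty \ldots \omega_n^\infty$ is precisely the left-hand side of \eqref{pro 1 eq} for the $n-1$ forms $\omega_2,\ldots,\omega_n$. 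By the induction hypothesis, $T_{n-1}(\tau_1,p) = \int_{\tau_1}^{\infty,*} \omega_2 \ldots \omega_n + O(c^{\im(p)})$, uniformly for $\im(\tau_1) \ge y_0$.

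Substituting this and setting $g(\tau_1) = \int_{\tau_1}^{\infty,*} \omega_2 \ldots \omega_n$ and $\eta = \omega_1\, g$, I would then recognise the surviving main part as $\int_\tau^p \eta + \int_p^0 \eta^\infty$. Indeed, by Lemma~\ref{lem nested adm} the function $g$ is admissible at $\infty$, hence so is $\eta$ by Lemma~\ref{lem f omega}, and its polynomial part is $\eta^\infty = \omega_1^\infty\, g^\infty$; by Lemma~\ref{lem 2} one has $g^\infty(\tau_1) = \int_{\tau_1}^0 \omega_2^\infty \ldots \omega_n^\infty$, so that $\int_p^0 \eta^\infty = \int_p^0 \omega_1^\infty \ldots \omega_n^\infty$ reproduces exactly the $k=0$ term. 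Applying Lemma~\ref{lem 1} to the admissible form $\eta$ then gives $\int_\tau^p \eta + \int_p^0 \eta^\infty = \int_\tau^\infty \eta + O(c^{\im(p)}) = \int_\tau^{\infty,*} \omega_1 \ldots \omega_n + O(c^{\im(p)})$, uniformly for $\im(\tau) \ge y_0$, which is the desired main term.

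The main obstacle I anticipate is controlling the remainder produced by the induction hypothesis, namely the contribution $\int_\tau^p \omega_1(\tau_1)\cdot O(c^{\im(p)})$. The uniform bound $|T_{n-1}(\tau_1,p) - g(\tau_1)| \le C\, c^{\im(p)}$ holds for $\im(\tau_1) \ge y_0$, hence along the whole segment from $\tau$ to $p$, but one must check that multiplying by $\int_\tau^p |\omega_1|$ does not destroy the exponential decay. Since $\omega_1$ is admissible, $|\omega_1(iy)|$ grows at most polynomially in $y$, so $\int_\tau^p |\omega_1| = O(\mathrm{poly}(\im(p)))$ uniformly for $\im(\tau) \ge y_0$, and the product with $c^{\im(p)}$ is $O(c'^{\im(p)})$ for any $c < c' < 1$. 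Absorbing such polynomial factors into a slightly larger constant $c' < 1$ is the only real care needed; after that the induction closes, and Proposition~\ref{main pro} follows by letting $p \to \infty$ in \eqref{pro 1 eq}.
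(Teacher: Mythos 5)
Your proof is correct and takes essentially the same route as the paper's: the same induction on $n$, peeling off $\omega_1$ to turn the sum over $1 \le k \le n$ into $\int_\tau^p \omega_1(\tau_1)\,T_{n-1}(\tau_1,p)$, then applying the induction hypothesis, identifying the polynomial part of the form $\omega_1(\tau_1)\int_{\tau_1}^{\infty,*}\omega_2\ldots\omega_n$ via Lemma~\ref{lem 2}, and closing with Lemma~\ref{lem 1}. The only (harmless) difference is that you spell out the polynomial-growth bound needed to absorb $\int_\tau^p \omega_1$ times $O(c^{\im(p)})$ into $O(c'^{\,\im(p)})$, a step the paper performs implicitly in passing from $c$ to $c_2$.
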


\begin{proof}
We proceed by induction on $n$. The case $n=1$ follows from Lemma \ref{lem 1}. Let $n \geq 2$. Using the induction hypothesis to $\omega_2 \ldots \omega_n$, the left-hand side of \eqref{pro 1 eq} can be written as
\begin{align}
\nonumber & \int_\tau^p \omega_1(\tau_1) \Bigl( \sum_{k=1}^n \int_{\tau_1}^p \omega_2 \ldots \omega_k \times \int_p^0 \omega_{k+1}^\infty \ldots \omega_n^\infty \Bigr) + \int_p^0 \omega_1^\infty \ldots \omega_n^\infty \\
\nonumber &\quad = \int_\tau^p \omega_1(\tau_1) \Bigl( \int_{\tau_1}^{\infty,*} \omega_2 \ldots \omega_n + O_{p \to \infty}(c^{\im(p)}) \Bigr) + \int_p^0 \omega_1^\infty \ldots \omega_n^\infty \\
\nonumber &\quad = \int_\tau^p \omega_1(\tau_1) \int_{\tau_1}^{\infty,*} \omega_2 \ldots \omega_n + \Bigl( \int_\tau^p \omega_1(\tau_1) \Bigr) O_{p \to \infty}(c^{\im(p)}) + \int_p^0 \omega_1^\infty \ldots \omega_n^\infty \\
\label{pro 1 eq 2} &\quad = \int_\tau^p \omega_1(\tau_1) \int_{\tau_1}^{\infty,*} \omega_2 \ldots \omega_n + \int_p^0 \omega_1^\infty \ldots \omega_n^\infty + O_{p \to \infty}(c_2^{\im(p)}).
\end{align}
Consider the differential form
\begin{equation*}
\alpha(\tau_1) = \omega_1(\tau_1) \int_{\tau_1}^{\infty,*} \omega_2 \ldots \omega_n.
\end{equation*}
Applying Lemma \ref{lem 2} to $\omega_2 \ldots \omega_n$, the polynomial part of $\alpha$ is
\begin{equation*}
\alpha^\infty(\tau_1) = \omega_1^\infty(\tau_1) \int_{\tau_1}^0 \omega_2^\infty \ldots \omega_n^\infty.
\end{equation*}
Therefore,
\begin{align*}
\eqref{pro 1 eq 2} & = \int_\tau^p \alpha(\tau_1) + \int_p^0 \omega_1^\infty(\tau_1) \int_{\tau_1}^0 \omega_2^\infty \ldots \omega_n^\infty + O_{p \to \infty}(c_2^{\im(p)}) \\
& = \int_\tau^p \alpha(\tau_1) + \int_p^0 \alpha^\infty(\tau_1) + O_{p \to \infty}(c_2^{\im(p)}) \\
& = \int_\tau^{\infty,*} \alpha + O_{p \to \infty}(c_3^{\im(p)}). \qedhere
\end{align*}
\end{proof}

Proposition \ref{main pro} and Lemma \ref{lem diff int tau infty} have the following consequence.

\begin{lem} \label{lem diff int n}
For any differential forms $\omega_1,\ldots,\omega_n$ which are admissible at $\infty$, we have
\begin{equation*}
d \left(\int_\tau^\infty \omega_1 \ldots \omega_n \right) = - \omega_1(\tau) \int_\tau^\infty \omega_2 \ldots \omega_n.
\end{equation*}
\end{lem}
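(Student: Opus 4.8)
The plan is to reduce everything to the single-integral case by invoking Proposition~\ref{main pro} to replace the regularised iterated integral with the naive nested one. Setting
\[
\alpha(\tau_1) := \omega_1(\tau_1) \int_{\tau_1}^{\infty,*} \omega_2 \ldots \omega_n,
\]
the definition \eqref{notation practical reg int} gives $\int_\tau^{\infty,*}\omega_1\ldots\omega_n = \int_\tau^\infty \alpha$, and Proposition~\ref{main pro} identifies this with $\int_\tau^\infty \omega_1 \ldots \omega_n$. Thus the statement to prove reduces to $d\bigl(\int_\tau^\infty \alpha\bigr) = -\alpha(\tau)$, after which one re-expands $\alpha$ and uses Proposition~\ref{main pro} once more to identify $\int_\tau^{\infty,*}\omega_2\ldots\omega_n$ with $\int_\tau^\infty \omega_2\ldots\omega_n$.

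Next I would check that $\alpha$ is a form admissible at $\infty$, so that the single-integral results apply to it. By Lemma~\ref{lem nested adm} the inner integral $\int_{\tau_1}^{\infty,*}\omega_2\ldots\omega_n$ is an admissible function of $\tau_1$, and by Lemma~\ref{lem f omega} its product with the admissible form $\omega_1$ is again an admissible form.

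With admissibility established, Lemma~\ref{lem diff int tau infty} applies directly to $\alpha$: the function $-\int_\tau^\infty \alpha$ is the (unique, normalised) primitive of $\alpha$, so $d\bigl(\int_\tau^\infty \alpha\bigr) = -\alpha(\tau)$. Substituting back the definition of $\alpha$ and applying Proposition~\ref{main pro} to $\omega_2\ldots\omega_n$ then yields exactly
\[
d \Bigl(\int_\tau^\infty \omega_1 \ldots \omega_n \Bigr) = - \omega_1(\tau) \int_\tau^\infty \omega_2 \ldots \omega_n.
\]

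The computation is purely formal once admissibility is in place; the only delicate point is precisely this verification, since the differential of a general admissible function need \emph{not} be admissible (as noted in the remark after Lemma~\ref{lem diff int tau infty}). What rescues the argument is that $\alpha$ is built as $\omega_1$ times a \emph{regularised integral}, rather than an arbitrary admissible function: Lemmas~\ref{lem nested adm} and~\ref{lem f omega} supply the admissibility of $\alpha$, and Lemma~\ref{lem diff int tau infty} then differentiates the outer integral with full control, avoiding any appeal to the derivative of an uncontrolled exponentially decaying part.
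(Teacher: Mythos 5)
Your proof is correct and is exactly the argument the paper intends: the paper states this lemma as an immediate consequence of Proposition~\ref{main pro} and Lemma~\ref{lem diff int tau infty}, and your write-up simply makes explicit the reduction to the form $\alpha(\tau_1)=\omega_1(\tau_1)\int_{\tau_1}^{\infty,*}\omega_2\ldots\omega_n$ together with its admissibility (via Lemmas~\ref{lem nested adm} and~\ref{lem f omega}). No gaps.
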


\subsection{Regularisation at zero} \label{sec reg 0}
The matrix $\sigma = (\begin{smallmatrix} 0 & -1 \\ 1 & 0 \end{smallmatrix})$ acts on $\h$ by $\tau \mapsto -1/\tau$. For a differential form $\omega$ on $\imaxis$, we write $\omega^\sigma = \sigma^* \omega$.

\begin{definition}[Admissibility at 0] \label{def reg 0}
A $C^\infty$ function $f \colon \imaxis \to \C$ is called \emph{admissible at $0$} if the function $g(\tau) = f(-1/\tau)$ is admissible at $\infty$. In this case, the \emph{regularised value of $f$ at $0$} is defined as $f(0) = g(\infty)$.

A $C^\infty$ differential form $\omega$ on $\imaxis$ is called \emph{admissible at $0$} if $\omega^\sigma$ is admissible at $\infty$.
\end{definition}

\begin{definition}[Admissibility] \label{def reg 0 oo}
A function or differential form on $\imaxis$ is called admissible if it is admissible at both $0$ and $\infty$.
\end{definition}

\begin{example} \label{exadmissible}
\begin{itemize}
\item The only polynomials in $\tau$ which are admissible are the constants.
\item If $f$ is a modular form of weight $k \geq 2$ on a finite index subgroup of $\SL_2(\Z)$, then $\omega = f(\tau) \,d\tau$ is admissible. In fact $f(\tau) \tau^{m-1} d\tau$ is admissible for any $m \in \{1,\ldots,k-1\}$. If $f$ is a cusp form, then $f(\tau) \tau^{m-1} d\tau$ is admissible for any $m \in \Z$.
\end{itemize}
\end{example}

Lemmas  \ref{lem f omega} and \ref{lem diff int tau infty} also hold for admissibility at $0$, and thus for admissibility:

\begin{lem} \label{lem admissible primitive}
Let $\omega$ be an admissible differential form on $\imaxis$, and $f(\tau)$ any primitive of $\omega$. Then $f$ is admissible.
\end{lem}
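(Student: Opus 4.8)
The plan is to verify the two halves of the definition of admissibility\,---\,admissibility at $\infty$ and admissibility at $0$\,---\,separately, reducing each to the behaviour of primitives under the one-variable regularisation already established in Lemma~\ref{lem diff int tau infty}.

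First I would treat admissibility at $\infty$. Since $\omega$ is in particular admissible at $\infty$, the remark following Lemma~\ref{lem diff int tau infty} already asserts that any primitive of $\omega$ is admissible at $\infty$. If one prefers to argue directly: the specific primitive $F(\tau) = -\int_\tau^\infty \omega$ is admissible at $\infty$ by Lemma~\ref{lem diff int tau infty}, and because $\imaxis$ is connected, the given primitive $f$ differs from $F$ by a constant. Constants are admissible at $\infty$ (they are degree-$0$ polynomials with zero decaying part), and a sum of two functions admissible at $\infty$ is again admissible at $\infty$, since the decompositions $f = f^\infty + f^0$ add termwise. Hence $f$ is admissible at $\infty$.

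Next I would handle admissibility at $0$. By Definition~\ref{def reg 0} I must show that $g(\tau) = f(-1/\tau) = (\sigma^* f)(\tau)$ is admissible at $\infty$. The key point is that pullback by $\sigma$ commutes with the exterior derivative, so that $dg = \sigma^*(df) = \sigma^* \omega = \omega^\sigma$. Since $\omega$ is admissible at $0$, the form $\omega^\sigma$ is by definition admissible at $\infty$. Thus $g$ is a primitive of a form admissible at $\infty$, and applying the first part (to $\omega^\sigma$ and $g$ in place of $\omega$ and $f$) shows that $g$ is admissible at $\infty$; equivalently, $f$ is admissible at $0$. Combining the two halves gives that $f$ is admissible.

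The only substantive point is the identity $dg = \omega^\sigma$, i.e.\ the $\sigma$-equivariance that lets the $0$-case be deduced from the $\infty$-case; once this is in hand the argument is pure bookkeeping and I expect no real obstacle. The one thing to keep in mind is that $\sigma$ preserves $\imaxis$ (indeed $\sigma(iy) = i/y$), so that $g$ and $\omega^\sigma$ are genuinely defined on the same domain and the one-variable results apply verbatim.
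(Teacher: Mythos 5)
Your proof is correct and follows essentially the same route as the paper, which states the lemma without detailed proof on the grounds that Lemma~\ref{lem diff int tau infty} (primitives of forms admissible at $\infty$ are admissible at $\infty$) ``also holds for admissibility at $0$'' --- precisely the $\sigma$-transport you make explicit via $dg = \sigma^*(df) = \omega^\sigma$. Your write-up simply fills in the bookkeeping the paper leaves implicit, and does so correctly.
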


We now want to define the regularised iterated integral from $0$ to $\tau$ of differential forms $\omega_1,\ldots,\omega_n$ which are admissible at $0$. We begin with the case $n=1$. Formal considerations lead to the following definition.

\begin{definition}
Let $\omega$ be a differential form on $\imaxis$ which is admissible at $0$. We set
\begin{equation*}
\int_0^\tau \omega := - \int_{-1/\tau}^\infty \omega^\sigma,
\end{equation*}
which is well-defined since $\omega^\sigma$ is admissible at $\infty$.
\end{definition}

\begin{lem} \label{lem diff int 0 tau}
Let $\omega$ be a differential form on $\imaxis$ which is admissible at $0$. Then $\int_0^\tau \omega$ is the unique primitive of $\omega$ whose regularised value at $0$ is zero.
\end{lem}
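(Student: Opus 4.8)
The plan is to transport Lemma \ref{lem diff int tau infty} across the involution $\sigma$, exactly mirroring the way the definition of $\int_0^\tau$ is modelled on $\int_\tau^\infty$. Write $F(\tau) = \int_0^\tau \omega$ and introduce the auxiliary function $G(u) = -\int_u^\infty \omega^\sigma$, so that by the very definition of $\int_0^\tau \omega$ one has $F(\tau) = G(-1/\tau) = G(\sigma\tau)$. Since $\omega$ is admissible at $0$, the form $\omega^\sigma$ is admissible at $\infty$ by Definition \ref{def reg 0}, so Lemma \ref{lem diff int tau infty} applies to it: $G$ is admissible at $\infty$, it is a primitive of $\omega^\sigma$, and its regularised value at $\infty$ is zero.

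Next I would check that $F$ is a primitive of $\omega$. Pulling back along $\sigma$ and using $dG = \omega^\sigma = \sigma^*\omega$, I obtain $dF = d(\sigma^* G) = \sigma^*(dG) = \sigma^*\sigma^*\omega = (\sigma^2)^*\omega$. The only point to track here is that $\sigma^2 = -I$ acts trivially on $\h$, so $(\sigma^2)^*\omega = \omega$ and indeed $dF = \omega$. I would then verify admissibility at $0$ together with $F(0) = 0$ straight from Definition \ref{def reg 0}: the relevant function is $g(\tau) = F(-1/\tau) = F(\sigma\tau) = G(\sigma^2\tau) = G(\tau)$, which is exactly $G$. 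Hence $g$ is admissible at $\infty$, so $F$ is admissible at $0$, and $F(0) = g(\infty) = G(\infty) = 0$.

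Finally, uniqueness follows as in Lemma \ref{lem diff int tau infty}: any two primitives of $\omega$ differ by a constant $c$, and the regularised value at $0$ of the constant function $c$ is $c$ itself (its associated $g$ is the constant $c$, whose polynomial part has constant term $c$), so two admissible primitives with vanishing regularised value at $0$ must coincide. I expect no serious obstacle in this argument; the whole statement is essentially a transport of structure, and the only thing requiring genuine care is the pullback bookkeeping combined with the involution property $\sigma^2 = \id$ on $\h$, which is precisely what makes the transport clean and lets every assertion descend from its counterpart at $\infty$.
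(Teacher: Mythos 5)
Your proposal is correct and follows essentially the same route as the paper: apply Lemma \ref{lem diff int tau infty} to $\omega^\sigma$ (which is admissible at $\infty$ by Definition \ref{def reg 0}) and pull back by $\sigma$, with the value at $0$ and uniqueness read off from the definition. Your write-up merely makes explicit the pullback bookkeeping, the identity $\sigma^2 = \id$ on $\h$, and the constant-difference argument that the paper leaves implicit.
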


\begin{proof}
By Lemma \ref{lem diff int tau infty} applied to $\omega^\sigma$, we know that $d \left(\int_\tau^\infty \omega^\sigma\right) = - \omega^\sigma$. Pulling back by $\sigma \colon \tau \mapsto -1/\tau$ gives the desired identity. The statement about the regularised value at $0$ follows from the definition and Lemma \ref{lem diff int tau infty}.
\end{proof}

Now we proceed to the iterated case. Let $\omega_1,\ldots,\omega_n$ be differential forms on $\imaxis$ which are admissible at $0$. We want to set
\begin{equation*}
\int_0^\tau \omega_1 \ldots \omega_n = \int_\infty^{-1/\tau} \omega_1^\sigma \ldots \omega_n^\sigma.
\end{equation*}
The right-hand side can be given a meaning using the reversal of paths formula
\begin{equation*}
\int_a^b \omega_1 \ldots \omega_n = (-1)^n \int_b^a \omega_n \ldots \omega_1.
\end{equation*}
This leads to:

\begin{definition}
For any forms $\omega_1,\ldots,\omega_n$ on $\imaxis$ which are admissible at $0$, we define
\begin{equation*}
\int_0^\tau \omega_1 \ldots \omega_n := (-1)^n \int_{-1/\tau}^\infty \omega_n^\sigma \ldots \omega_1^\sigma.
\end{equation*}
\end{definition}

We have the following analogues of Lemmas \ref{lem nested adm} and \ref{lem diff int n}.

\begin{lem} \label{lem adm 0}
The integral $\int_0^\tau \omega_1 \ldots \omega_n$ is admissible at $0$ as a function of $\tau$, and its regularised value at $0$ is zero.
\end{lem}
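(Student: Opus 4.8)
The plan is to reduce everything to the already-established facts about regularisation at $\infty$, namely Lemma \ref{lem nested adm} together with Proposition \ref{main pro}, by unwinding the definition through the involution $\sigma \colon \tau \mapsto -1/\tau$.

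First I recall that by definition $\int_0^\tau \omega_1 \ldots \omega_n = (-1)^n \int_{-1/\tau}^\infty \omega_n^\sigma \ldots \omega_1^\sigma$. To verify admissibility at $0$ I appeal to Definition \ref{def reg 0}: I must show that the function $g(\tau) = \int_0^{-1/\tau} \omega_1 \ldots \omega_n$ is admissible at $\infty$. Substituting $-1/\tau$ for $\tau$ in the defining formula and using that $\sigma^2 = \id$, so that $-1/(-1/\tau) = \tau$ restores the base point, this function becomes
\[
g(\tau) = (-1)^n \int_\tau^\infty \omega_n^\sigma \ldots \omega_1^\sigma.
\]

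Next I observe that, since each $\omega_i$ is admissible at $0$, each $\omega_i^\sigma$ is admissible at $\infty$ by the very definition of admissibility at $0$ for forms. Hence the reversed list $\omega_n^\sigma,\ldots,\omega_1^\sigma$ consists of forms admissible at $\infty$, and Lemma \ref{lem nested adm} combined with Proposition \ref{main pro} shows that $\tau \mapsto \int_\tau^\infty \omega_n^\sigma \ldots \omega_1^\sigma$ is admissible at $\infty$ with regularised value $0$ there. Multiplying by the constant $(-1)^n$ preserves admissibility and leaves the regularised value equal to $0$, so $g$ is admissible at $\infty$. By Definition \ref{def reg 0} this means $\int_0^\tau \omega_1 \ldots \omega_n$ is admissible at $0$, and feeding the vanishing of $g$'s value at $\infty$ back through the prescription $f(0) = g(\infty)$ yields that its regularised value at $0$ is $0$.

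I do not anticipate a genuine obstacle here: the statement is a formal transport of Lemma \ref{lem nested adm} and Proposition \ref{main pro} across $\sigma$. The only points requiring care are bookkeeping ones\,---\,tracking the sign $(-1)^n$ and the reversal of the ordering of the forms that come from the reversal-of-paths convention, and confirming that the double application of $\sigma$ returns the base point to $\tau$ so that the $\infty$-results apply verbatim.
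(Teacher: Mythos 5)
Your proof is correct and is essentially the paper's own argument: the paper's proof reads, in full, ``This follows from Lemma \ref{lem nested adm} applied to $\omega_n^\sigma \ldots \omega_1^\sigma$,'' which is exactly your reduction through $\sigma$ (with Proposition \ref{main pro} implicitly identifying the regularised and na\"\i ve integrals). Your write-up just makes explicit the bookkeeping of the sign, the reversal of the forms, and the double application of $\sigma$ that the paper leaves to the reader.
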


\begin{proof}
This follows from Lemma \ref{lem nested adm} applied to $\omega_n^\sigma \ldots \omega_1^\sigma$.
\end{proof}

\begin{lem} \label{lem diff int n 0}
We have
\begin{equation*}
d \left(\int_0^\tau \omega_1 \ldots \omega_n \right) = \omega_n(\tau) \int_0^\tau \omega_1 \ldots \omega_{n-1}.
\end{equation*}
\end{lem}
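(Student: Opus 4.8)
The plan is to reduce the statement to the differentiation formula at $\infty$ already proved in Lemma~\ref{lem diff int n}, transporting it through the involution $\sigma\colon\tau\mapsto-1/\tau$ and then carefully reconciling the two independent sources of signs. First I would note that since each $\omega_i$ is admissible at $0$, each $\omega_i^\sigma$ is admissible at $\infty$, so the right-hand side of the defining identity
\[
\int_0^\tau \omega_1 \ldots \omega_n = (-1)^n \int_{-1/\tau}^\infty \omega_n^\sigma \ldots \omega_1^\sigma
\]
is meaningful and Lemma~\ref{lem diff int n} is available for the \emph{reversed} sequence $\omega_n^\sigma,\ldots,\omega_1^\sigma$. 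Writing $G(u) = \int_u^\infty \omega_n^\sigma \ldots \omega_1^\sigma$, that lemma gives $dG = -\,\omega_n^\sigma\,(u\mapsto\int_u^\infty \omega_{n-1}^\sigma \ldots \omega_1^\sigma)$, because the leading form of the reversed word is $\omega_n^\sigma$.

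Next I would set $F(\tau) = \int_0^\tau \omega_1 \ldots \omega_n = (-1)^n (G\circ\sigma)(\tau)$ and use that $d$ commutes with pullback, so $dF = (-1)^n \sigma^*(dG)$. The key geometric input is that $\sigma$ is an involution of $\h$ (since $\sigma^2=-I$ acts trivially), whence $\sigma^*\omega_n^\sigma = (\sigma\circ\sigma)^*\omega_n = \omega_n$; applying $\sigma^*$ to the function factor simply evaluates it at $-1/\tau$. This turns $\sigma^*(dG)$ into $-\,\omega_n(\tau)\int_{-1/\tau}^\infty \omega_{n-1}^\sigma \ldots \omega_1^\sigma$. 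Finally I would re-express the remaining integral using the defining identity \emph{at length $n-1$}, namely $\int_{-1/\tau}^\infty \omega_{n-1}^\sigma \ldots \omega_1^\sigma = (-1)^{n-1}\int_0^\tau \omega_1 \ldots \omega_{n-1}$.

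Assembling these pieces yields $dF = (-1)^n\cdot(-1)\cdot(-1)^{n-1}\,\omega_n(\tau)\int_0^\tau \omega_1 \ldots \omega_{n-1}$, and since $(-1)^{2n}=1$ the total sign is $+1$, which is exactly the claimed formula. The only delicate point — and the step I would double-check most carefully — is the sign bookkeeping: one factor $(-1)^n$ comes from the definition at length $n$, a factor $(-1)^{n-1}$ from reconverting at length $n-1$, and a single $-1$ from Lemma~\ref{lem diff int n}; it is precisely their cancellation that flips the minus sign of the $\infty$-formula into the plus sign of the $0$-formula. No convergence or admissibility issues remain, as those are handled by the admissibility of $\int_0^\tau$ recorded in Lemma~\ref{lem adm 0}.
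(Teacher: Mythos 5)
Your proposal is correct and follows essentially the same route as the paper's proof: apply Lemma~\ref{lem diff int n} to the reversed sequence $\omega_n^\sigma,\ldots,\omega_1^\sigma$, pull back the resulting identity by $\sigma$, and reconvert the remaining integral via the defining identity at length $n-1$, with the signs $(-1)^n\cdot(-1)\cdot(-1)^{n-1}$ cancelling exactly as you compute. The only difference is presentational (your explicit $F$, $G$ notation and sign ledger), not mathematical.
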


\begin{proof}
By Lemma \ref{lem diff int n}, we have
\begin{equation*}
d \left(\int_\tau^\infty \omega_n^\sigma \ldots \omega_1^\sigma \right) = - \omega_n^\sigma(\tau) \int_\tau^\infty \omega_{n-1}^\sigma \ldots \omega_1^\sigma.
\end{equation*}
Applying $\sigma^*$ to this identity gives
\begin{equation*}
d \left(\int_{-1/\tau}^\infty \omega_n^\sigma \ldots \omega_1^\sigma \right) = - \omega_n(\tau) \int_{-1/\tau}^\infty \omega_{n-1}^\sigma \ldots \omega_1^\sigma = (-1)^n \omega_n(\tau) \int_0^\tau \omega_1 \ldots \omega_{n-1}. \qedhere
\end{equation*}
\end{proof}

\subsection{Regularisation from zero to infinity} \label{reg 0 oo}

Note that if $\omega$ is admissible, then the integral $\int_0^\infty \omega := \int_0^\tau \omega + \int_\tau^\infty \omega$ is well-defined and independent of $\tau$ by Lemmas \ref{lem diff int tau infty} and \ref{lem diff int 0 tau}. Moreover, if $\omega$ is integrable, then this definition coincides with the usual (convergent) integral of $\omega$ on $\imaxis$. 

In the iterated case, the composition of paths formula forces the following definition. 

\begin{definition} \label{def int reg 0 infty}
Let $\omega_1,\ldots,\omega_n$ be admissible differential forms on $\imaxis$. We define
\begin{equation} \label{eq int reg 0 infty}
\int_0^\infty \omega_1 \ldots \omega_n = \sum_{k=0}^n \int_0^\tau \omega_1 \ldots \omega_k \times \int_\tau^\infty \omega_{k+1} \ldots \omega_n.
\end{equation}
\end{definition}

\begin{lem}
The definition \eqref{eq int reg 0 infty} does not depend on $\tau$.
\end{lem}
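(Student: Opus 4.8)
The plan is to show that the right-hand side of \eqref{eq int reg 0 infty}, regarded as a function $S(\tau)$ on $\imaxis$, has vanishing differential; since $\imaxis$ is connected, this forces $S$ to be constant in $\tau$. All the iterated integrals appearing are admissible, hence $C^\infty$, functions of $\tau$ (by Lemmas \ref{lem nested adm}, \ref{main pro} and \ref{lem adm 0}), so $S$ is differentiable and the Leibniz rule applies summand by summand.

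First I would differentiate each summand by the product rule, applying Lemma \ref{lem diff int n 0} to the factor $\int_0^\tau \omega_1 \ldots \omega_k$ and Lemma \ref{lem diff int n} to the factor $\int_\tau^\infty \omega_{k+1} \ldots \omega_n$, obtaining
\[
dS = \sum_{k=0}^n \omega_k(\tau) \Bigl(\int_0^\tau \omega_1 \ldots \omega_{k-1}\Bigr)\Bigl(\int_\tau^\infty \omega_{k+1} \ldots \omega_n\Bigr) - \sum_{k=0}^n \omega_{k+1}(\tau) \Bigl(\int_0^\tau \omega_1 \ldots \omega_k\Bigr)\Bigl(\int_\tau^\infty \omega_{k+2} \ldots \omega_n\Bigr).
\]
Here the $k=0$ term of the first sum and the $k=n$ term of the second sum drop out, since the empty iterated integrals $\int_0^\tau$ and $\int_\tau^\infty$ equal the constant function $1$, whose differential is zero (equivalently, Lemmas \ref{lem diff int n 0} and \ref{lem diff int n} return $0$ in these degenerate ranges).

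Then I would reindex the second sum by $j = k+1$, so that it runs over $j = 1,\ldots,n$ and takes the form $\sum_{j=1}^n \omega_j(\tau)\bigl(\int_0^\tau \omega_1 \ldots \omega_{j-1}\bigr)\bigl(\int_\tau^\infty \omega_{j+1} \ldots \omega_n\bigr)$, which is term-by-term identical to the surviving part ($k = 1,\ldots,n$) of the first sum. The two sums therefore cancel, giving $dS = 0$; hence $S$ is locally constant on the connected set $\imaxis$ and thus independent of $\tau$.

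The argument is entirely mechanical, so I do not expect a genuine obstacle; the only point demanding care is the bookkeeping of the two boundary terms and the index shift, together with checking that the two differentiation lemmas are invoked in the correct degenerate cases $k=0$ and $k=n$. This is the same telescoping mechanism that underlies the path-composition formula for iterated integrals, which is precisely why \eqref{eq int reg 0 infty} is forced.
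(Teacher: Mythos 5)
Your proof is correct and is essentially the paper's own argument: differentiate the sum via Lemmas \ref{lem diff int n 0} and \ref{lem diff int n}, note the degenerate boundary terms vanish, and cancel the two sums by the index shift $k \to k+1$, concluding by connectedness of $\imaxis$. The only difference is that you spell out the differentiability bookkeeping slightly more explicitly, which is fine.
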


\begin{proof}
Using Lemmas \ref{lem diff int n} and \ref{lem diff int n 0}, the differential of the right-hand side of \eqref{eq int reg 0 infty} is
\begin{align*}
& \sum_{k=0}^{n-1} \int_0^\tau \omega_1 \ldots \omega_k \times \big(- \omega_{k+1}(\tau)\big) \int_\tau^\infty \omega_{k+2} \ldots \ldots \omega_n \\
& \quad + \sum_{k=1}^n \omega_k(\tau) \int_0^\tau \omega_1 \ldots \omega_{k-1} \times \int_\tau^\infty \omega_{k+1} \ldots \ldots \omega_n
\end{align*}
which vanishes by changing $k \to k+1$ in the second sum.
\end{proof}

The last lemma naturally brings us to a statement which will be important for expressing the Goncharov regulator integral in terms of iterated integrals.

\begin{pro} \label{pro int 0 infty}
Let $\omega_1,\ldots,\omega_n$ be admissible differential forms on $\imaxis$. Then
\begin{equation*}
\int_\tau^\infty \omega_1 \ldots \omega_n
\end{equation*}
is admissible at $0$ as a function of $\tau$, and its regularised value at $0$ is $\int_0^\infty \omega_1 \ldots \omega_n$. Moreover,
\begin{equation} \label{eq int 0 infty}
\int_0^\infty \omega_1 \ldots \omega_n = \int_0^\infty \omega_1(\tau_1) \int_{\tau_1}^\infty \omega_2(\tau_2) \cdots \int_{\tau_{n-1}}^\infty \omega_n(\tau_n),
\end{equation}
where the right-hand side of \eqref{eq int 0 infty} is understood as successive one-variable regularisations.
\end{pro}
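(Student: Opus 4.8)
The plan is to prove the three assertions --- admissibility at $0$ of $F_n(\tau) := \int_\tau^\infty \omega_1 \ldots \omega_n$, the identification of its regularised value at $0$ with $\int_0^\infty \omega_1 \ldots \omega_n$, and the nested formula \eqref{eq int 0 infty} --- in that order, each feeding into the next. The only auxiliary ingredient not already in the text is the elementary multiplicativity of regularised values: if $u,v$ are admissible at $0$ then so is $uv$, with $(uv)(0) = u(0)v(0)$; in particular $(uv)(0)=0$ whenever $u(0)=0$. I would establish this by writing $u(-1/\tau)=u^\infty+u^0$ and $v(-1/\tau)=v^\infty+v^0$ as in Definition \ref{Defi 1}, observing that the polynomial part of the product is $u^\infty v^\infty$ (all cross terms decay exponentially, exactly as in Lemma \ref{lem f omega}), and reading off constant terms.

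For the first assertion I would induct on $n$. The case $n=0$ is the constant $1$, and $n=1$ follows since $dF_1 = -\omega_1$ is admissible at $0$, so $F_1$ is a primitive of an admissible-at-$0$ form and hence admissible at $0$ by the $0$-analogue of Lemma \ref{lem diff int tau infty}. For $n\ge 2$, the induction hypothesis gives that $G(\tau) := \int_\tau^\infty \omega_2 \ldots \omega_n$ is admissible at $0$; then $\omega_1 G$ is admissible at $0$ by the $0$-analogue of Lemma \ref{lem f omega}, and since $dF_n = -\omega_1 G$ by Lemma \ref{lem diff int n}, the function $F_n$ is once more a primitive of an admissible-at-$0$ form, hence admissible at $0$.

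For the second assertion I would exploit the $\tau$-independence of \eqref{eq int reg 0 infty}. Separating the $k=0$ term, which is $F_n(\tau)$, I write
\[
\int_0^\infty \omega_1 \ldots \omega_n = F_n(\tau) + \sum_{k=1}^n \int_0^\tau \omega_1 \ldots \omega_k \cdot \int_\tau^\infty \omega_{k+1}\ldots\omega_n.
\]
The left-hand side is constant in $\tau$, hence equals its own regularised value at $0$. On the right, each factor $\int_0^\tau \omega_1 \ldots \omega_k$ with $k\ge1$ has regularised value $0$ at $0$ by Lemma \ref{lem adm 0}, while $\int_\tau^\infty \omega_{k+1}\ldots\omega_n$ is admissible at $0$ by the first assertion; so by the multiplicativity lemma every term with $k\ge1$ has regularised value $0$ at $0$. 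Taking regularised values at $0$ therefore yields $\int_0^\infty \omega_1 \ldots \omega_n = F_n(0)$, which is the claim.

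Finally, for \eqref{eq int 0 infty} I would read the right-hand side as the single outer regularisation $\int_0^\infty (\omega_1 H)$ of the form $\omega_1 H$, where $H(\tau_1) = \int_{\tau_1}^\infty \omega_2 \ldots \omega_n$; here $H$ is admissible by Lemma \ref{lem nested adm} together with the first assertion, so $\omega_1 H$ is admissible and the outer integral is well defined. By the definition \eqref{notation practical reg int} of the na\"\i ve regularisation and Proposition \ref{main pro}, one has $\int_\tau^\infty (\omega_1 H) = \int_\tau^{\infty,*}\omega_1 \ldots \omega_n = F_n(\tau)$; splitting $\int_0^\infty(\omega_1 H) = \int_0^\tau(\omega_1 H) + F_n(\tau)$ and repeating the regularised-value-at-$0$ argument (now using Lemma \ref{lem diff int 0 tau} for the single integral) gives $\int_0^\infty(\omega_1 H) = F_n(0)$, which coincides with the left-hand side by the second assertion. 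I expect the main obstacle to be the careful handling of regularised values of products at $0$, in particular setting up the multiplicativity lemma cleanly, since the remainder is a direct induction and repeated use of the $\tau$-independence already in hand; one must also keep in mind that regularised values at $0$ are genuinely not ordinary limits, so that a termwise limit as $\tau\to 0$ cannot substitute for the multiplicativity statement.
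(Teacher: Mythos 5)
Your proposal is correct and follows essentially the same route as the paper: the heart of both arguments is the rearrangement of the defining identity \eqref{eq int reg 0 infty} together with Lemma \ref{lem adm 0} and the multiplicativity of regularised values at $0$, and both obtain \eqref{eq int 0 infty} by applying the case $n=1$ to the form $\omega_1(\tau_1)\int_{\tau_1}^\infty \omega_2\ldots\omega_n$. The only (harmless) deviations are that you derive admissibility at $0$ of $\int_\tau^\infty\omega_1\ldots\omega_n$ from the differential equation of Lemma \ref{lem diff int n} and the primitive lemma, where the paper reads it off the same rearrangement by induction, and that you make explicit the product rule for regularised values at $0$, which the paper invokes without proof.
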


\begin{proof}
For the first part of the proposition, we proceed by induction on $n$. The case $n=1$ follows from $\int_0^\infty \omega_1 = \int_0^\tau \omega_1 + \int_\tau^\infty \omega_1$ and Lemma \ref{lem diff int 0 tau}. For $n \geq 2$, we can write
\begin{equation*}
\int_\tau^\infty \omega_1 \ldots \omega_n = \int_0^\infty \omega_1 \ldots \omega_n - \sum_{k=1}^n \int_0^\tau \omega_1 \ldots \omega_k \times \int_\tau^\infty \omega_{k+1} \ldots \omega_n.
\end{equation*}
By the induction hypothesis and Lemma \ref{lem adm 0}, the right-hand side is admissible at $0$. Moreover, the regularised value at $0$ of the product
\begin{equation*}
\int_0^\tau \omega_1 \ldots \omega_k \times \int_\tau^\infty \omega_{k+1} \ldots \omega_n
\end{equation*}
is the product of the regularised values, hence it is zero by Lemma \ref{lem adm 0}.

Finally, \eqref{eq int 0 infty} follows formally by using the case $n=1$ with the form $\omega_1(\tau_1) \int_{\tau_1}^\infty \omega_2 \ldots \omega_n$.
\end{proof}

\subsection{Shuffle relations of iterated integrals}
An important feature of all the regularisations we have discussed, $\int_0^\infty$ as well as $\int_0^\tau$ and $\int_\tau^\infty$, is that they satisfy the shuffle relations.
Let $V$ be the $\C$-vector space of admissible differential $1$-forms on $\imaxis$. Consider the functional $I_0^\infty \colon V \to \C$ sending $\omega$ to the regularised integral $\int_0^\infty \omega$. Then the regularised iterated integrals of Section \ref{reg 0 oo} provide a natural extension of $I_0^\infty$ to the tensor algebra $T(V) = \bigoplus_{n \geq 0} V^{\otimes n}$,
\begin{equation*}
I_0^\infty \colon T(V) \to \C, \quad \omega_1 \otimes \ldots \otimes \omega_n \mapsto \int_0^\infty \omega_1 \ldots \omega_n \qquad (\omega_i \in V).
\end{equation*}
The algebra $T(V)$ has a structure of Hopf algebra, called the shuffle algebra, with the multiplication $T(V) \otimes T(V) \to T(V)$ given by the shuffle product
\begin{equation*}
\omega_1 \ldots \omega_p \Sha \omega_{p+1} \ldots \omega_n
= \sum_{\sigma \in S_{p,n-p}} \omega_{\sigma^{-1}(1)} \ldots \omega_{\sigma^{-1}(n)},
\end{equation*}
where the sum is over the $(p,n-p)$-shuffles.

More generally, one may integrate over a path $\gamma$ which is either a finite interval in $\imaxis$, or a path in the `tangent space of $\h$ at $0$ or $\infty$' involving tangential base points $\vec{1}_0$ or $\vec{1}_\infty$, as defined in \cite[Section 4]{Bro17}. For such a path $\gamma$, there is an associated functional $I_\gamma : T(V) \to \C$. The important point is that, as $I_\gamma$ is essentially an ordinary iterated integral, it satisfies the shuffle relations; in other words, $I_\gamma$ is a morphism of algebras. Moreover, regularised integrals on $\imaxis$ are defined by formally concatenating the paths $\vec{1}_0 \to i/y \to iy \to \vec{1}_\infty$ (with $y \to \infty$). Formal considerations using the Hopf algebra structure on $T(V)$ lead to the following proposition.

\begin{pro} \label{pro 0 shuffle}
The functional $I_0^\infty \colon T(V) \to \C$ satisfies the shuffle relations\textup; in other words,
\begin{equation*}
\int_0^\infty \omega_1 \ldots \omega_p
\times \int_0^\infty \omega_{p+1} \ldots \omega_n
= \int_0^\infty \omega_1 \ldots \omega_p \Sha \omega_{p+1} \ldots \omega_n
\qquad (\omega_i \in V)
\end{equation*}
for any choice of $p\in\{1,\dots,n\}$.
\end{pro}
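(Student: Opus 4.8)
The plan is to deduce the shuffle relations for $I_0^\infty$ from the corresponding relations for the two ``half'' functionals $I_0^\tau\colon\omega_1\ldots\omega_n\mapsto\int_0^\tau\omega_1\ldots\omega_n$ and $I_\tau^\infty\colon\omega_1\ldots\omega_n\mapsto\int_\tau^\infty\omega_1\ldots\omega_n$ (for a fixed $\tau$), combined with the compatibility of the deconcatenation coproduct with the shuffle product. Recall that the deconcatenation coproduct $\Delta(\omega_1\ldots\omega_n)=\sum_{k=0}^n(\omega_1\ldots\omega_k)\otimes(\omega_{k+1}\ldots\omega_n)$ is a morphism of shuffle algebras, i.e.\ $\Delta(u\Sha v)=\Delta(u)\Sha\Delta(v)$; this is precisely the bialgebra axiom built into the Hopf structure on $T(V)$. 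The formal lemma I would isolate is that if $A,B\colon T(V)\to\C$ are algebra homomorphisms for $\Sha$, then so is $C=m\circ(A\otimes B)\circ\Delta$, where $m\colon\C\otimes\C\to\C$ is multiplication: unwinding $\Delta(u\Sha v)=\Delta(u)\Sha\Delta(v)$ and using that $A,B$ are homomorphisms and that $\C$ is commutative gives $C(u\Sha v)=C(u)C(v)$ at once. Since Definition~\ref{def int reg 0 infty} reads exactly $I_0^\infty=m\circ(I_0^\tau\otimes I_\tau^\infty)\circ\Delta$, the proposition reduces to checking that $I_0^\tau$ and $I_\tau^\infty$ are themselves shuffle homomorphisms for each fixed $\tau$.

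For $I_\tau^\infty$ I would read off from the defining limit \eqref{def reg multiple} that, for each truncation height $p=iy$, the partial sum is again of the form $m\circ(I_{[\tau,p]}\otimes J_{[p,0]})\circ\Delta$, where $I_{[\tau,p]}$ is the ordinary iterated integral along the compact segment $[\tau,p]$ and $J_{[p,0]}(\omega_1\ldots\omega_n)=\int_p^0\omega_1^\infty\ldots\omega_n^\infty$. The first factor satisfies shuffle because it is a genuine (absolutely convergent) iterated integral; the second factor is the composite of the letterwise extraction of polynomial parts $\omega\mapsto\omega^\infty$ — a linear operation, hence inducing a morphism of shuffle algebras — with the ordinary iterated integral along $[p,0]$, and is therefore a shuffle homomorphism too. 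By the formal lemma each partial sum satisfies the shuffle relations; as these are finite algebraic identities between complex numbers that converge as $p\to\infty$ by Proposition~\ref{pro 1}, they pass to the limit, giving shuffle for $I_\tau^\infty$. For $I_0^\tau$ I would use its definition $\int_0^\tau\omega_1\ldots\omega_n=(-1)^n\int_{-1/\tau}^\infty\omega_n^\sigma\ldots\omega_1^\sigma$, that is $I_0^\tau=I_{-1/\tau}^\infty\circ\Phi$, where $\Phi$ applies the pullback $\sigma^*$ letterwise and then reverses the word with the sign $(-1)^n$. The letterwise pullback $\sigma^*$ is a shuffle endomorphism of $T(V)$, and signed reversal is one as well — reversing respects the shuffle product and the length-sign is multiplicative on shuffles of words of lengths $p$ and $q$ since $(-1)^{p+q}=(-1)^p(-1)^q$ — so $I_0^\tau$ is a composite of shuffle homomorphisms.

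Assembling these pieces through the formal lemma yields the proposition. The step I expect to require the most care is the treatment of $I_\tau^\infty$: one must be sure that the mixing of the full forms $\omega_i$ on the segment $[\tau,p]$ with their polynomial parts $\omega_i^\infty$ on the segment $[p,0]$ still organises into a concatenation of two bona fide shuffle homomorphisms, and that the passage $p\to\infty$ does not break the algebraic identity. Both points are handled by the observations that extraction of the polynomial part is a \emph{letterwise} linear operation, so it induces a shuffle morphism, and that the shuffle relation, being a finite identity among quantities that converge individually by Proposition~\ref{pro 1}, is stable under the limit. Everything else — the behaviour of $\sigma^*$ and of signed reversal, and the bialgebra compatibility of $\Delta$ with $\Sha$ — is purely formal Hopf-algebraic bookkeeping.
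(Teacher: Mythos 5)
Your proposal is correct, and its formal skeleton is exactly what the paper's sketch alludes to: you write $I_0^\infty = m\circ(I_0^\tau\otimes I_\tau^\infty)\circ\Delta$ (which is literally Definition~\ref{def int reg 0 infty}) and invoke the bialgebra compatibility $\Delta(u\Sha v)=\Delta(u)\Sha\Delta(v)$ together with commutativity of $\C$ --- these are the ``formal considerations using the Hopf algebra structure'' in the text. Where you genuinely diverge is in verifying that the two half-functionals are shuffle morphisms: the paper defers this to Brown's tangential base point formalism (the functionals attached to the pieces $\vec{1}_0 \to i/y \to iy \to \vec{1}_\infty$ are ``essentially ordinary iterated integrals'', hence algebra morphisms), whereas you prove it with the paper's own tools. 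For $I_\tau^\infty$ you observe that each truncation in \eqref{def reg multiple} is itself a convolution $m\circ(I_{[\tau,p]}\otimes J_{[p,0]})\circ\Delta$ of honest iterated integrals --- using that letterwise extraction $\omega\mapsto\omega^\infty$, being applied letter by letter, induces a morphism of shuffle algebras --- and you pass the resulting finite algebraic identity to the limit $p\to\infty$ via Proposition~\ref{pro 1}; for $I_0^\tau$ you reduce to $I_{-1/\tau}^\infty$ through the letterwise pullback $\sigma^*$ and signed reversal, correctly noting that reversal respects $\Sha$ (an interleaving, read backwards, is an interleaving of the reversed words) and that the sign $(-1)^n$ is multiplicative on lengths. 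What your route buys is self-containedness: no appeal to tangential base points, only to the one-variable regularisation lemmas already proved in Section~\ref{reg ints}. What the paper's route buys is brevity and the conceptual placement of $I_0^\infty$ inside Brown's general framework, where the shuffle property comes as part of the package.
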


For more details, we refer the reader to \cite[Section~4]{Bro17}.

\subsection{The Newton--Leibniz formula and integration by parts} \label{general NL}
We now want to generalise Lemma \ref{lem int df infty} in the form of formula \eqref{IPoo} for integration by parts to iterated integrals $\int_\tau^\infty \omega_1 \ldots \omega_n $. `with respect to a particular form' $\omega_p(\tau)=f(\tau)\,d\tau$, assuming that the 1-forms $\omega_1,\dots,\omega_n$ are admissible.
As we already know from the lemma, $f$ is an admissible function;
we keep the notation $f(\infty)$ and $f(0)$ for its regularised values at $\infty$ and $0$ as in Definition \ref{Defi 1}.

If $p=1$ we get, using \eqref{IPoo},
\begin{align*}
&
\int_\tau^\infty\frac{df}{d\tau}(\tau_1)\,d\tau_1\,\omega_2(\tau_2)\ldots\omega_n(\tau_n)
\\ &\quad
=\int_\tau^\infty\frac{df}{d\tau}(\tau_1)\,d\tau_1\int_{\tau_1}^\infty\omega_2(\tau_2)\ldots\omega_n(\tau_n)
\\ &\quad
=-f(\tau)\int_\tau^\infty\omega_2(\tau_2)\ldots\omega_n(\tau_n)
+\int_\tau^\infty f(\tau_2)\omega_2(\tau_2)\ldots\omega_n(\tau_n).
\end{align*}
For $p>1$, we write
\begin{align*}
&
\int_\tau^\infty\omega_1(\tau_1)\ldots\frac{df}{d\tau}(\tau_p)\,d\tau_l\ldots\omega_n(\tau_n)
\\ &\quad
=\int_\tau^\infty\omega_1(\tau_1)\ldots\int_{\tau_{p-3}}^\infty\omega_{p-2}(\tau_{p-2})\int_{\tau_{p-2}}^\infty\omega_{p-1}(\tau_{p-1})\int_{\tau_{p-1}}^\infty\frac{df}{d\tau}(\tau_p)\,d\tau_p\,\omega_{p+1}(\tau_{p+1})\ldots\omega_n(\tau_n)
\\ \intertext{and use the above derivation to conclude that this is}
&\quad
=-\int_\tau^\infty\omega_1(\tau_1)\ldots\omega_{p-2}(\tau_{p-2})\,f(\tau_{p-1})\omega_{p-1}(\tau_{p-1})\,
\omega_{p+1}(\tau_{p+1})\ldots\omega_n(\tau_n)
\\ &\quad\qquad
+\int_\tau^\infty\omega_1(\tau_1)\ldots\omega_{p-1}(\tau_{p-1})\,
f(\tau_{p+1})\omega_{p+1}(\tau_{p+1})\,\omega_{p+2}(\tau_{p+2})\ldots\omega_n(\tau_n).
\end{align*}
Taking the regularised value as $\tau \to 0$ and using Proposition \ref{pro int 0 infty}, we get
\begin{align}
\label{NL-gen}
&
\int_0^\infty\omega_1(\tau_1)\ldots\frac{df}{d\tau}(\tau_p)\,d\tau_p\ldots\omega_n(\tau_n)
\\ &\quad
=\int_0^\infty\omega_1(\tau_1)\ldots\omega_{p-1}(\tau_{p-1})
\,f(\tau_{p+1})\omega_{p+1}(\tau_{p+1})\,
\omega_{p+2}(\tau_{p+2})\ldots\omega_n(\tau_n)
\nonumber\\ &\quad\qquad
-\int_0^\infty\omega_1(\tau_1)\ldots\omega_{p-2}(\tau_{p-2})\,f(\tau_{p-1})\omega_{p-1}(\tau_{p-1})\,\omega_{p+1}(\tau_{p+1})\ldots\omega_n(\tau_n),
\nonumber
\end{align}
where the first summand is interpreted as
\[
f(\infty)\int_0^\infty\omega_1(\tau_1)\ldots\omega_{n-1}(\tau_{n-1})
\]
when $p=n$, while the second summand is
\[
-f(0)\int_0^\infty\omega_2(\tau_2)\ldots\omega_n(\tau_n)
\]
when $p=1$.

In the particular case $p=n=1$, formula \eqref{NL-gen} extends Lemma~\ref{lem int df infty} to regularised integrals from $0$ to $\infty$:

\begin{lem} \label{lem int df 0 infty}
Let $f \colon \imaxis \to \C$ be a $C^\infty$ function such that $df$ is admissible. Then $f$ is admissible and $\int_0^\infty df = f(\infty)-f(0)$.
\end{lem}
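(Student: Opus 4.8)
\emph{Proof proposal.} The plan is to reduce the statement to the single-integral results already in hand: Lemma~\ref{lem int df infty} governs the behaviour at $\infty$, Lemma~\ref{lem diff int 0 tau} governs the behaviour at $0$, and the two are glued by the defining decomposition $\int_0^\infty\omega=\int_0^\tau\omega+\int_\tau^\infty\omega$ from Section~\ref{reg 0 oo}.

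I would first check that $f$ is admissible. Since $df$ is admissible it is in particular admissible at $\infty$, so Lemma~\ref{lem int df infty} gives directly that $f$ is admissible at $\infty$. For admissibility at $0$ I would transport this assertion through $\sigma$: by Definition~\ref{def reg 0}, $df$ being admissible at $0$ means $(df)^\sigma$ is admissible at $\infty$, and since $(df)^\sigma=\sigma^*(df)=d(f\circ\sigma)$, the function $g=f\circ\sigma$ has admissible differential at $\infty$. Applying Lemma~\ref{lem int df infty} to $g$ shows $g$ is admissible at $\infty$, which by Definition~\ref{def reg 0} is exactly the statement that $f$ is admissible at $0$. Hence $f$ is admissible.

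Next I would evaluate the two pieces of $\int_0^\infty df=\int_0^\tau df+\int_\tau^\infty df$. The tail is immediate: Lemma~\ref{lem int df infty} gives $\int_\tau^\infty df=f(\infty)-f(\tau)$. For the head, I would argue by uniqueness. The function $f(\tau)-f(0)$ is a primitive of $df$, and its regularised value at $0$ equals $f(0)-f(0)=0$, because the regularised value at $0$ is $\C$-linear and sends a constant to itself. By the uniqueness clause of Lemma~\ref{lem diff int 0 tau}, this identifies $\int_0^\tau df=f(\tau)-f(0)$. Summing the two contributions, the $f(\tau)$ terms cancel and I obtain $\int_0^\infty df=f(\infty)-f(0)$, confirming along the way that the result is independent of the auxiliary point $\tau$.

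This derivation is formal and I do not expect a genuine obstacle; the one point requiring care is the bookkeeping of the regularised value at $0$, namely verifying its $\C$-linearity and its action on constants, which is precisely what legitimises singling out $f(\tau)-f(0)$ as the primitive with vanishing regularisation at $0$. As a sanity check, the lemma is also the special case $p=n=1$ of formula~\eqref{NL-gen}, but I favour the direct computation above because it exhibits the cancellation of $f(\tau)$ explicitly.
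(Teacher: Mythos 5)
Your proof is correct, and each step is properly justified: the $\sigma$-transport argument for admissibility at $0$ (using $(df)^\sigma=\sigma^*(df)=d(f\circ\sigma)$), the identification $\int_0^\tau df=f(\tau)-f(0)$ via the uniqueness clause of Lemma~\ref{lem diff int 0 tau} (which indeed requires only linearity of the regularised value at $0$ and its triviality on constants), and the cancellation of $f(\tau)$ in the defining decomposition of $\int_0^\infty$. The paper gives no standalone proof of this lemma: it obtains it as the special case $p=n=1$ of the general Newton--Leibniz formula \eqref{NL-gen}, whose derivation applies the integration-by-parts identity \eqref{IPoo} to get $\int_\tau^\infty df=f(\infty)-f(\tau)$ and then takes the regularised value as $\tau\to0$ via Proposition~\ref{pro int 0 infty}, with the admissibility of $f$ delegated to Lemma~\ref{lem admissible primitive}. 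Your route replaces the appeal to Proposition~\ref{pro int 0 infty} by a direct evaluation of the head integral $\int_0^\tau df$; for $n=1$ this is essentially equivalent, since the $n=1$ case of that proposition is itself proved from the decomposition $\int_0^\infty=\int_0^\tau+\int_\tau^\infty$ together with Lemma~\ref{lem diff int 0 tau}. So the two arguments rest on the same ingredients, but yours is self-contained and makes explicit the admissibility-at-$0$ verification that the paper compresses into the remark that the relevant lemmas ``also hold for admissibility at $0$''.
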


\subsection{Iterated integrals with parameters} \label{subsec reg int param}
In this part we record our needs for differentiating the regularised (iterated) integral when a differential form depends smoothly on a \emph{real} parameter.

\begin{pro} \label{pro reg int param}
Let $(\omega_a)_a$ be a family of differential forms on $\imaxis$ admissible at $\infty$ depending on a single real parameter $a$. Write $\omega_a = f_a(\tau) \,d\tau$, and assume that\textup:
\begin{itemize}
\item[\textup{(i)}] the polynomial $f_a^\infty(\tau)$ has degree bounded independently of $a$, and its coefficients are differentiable functions of $a$\textup;
\item[\textup{(ii)}] $f_a^0(\tau)$ is differentiable as a function of $a$\textup;
\item[\textup{(iii)}] locally on $a$, there exists a constant $0<c<1$ such that $\frac{d}{da} f_a^0(\tau) = O_{\tau \to \infty}(c^{\im(\tau)})$, where the implied constant does not depend on $a$.
\end{itemize}
Then the function $a \mapsto \int_\tau^\infty \omega_a$ is differentiable, and we have
\begin{equation*}
\frac{d}{da} \Big(\int_\tau^\infty \omega_a \Big) = \int_\tau^\infty \frac{d}{da} \omega_a.
\end{equation*}
\end{pro}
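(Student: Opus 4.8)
The plan is to reduce everything to the practical formula of Lemma \ref{lem 1}, which writes the regularised integral as a convergent improper integral plus a finite integral of a polynomial. Concretely, decomposing $\omega_a = \omega_a^\infty + \omega_a^0$ as in Definition \ref{Defi 1} and applying Lemma \ref{lem 1}, we have
$$\int_\tau^\infty \omega_a = \int_\tau^\infty \omega_a^0 + \int_\tau^0 \omega_a^\infty,$$
with $\tau$ held fixed throughout. I would differentiate each of the two terms separately with respect to $a$ and then recombine.

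The second term is elementary. Since $f_a^\infty(\tau_1)$ is a polynomial in $\tau_1$ whose degree is bounded independently of $a$ by hypothesis (i) and whose coefficients are differentiable in $a$, the integral $\int_\tau^0 f_a^\infty(\tau_1)\,d\tau_1$ over the fixed finite path from $\tau$ to $0$ is itself a polynomial expression in the endpoints with coefficients that are differentiable functions of $a$. Hence it is differentiable in $a$ and, integrating term by term, $\frac{d}{da}\int_\tau^0 \omega_a^\infty = \int_\tau^0 \frac{d}{da}\omega_a^\infty$.

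For the first term I would invoke the Leibniz rule for differentiation under the integral sign for convergent improper integrals, and here the domination hypothesis (iii) is exactly what is needed: locally on $a$ the integrand $\frac{d}{da} f_a^0(\tau_1)$ is $O(c^{\im(\tau_1)})$ with an implied constant independent of $a$, and $c^{\im(\tau_1)}$ is integrable along $\imaxis$ as $\im(\tau_1)\to +\infty$. Combined with differentiability of $f_a^0$ in $a$ from (ii), this yields $\frac{d}{da}\int_\tau^\infty \omega_a^0 = \int_\tau^\infty \frac{d}{da}\omega_a^0$. The cleanest self-contained way to obtain this is to first differentiate the truncated integral $\int_\tau^{p} \omega_a^0$ (a finite interval, so standard), and then check that $\int_\tau^{p}\frac{d}{da}\omega_a^0$ converges as $p \to \infty$ uniformly in $a$ locally, the tail $\int_p^\infty |\frac{d}{da} f_a^0|$ being bounded by $\int_p^\infty C c^{\im}$, which tends to $0$ uniformly in $a$; uniform convergence of the derivatives then licenses interchanging $\frac{d}{da}$ with $\lim_{p \to \infty}$.

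Adding the two contributions gives $\frac{d}{da}\int_\tau^\infty \omega_a = \int_\tau^\infty \frac{d}{da}\omega_a^0 + \int_\tau^0 \frac{d}{da}\omega_a^\infty$, and it remains only to recognise the right-hand side as $\int_\tau^\infty \frac{d}{da}\omega_a$. For this one observes that $\frac{d}{da}\omega_a = \bigl(\frac{d}{da} f_a^\infty + \frac{d}{da} f_a^0\bigr)\,d\tau$, where $\frac{d}{da} f_a^\infty$ is again a polynomial (of bounded degree) and $\frac{d}{da} f_a^0$ has exponential decay by (iii); hence $\frac{d}{da}\omega_a$ is admissible at $\infty$, and by the uniqueness of the decomposition noted after Definition \ref{Defi 1} its polynomial and decaying parts are precisely $\frac{d}{da}\omega_a^\infty$ and $\frac{d}{da}\omega_a^0$. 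Applying Lemma \ref{lem 1} once more to $\frac{d}{da}\omega_a$ then matches the two expressions and finishes the argument. The only real subtlety is the uniform-in-$a$ control of the improper integral, which is precisely why hypothesis (iii) is stated the way it is.
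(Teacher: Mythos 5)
Your proposal is correct and follows essentially the same route as the paper: decompose via Lemma \ref{lem 1} into $\int_\tau^\infty \omega_a^0 + \int_\tau^0 \omega_a^\infty$, differentiate each piece under the integral sign (the polynomial part trivially, the decaying part using the locally uniform domination from hypothesis (iii)), and recombine by noting that $\frac{d}{da}\omega_a$ is itself admissible with polynomial part $\frac{d}{da}\omega_a^\infty$ and decaying part $\frac{d}{da}\omega_a^0$. The paper's proof is just a terser version of this argument; your added justification of the interchange via uniform convergence of truncated integrals fills in exactly the step the paper leaves implicit.
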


\begin{proof}
Note that the assumptions imply that for every $a$, the form $\frac{d}{da} \omega_a = \frac{d}{da} f_a(\tau) \, d\tau$ is admissible, with $(\frac{d}{da} \omega_a)^\infty = \frac{d}{da} \omega_a^\infty$ and $(\frac{d}{da} \omega_a)^0 = \frac{d}{da} \omega_a^0$. We have:
\begin{equation*}
\int_\tau^\infty \omega_a = \int_\tau^\infty \omega_a^0 + \int_\tau^0 \omega_a^\infty.
\end{equation*}
This shows that $a \mapsto \int_\tau^\infty \omega_a$ is differentiable, and we can differentiate inside the integral:
\begin{equation*}
\frac{d}{da} \Big( \int_\tau^\infty \omega_a \Big) = \int_\tau^\infty \Big(\frac{d}{da} \omega_a\Big)^0 + \int_\tau^0 \Big(\frac{d}{da} \omega_a\Big)^\infty = \int_\tau^\infty \frac{d}{da} \omega_a. \qedhere
\end{equation*}
\end{proof}

Proposition \ref{pro reg int param} motivates calling a real-parameter family $(\omega_a)_a$ of admissible differential forms on $\imaxis$ differentially admissible at $\infty$ if they are subject to conditions (i)--(iii) above, written as $\omega_a = f_a(\tau) \,d\tau$.
Furthermore, we call a real-parameter family $(\omega_a)_a$ differentially admissible at $0$ if the family $(\omega_a^\sigma)_a$ is differentially admissible at $\infty$; see Definition \ref{def reg 0}.
With these definitions in mind, we apply Proposition \ref{pro reg int param} twice to deduce the following statement.

\begin{pro} \label{gen reg int param}
Let $(\omega_a)_a$ be a family of differentially admissible at $0$ and $\infty$ differential forms depending on a single real parameter $a$. Then the function $a \mapsto \int_0^\infty \omega_a$ is differentiable, and we have
\begin{equation*}
\frac{d}{da} \Big(\int_0^\infty \omega_a \Big) = \int_0^\infty \frac{d}{da} \omega_a.
\end{equation*}
\end{pro}

Observe that Propositions \ref{pro reg int param} and \ref{gen reg int param} cover the iterated integral situation as well, since $f_a(\tau)$ themselves may come as iterated integrals of admissible forms.
For this, we simply apply the propositions inductively using Proposition \ref{pro int 0 infty}.

\subsection{Mellin transforms} \label{sec Mellin}

A powerful analytic tool to compute regularised integrals is the theory of Mellin transforms. Since we consider admissible forms on $\imaxis$ with possible poles at $0$ and $\infty$, we will need generalised Mellin transforms as described in \cite[Section 3.4]{Car92}. We use notably this theory in Section \ref{RZ} to compute integrals of products of two Eisenstein series using the Rogers--Zudilin method.

We enlarge a bit our setting by considering functions $f \colon \imaxis \to \C$ of the form $f(\tau) = f^\infty(\tau) + f^0(\tau)$, where $f^\infty(\tau) \in \C[\tau,\tau^{-1}]$ is a Laurent polynomial, and $f^0$ is a $C^\infty$ function with exponential decay at $i\infty$. Moreover, we assume that $f \circ \sigma(\tau) = f(-1/\tau)$ is also of this form. For such a function $f$, the (generalised) Mellin transform is defined as
\begin{equation*}
\mathcal{M}(f,s) = \int_0^\infty f(iy) y^s \frac{dy}{y} \qquad (s \in \C).
\end{equation*}
In general, this integral may not converge at any $s \in \C$. However, splitting the integral as $\int_0^1 + \int_1^\infty$, and analytically continuing each term, it is possible to make sense of $\mathcal{M}(f,s)$ as a meromorphic function of $s \in \C$, with at most simple poles at finitely many integers. A pole of $\mathcal{M}(f,s)$ can occur at $n_0 \in \Z$ only if $-n_0$ arises as an exponent in the polynomial $f^\infty$, or $n_0$ arises as an exponent in $(f \circ \sigma)^\infty$. As a remark, $\mathcal{M}(f,s)$ is identically zero if $f$ is a polynomial. Therefore, we can always reduce to the situation where $f^\infty=0$.

For any $s_0 \in \C$, we denote by $\mathcal{M}^*(f,s_0)$ the constant term of the Laurent expansion of $\mathcal{M}(f,s)$ at $s=s_0$. 

From Lemma \ref{lem int df 0 infty}, we get the following computational tool.

\begin{pro}[{\cite[Section 3.4]{Car92}}] \label{pro reg int Mellin}
Let $\omega = f(\tau)\, d\tau$ be an admissible differential form on $\imaxis$. Then $\mathcal{M}(f,s)$ is holomorphic at $s=1$, and we have $\int_0^\infty \omega = i \mathcal{M}(f,1)$.
\end{pro}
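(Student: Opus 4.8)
The plan is to parametrise $\imaxis$ by $\tau = iy$, under which both sides unfold to regularisations of one and the same (in general divergent) integral. Indeed, formally $\int_0^\infty \omega = i\int_0^\infty f(iy)\,dy$ and $i\mathcal{M}(f,1) = i\int_0^\infty f(iy)\,dy$, so the statement asserts that Brown's tangential-base-point regularisation (unravelled in Lemma \ref{lem 1}) and Cartier's analytic-continuation regularisation of the Mellin transform produce the same finite value. I would prove this by computing both sides explicitly after cutting the $y$-range at $y = 1$, and matching the resulting finite quantities term by term. Throughout, write $f = f^\infty + f^0$ for the decomposition at $\infty$, and $\omega^\sigma = g(\tau)\,d\tau$ with $g(\tau) = \tau^{-2} f(-1/\tau)$ and $g = g^\infty + g^0$ its decomposition at $\infty$ (see Definition \ref{def reg 0}).

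I would first dispatch the holomorphy at $s = 1$. Because $\omega$ is admissible at $\infty$, the polynomial $f^\infty$ lies in $\C[\tau]$ and so has exponents $\geq 0$; the associated monomial tails $\int_1^\infty (iy)^j y^{s-1}\,dy$ continue meromorphically with poles only at $s = -j \leq 0$. Because $\omega$ is admissible at $0$, the function $g$ is admissible at $\infty$, so $g^\infty \in \C[\tau]$ also has exponents $\geq 0$; after the substitution $y = 1/u$ these produce poles of $\mathcal{M}(f,s)$ only at $s \geq 2$. Hence $\mathcal{M}(f,s)$ is holomorphic on the strip $0 < \re(s) < 2$, in particular at $s=1$.

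For the identity I would use the $\tau$-independent splitting $\int_0^\infty \omega = \int_0^i \omega + \int_i^\infty \omega$. Lemma \ref{lem 1} at $\tau = i$ gives
\[
\int_i^\infty \omega = i\int_1^\infty f^0(iy)\,dy - i\int_0^1 f^\infty(iy)\,dy,
\]
and, since $-1/i = i$, the relation $\int_0^i \omega = -\int_i^\infty \omega^\sigma$ together with Lemma \ref{lem 1} applied to $\omega^\sigma$ gives $\int_0^i \omega = -i\int_1^\infty g^0(iu)\,du + i\int_0^1 g^\infty(iu)\,du$. On the Mellin side I would split $\mathcal{M}(f,s) = \int_0^1 f(iy)y^{s-1}\,dy + \int_1^\infty f(iy)y^{s-1}\,dy$: the tail at $\infty$ contributes $\int_1^\infty f^0(iy)\,dy$ from $f^0$ and $-\int_0^1 f^\infty(iy)\,dy$ from $f^\infty$ by analytic continuation of its monomials; for the piece over $[0,1]$ I would substitute $y = 1/u$, use $f(i/u) = (f\circ\sigma)(iu) = -u^2 g(iu)$, and repeat the analysis to obtain $-\int_1^\infty g^0(iu)\,du + \int_0^1 g^\infty(iu)\,du$ at $s=1$. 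Summing, $\mathcal{M}(f,1)$ reproduces exactly the bracket found for $\int_0^\infty \omega$, whence $\int_0^\infty \omega = i\,\mathcal{M}(f,1)$.

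The work is essentially bookkeeping, and the one point requiring care is the matching of the two regularisations of the divergent polynomial pieces. The decisive identity is that for each exponent $j \geq 0$ the analytic continuation of $\int_1^\infty y^{j+s-1}\,dy$ at $s = 1$ equals $-1/(j+1) = -\int_0^1 y^j\,dy$; equivalently, both schemes discard the divergent monomial integral $\int_1^\infty y^j\,dy$ and replace it by $-\int_0^1 y^j\,dy$. Once this is checked — and the factors of $i$ together with the orientation sign in $\int_i^0 = -\int_0^i$ are tracked — the two sides agree monomial by monomial. This is consistent with the Newton--Leibniz identity of Lemma \ref{lem int df 0 infty}, which both regularisations satisfy.
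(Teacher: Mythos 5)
Your proposal is correct: I checked the bookkeeping and all signs, factors of $i$, and the pole locations come out right. In particular, $f(i/u) = -u^2 g(iu)$ with $g(\tau) = \tau^{-2}f(-1/\tau)$, the tail piece of the Mellin transform at $s=1$ gives $\int_1^\infty f^0(iy)\,dy - \int_0^1 f^\infty(iy)\,dy$, the $[0,1]$ piece gives $-\int_1^\infty g^0(iu)\,du + \int_0^1 g^\infty(iu)\,du$, and these match exactly the decomposition $\int_0^i\omega + \int_i^\infty\omega$ furnished by Lemma \ref{lem 1} and the definition of $\int_0^\tau$; the holomorphy on $0<\re(s)<2$ also follows correctly since exponents of $f^\infty$ are $\geq 0$ and those of $(f\circ\sigma)^\infty = \tau^2 g^\infty$ are $\geq 2$. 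The route, however, is genuinely different from the paper's: the paper gives no self-contained argument at all, but instead derives the statement from the Newton--Leibniz formula of Lemma \ref{lem int df 0 infty} and delegates the analytic content (meromorphic continuation, location of poles, identification of the regularised integral with the value of the continued Mellin transform) to Cartier's theory of finite parts in \cite[Section 3.4]{Car92}. What the paper's citation buys is brevity and placement of the result inside a general theory that is reused elsewhere (e.g.\ for $\mathcal{M}^*$ at other integer points); what your computation buys is a completely elementary, self-contained verification that simultaneously proves the holomorphy claim and makes transparent the one genuinely substantive point, namely that both regularisation schemes replace each divergent monomial integral $\int_1^\infty y^j\,dy$ by $-\int_0^1 y^j\,dy = -1/(j+1)$. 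It would be worth one extra sentence in your write-up noting that your term-by-term continuation of the $\int_0^1$ piece agrees with the paper's definition of the generalised Mellin transform (which prescribes continuation of each of the two split pieces), so that you are computing the same object the proposition refers to.
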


\section{Multiple modular values} \label{mmvs}

We use the notation $e(z) = e^{2\pi iz}$ for $z \in \C$, so that $q=e(\tau)$ for $\tau \in \h$. For any $\alpha \in \R$, write also $q^\alpha = e(\alpha \tau)$. Introduce the differential operators
\[
\delta = \delta_\tau := \frac{1}{2\pi i} \,\frac{d}{d\tau} = q\frac{d}{dq}
\quad\text{and}\quad
\delta_a = \frac{1}{2\pi i} \,\frac{d}{da}
\]
if $a$ is a real variable.

Recall the Hurwitz zeta function
\[
\zeta(y,s) = \sum_{\substack{n>0\\ n \equiv y \bmod1}} n^{-s} \qquad (y \in \R/\Z, \; \re(s)>1),
\]
and the periodic zeta function
\[
\hat{\zeta}(y,s) = \sum_{n \geq 1} e(ny) n^{-s} \qquad (y \in \R/\Z, \; \re(s)>1).
\]
Some properties of these functions can be found in \cite[Section 2]{Bru17}. Let us point out that the relation \cite[eq.~(11)]{Bru17} is incorrect in the case $n=1$. Indeed, for $x \in \R/\Z$, we have
\begin{equation} \label{eq hat zeta 0}
    \hat{\zeta}(x,0) = \begin{cases} \frac{e(x)}{1-e(x)} & \textrm{if } x \neq 0, \\
    -\frac12 & \textrm{if } x = 0.
    \end{cases}
\end{equation}
This can be shown by differentiating the relation
\begin{equation*}
    \hat{\zeta}(x,1) = \sum_{n=1}^{\infty} \frac{e(nx)}{n} = - \log(1-e(x)).
\end{equation*}

\subsection{Eisenstein series} \label{subsec Eisenstein}

It will be essential to us to view Eisenstein series not only as functions of the modular variable $\tau \in \h$, but also as functions of the elliptic variable $z \in \C/(\Z+\tau\Z)$. To this end, we recall the Eisenstein-Kronecker function, in the notations of Weil \cite[VII, \S 12]{Wei76}.

Let $L$ be a lattice in $\C$, and let $(\omega_1,\omega_2)$ be a basis of $L$ such that $\im(\omega_2/\omega_1)>0$. Then $A(L) := (2\pi i)^{-1}(\overline{\omega}_1 \omega_2 - \omega_1 \overline{\omega}_2)$ is a positive real number which does not depend on the choice of $(\omega_1,\omega_2)$. 

\begin{definition} \label{def EK}
    For an integer $a \geq 0$ and $x, x_0, s \in \C$, introduce the \emph{Kronecker double series}
    \begin{equation*}
        K_a(x,x_0,s;L) = \sum_{\substack{w \in L \\ w \neq -x}} \exp\left(A(L)^{-1}(w \overline{x}_0 - \overline{w} x_0)\right) \frac{(\overline{w}+\overline{x})^a}{|w+x|^{2s}},
    \end{equation*}
    where the sum is extended to all $\omega \in L$, except $\omega=-x$ if $x \in L$. In the case $L = \Z+\tau\Z$ with $\tau \in \h$, we write $K_a(x,x_0,s;\tau)$ or simply $K_a(x,x_0,s)$ when the context is clear.
\end{definition}

The series $K_a(x,x_0,s;L)$ converges for $\re(s)>1+\frac{a}2$. For $a \geq 1$, the function $s \mapsto K_a(x,x_0,s;L)$ extends to a holomorphic function on $\C$ \cite[VII, \S 13]{Wei76}. Moreover, the functions $x \mapsto K_a(x,0,s;L)$ and $x \mapsto K_a(0,x,s;L)$ are periodic with respect to $L$, which justifies the following definition.

\begin{definition} \label{def E Ehat}
    Let $k \geq 1$ be an integer. For $\boldx = (x_1,x_2) \in (\R/\Z)^2$, we define
    \begin{equation*}
        E^{(k)}_\boldx(\tau) = - \frac{(k-1)!}{(-2\pi i)^k} K_k(0,x_1\tau+x_2,k) \qquad \hat{E}^{(k)}_\boldx(\tau) = \frac{(k-1)!}{(-2\pi i)^k} K_k(x_1\tau+x_2,0,k).
    \end{equation*}
\end{definition}

Kato has given in \cite[Section 3]{Kat04} an algebraic interpretation of $\hat{E}^{(k)}_\boldx$ in the case $\boldx \in (\frac{1}{N}\Z/\Z)^2$.

We are particularly interested in the series $E^{(k)}_\boldx$. We will determine its Fourier expansion with respect to $\tau$, and then examine its behaviour with respect to the action of $\SL_2(\Z)$ on $\h$. Finally, we will give a differential property of $E^{(k)}_\boldx$ with respect to the elliptic variable.

\begin{lem} \label{lem Fourier Ekx}
Let $k \geq 1$ be an integer, and $\boldx = (x_1,x_2) \in (\R/\Z)^2$, with $\boldx \neq \boldsymbol0$ in the case $k=2$. We have
\begin{equation} \label{eq Fourier Ekx}
E^{(k)}_\boldx(\tau) = a_0(E^{(k)}_\boldx) - \sum_{\substack{m \geq 1 \\ n \in \R_{>0} \\ n \equiv x_1 \bmod{1}}} e(mx_2) n^{k-1} q^{mn} + (-1)^{k+1} \sum_{\substack{m \geq 1 \\ n \in \R_{>0} \\ n \equiv -x_1 \bmod{1}}} e(-mx_2) n^{k-1} q^{mn},
\end{equation}
with
\begin{align*}
a_0(E^{(1)}_\boldx) & = \begin{cases} 0 & \text{if } x_1 = x_2 = 0, \\
-\frac12 \frac{1+e(x_2)}{1-e(x_2)} & \text{if } x_1 = 0 \text{ and } x_2 \neq 0, \\
\{x_1\} - \frac12 & \text{if } x_1 \neq 0, \end{cases} \\
a_0(E^{(k)}_\boldx) & = \frac{B_k(\{x_1\})}{k} \qquad (k \geq 2),
\end{align*}
where $B_k(t)$ is the $k$-th Bernoulli polynomial and $\{ \,\cdot\, \}$ stands for the fractional part.
\end{lem}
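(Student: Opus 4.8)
The plan is to unwind Definitions~\ref{def EK} and~\ref{def E Ehat} for the lattice $L=\Z+\tau\Z$, reduce the Kronecker series at the critical value $s=k$ to a twisted Eisenstein--Lipschitz series, and then read off its Fourier expansion term by term.

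First I would record the simplifications special to $L=\Z+\tau\Z$. Using the basis $(\omega_1,\omega_2)=(1,\tau)$ one gets $A(L)=\im(\tau)/\pi$. Writing a lattice point as $w=m\tau+n$ with $m,n\in\Z$ and $x_0=x_1\tau+x_2$, a one-line computation gives $w\bar x_0-\bar w x_0=(\tau-\bar\tau)(mx_2-nx_1)$, so the Kronecker exponential collapses to the additive character $e(mx_2-nx_1)$. At $s=k$ the radial factor collapses as well, since $(\bar w)^k/|w|^{2k}=(m\tau+n)^{-k}$, whence
\[
K_k(0,x_1\tau+x_2,k)=\sum_{(m,n)\neq(0,0)}e(mx_2-nx_1)\,(m\tau+n)^{-k},
\]
absolutely convergent for $k\geq3$ and otherwise understood as the value at $s=k$ of the continuation from \cite[VII, \S13]{Wei76}.

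For $k\geq3$ I would then split off $m=0$ and treat each $m\neq0$ by the twisted Lipschitz formula
\[
\sum_{n\in\Z}\frac{e(-nx_1)}{(m\tau+n)^k}=\frac{(-2\pi i)^k}{(k-1)!}\sum_{\substack{\nu>0\\ \nu\equiv x_1 \bmod 1}}\nu^{k-1}q^{m\nu}\qquad(m\geq1),
\]
which I would derive by Poisson summation from the Fourier transform of $x\mapsto(m\tau+x)^{-k}$. Weighting by $e(mx_2)$ and applying the prefactor $-\tfrac{(k-1)!}{(-2\pi i)^k}$, the terms with $m\geq1$ give exactly the middle sum in \eqref{eq Fourier Ekx}; the terms $m\leq-1$ follow from the involution $(m,n)\mapsto(-m,-n)$, which contributes the sign $(-1)^{k+1}$ and swaps $(x_1,x_2)$ for $(-x_1,-x_2)$, producing the third sum. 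The constant term is then the $m=0$ slice, $\hat\zeta(-x_1,k)+(-1)^k\hat\zeta(x_1,k)$, which I would evaluate through the polylogarithm--Bernoulli identity $\hat\zeta(x,k)+(-1)^k\hat\zeta(-x,k)=-\frac{(2\pi i)^k}{k!}B_k(\{x\})$ combined with $B_k(1-t)=(-1)^kB_k(t)$; after the prefactor this collapses to $a_0(E^{(k)}_\boldx)=B_k(\{x_1\})/k$. The weight $k=2$ is handled the same way once the rearrangement at the boundary of convergence is justified by analytic continuation, and produces no additional constant since the Lipschitz sum for $k=2$ already converges absolutely.

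The main obstacle is the weight $k=1$, where neither the double series nor the inner sums converge absolutely and everything must be read off the analytic continuation of $K_1(0,x_0,s)$ at $s=1$ (equivalently, via Eisenstein summation). If $x_1\neq0$ the $m=0$ slice gives $\tfrac{1}{2\pi i}\big(\hat\zeta(-x_1,1)-\hat\zeta(x_1,1)\big)$, which through $\hat\zeta(y,1)=-\log(1-e(y))$ equals, on the principal branch, $\{x_1\}-\tfrac12$. If $x_1=0$ the inner sums become the conditionally convergent cotangent series $\pi\cot(\pi m\tau)=-\pi i-2\pi i\sum_{d\geq1}q^{md}$, and the constant $-\pi i$ carried by each $m$ must be collected and summed against $e(mx_2)$; it is exactly this regularised sum that yields $\tfrac12\big(\hat\zeta(-x_2,0)-\hat\zeta(x_2,0)\big)$, which by \eqref{eq hat zeta 0} equals $-\tfrac12\frac{1+e(x_2)}{1-e(x_2)}$ for $x_2\neq0$ and $0$ for $x_2=0$. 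The real work, and the reason \eqref{eq hat zeta 0} was isolated beforehand, is to verify that these rearrangements and branch choices genuinely compute the continuation at $s=1$; I expect to spend most of the effort on the cases $k=1$ and $k=2$.
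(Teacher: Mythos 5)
Your proposal follows essentially the same route as the paper's proof: both unwind the Kronecker series to the double sum $\sum_{(m,n)\neq(0,0)} e(mx_2-nx_1)\,(m\tau+n)^{-k}$, split off the $m=0$ slice (evaluated via the $\hat{\zeta}$--Bernoulli identities) and compute the $m\neq 0$ terms by Poisson summation/the Lipschitz formula --- the paper sketches this only for $k\geq 3$, citing Kato for torsion points and Schoeneberg for the method, while you additionally spell out the $k=1$ cotangent constant-term bookkeeping. The remaining issue you flag (that Eisenstein summation genuinely computes the analytic continuation at $s=k$ when $k=1,2$) is exactly the point the paper's citations are covering, so your outline is on track.
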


\begin{proof}
    In the case $\boldx$ is an $N$-torsion point in $(\R/\Z)^2$, the Fourier expansions of $E^{(k)}_\boldx$ and $\hat{E}^{(k)}_\boldx$ can be found in \cite[Proposition 3.10]{Kat04}.
    The general case can be handled as in \cite[VII]{Sch74}, we only sketch the details in the case $k \geq 3$. We have
    \begin{equation*}
        \sum_{\substack{(m,n) \in \Z^2 \\ (m,n) \neq (0,0)}} \frac{e(mx_2-nx_1)}{(m\tau+n)^k} = \sum_{n \neq 0} \frac{e(-nx_1)}{n^k} + \sum_{m \geq 1} e(mx_2) S(x_1;m\tau) + (-1)^k \sum_{m \geq 1} e(-mx_2) S(-x_1;m\tau)
    \end{equation*}
    with $S(x;\tau) = \sum_{n \in \Z} e(-nx)(\tau+n)^{-k}$. By \cite[Section 2]{Bru17}, the first term is
    \[
    \sum_{n \neq 0} \frac{e(-nx_1)}{n^k} = \hat{\zeta}(-x_1,k) + (-1)^k \hat{\zeta}(x_1,k) = (-1)^{k+1} \frac{(2\pi i)^k}{k!} B_k(\{x_1\}).
    \]
    For any $x \in \R$, the function $e(-\tau x) S(x;\tau)$ is invariant under $\tau \mapsto \tau+1$, hence has a Fourier expansion
    \begin{equation*}
        e(-\tau x) S(x;\tau) = \sum_{r \in \Z} c_r(x) e(r\tau).
    \end{equation*}
    The Fourier coefficients $c_r(x)$ can be computed as in \cite[VII]{Sch74} using the Poisson summation formula and the residue theorem, leading to \eqref{eq Fourier Ekx}.
\end{proof}

\begin{lem} \label{lem modularity Ekx}
    Let $k \geq 1$ be an integer, and $\boldx \in (\R/\Z)^2$. For any $\gamma = (\begin{smallmatrix} a & b \\ c & d \end{smallmatrix}) \in \SL_2(\Z)$, we have
    \begin{equation*}
    E^{(k)}_\boldx(\gamma\tau) = (c\tau+d)^k E^{(k)}_{\boldx \gamma}(\tau),
    \end{equation*}
where $\boldx\gamma$ means the right multiplication by $\gamma$ on the row vector $\boldx$.
\end{lem}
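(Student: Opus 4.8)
The plan is to derive the transformation law from a single homogeneity property of the Kronecker double series under rescaling of the lattice, and then to exploit that $\gamma\in\SL_2(\Z)$ permutes the bases of the standard lattice. Write $L_\tau=\Z+\tau\Z$. The key identity I would establish first is that, for any $\lambda\in\C^\times$,
\[
K_a(\lambda x,\lambda x_0,s;\lambda L)=\overline{\lambda}^{\,a}\,|\lambda|^{-2s}\,K_a(x,x_0,s;L),
\]
valid initially for $\re(s)>1+\tfrac a2$, where the defining series converges absolutely. This is proved by the substitution $w=\lambda w'$ in the sum: the factor $(\overline{w}+\overline{x})^a$ produces $\overline{\lambda}^{\,a}$, the factor $|w+x|^{-2s}$ produces $|\lambda|^{-2s}$, and — the one point needing care — the exponential weight is \emph{invariant}. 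Indeed $A(\lambda L)=|\lambda|^2 A(L)$, while $w\overline{x}_0-\overline{w}x_0$ scales by $|\lambda|^2$ under $w\mapsto\lambda w$, $x_0\mapsto\lambda x_0$, so the quotient $A(\lambda L)^{-1}(w\overline{x}_0-\overline{w}x_0)$ is unchanged.

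Next I would use that $\gamma=(\begin{smallmatrix} a&b\\ c&d\end{smallmatrix})\in\SL_2(\Z)$ turns the standard basis into another basis of the same lattice. Explicitly $\bigl(\begin{smallmatrix} a\tau+b\\ c\tau+d\end{smallmatrix}\bigr)=\gamma\bigl(\begin{smallmatrix}\tau\\ 1\end{smallmatrix}\bigr)$, so $(a\tau+b,c\tau+d)$ is a $\Z$-basis of $L_\tau$; hence $(c\tau+d)L_{\gamma\tau}=(a\tau+b)\Z+(c\tau+d)\Z=L_\tau$, that is $L_{\gamma\tau}=\lambda L_\tau$ with $\lambda=(c\tau+d)^{-1}$.

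Now I would apply the homogeneity identity to
\[
E^{(k)}_\boldx(\gamma\tau)=-\frac{(k-1)!}{(-2\pi i)^k}\,K_k\bigl(0,\,x_1\gamma\tau+x_2,\,k;\,L_{\gamma\tau}\bigr)
\]
with $\lambda=(c\tau+d)^{-1}$. The new elliptic variable has to be rewritten as $\lambda x_0'$, i.e. $x_0'=(c\tau+d)(x_1\gamma\tau+x_2)$; a one-line computation using $\gamma\tau=(a\tau+b)/(c\tau+d)$ gives $x_0'=x_1(a\tau+b)+x_2(c\tau+d)=(x_1a+x_2c)\tau+(x_1b+x_2d)$, which is exactly the elliptic variable attached to $\boldx\gamma=(x_1a+x_2c,\,x_1b+x_2d)$ on $L_\tau$. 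The homogeneity factor is $\overline{\lambda}^{\,k}|\lambda|^{-2k}=\overline{(c\tau+d)}^{-k}\,|c\tau+d|^{2k}=(c\tau+d)^k$, and collecting these (with the common prefactor $-(k-1)!/(-2\pi i)^k$) yields $E^{(k)}_\boldx(\gamma\tau)=(c\tau+d)^k E^{(k)}_{\boldx\gamma}(\tau)$.

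The only subtlety I anticipate is that the homogeneity identity was derived on a half-plane $\re(s)>1+\tfrac k2$, whereas the statement is evaluated at $s=k$: for $k\geq3$ this is already inside the domain of absolute convergence, but for $k\in\{1,2\}$ one invokes the analytic continuation of $s\mapsto K_k(\,\cdot\,,\,\cdot\,,s;L)$ to all of $\C$ (valid for $a=k\geq1$ by Weil), so that an identity holding on a half-plane persists at $s=k$ by the identity theorem. The hard part is therefore purely bookkeeping: tracking which argument scales by $\lambda$ versus $\overline{\lambda}$ in the exponential weight, and justifying the passage to $s=k$; everything else is a direct substitution.
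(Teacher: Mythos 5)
Your proposal is correct and is essentially the paper's proof: the paper likewise reduces the claim to the homogeneity identity $K_k(0,x_0,k;L)=\alpha^k K_k(0,\alpha x_0,k;\alpha L)$ with $\alpha=c\tau+d$, using that $(a\tau+b,c\tau+d)$ is a basis of $\Z+\tau\Z$. Your write-up simply makes explicit the details the paper leaves implicit (the invariance of the exponential weight via $A(\lambda L)=|\lambda|^2A(L)$, and the passage from the half-plane of convergence to $s=k$ by analytic continuation).
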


\begin{proof}
    Putting $x_0 = x_1\tau+x_2$ and $\alpha = c\tau+d$, this follows from the identity $K_k(0,x_0,k;L) = \alpha^k K_k(0,\alpha x_0, k; \alpha L)$, valid for any lattice $L$ in $\C$.
\end{proof}

Taking $\gamma=-I_2$ in Lemma \ref{lem modularity Ekx}, we see that $E^{(k)}_{-\boldx} = (-1)^k E^{(k)}_\boldx$. Lemma \ref{lem modularity Ekx} also shows that if $\boldx$ is $N$-torsion in $(\R/\Z)^2$, then $E^{(k)}_\boldx$ is a modular form of weight $k$ on $\Gamma(N)$, except when $k=2$ and $\boldx=\boldsymbol0$ (in which case $E^{(2)}_{\boldsymbol0}$ is not holomorphic).

Using Lemmas \ref{lem Fourier Ekx} and \ref{lem modularity Ekx} with $\gamma = (\begin{smallmatrix}
    0 & -1 \\ 1 & 0
\end{smallmatrix})$, we obtain the following admissibility property.

\begin{lem} \label{lem admissibility Ekx}
    For any $k \geq 2$ and $\boldx \in (\R/\Z)^2$, with $\boldx \neq \boldsymbol0$ in the case $k=2$, the differential form $E^{(k)}_\boldx(\tau) \tau^{m-1} d\tau$ is admissible on $\imaxis$ for any integer $1 \leq m \leq k-1$.
\end{lem}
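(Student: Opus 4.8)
The plan is to verify the two halves of Definition~\ref{def reg 0 oo} — admissibility at $\infty$ and at $0$ — separately, using the Fourier expansion for the former and the modularity under $\sigma$ for the latter. The key structural observation I would exploit is that pulling back by $\sigma$ sends the exponent $m$ to $k-m$, which is exactly why the symmetric range $1\le m\le k-1$ is the right one.

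\emph{Admissibility at $\infty$.} First I would feed the Fourier expansion \eqref{eq Fourier Ekx} of Lemma~\ref{lem Fourier Ekx} into the definition. Writing $E^{(k)}_\boldx=a_0(E^{(k)}_\boldx)+E^0$, where $E^0$ gathers the two $q$-series, the point is that each exponent $mn$ occurring in $E^0$ satisfies $mn\ge\delta_0$ for some $\delta_0>0$ (since $m\ge1$ and the least admissible value of $n$ is positive), so that $E^0(\tau)=O(c^{\im\tau})$ for some $0<c<1$. Then for $f(\tau)=E^{(k)}_\boldx(\tau)\tau^{m-1}$ I would take $f^\infty(\tau)=a_0(E^{(k)}_\boldx)\,\tau^{m-1}$ (a polynomial) and $f^0(\tau)=E^0(\tau)\tau^{m-1}$; on $\imaxis$ the polynomial factor $\tau^{m-1}$ is absorbed by slightly weakening the exponential rate, so $f^0$ still decays exponentially and $f$ is admissible at $\infty$. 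Note that this step works for every $m\ge1$, so the restriction $m\le k-1$ has not yet been used.

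\emph{Admissibility at $0$.} Here I would compute $\omega^\sigma=\sigma^*\omega$ directly. Using $d(-1/\tau)=\tau^{-2}\,d\tau$ together with the modularity Lemma~\ref{lem modularity Ekx} for $\gamma=\sigma$, for which $c\tau+d=\tau$ and $\boldx\sigma=(x_2,-x_1)$, I obtain
\[
\omega^\sigma=(-1)^{m-1}\,E^{(k)}_{(x_2,-x_1)}(\tau)\,\tau^{\,k-m-1}\,d\tau .
\]
Up to the harmless sign $(-1)^{m-1}$ this is a form of exactly the same shape as $\omega$, with elliptic parameter $\boldx\sigma$ and with $m$ replaced by $k-m$. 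The hypothesis $1\le m\le k-1$ is precisely what keeps the new exponent $k-m$ inside $\{1,\dots,k-1\}$, and the excluded case is preserved because $\boldx\sigma=\boldsymbol0$ if and only if $\boldx=\boldsymbol0$. Therefore the admissibility-at-$\infty$ argument of the first step applies verbatim to $\omega^\sigma$, yielding admissibility of $\omega$ at $0$; combining the two halves finishes the proof.

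The hard part is really just the bookkeeping around the symmetry $m\leftrightarrow k-m$: one must confirm that $\sigma$ sends the allowed weight range to itself and that the $k=2$ exclusion of $\boldx=\boldsymbol0$ survives the substitution $\boldx\mapsto\boldx\sigma$; both are immediate. Checking that the exponentially small tail $E^0$ survives multiplication by $\tau^{m-1}$ on the imaginary axis is routine, and I would not dwell on it.
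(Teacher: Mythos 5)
Your proposal is correct and follows exactly the route the paper takes: the paper introduces this lemma with the phrase ``Using Lemmas \ref{lem Fourier Ekx} and \ref{lem modularity Ekx} with $\gamma = \sigma$,'' i.e., the Fourier expansion gives admissibility at $\infty$ and the $\sigma$-modularity, with its exchange $m \leftrightarrow k-m$ and $\boldx \mapsto \boldx\sigma$, gives admissibility at $0$. Your write-up simply makes explicit the bookkeeping (the sign $(-1)^{m-1}$, the exponent $k-m-1$, and the preservation of the exclusions) that the paper leaves to the reader.
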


The Eisenstein series $E^{(2)}_\boldx$ are related to the so-called Siegel units as follows. For $\boldx = (x_1,x_2) \in (\R/\Z)^2$, $\boldx \neq \boldsymbol0$, consider the following function on $\h$:
\begin{equation} \label{def gx}
g_\boldx(\tau) = q^{B_2(\{x_1\})/2} \prod_{\substack{n \in \R_{\geq 0} \\ n \equiv x_1 \bmod{1}}} (1-q^n e(x_2)) \prod_{\substack{n \in \R_{>0} \\ n \equiv -x_1 \bmod{1}}} (1-q^n e(-x_2)).
\end{equation}
For $a,b \in \Z$, $(a,b) \not\equiv (0,0) \bmod{N}$, the function $g_{a/N,b/N}$ is none other than the classical Siegel unit $g_{\overline{a},\overline{b}}$ \cite[Section 1]{Kat04}. This function is a $(12N)$-th root of a unit on the modular curve $Y(N)$ over $\Q$, thus defining an element of $\mathcal{O}(Y(N))^\times \otimes \Q$.

We also define, for $\boldx \in (\R/\Z)^2$, $\boldx \neq \boldsymbol0$, the logarithm of $g_\boldx$ by taking the logarithm of the infinite product \eqref{def gx} and specifying the branch:
\begin{equation} \label{def log gx}
\log g_\boldx (\tau) = \pi i B_2(\{x_1\}) \tau + \log(1-e(x_2)) \cdot \un_{x_1 = 0} - \sum_{\substack{m \geq 1 \\ n \in \R_{>0} \\ n \equiv x_1 \bmod{1}}} \frac{e(mx_2)}{m} q^{mn} - \sum_{\substack{m \geq 1 \\ n \in \R_{>0} \\ n \equiv -x_1 \bmod{1}}} \frac{e(-mx_2)}{m} q^{mn},
\end{equation}
where
\begin{equation*}
\log(1-e(x_2)) = - \hat{\zeta}(x_2,1) = \log |1-e(x_2)| + \pi i \Bigl(\{x_2\}-\frac12\Bigr).
\end{equation*}


\begin{lem} \label{lem dlog E2}
    For any $\boldx \in (\R/\Z)^2$, $\boldx \neq \boldsymbol0$, we have $\dlog g_\boldx(\tau) = 2\pi i E^{(2)}_\boldx(\tau) \, d\tau$.
\end{lem}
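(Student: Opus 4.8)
The plan is to reduce the claim to a term-by-term differentiation of the explicit expansion \eqref{def log gx}, followed by a direct comparison with the Fourier expansion of $E^{(2)}_\boldx$ furnished by Lemma \ref{lem Fourier Ekx}. The first observation is that $\dlog g_\boldx$ does not depend on the chosen branch of the logarithm: since the right-hand side of \eqref{def log gx} is by construction a branch of $\log g_\boldx$ (it is the sum of the logarithms of the individual factors of the product \eqref{def gx}, with the specified branch for the $n=0$ term pulled out as the constant $\log(1-e(x_2))\cdot\un_{x_1=0}$), we have $\dlog g_\boldx = d(\log g_\boldx)$. Thus it suffices to compute $\frac{d}{d\tau}\log g_\boldx(\tau)$, equivalently $\delta(\log g_\boldx)$ with $\delta=\frac1{2\pi i}\frac{d}{d\tau}$, and to check that it equals $E^{(2)}_\boldx(\tau)$.

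To carry out the differentiation, I would apply $\delta$ to each summand of \eqref{def log gx}, justifying term-by-term differentiation by the locally uniform convergence of the $q$-series on $\h$. The linear term contributes $\delta\big(\pi i B_2(\{x_1\})\tau\big)=\tfrac12 B_2(\{x_1\})$, the constant term $\log(1-e(x_2))\cdot\un_{x_1=0}$ is annihilated, and for the two double series one uses $\delta\,q^{mn}=mn\,q^{mn}$ (valid for $n\in\R_{>0}$), which cancels the factor $1/m$ and produces a factor $n$. The outcome is
\begin{equation*}
\delta(\log g_\boldx)(\tau)=\frac{B_2(\{x_1\})}{2}-\sum_{\substack{m\geq1\\ n\in\R_{>0}\\ n\equiv x_1}}e(mx_2)\,n\,q^{mn}-\sum_{\substack{m\geq1\\ n\in\R_{>0}\\ n\equiv -x_1}}e(-mx_2)\,n\,q^{mn}.
\end{equation*}

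Finally I would match this against Lemma \ref{lem Fourier Ekx} specialised to $k=2$, where $n^{k-1}=n$, $(-1)^{k+1}=-1$, and $a_0(E^{(2)}_\boldx)=B_2(\{x_1\})/2$; the two expressions coincide verbatim, so $\delta(\log g_\boldx)=E^{(2)}_\boldx$. Multiplying by $2\pi i\,d\tau$ and using $\dlog g_\boldx=d(\log g_\boldx)=2\pi i\,\delta(\log g_\boldx)\,d\tau$ gives the asserted identity $\dlog g_\boldx(\tau)=2\pi i E^{(2)}_\boldx(\tau)\,d\tau$. I do not expect a genuine obstacle here: the content is a bookkeeping verification, and the only points requiring care are the legitimacy of differentiating the double series termwise and the observation that the pulled-out constant (together with the shift from $n\in\R_{\geq0}$ in \eqref{def gx} to $n\in\R_{>0}$ in \eqref{def log gx}) drops out under $\delta$, leaving precisely the constant term $B_2(\{x_1\})/2$ of $E^{(2)}_\boldx$.
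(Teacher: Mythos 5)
Your proof is correct and follows exactly the paper's own argument: the paper's proof of this lemma is precisely "comparing the Fourier expansions \eqref{eq Fourier Ekx} and \eqref{def log gx}", which is the term-by-term differentiation and matching you carry out in detail. The bookkeeping (the linear term giving $B_2(\{x_1\})/2$, the constant being killed, and the sign $(-1)^{k+1}=-1$ at $k=2$) all checks out.
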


\begin{proof}
    This follows from comparing the Fourier expansions \eqref{eq Fourier Ekx} and \eqref{def log gx}.
\end{proof}



The Kronecker double series $K_a(x,x_0,s;L)$ satisfies differential equations with respect to the elliptic parameters $x$ and $x_0$ \cite[Lemma 1.4]{BK10}. Similarly, the series $E^{(k)}_\boldx$ satisfies a differential relation with respect to both elliptic and modular parameters, which will be especially important.

\begin{lem} \label{diff property Ek}
For $k \geq 1$, the function $\boldx \mapsto E^{(k)}_\boldx(\tau)$ is smooth on the domain $(\R/\Z)^2 \setminus \{\boldsymbol0\}$. Moreover, we have
\begin{equation} \label{eq diff}
\delta_{x_2} E^{(k+1)}_\boldx(\tau) = \delta_\tau E^{(k)}_\boldx(\tau).
\end{equation}
\end{lem}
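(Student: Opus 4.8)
The plan is to establish the two assertions by different means: the differential identity \eqref{eq diff} by a direct term-by-term computation on Fourier expansions, and the smoothness in $\boldx$ from the Kronecker-series description, for which the Fourier expansion turns out to be ill-suited.

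First I would prove \eqref{eq diff}. For $\tau \in \h$ both $q$-series in Lemma \ref{lem Fourier Ekx} for $E^{(k)}_\boldx$ and $E^{(k+1)}_\boldx$ converge absolutely and locally uniformly, since $|q^{mn}| = e^{-2\pi m n \im(\tau)}$ decays exponentially; the same bound, with the extra polynomial factors $m$ and $n$ produced by differentiation, shows that the series may be differentiated term by term in both $\tau$ and $x_2$. Now $\delta_\tau q^{mn} = mn\,q^{mn}$, so applying $\delta_\tau$ to the expansion \eqref{eq Fourier Ekx} of $E^{(k)}_\boldx$ multiplies the $(m,n)$-term by $mn$, turning $n^{k-1}$ into $n^{k}$ and inserting a factor $m$; the constant term $a_0(E^{(k)}_\boldx)$ is killed because it does not depend on $\tau$. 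On the other side, $\delta_{x_2} e(\pm m x_2) = \pm m\, e(\pm m x_2)$, while $a_0(E^{(k+1)}_\boldx) = B_{k+1}(\{x_1\})/(k+1)$ is independent of $x_2$, so $\delta_{x_2}$ applied to $E^{(k+1)}_\boldx$ multiplies its first sum by $m$ and its second sum by $-m$. Tracking signs, the factor $(-1)^{k+2}$ on the second sum of $E^{(k+1)}_\boldx$ combines with $-m$ to give $(-1)^{k+1} m$, which is exactly the coefficient pattern of the second sum of $\delta_\tau E^{(k)}_\boldx$; the first sums match as well. Comparing the two resulting series term by term yields \eqref{eq diff}.

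For smoothness I would avoid Lemma \ref{lem Fourier Ekx}, whose individual constituents are not smooth across $x_1 \in \Z$ --- for instance $B_2(\{x_1\})/2$ already fails to be $C^1$ at $x_1 = 0$ --- even though their sum is smooth there. Instead I would use $E^{(k)}_\boldx(\tau) = -\frac{(k-1)!}{(-2\pi i)^k} K_k(0, x_1\tau + x_2, k; \tau)$ from Definition \ref{def E Ehat}. The elliptic variable $x_0 = x_1\tau + x_2$ is an $\R$-linear, hence smooth, function of $(x_1,x_2)$, and $x_0 \in L = \Z + \tau\Z$ if and only if $\boldx = \boldsymbol0$ in $(\R/\Z)^2$. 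It therefore suffices to know that the analytically continued function $x_0 \mapsto K_k(0, x_0, k; L)$ is smooth on $\C \setminus L$ for every $k \geq 1$. For $k \geq 3$ this is immediate from the absolute, locally uniform convergence of the defining series at $s = k$ (where $\re(s) > 1 + \frac{k}{2}$), which lets one differentiate under the summation in $x_0$. For the remaining weights $k = 1, 2$ it follows from Weil's analytic continuation \cite[VII, \S 13]{Wei76}, whose explicit expression depends smoothly (indeed real-analytically) on the elliptic parameter; this is the same regularity underlying the differential equations in the elliptic variables recorded in \cite[Lemma 1.4]{BK10}.

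The delicate point is precisely this smoothness at the exceptional points where some coordinate of $\boldx$ is integral while $\boldx \neq \boldsymbol0$, together with the small weights $k = 1, 2$. At such points the Fourier description is genuinely misleading: its pieces individually acquire kinks in $x_1$, and only the lattice-sum (Kronecker) description makes the cancellation of these kinks transparent. By contrast, the differential identity \eqref{eq diff} is a routine manipulation of absolutely convergent $q$-series and presents no real difficulty.
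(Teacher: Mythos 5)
Your treatment of the identity \eqref{eq diff} is correct and is precisely one of the two routes the paper indicates: term-by-term comparison of the $q$-expansions of Lemma \ref{lem Fourier Ekx} (the paper notes it also follows directly from Definition \ref{def E Ehat}).

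The smoothness argument, however, has a genuine gap at its main step, the case $k \geq 3$. Absolute, locally uniform convergence of $K_k(0,x_0,k;L) = \sum_{w \neq 0} \exp\bigl(A(L)^{-1}(w \overline{x}_0 - \overline{w} x_0)\bigr)\, \overline{w}^{k} |w|^{-2k}$ does \emph{not} license differentiation under the summation sign; one needs locally uniform convergence of the \emph{differentiated} series. Each derivative in the real coordinates of $x_0$ brings down a factor of size $|A(L)^{-1}w|$ from the character, so the $j$-times differentiated series has terms of size $|w|^{j-k}$, and the sum over the rank-two lattice diverges as soon as $j \geq k-2$ --- already at the first derivative when $k=3$. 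Term-wise differentiation therefore yields at best finite regularity, never smoothness. Worse, your argument would prove too much: the series is completely insensitive to whether $x_0$ lies in $L$ (the excluded term is $w=-x$ with $x=0$, not $w=-x_0$), so if valid it would give smoothness at $x_0 \in L$ as well, where the function genuinely fails to be smooth. Concretely, for $L = \Z+\tau\Z$ the character equals $e(mx_2-nx_1)$ for $w=m\tau+n$, so $E^{(k)}_\boldx(\tau)$ is a constant multiple of the Fourier series $\sum_{(m,n)\neq(0,0)} e(mx_2-nx_1)(m\tau+n)^{-k}$ in $\boldx$, whose coefficients decay only polynomially; exactly as $\sum_{n\neq 0} e(nx)n^{-k}$ is proportional to $B_k(\{x\})$ (of class $C^{k-2}$ but not $C^{k-1}$ at $x \in \Z$), this forces a finite-order singularity at $\boldx=\boldsymbol0$. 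Any correct proof of smoothness off $\boldsymbol0$ must use the position of $\boldx$; yours does not.

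The repair is available to you and is what the paper does. On the chart $\{x_1 \neq 0\}$ the $q$-expansion \eqref{eq Fourier Ekx} converges exponentially together with all its $\boldx$-differentiated series, and its constant term $B_k(\{x_1\})/k$ is smooth there; this gives smoothness on $\{x_1 \neq 0\}$. Then Lemma \ref{lem modularity Ekx} with $\gamma=\sigma$, i.e. $E^{(k)}_\boldx(-1/\tau)=\tau^k E^{(k)}_{\boldx\sigma}(\tau)$ with $\boldx\sigma=(x_2,-x_1)$, transports smoothness to the chart $\{x_2 \neq 0\}$, and the two charts cover $(\R/\Z)^2 \setminus \{\boldsymbol0\}$. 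So your objection that the $q$-expansion is ``ill-suited'' is only half-right: it fails at $x_1 \in \Z$, but combined with modularity it suffices. Alternatively, your appeal to Weil's analytically continued expressions (real-analytic in $x_0$ off $L$) could be made rigorous, but it should then be invoked uniformly for all $k \geq 1$, with the verification that the singular locus is exactly $L$; as written, you reserve it for $k=1,2$ and leave $k \geq 3$ to the flawed term-wise argument.
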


\begin{proof}
The Fourier expansion \eqref{eq Fourier Ekx} shows that $\boldx \mapsto E^{(k)}_\boldx(\tau)$ is smooth on the domain $\{x_1 \neq 0\}$. Using Lemma \ref{lem modularity Ekx} with $\gamma = \sigma$, the function is also smooth on $\{x_2 \neq 0\}$, whence the claim.

The identity \eqref{eq diff} follows either by inspecting the Fourier expansions of both sides (using Lemma \ref{lem Fourier Ekx}), or directly from Definition \ref{def E Ehat}.
\end{proof}

We now introduce an interpolated version of the Eisenstein series $G^{(k);N}_\boldx$ defined in \eqref{def GkN}.

\begin{definition} \label{def Gkx}
For an integer $k \geq 1$ and $\boldx = (x_1, x_2) \in (\R/\Z)^2$, define
\begin{equation*}
G^{(k)}_\boldx(\tau) = a_0(G^{(k)}_\boldx) + \Bigg(\sum_{\substack{m, n \in \R_{>0} \\ (m, n) \equiv \boldx \bmod{1}}} + (-1)^k \sum_{\substack{m, n \in \R_{>0} \\ (m, n) \equiv -\boldx \bmod{1}}} \Bigg) m^{k-1} q^{mn}
\end{equation*}
with
\begin{align*}
a_0(G^{(1)}_\boldx) & = \begin{cases}
-B_1(\{x_2\}) & \text{if } x_1 = 0 \text{ and } x_2 \neq 0, \\
-B_1(\{x_1\}) & \text{if } x_1 \neq 0 \text{ and } x_2 = 0, \\
0 & \text{otherwise,}
\end{cases} \\
(k \geq 2) \qquad a_0(G^{(k)}_\boldx) & = \begin{cases}
- \frac{B_k(\{x_1\})}{k} & \text{if } x_2 = 0, \\
0 & \text{if } x_2 \neq 0.
\end{cases}
\end{align*}
\end{definition}

The relation with $G^{(k);N}_\boldx$ is as follows. If $\boldx = (a/N,b/N)$ is an $N$-torsion point in $(\R/\Z)^2$, then
\begin{equation} \label{eq Gk GkN}
    G^{(k)}_\boldx(N\tau) = N^{1-k} G^{(k);N}_{\overline{a},\overline{b}}(\tau).
\end{equation}

\begin{lem} \label{lem diff Gk}
For $k \geq 1$, the function $\boldx \mapsto G^{(k)}_\boldx(\tau)$ is smooth on the domain $(\R/\Z \setminus \{0\})^2$, and we have
\begin{equation*}
\delta_{x_2} G^{(k)}_\boldx(\tau) = \tau G^{(k+1)}_\boldx(\tau).
\end{equation*}
\end{lem}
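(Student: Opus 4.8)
The plan is to prove both assertions directly from the double series in Definition \ref{def Gkx}, reducing everything to a term-by-term differentiation once convergence is under control. The first observation is that on the domain $(\R/\Z\setminus\{0\})^2$ one has $x_1\neq0$ and $x_2\neq0$, so inspecting the case distinctions in Definition \ref{def Gkx} shows that the constant term $a_0(G^{(k)}_\boldx)$ vanishes identically for every $k\geq1$ (and likewise $a_0(G^{(k+1)}_\boldx)=0$, since $x_2\neq0$). Thus on this domain $G^{(k)}_\boldx(\tau)$ equals the pure double series and no Bernoulli contribution needs to be differentiated.

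Next I would fix the parametrisation: in the first sum write $m=\{x_1\}+r$ and $n=\{x_2\}+s$ with $r,s\geq0$ integers, and analogously $m\equiv -x_1$, $n\equiv -x_2$ for the second sum. Each summand $m^{k-1}q^{mn}$ is a smooth function of $(x_1,x_2,\tau)$ on the domain, because $\{x_1\}$ and $\{x_2\}$ are smooth away from the integers and $q^{mn}=e(mn\tau)$. Since $|q^{mn}|=e^{-2\pi mn\,\im(\tau)}$ decays exponentially in $mn$ while the prefactor $m^{k-1}$ (and each of its partial derivatives in $x_1,x_2$) grows only polynomially, the series together with all of its partial derivatives converges uniformly on compact subsets of the domain. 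This simultaneously yields the smoothness of $\boldx\mapsto G^{(k)}_\boldx(\tau)$ and licenses differentiation under the summation sign.

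For the differential identity I would apply $\delta_{x_2}=\frac{1}{2\pi i}\frac{d}{dx_2}$ term by term. In the first sum $n\equiv x_2$, so locally $\partial n/\partial x_2=1$ and
\[
\delta_{x_2}\bigl(m^{k-1}q^{mn}\bigr)=\frac{1}{2\pi i}\cdot m^{k-1}\cdot 2\pi i\,m\tau\,q^{mn}=\tau\,m^{k}q^{mn};
\]
in the second sum $n\equiv -x_2$, so $\partial n/\partial x_2=-1$ and the same computation produces an extra minus sign, namely $\delta_{x_2}(m^{k-1}q^{mn})=-\tau\,m^{k}q^{mn}$. Combining this with the overall factor $(-1)^k$ turns it into $(-1)^{k+1}\tau\,m^{k}q^{mn}$, which is precisely the sign attached to the second sum of $G^{(k+1)}_\boldx$. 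Adding the two contributions and using that $a_0(G^{(k+1)}_\boldx)=0$ on the domain, the result is exactly $\tau$ times the double series defining $G^{(k+1)}_\boldx$, i.e.\ $\delta_{x_2}G^{(k)}_\boldx(\tau)=\tau\,G^{(k+1)}_\boldx(\tau)$.

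The computation itself is elementary; the only point requiring genuine care is the justification of differentiating inside the infinite sum, which I expect to be the main (if routine) obstacle, and which rests entirely on the exponential decay of $q^{mn}$ dominating the polynomial factor $m^{k-1}$. A secondary pitfall is the sign bookkeeping in passing from the $(-1)^k$ of $G^{(k)}_\boldx$ to the $(-1)^{k+1}$ of $G^{(k+1)}_\boldx$, which must match exactly the minus sign produced by differentiating the second sum.
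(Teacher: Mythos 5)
Your proof is correct and follows essentially the same route as the paper's: the paper also restricts to representatives $0<x_1,x_2<1$, rewrites $G^{(k)}_\boldx$ as the explicit double series $\sum_{m,n\ge0}(m+x_1)^{k-1}q^{(m+x_1)(n+x_2)}+(-1)^k\sum_{m,n\ge1}(m-x_1)^{k-1}q^{(m-x_1)(n-x_2)}$, and differentiates term by term with the same sign bookkeeping. Your added justification of differentiation under the sum (uniform convergence on compacta from the exponential decay of $q^{mn}$) is left implicit in the paper but is the right way to make the step rigorous.
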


\begin{proof}
It suffices to consider the domain $0 < x_1, x_2 < 1$. There $G^{(k)}_\boldx$ can be written as
\begin{equation*}
G^{(k)}_\boldx(\tau) = \sum_{m,n \geq 0} (m+x_1)^{k-1} q^{(m+x_1)(n+x_2)} + (-1)^k \sum_{m,n \geq 1} (m-x_1)^{k-1} q^{(m-x_1)(n-x_2)}.
\end{equation*}
Therefore
\begin{equation*}
\delta_{x_2} G^{(k)}_\boldx(\tau) = \sum_{m,n \geq 0} (m+x_1)^k \tau q^{(m+x_1)(n+x_2)} + (-1)^k \sum_{m,n \geq 1} -(m-x_1)^k \tau q^{(m-x_1)(n-x_2)} = \tau G^{(k+1)}_\boldx(\tau). \qedhere
\end{equation*}
\end{proof}

To end this section, we give an explicit formula for the Mellin transform of the Eisenstein series of type $E^{(k)}$ and $G^{(k)}$.

\begin{lem} \label{Mellin Ek Gk}
For any integer $k \geq 1$ and $\boldx = (x_1,x_2) \in (\R/\Z)^2$, with $\boldx \neq \boldsymbol0$ in the case $k=2$, we have
\begin{align}
\label{eq Mellin Ek}
\mathcal{M}(E^{(k)}_\boldx, s) & = (2\pi)^{-s} \Gamma(s) \bigl( - \zeta(x_1, s-k+1) \hat{\zeta}(x_2, s) + (-1)^{k+1} \zeta(-x_1, s-k+1) \hat{\zeta}(-x_2, s) \bigr) \\
 \label{eq Mellin Gk}
 \mathcal{M}(G^{(k)}_\boldx, s) & = (2\pi)^{-s} \Gamma(s) \bigl( \zeta(x_1, s-k+1) \zeta(x_2, s) + (-1)^k \zeta(-x_1, s-k+1) \zeta(-x_2, s) \bigr).
\end{align}
\end{lem}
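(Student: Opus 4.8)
The plan is to read off both transforms directly from the Fourier expansions by integrating term by term in a right half-plane and then invoking analytic continuation. Since $\mathcal{M}(\,\cdot\,,s)$ is linear and vanishes on polynomials (as noted in Section~\ref{sec Mellin}), the constant terms $a_0(E^{(k)}_\boldx)$ and $a_0(G^{(k)}_\boldx)$ contribute nothing, and it suffices to transform the non-constant parts of \eqref{eq Fourier Ekx} and of the series in Definition~\ref{def Gkx}. On the imaginary axis $\tau=iy$ each exponential $q^{mn}$ becomes $e^{-2\pi mny}$, and the elementary Gamma integral
\begin{equation*}
\int_0^\infty e^{-2\pi mny}\,y^s\,\frac{dy}{y} = (2\pi mn)^{-s}\,\Gamma(s) \qquad (\re(s)>0)
\end{equation*}
is precisely what produces the prefactor $(2\pi)^{-s}\Gamma(s)$ together with the factor $(mn)^{-s}$ attached to each monomial.

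Next I would delimit the region in which the interchange of summation and integration is legitimate. The non-constant part $f^0$ of either Eisenstein series decays exponentially as $y\to+\infty$, while by Lemma~\ref{lem modularity Ekx} applied to $\sigma=(\begin{smallmatrix}0&-1\\1&0\end{smallmatrix})$ one has $E^{(k)}_\boldx(iy)=(i/y)^k E^{(k)}_{\boldx\sigma}(i/y)$, so the function grows at most like $y^{-k}$ as $y\to0^+$. Hence $\int_0^\infty f^0(iy)\,y^s\,\frac{dy}{y}$ converges as an ordinary integral for $\re(s)>k$, where it coincides with the generalised Mellin transform $\mathcal{M}(f^0,s)$; in the same range the associated double series of absolute values converges (the $n$- or $m$-power in the coefficient imposing exactly the constraint $\re(s)>k$), so Tonelli's theorem justifies exchanging $\sum_{m,n}$ with $\int_0^\infty$.

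Finally I would recognise the two resulting double sums. For $E^{(k)}_\boldx$ the inner sum over $n\equiv\pm x_1\bmod1$ of $n^{k-1-s}$ is the Hurwitz value $\zeta(\pm x_1,s-k+1)$, while the sum over $m\geq1$ of $e(\pm mx_2)\,m^{-s}$ is the periodic value $\hat\zeta(\pm x_2,s)$; collecting signs exactly as in \eqref{eq Fourier Ekx} gives \eqref{eq Mellin Ek}. For $G^{(k)}_\boldx$ both factors are Hurwitz values $\zeta(\pm x_1,s-k+1)$ and $\zeta(\pm x_2,s)$, producing \eqref{eq Mellin Gk}. As both sides are meromorphic in $s$, the identities valid on $\{\re(s)>k\}$ extend to all of $\C$. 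The only genuinely analytic point\,---\,the main thing to get right\,---\,is the coincidence of the generalised Mellin transform with the absolutely convergent integral on $\{\re(s)>k\}$, which in turn rests on the $y^{-k}$ growth at the cusp $0$ extracted from modularity; everything else is bookkeeping of Fourier coefficients.
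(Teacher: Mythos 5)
Your proof is correct and is essentially the paper's own argument: the paper likewise computes $\mathcal{M}(G^{(k)}_\boldx,s)$ term by term from the $q$-expansion for $\re(s)$ large enough, recognises the resulting double Dirichlet series as products of Hurwitz/periodic zeta values, and concludes by analytic continuation, treating $E^{(k)}_\boldx$ as "the other case being similar". One small correction to your analytic preamble: the $y^{-k}$ bound at the cusp $0$ for $G^{(k)}_\boldx$ does not follow from Lemma~\ref{lem modularity Ekx}, which governs only $E^{(k)}_\boldx$; for $G^{(k)}_\boldx$ one needs Lemma~\ref{lem Gk Hk} (that is, $H^{(k)}_\boldx|_k\sigma = G^{(k)}_\boldx$), exactly as the paper points out in the paragraph following the lemma.
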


\begin{proof}
We give the proof for $G^{(k)}_\boldx$, the other case being similar. Writing $G^{(k)}_\boldx(\tau) = \sum_{n \in \R_{\geq 0}} c_n q^n$, we have for $\re(s)$ large enough:
\begin{align*}
\mathcal{M}(G^{(k)}_\boldx, s) & = (2\pi)^{-s} \Gamma(s) \sum_{n \in \R_{>0}} \frac{c_n}{n^s} \\
& = (2\pi)^{-s} \Gamma(s) \Bigg( \sum_{\substack{m_1,m_2 \in \R_{>0} \\ (m_1,m_2) \equiv (x_1,x_2) \bmod{1}}} + (-1)^k \sum_{\substack{m_1,m_2 \in \R_{>0} \\ (m_1,m_2) \equiv (-x_1,-x_2) \bmod{1}}} \Bigg) \frac{1}{m_1^{s-k+1} m_2^s}. \qedhere
\end{align*}
\end{proof}

From the description of the poles of the Mellin transform in Section \ref{sec Mellin}, one can show that the only possible poles of $\mathcal{M}(E^{(k)}_\boldx, s)$ and $\mathcal{M}(G^{(k)}_\boldx, s)$ are located at $s=0$ and $s=k$. For $E^{(k)}_\boldx$ this follows from using Lemma \ref{lem modularity Ekx} with $\gamma = \sigma$, while for $G^{(k)}_\boldx$ this follows from Lemma \ref{lem Gk Hk} below.

\subsection{Multiple modular values}

Recall that if $f$ is a modular form of weight $k \geq 2$ on some finite index subgroup $\Gamma$ of $\SL_2(\Z)$, the differential form $f(\tau) \tau^{m-1} d\tau$ is admissible on $\imaxis$ for any $1 \leq m \leq k-1$ (see Example~\ref{exadmissible}). For any modular forms $f_1,\ldots,f_n$ of respective weights $k_1,\ldots,k_n \geq 2$, and any integers $m_1, \ldots, m_n$ with $1 \leq m_i \leq k_i-1$, the regularised iterated integral
\begin{align} \label{eq def Lambda fi}
\Lambda(f_1, \ldots, f_n; m_1, \ldots, m_n) & = \int_0^\infty f_1(\tau) \tau^{m_1-1} d\tau \ldots f_n(\tau) \tau^{m_n-1} d\tau \\ 
\nonumber & = \int_0^\infty f_1(\tau_1) \tau_1^{m_1-1} d\tau_1 \int_{\tau_1}^\infty f_2(\tau_2) \tau_2^{m_2-1} d\tau_2 \cdots \int_{\tau_{n-1}}^\infty f_n(\tau_n) \tau_n^{m_n-1} d\tau_n
\end{align}
is called a totally holomorphic multiple modular value (MMV) \cite[Section 5]{Bro19}. In the case all $m_i$ are equal to $1$, we simply write $\Lambda(f_1, \ldots, f_n) = \Lambda(f_1, \ldots, f_n; 1, \ldots, 1)$.

In the case $\Gamma = \SL_2(\Z)$, the multiple modular values are periods of the relative completion of the fundamental group of $\mathcal{M}_{1,1}$ \cite{Bro17, Bro19}. In this article, we are particularly interested in the case $\Gamma$ is the principal congruence subgroup $\Gamma(N)$, and all $f_i$ are Eisenstein series of weight $\geq 2$ on $\Gamma(N)$. In this case \eqref{eq def Lambda fi} is called a multiple Eisenstein value.

\begin{example} \label{MEV diff}
When the Eisenstein series in question are $E^{(k_i)}_{\boldx_i}(\tau)$ with $k_i\ge2$, all $m_i=1$, and allowing continuous parameters $\boldx_i \in (\R/\Z)^2$, we can view the MEV as a function
\[
(\boldx_1,\dots,\boldx_n)\mapsto\Lambda(E^{(k_1)}_{\boldx_1}, \ldots, E^{(k_n)}_{\boldx_n})
\]
which has partial derivatives with respect all elliptic parameters $x_{p1},x_{p2}$ restricted to the interval $(0,1)$ (or to any shift of it by an integer) where $1\le p\le n$.
This follows from viewing the MEV as the iterated integral of a family of differential forms that depend on each such parameter, the forms differentially admissible at both $0$ and $\infty$ as defined in Section \ref{subsec reg int param}.
The diffentiation of the MEV with respect to the $x_2$-component of $\boldx_1,\dots,\boldx_n$ is particularly simple.
When an index $p$ in the range $1\le p\le n$ is fixed, we can apply Proposition \ref{gen reg int param} to the corresponding parameter $x_{p2}$ and then combine the result with Lemma \ref{diff property Ek} and formula~\eqref{NL-gen} to obtain
\begin{align} \label{Lambda diff}
\frac{\partial}{\partial x_{p2}}\Lambda(E^{(k_1)}_{\boldx_1}, \ldots, E^{(k_p)}_{\boldx_p}, \ldots, E^{(k_n)}_{\boldx_n})
&=\Lambda(E^{(k_1)}_{\boldx_1}, \ldots, E^{(k_{p-1})}_{\boldx_{p-1}}, E^{(k_p-1)}_{\boldx_p} E^{(k_{p+1})}_{\boldx_{p+1}}, \ldots, E^{(k_n)}_{\boldx_n})
\\ &\quad
-\Lambda(E^{(k_1)}_{\boldx_1}, \ldots, E^{(k_{p-1})}_{\boldx_{p-1}} E^{(k_p-1)}_{\boldx_p}, E^{(k_{p+1})}_{\boldx_{p+1}}, \ldots, E^{(k_n)}_{\boldx_n}),
\nonumber
\end{align}
with the first term interpreted as $a_0(E^{(k_n-1)}_{\boldx_n})\Lambda(E^{(k_1)}_{\boldx_1}, \ldots, E^{(k_{n-1})}_{\boldx_{n-1}})$ if $p=n$, while the second term is discarded if $p=1$. This formula means that differentiation of $\Lambda(E^{(k_1)}_{\boldx_1}, \ldots, E^{(k_n)}_{\boldx_n})$ with respect to the elliptic parameter $x_{p2}$ reduces the length of the MEV by~1.
\end{example}

\begin{definition} \label{def MEV}
For $\boldx_1,\ldots,\boldx_n \in (\R/\Z)^2$, we define
\begin{align*}
\Lambda(\boldx_1, \ldots, \boldx_n)
& = (2\pi i)^n \Lambda(E^{(2)}_{\boldx_1}, \ldots, E^{(2)}_{\boldx_n}) \\
& = (2\pi i)^n \int_0^\infty E^{(2)}_{\boldx_1}(\tau_1) d\tau_1 \int_{\tau_1}^\infty E^{(2)}_{\boldx_2}(\tau_2) d\tau_2 \cdots \int_{\tau_{n-1}}^\infty E^{(2)}_{\boldx_n}(\tau_n) d\tau_n.
\end{align*}
\end{definition}

We call $\Lambda(\boldx_1, \ldots, \boldx_n)$ a (totally holomorphic) \emph{multiple Eisenstein value} (MEV) of length $n$. In general, we expect $\Lambda(\boldx_1, \ldots, \boldx_n)$ to be a period only when the parameters $\boldx_i$ belong to $(\Q/\Z)^2$. In the sequel, we implicitly identify $(\Z/N\Z)^2$ with a subgroup of $(\R/\Z)^2$ by mapping a pair $(\overline{x}_1,\overline{x}_2)$ to the class of $(x_1/N, x_2/N)$. In this way $\Lambda(\boldx_1, \ldots, \boldx_n)$ makes sense for arguments $\boldx_i$ in $(\Z/N\Z)^2$.

Since $\dlog g_\boldx = 2\pi i E^{(2)}_\boldx(\tau) \, d\tau$, the multiple Eisenstein value can also be written
\begin{equation*}
\Lambda(\boldx_1, \ldots, \boldx_n) = \int_0^\infty \dlog g_{\boldx_1} \dlog g_{\boldx_2} \ldots \dlog g_{\boldx_n}.
\end{equation*}

Recall that $\sigma = (\begin{smallmatrix} 0 & -1 \\ 1 & 0 \end{smallmatrix})$ acts on $\h$. If $\boldx \in (\Z/N\Z)^2$, $\boldx \neq \boldsymbol0$, then $\sigma^* (\dlog g_\boldx) = \dlog g_{\boldx \sigma}$ by \cite[Lemma 1.7(1)]{Kat04}. By continuity, this identity holds for arbitrary $\boldx \neq \boldsymbol0$. Since $\sigma$ reverses the path $\imaxis$, the path reversal formula for iterated integrals gives
\begin{equation} \label{eq sigma action Lambda}
    \Lambda(\boldx_1 \sigma, \ldots, \boldx_n \sigma) = (-1)^n \Lambda(\boldx_n, \ldots, \boldx_1).
\end{equation}

The \emph{single} modular values are essentially the critical $L$-values of a modular form.
In the particular case of an Eisenstein series, these values are computed classically in terms of Bernoulli polynomials.

\begin{pro} \label{pro single Eis}
For any $\boldx = (x_1, x_2) \in (\R/\Z)^2 \setminus \{\boldsymbol0\}$, we have
\begin{equation*}
\Lambda(\boldx) = \begin{cases} 2\pi i \left(\{x_1\}-\frac12\right) \left(\{x_2\}-\frac12\right) & \text{if } x_1, x_2 \neq 0, \\
\log |1-e(x_2)| & \text{if } x_1 = 0, \, x_2 \neq 0, \\
- \log |1-e(x_1)| & \text{if } x_1 \neq 0, \, x_2 = 0.
\end{cases}
\end{equation*}
\end{pro}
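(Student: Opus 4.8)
The plan is to reduce everything to the value of the Mellin transform of $E^{(2)}_\boldx$ at $s=1$. For $\boldx \neq \boldsymbol0$ the form $E^{(2)}_\boldx(\tau)\,d\tau$ is admissible (Example~\ref{exadmissible}), so Definition~\ref{def MEV} (with $n=1$) together with Proposition~\ref{pro reg int Mellin} gives
\[
\Lambda(\boldx) = 2\pi i \int_0^\infty E^{(2)}_\boldx(\tau)\,d\tau = 2\pi i\cdot i\,\mathcal{M}(E^{(2)}_\boldx,1) = -2\pi\,\mathcal{M}(E^{(2)}_\boldx,1).
\]
It then suffices to evaluate \eqref{eq Mellin Ek} at $k=2$, $s=1$; whenever the two summands are individually finite this reads
\[
\mathcal{M}(E^{(2)}_\boldx,1) = \frac{1}{2\pi}\bigl(-\zeta(x_1,0)\,\hat\zeta(x_2,1) - \zeta(-x_1,0)\,\hat\zeta(-x_2,1)\bigr).
\]
Here I would feed in the standard special values $\zeta(y,0) = \tfrac12 - \{y\}$ for $y\neq0$ (with $\zeta(0,0)=\zeta(0)=-\tfrac12$) and $\hat\zeta(y,1)=-\log(1-e(y))$ for $y\neq0$, the branch identity $\log(1-e(x_2)) = \log|1-e(x_2)| + \pi i(\{x_2\}-\tfrac12)$ already recorded before Lemma~\ref{lem dlog E2}, and the elementary facts $\{-y\}=1-\{y\}$ and $|1-e(-y)|=|1-e(y)|$.

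In the generic case $x_1,x_2\neq0$, all four factors are finite and $\zeta(-x_1,0)=-\zeta(x_1,0)=\{x_1\}-\tfrac12$, so the bracket factors as $(\tfrac12-\{x_1\})\bigl(\log(1-e(x_2))-\log(1-e(-x_2))\bigr)$. The logarithmic (absolute-value) parts cancel while the imaginary parts combine to $2\pi i(\{x_2\}-\tfrac12)$, so that $\mathcal{M}(E^{(2)}_\boldx,1) = -i(\{x_1\}-\tfrac12)(\{x_2\}-\tfrac12)$ and hence $\Lambda(\boldx)=2\pi i(\{x_1\}-\tfrac12)(\{x_2\}-\tfrac12)$, as claimed.

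When $x_1=0$ and $x_2\neq0$ one has $\zeta(0,s-1)=\zeta(s-1)$ with value $\zeta(0)=-\tfrac12$ at $s=1$, while $\hat\zeta(\pm x_2,s)$ stays holomorphic there; the bracket becomes $\tfrac12(\hat\zeta(x_2,1)+\hat\zeta(-x_2,1))=-\tfrac12(\log(1-e(x_2))+\log(1-e(-x_2)))$. This time the imaginary parts cancel, the bracket equals $-\log|1-e(x_2)|$, and therefore $\Lambda(\boldx)=\log|1-e(x_2)|$.

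The remaining case $x_1\neq0$, $x_2=0$ is the only delicate one, and I expect it to be the main obstacle: since $\hat\zeta(0,s)=\zeta(s)$ has a simple pole at $s=1$, the two summands of \eqref{eq Mellin Ek} are individually singular, and only their sum is finite (in agreement with Proposition~\ref{pro reg int Mellin}). A direct evaluation would require expanding $\zeta(s)\bigl(\zeta(x_1,s-1)+\zeta(-x_1,s-1)\bigr)$ to first order, using that the bracket vanishes at $s=1$ and invoking the Lerch formula $\partial_s\zeta(x_1,s)|_{s=0}=\log\Gamma(\{x_1\})-\tfrac12\log 2\pi$ together with the reflection formula $\Gamma(\{x_1\})\Gamma(1-\{x_1\})=\pi/\sin(\pi\{x_1\})$. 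To avoid this Laurent-expansion bookkeeping I would instead invoke the functional equation \eqref{eq sigma action Lambda} with $n=1$: since $\boldx\sigma=(0,-x_1)$, one has $\Lambda(x_1,0)=-\Lambda(0,-x_1)$, and the previous case (applied to $(0,-x_1)$, whose nonzero coordinate is $-x_1$) gives $\Lambda(0,-x_1)=\log|1-e(-x_1)|=\log|1-e(x_1)|$. Hence $\Lambda(\boldx)=-\log|1-e(x_1)|$, completing all three cases. (As a cross-check, one could alternatively write $\Lambda(\boldx)=\int_0^\infty\dlog g_\boldx = \log g_\boldx(\infty)-\log g_\boldx(0)$ via Lemma~\ref{lem int df 0 infty}, reading $\log g_\boldx(\infty)$ off the polynomial part of \eqref{def log gx}; but pinning down the regularised value $\log g_\boldx(0)$ reintroduces exactly the branch constant that the Mellin route sidesteps.)
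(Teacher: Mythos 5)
Your proof is correct and takes essentially the same route as the paper: reduce $\Lambda(\boldx)$ to $-2\pi\,\mathcal{M}(E^{(2)}_\boldx,1)$ via Proposition~\ref{pro reg int Mellin} and Lemma~\ref{Mellin Ek Gk}, plug in the special values $\zeta(\,\cdot\,,0)$ and $\hat\zeta(\,\cdot\,,1)$, and settle the case $x_2=0$ by the $\sigma$-symmetry \eqref{eq sigma action Lambda} applied to the already-proved case $x_1=0$. The paper's proof has exactly this structure (it merely treats the two sub-cases with $x_2\neq0$ in one computation), so your extra care about the pole of $\hat\zeta(0,s)$ at $s=1$ only makes explicit why the $\sigma$-trick is the natural way to finish.
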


Note that the function $\boldx \mapsto \Lambda(\boldx)$ has discontinuities at $\{x_1=0\} \cup \{x_2=0\}$.

\begin{proof}
    Assume first $x_2 \neq 0$. By Proposition \ref{pro reg int Mellin} and Lemma \ref{Mellin Ek Gk}, we have
    \begin{equation*}
        \Lambda(\boldx) = -2\pi \mathcal{M}(E^{(2)}_\boldx, 1) = \hat{\zeta}(x_2,1) \zeta(x_1,0) + \hat{\zeta}(-x_2,1) \zeta(-x_1,0).
    \end{equation*}
    It remains to apply the identities \cite[Section 2]{Bru17}
    \begin{align*}
        \zeta(x_1,0) & = \begin{cases} \frac12 - \{x_1\} & \textrm{if } x_1 \neq 0, \\
            - \frac12 & \textrm{if } x_1 = 0,
        \end{cases} \\
        \hat{\zeta}(x_2, 1) & = \sum_{n \geq 1} \frac{e(nx_2)}{n} = -\log(1-e(x_2)).
    \end{align*}
    The case $x_2=0$ follows by noting that $\Lambda((x_1,0)) = - \Lambda((0,x_1))$ thanks to \eqref{eq sigma action Lambda}.
\end{proof}

With the same method in mind, one can show that for $k \geq 2$ and $m \in \{1, \ldots, k-1\}$, we have
\begin{equation*}
    \Lambda(E^{(k)}_\boldx ; m) = (-1)^{m+1} \frac{B_{k-m}(x_1) B_m(x_2)}{(k-m)m} \qquad (0 < x_1, x_2 < 1).
\end{equation*}



We will also need the above iterated integrals with the Eisenstein series replaced by their real or imaginary parts. For $\boldx \in (\R/\Z)^2 \setminus \{\boldsymbol0\}$, write
\begin{equation*}
    \omega_\boldx^+ = \re(\dlog g_\boldx) = \dlog |g_\boldx|, \qquad \omega_\boldx^- = \im(\dlog g_\boldx) = \darg(g_\boldx).
\end{equation*}
Then for any $\boldx_1, \ldots, \boldx_n \in (\R/\Z)^2 \setminus \{\boldsymbol0\}$ and any sequence of signs $\varepsilon_1,\ldots,\varepsilon_n \in \{\pm\}$, consider the regularised iterated integral
\begin{equation*}
\Lambda^{\varepsilon_1 \ldots \varepsilon_n}(\boldx_1, \ldots, \boldx_n) = \int_0^\infty \omega_{\boldx_1}^{\varepsilon_1} \ldots \omega_{\boldx_n}^{\varepsilon_n}.
\end{equation*}
For example, taking the real and imaginary parts in Proposition \ref{pro single Eis}, we get
\begin{equation} \label{eq Lambda x +-}
    \Lambda^+(\boldx) = 0 \qquad \textrm{and} \qquad \Lambda^-(\boldx) = 2\pi \Bigl(x_1-\frac12\Bigr) \Bigl(x_2-\frac12\Bigr) \qquad (0<x_1,x_2<1).
\end{equation}

As discussed in Example \ref{MEV diff}, the function $(\boldx_1,\ldots,\boldx_n) \mapsto \Lambda(\boldx_1,\ldots,\boldx_n)$ is differentiable on the domain $(\R/\Z \setminus\{0\})^{2n}$ and its partial derivatives with respect to the $x_2$-components of indices $\boldx_1,\ldots,\boldx_n$ can be explicitly computed using equation \eqref{Lambda diff}; we make use of this differentiation in Section~\ref{G-reg}.

\section{A baby case: The $K_2$ regulator and double modular values} \label{baby}

Let $Y(N)$ be the modular curve over $\Q$ of level $N \geq 1$. The cup-products $\{g_\bolda,g_\boldb\}$ of two Siegel units $g_\bolda$ and $g_\boldb$ provide important elements in the $K$-group $K_2^{(2)}(Y(N))$. Let us consider their images under the Beilinson regulator map \cite[2.10]{Kat04}
\begin{equation*}
    K_2^{(2)}(Y(N)) \longrightarrow H^1(Y(N)(\C), \R \cdot i).
\end{equation*}
The regulator of $\{g_\bolda,g_\boldb\}$ is represented by the differential form $i \, \eta(g_\bolda,g_\boldb)$ on $Y(N)(\C)$, where
\begin{equation*}
    \eta(g_\bolda,g_\boldb) = \log |g_\bolda| \darg g_\boldb - \log |g_\boldb| \darg g_\bolda.
\end{equation*}
The regulator integral of $\eta(g_\bolda, g_\boldb)$ along the modular symbol $\{0,i\infty\}$ can be computed in terms of $L$-values at $s=0$ of modular forms of weight $2$ and level $\Gamma(N)$ \cite{Bru16}. Here we show that this regulator integral can be expressed in terms of double Eisenstein values.

\begin{pro} \label{pro K2 regulator}
Let $\bolda,\boldb \in (\Z/N\Z)^2 \setminus \{\boldsymbol0\}$. We have
\begin{equation*}
\int_0^{\infty} \eta(g_\bolda, g_\boldb) = \im \Lambda(\bolda,\boldb) - \Lambda^+(\bolda) \Lambda^-(\boldb) + R_\bolda \Lambda^-(\boldb) - R_\boldb \Lambda^-(\bolda),
\end{equation*}
where $R_\boldx$ is the regularised value of $\log |g_\boldx|$ at $\infty$, obtained from \eqref{def log gx} by taking the real part of the constant term. In the case the coordinates of $\bolda$ and $\boldb$ are non-zero, this simplifies to
\begin{equation*}
\int_0^{\infty} \eta(g_\bolda, g_\boldb) = \im \Lambda(\bolda,\boldb).
\end{equation*}
\end{pro}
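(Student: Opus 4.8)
The plan is to linearise the integral of the $\R$-bilinear expression $\eta(g_\bolda,g_\boldb)$ into genuine length-two iterated integrals and then recognise the resulting combination as $\im\Lambda(\bolda,\boldb)$ up to boundary corrections. Write $u_\boldx=\log|g_\boldx|$, so that $du_\boldx=\omega_\boldx^+$ and $\eta(g_\bolda,g_\boldb)=u_\bolda\,\omega_\boldb^--u_\boldb\,\omega_\bolda^-$. First I would record that all the forms in sight are admissible: by Lemma~\ref{lem dlog E2} and Example~\ref{exadmissible} the form $\omega_\boldx^+=\re(\dlog g_\boldx)$ is admissible, hence its primitive $u_\boldx$ is admissible by Lemma~\ref{lem admissible primitive}, and the products $u_\bolda\,\omega_\boldb^-$ are admissible by Lemma~\ref{lem f omega}; in particular $\int_0^\infty\eta(g_\bolda,g_\boldb)$ is well defined.

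Next I would integrate by parts. Applying the integration-by-parts identity \eqref{NL-gen} in the case $p=1$, $n=2$ with $f=u_\bolda$ and second form $\omega_\boldb^-$ gives
\begin{equation*}
\int_0^\infty u_\bolda\,\omega_\boldb^-=\int_0^\infty\omega_\bolda^+\,\omega_\boldb^-+u_\bolda(0)\int_0^\infty\omega_\boldb^-=\Lambda^{+-}(\bolda,\boldb)+u_\bolda(0)\,\Lambda^-(\boldb),
\end{equation*}
where $u_\boldx(0)$ denotes the regularised value of $\log|g_\boldx|$ at $0$, and symmetrically for $u_\boldb\,\omega_\bolda^-$. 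Subtracting, this expresses $\int_0^\infty\eta$ as $\Lambda^{+-}(\bolda,\boldb)-\Lambda^{+-}(\boldb,\bolda)$ plus the boundary terms $u_\bolda(0)\Lambda^-(\boldb)-u_\boldb(0)\Lambda^-(\bolda)$.

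The third step turns the antisymmetric combination of $\Lambda^{+-}$'s into the imaginary part of $\Lambda(\bolda,\boldb)$. Expanding $\dlog g_\boldx=\omega_\boldx^++i\omega_\boldx^-$ and using bilinearity of the iterated integral gives $\im\Lambda(\bolda,\boldb)=\Lambda^{+-}(\bolda,\boldb)+\Lambda^{-+}(\bolda,\boldb)$, while the shuffle relation of Proposition~\ref{pro 0 shuffle} yields $\Lambda^+(\boldb)\,\Lambda^-(\bolda)=\Lambda^{+-}(\boldb,\bolda)+\Lambda^{-+}(\bolda,\boldb)$. Eliminating $\Lambda^{-+}(\bolda,\boldb)$ between these gives
\begin{equation*}
\Lambda^{+-}(\bolda,\boldb)-\Lambda^{+-}(\boldb,\bolda)=\im\Lambda(\bolda,\boldb)-\Lambda^+(\boldb)\,\Lambda^-(\bolda).
\end{equation*}
Finally I would trade the boundary values at $0$ for those at $\infty$: Lemma~\ref{lem int df 0 infty} applied to $f=\log|g_\boldx|$ reads $\Lambda^+(\boldx)=\int_0^\infty du_\boldx=R_\boldx-u_\boldx(0)$, i.e.\ $u_\boldx(0)=R_\boldx-\Lambda^+(\boldx)$. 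Substituting this for both $\bolda$ and $\boldb$, the two occurrences of $\Lambda^+(\boldb)\,\Lambda^-(\bolda)$ cancel and one is left with exactly
\begin{equation*}
\int_0^\infty\eta(g_\bolda,g_\boldb)=\im\Lambda(\bolda,\boldb)-\Lambda^+(\bolda)\,\Lambda^-(\boldb)+R_\bolda\,\Lambda^-(\boldb)-R_\boldb\,\Lambda^-(\bolda).
\end{equation*}
For the last assertion, when all coordinates of $\bolda,\boldb$ are non-zero one has $\Lambda^+(\bolda)=\Lambda^+(\boldb)=0$ by \eqref{eq Lambda x +-}, while $R_\boldx=\log|1-e(x_2)|\,\un_{x_1=0}$ read off from \eqref{def log gx} vanishes unless $x_1=0$, so $R_\bolda=R_\boldb=0$ and the formula collapses to $\int_0^\infty\eta=\im\Lambda(\bolda,\boldb)$.

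The routine part is the bookkeeping of the four sign-indexed double integrals; the point demanding care is the consistent tracking of which regularised boundary value ($0$ versus $\infty$) each integration by parts produces, together with the argument swap in the shuffle step. It is precisely the identity $u_\boldx(0)=R_\boldx-\Lambda^+(\boldx)$ of Lemma~\ref{lem int df 0 infty} that reconciles the value-at-$0$ terms coming from integration by parts with the value-at-$\infty$ terms $R_\boldx$ in the statement, and simultaneously corrects the index on the surviving $\Lambda^+\Lambda^-$ contribution from $\boldb$ to $\bolda$.
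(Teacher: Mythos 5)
Your proof is correct and follows essentially the same route as the paper: both express $\log|g_\boldx|$ as a regularised primitive of $\dlog|g_\boldx|$, reduce $\int_0^\infty\eta(g_\bolda,g_\boldb)$ to double Eisenstein values $\Lambda^{\pm\pm}$, and finish with a single shuffle relation. The only difference is cosmetic: you anchor the primitive at the cusp $0$ (via \eqref{NL-gen} and Lemma \ref{lem int df 0 infty}, producing boundary terms $u_\boldx(0)$ that you must then convert to $R_\boldx-\Lambda^+(\boldx)$), whereas the paper anchors it at $\infty$ via Lemma \ref{lem int df infty}, so the terms $R_\boldx$ appear immediately and no conversion step is needed.
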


\begin{proof}
Recall that $\dlog |g_\boldx|$ and $\darg g_\boldx$ are admissible by Lemmas \ref{lem admissibility Ekx} and \ref{lem dlog E2}, and note that $\log |g_\boldx(\tau)| = R_\boldx - \int_\tau^\infty \dlog |g_\boldx|$ by Lemma \ref{lem int df infty}. Then
\begin{equation*}
\eta(g_\bolda,g_\boldb)(\tau) = \Bigl( R_\bolda - \int_\tau^\infty \dlog |g_\bolda| \Bigr) \darg g_\boldb(\tau) - \Bigl( R_\boldb - \int_\tau^\infty \dlog |g_\boldb| \Bigr) \darg g_\bolda(\tau).
\end{equation*}
This expression shows that the form $\eta(g_\bolda,g_\boldb)$ is admissible at $\infty$. It is also admissible at $0$ since $\sigma^*(g_\boldx) = g_{\boldx \sigma}$ in $\mathcal{O}(Y(N))^\times \otimes \Q$ by \cite[Lemma 1.7(1)]{Kat04}. Integrating from $0$ to $\infty$ and using Proposition \ref{pro int 0 infty}, this gives
\begin{equation*}
\int_0^\infty \eta(g_\bolda,g_\boldb) = R_\bolda \Lambda^-(\boldb) - \Lambda^{-+}(\boldb,\bolda) - R_\boldb \Lambda^-(\bolda) + \Lambda^{-+}(\bolda,\boldb).
\end{equation*}
Using the shuffle relation $\Lambda^-(\boldb) \Lambda^+(\bolda) = \Lambda^{-+}(\boldb,\bolda) + \Lambda^{+-}(\bolda,\boldb)$, we arrive at
\begin{align*}
\int_0^\infty \eta(g_\bolda,g_\boldb) & = R_\bolda \Lambda^-(\boldb) - \Lambda^-(\boldb) \Lambda^+(\bolda) + \Lambda^{+-}(\bolda,\boldb) - R_\boldb \Lambda^-(\bolda) + \Lambda^{-+}(\bolda,\boldb) \\
\nonumber & = \im \Lambda(\bolda,\boldb) - \Lambda^+(\bolda) \Lambda^-(\boldb) + R_\bolda \Lambda^-(\boldb) - R_\boldb \Lambda^-(\bolda).
\qedhere
\end{align*}
\end{proof}

Proposition \ref{pro K2 regulator} could be refined by considering the \emph{integral} regulator of $\{g_\bolda, g_\boldb\}$, which is a class in $H^1(Y(N)(\C),\C/(2\pi i)^2\Q)$; for the definition of this regulator map see \cite[Exercise 7.10, p.~93]{BZ20}. The associated regulator integral should then involve the real part of $\Lambda(\bolda,\boldb)$.

\section{The Goncharov regulator in terms of triple modular values} \label{G-tmv}

In \cite{Bru20} the first author constructed classes $\xi(\bolda,\boldb)$ in $K_4^{(3)}(Y(N))$, for $\bolda,\boldb \in (\Z/N\Z)^2$. Our aim in this section is to express the regulator of $\xi(\bolda,\boldb)$ in terms of triple Eisenstein values. Since we integrate from $0$ to $\infty$, the regulator integral depends on a choice of representative $\tilde{\xi}(\bolda,\boldb)$ of $\xi(\bolda,\boldb)$ in the Goncharov complex $\Gamma(Y(N),3)$. We choose the one given in \cite[Construction 6.1]{Bru20}. One consequence of our main formula (Theorem \ref{thm r32 formula}) is that the regulator integral interpolates as a function of $\bolda,\boldb \in (\R/\Z)^2$, at least in the domain where the coordinates of $\bolda$, $\boldb$ and $\bolda+\boldb$ are non-zero.

Let us recall the construction of $\tilde\xi(\bolda,\boldb)$. Let $\bolda,\boldb, \boldc \in (\Z/N\Z)^2$ be such that $\bolda+\boldb+\boldc=\boldsymbol0$. From now on, \emph{we assume that all the coordinates of $\bolda$, $\boldb$ and $\boldc$ are non-zero}. This considerably simplifies the expressions with multiple modular values below.

According to \cite[Section 4]{Bru20}, there is a triangulation
\begin{equation} \label{eq triangulation abc}
g_\bolda \wedge g_\boldb + g_\boldb \wedge g_\boldc + g_\boldc \wedge g_\bolda = \sum_i m_i \cdot u_i \wedge (1-u_i) \quad \text{in }\; \Lambda^2 \mathcal{O}(Y(N))^\times \otimes \Q,
\end{equation}
where $u_i$ and $1-u_i$ are certain modular units, and $m_i \in \Q$. Then our cocycle is
\begin{equation*}
\tilde\xi(\bolda,\boldb) := \sum_i m_i \{u_i\}_2 \otimes \frac{g_\boldb}{g_\bolda} \in B_2(\Q(Y(N))) \otimes \mathcal{O}(Y(N))^\times \otimes \Q.
\end{equation*}
For the definition of the group $B_2(F)$ of a field $F$, see \cite[Section 2.2]{Gon02}.

Recall the expression of Goncharov's explicit regulator map $r_3(2)$. Let $D \colon \p^1(\C) \to \R$ be the Bloch--Wigner dilogarithm. For any two functions $f,g$ on a Riemann surface, define the $1$-form
\begin{equation} \label{eq def r32}
r_3(2)(\{f\}_2 \otimes g) = -D(f) \cdot \darg g - \frac13  \log |g| \cdot \alpha((1-f) \wedge f),
\end{equation}
where
\begin{equation*}
\alpha(f_1 \wedge f_2) = - \log |f_1| \dlog |f_2| + \log |f_2| \dlog |f_1|.
\end{equation*}

By linearity using \eqref{eq def r32}, the regulator $1$-form associated to $\tilde\xi(\bolda,\boldb)$ is
\begin{align*}
r_3(2)(\tilde\xi(\bolda,\boldb)) & = \sum_i m_i \Bigl(-D(u_i) \cdot \darg (g_\boldb/g_\bolda) - \frac13  \log |g_\boldb/g_\bolda| \cdot \alpha((1-u_i) \wedge u_i)\Bigr) \\
& = -\Bigl(\sum_i m_i D(u_i)\Bigr) \darg (g_\boldb/g_\bolda) + \frac13  \log |g_\boldb/g_\bolda| \cdot \alpha(g_\bolda \wedge g_\boldb + g_\boldb \wedge g_\boldc + g_\boldc \wedge g_\bolda).
\end{align*}
Let us introduce the following notation for the regulator integral:
\begin{equation*}
\mathcal{G}(\bolda,\boldb) = \int_0^\infty r_3(2)(\tilde\xi(\bolda,\boldb)).
\end{equation*}
By \cite[Corollary 7.3]{Bru20}, this integral is absolutely convergent. To express $\mathcal{G}(\bolda,\boldb)$ as a triple iterated integral, a key idea is to cast the Bloch--Wigner function $D(z)$ as a primitive:
\begin{equation*}
d(D(z)) = \eta(z \wedge (1-z)), \qquad \text{where}\quad \eta(f \wedge g) = \log |f| \darg(g) - \log |g| \darg(f).
\end{equation*}
Then using \eqref{eq triangulation abc} we can write
\begin{equation} \label{eq primitive 3term}
d\Bigl(\sum_i m_i D(u_i)\Bigr) = \sum_i m_i \, \eta(u_i \wedge (1-u_i)) = \eta(g_\bolda \wedge g_\boldb + g_\boldb \wedge g_\boldc + g_\boldc \wedge g_\bolda).
\end{equation}

As we saw in the proof of Proposition \ref{pro K2 regulator}, the right-hand side of \eqref{eq primitive 3term} is an admissible form on $\imaxis$. Moreover, if $u$ is a modular unit such that $1-u$ is also a modular unit, then $\eta(u \wedge (1-u))$ is admissible and Lemma \ref{lem admissible primitive} implies that $D(u)$ is admissible. Actually $D(u(\tau))$ converges as $\tau \to \infty$ since $D$ is continuous on $\p^1(\C)$. So the regularised value of $D(u)$ at $\infty$ is simply $D(u(\infty))$, and Lemma \ref{lem int df infty} tells us that
\begin{equation*}
D(u(\tau)) = D(u(\infty)) - \int_\tau^\infty \eta(u,1-u).
\end{equation*}
Note that the form $D(u) \darg(g_\boldb/g_\bolda)$ is then admissible. Therefore, using \eqref{eq primitive 3term} the regulator integral can be written
\begin{equation*}
\mathcal{G}(\bolda,\boldb) = A_1 + A_2 + A_3,
\end{equation*}
where
\begin{align}
\label{eq Dinf term} A_1 & = - \sum_i m_i D(u_i(\infty)) \int_0^\infty \darg (g_\boldb/g_\bolda), \\
\label{eq A2 term} A_2 & = \int_0^\infty \darg (g_\boldb/g_\bolda)(\tau) \int_{\tau}^\infty \eta(g_\bolda \wedge g_\boldb + g_\boldb \wedge g_\boldc + g_\boldc \wedge g_\bolda), \\
\label{eq A3 term} A_3 & = \frac13 \int_0^\infty \log |g_\boldb/g_\bolda| \cdot \alpha(g_\bolda \wedge g_\boldb + g_\boldb \wedge g_\boldc + g_\boldc \wedge g_\bolda).
\end{align}
Similar arguments show that the integrand of $A_3$ is admissible on $\imaxis$.

\subsection{The $A_1$ term} \label{A1 term}
The explicit form of the triangulation \eqref{eq triangulation abc} is given by \cite[Theorem 4.3]{Bru20}:
\begin{align*}
&   \sum_i m_i \{u_i\}_2 = \frac{1}{N^2} \sum_{\boldx \in (\Z/N\Z)^2} \{u(\boldsymbol0,\boldx,\bolda-\boldx,\boldb+\boldx)\}_2 \\ &\quad
-\frac{1}{4N^4} \sum_{\boldx,\boldy \in (\Z/N\Z)^2} \big( \{u(\boldsymbol0,\bolda,\boldc+2\boldx,\boldy)\}_2 +\{u(\boldsymbol0,\boldc,\boldb+2\boldx,\boldy)\}_2 + \{u(\boldsymbol0,\boldb,\bolda+2\boldx,\boldy)\}_2 \big),
\end{align*}
which simplifies to
 \begin{equation*}
    \sum_i m_i \{u_i\}_2 = \frac{1}{N^2} \sum_{\boldx \in (\Z/N\Z)^2} \{u(\boldsymbol0,\boldx,\bolda-\boldx,\boldb+\boldx)\}_2
\end{equation*}
in the case $N$ is odd. By convention, in the above sums we keep only those terms $u(\boldx,\boldy,\boldz,\boldsymbol t)$ for which $\boldx,\boldy,\boldz,\boldsymbol t$ are distinct in $(\Z/N\Z)^2/\pm 1$. The same convention takes place below.

\begin{lem} \label{lem1 A1 term}
Let $\bolda,\boldb,\boldc \in (\Z/N\Z)^2$ such that $\bolda+\boldb+\boldc=\boldsymbol0$. Assume that all the coordinates of $\bolda,\boldb,\boldc$ are non-zero. Then
\begin{equation*}
\sum_{\boldx \in (\Z/N\Z)^2} D\big(u(\boldsymbol0,\boldx,\bolda-\boldx,\boldb+\boldx)(\infty)\big) = 0.
\end{equation*}
\end{lem}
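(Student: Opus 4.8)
My plan is to make the cusp value $u(\boldsymbol0,\boldx,\bolda-\boldx,\boldb+\boldx)(\infty)$ completely explicit and then exploit two elementary properties of the Bloch--Wigner dilogarithm: $D$ vanishes on $\p^1(\R)$ (in particular $D(0)=D(1)=D(\infty)=0$), and $D$ is odd under complex conjugation, $D(\bar z)=-D(z)$. Using the explicit description of the cross-ratio modular unit from \cite[Section 4]{Bru20}, I would write $u=g_{\boldu_1}g_{\boldu_2}/(g_{\boldu_3}g_{\boldu_4})$, where the four arguments $\boldu_i$ are differences of the four points, are affine-linear in $\boldx$, and satisfy the balance $\boldu_1+\boldu_2=\boldu_3+\boldu_4$. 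Reading the leading $q$-behaviour off \eqref{def gx}, each factor $g_\boldy$ contributes $q^{B_2(\{y_1\})/2}$ times a nonzero constant, this constant being $1$ when the first coordinate $y_1\neq0$ and $1-e(y_2)$ when $y_1=0$. Consequently $u(\infty)=\lim_{q\to0}q^{E/2}\prod_i(\cdots)$, where $E=B_2(\{u_{1,1}\})+B_2(\{u_{2,1}\})-B_2(\{u_{3,1}\})-B_2(\{u_{4,1}\})$ depends only on the first coordinates, hence only on $x_1$.

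The computation then splits according to $x_1$, since which of the first coordinates $u_{i,1}$ vanishes modulo $N$ is governed by $x_1$ alone. For generic $x_1$ none of the $u_{i,1}$ vanishes, every constant equals $1$, and $u(\infty)\in\{0,1,\infty\}$ according to the sign of $E$; thus $D(u(\infty))=0$ for every $x_2$. There are only a bounded number of exceptional residues $x_1$, each making exactly one $\boldu_i$ acquire vanishing first coordinate. On such a slice the value degenerates to $q^{E/2}\bigl(1-e(u_{i,2})\bigr)$ with $u_{i,2}$ an affine function of $x_2$: if $E\neq0$ this is $0$ or $\infty$ and $D$ again vanishes, while if $E=0$ the summand is $D(1-e(u_{i,2}))$. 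The key point is that each exceptional slice carries its own involution $x_2\mapsto x_2^{*}$ (a reflection of the type $x_2\mapsto-x_2$, $x_2\mapsto a_2-x_2$, $x_2\mapsto a_2-b_2-x_2$, $x_2\mapsto-2b_2-x_2$, chosen so that $u_{i,2}(x_2^{*})=-u_{i,2}(x_2)$) which sends $1-e(u_{i,2})$ to its complex conjugate. Since $D$ is odd under conjugation and $E$ is unchanged along the slice, the terms cancel in pairs, and the fixed points contribute $D(1-e(0))=D(0)=0$. Hence the inner sum over $x_2$ vanishes on every slice.

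It remains to treat the coincidences where two of the $\boldu_i$ acquire a zero first coordinate simultaneously. Here I would invoke the hypothesis that all coordinates of $\bolda$, $\boldb$ and $\boldc=-\bolda-\boldb$ are non-zero: a short case check shows that most coincidences are excluded, as they would force one of $a_1,b_1,c_1$ to vanish, and in the two surviving configurations one computes $E=\pm\,2t(1-t)$ with $t\in(0,1)$ (using $B_2(1-t)=B_2(t)$), so $E\neq0$, whence $u(\infty)\in\{0,\infty\}$ and $D$ vanishes termwise. Combining the three regimes, every inner sum over $x_2$ is zero, so the full sum over $\boldx$ vanishes.

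The principal difficulty I anticipate is bookkeeping rather than conceptual. One must pin down the precise cross-ratio normalisation of $u$ in \cite{Bru20} (including any root-of-unity factor, which is invisible to $D$ but shifts the argument of $1-e(\cdot)$), select the correct reflection $x_2\mapsto x_2^{*}$ slice by slice, and determine the sign of the exponent $E$ in the coincidence cases carefully. One also has to reconcile the pairing with the convention that only configurations whose four points are distinct in $(\Z/N\Z)^2/\pm1$ are retained: in a genuine single-vanishing slice the excluded residues of $x_2$ turn out to be precisely the fixed points of the relevant reflection (or points where $u$ degenerates to $0,1,\infty$), so the cancellation is unaffected. Throughout, the conceptual engine is simply $D(\bar z)=-D(z)$ together with the fact that complex conjugation on $\imaxis$ is induced by negating the elliptic parameter $x_2$.
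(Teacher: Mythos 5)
Your overall strategy --- make the cusp values explicit, kill most terms because $D$ vanishes on $\p^1(\R)$, and cancel the rest in conjugate pairs via $D(\bar z)=-D(z)$ --- is the same engine as the paper's proof, but there is a genuine gap at the step where you dispose of the ``generic'' slices. Your claim that when none of the first coordinates vanishes the leading constant equals $1$ (so $u(\infty)\in\{0,1,\infty\}$ and each term dies) is false. The correct expansion from \cite[Lemma 3.4]{Bru20} is the six-factor expression $u(\boldsymbol0,\boldx,\bolda-\boldx,\boldb+\boldx)=\Delta_{\hat\bolda-\hat\boldx}^2\Delta_{\hat\boldb}\Delta_{\hat\boldb+2\hat\boldx}\big/\big(\Delta_{\hat\bolda}\Delta_{\hat\bolda-2\hat\boldx}\Delta_{\hat\boldb+\hat\boldx}^2\big)$ (not a four-factor one), and each factor carries a prefactor $(-e(-v))^{\lfloor u\rfloor}$ coming from reducing the affine-linear arguments to representatives in $[0,1)$. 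These prefactors do not cancel in general: they produce cusp values that are nontrivial roots of unity depending on $\hat x_2$, on slices where no first coordinate vanishes and the $q$-order is zero. Concretely, take $N=6$ and $\bolda=\boldb=(1,1)$, and the slice $x_1=3$: all six first coordinates are nonzero mod $1$, the $q$-order is $B_2(\tfrac23)+B_2(\tfrac16)$ in both numerator and denominator, and a direct computation gives $u(\infty)=e(-2\hat x_2)$. The summands are then $D(e(\pm\tfrac13))=\pm\mathrm{Cl}_2(2\pi/3)\neq0$; they are individually nonzero, and their sum over $x_2$ vanishes only by the conjugate-pairing argument --- which your trichotomy never applies to this slice, since you classify it as generic and discard it termwise. (This is precisely the paper's exceptional value $\hat x_1=\tfrac12(\hat a_1-\hat b_1+1)$, whose constant term is $e(\hat a_2-\hat b_2-2\hat x_2)$.)

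Relatedly, your parenthetical that the root-of-unity normalisation is ``invisible to $D$'' is not correct --- $D$ is not invariant under multiplying its argument by a root of unity --- and the same prefactors also contaminate your single-vanishing slices: there the cusp value is a root of unity times $(1-e(u_{i,2}))$, so your reflection $x_2\mapsto x_2^{*}$ must be checked to conjugate that prefactor as well, not only the factor $1-e(u_{i,2})$. The paper handles both issues by first exploiting symmetries of the full sum (invariance under $(\bolda,\boldb,\boldc)\mapsto(-\bolda,-\boldb,-\boldc)$ and under cyclic permutations, antisymmetry under transpositions) to normalise $\hat a_1+\hat b_1+\hat c_1=1$ with $0<\hat a_1\le\hat b_1\le\hat c_1$; after this reduction the prefactors are $1$ on the two interval slices, and the only surviving contributions come from three isolated values of $\hat x_1$, each dispatched by $D(1-z)=D(1/z)=-D(z)$ together with conjugate pairing of roots of unity (with a separate parity discussion when $N$ is even). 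Your argument can be repaired along exactly these lines, but as written the generic-slice step fails, and it fails exactly where a substantive part of the cancellation actually takes place.
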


\begin{proof}
We write $\hat x$ for the representative of $x/N$, where $x\in\Z/N\Z$, on the interval $[0,1)$, so that $\hat x\in\frac1N\Z\cap[0,1)$. 
According to \cite[Lemma 3.4]{Bru20} we have
\begin{equation} \label{eq uabx}
u(\boldsymbol0,\boldx,\bolda-\boldx,\boldb+\boldx)=\frac{\Delta_{\hat\bolda-\hat\boldx}^2}{\Delta_{\hat\bolda}\Delta_{\hat\bolda-2\hat\boldx}}\,\frac{\Delta_{\hat\boldb} \Delta_{\hat\boldb+2\hat\boldx}}{\Delta_{\hat\boldb+\hat\boldx}^2},
\end{equation}
where
\[
\Delta_{u,v}=(-e(-v))^{\lfloor u\rfloor}q^{B_2(\{u\})/2}\big(1-e(v)\bold1_{u\in\Z}+O(q^{1/N})\big)
\quad\text{as}\; q\to0.
\]
We now collect relevant information for determining when the unit \eqref{eq uabx} has order 0 at $\infty$ and what is the corresponding constant term in the latter case.

For $0\le\hat a_1<1$ and $0\le\hat x_1<1$ we have
\[
\ord_q\frac{\Delta_{\hat\bolda-\hat\boldx}^2}{\Delta_{\hat\bolda}\Delta_{\hat\bolda-2\hat\boldx}}=\begin{cases}
-\hat x_1^2 & \text{if}\; 0\le\hat x_1\le\frac12\hat a_1, \\
-(1-\hat x_1)^2+1-\hat a_1 & \text{if}\; \frac12\hat a_1<\hat x_1\le\hat a_1, \\
-\hat x_1^2+\hat a_1  & \text{if}\; \hat a_1<\hat x_1\le\frac12+\frac12\hat a_1, \\
-(1-\hat x_1)^2 & \text{if}\; \frac12+\frac12\hat a_1<\hat x_1<1.
\end{cases}
\]
If moreover $\hat a_1,\hat a_1-\hat x_1,\hat a_1-2\hat x_1 \notin \Z$, we find
\[
\frac{(-e(-\hat a_2+\hat x_2))^{2\lfloor\hat a_1-\hat x_1\rfloor}}{(-e(-\hat a_2))^{\lfloor\hat a_1\rfloor}(-e(-\hat a_2+2\hat x_2))^{\lfloor\hat a_1-2\hat x_1\rfloor}}
=\begin{cases}
1 & \text{if}\; 0\le\hat x_1\le\frac12\hat a_1, \\
-e(-\hat a_2+2\hat x_2) & \text{if}\; \frac12\hat a_1<\hat x_1\le\hat a_1, \\
-e(\hat a_2)  & \text{if}\; \hat a_1<\hat x_1\le\frac12+\frac12\hat a_1, \\
e(2\hat x_2) & \text{if}\; \frac12+\frac12\hat a_1<\hat x_1<1.
\end{cases}
\]
Similarly, for $0\le\hat b_1<1$ and $0\le\hat x_1<1$ we have
\[
\ord_q\frac{\Delta_{\hat\boldb} \Delta_{\hat\boldb+2\hat\boldx}}{\Delta_{\hat\boldb+\hat\boldx}^2}=\begin{cases}
\hat x_1^2 & \text{if}\; 0\le\hat x_1<\frac12-\frac12\hat b_1, \\
(1-\hat x_1)^2-\hat b_1 & \text{if}\; \frac12-\frac12\hat b_1\le\hat x_1<1-\hat b_1, \\
\hat x_1^2-(1-\hat b_1)  & \text{if}\; 1-\hat b_1\le\hat x_1<1-\frac12\hat b_1, \\
(1-\hat x_1)^2 & \text{if}\; 1-\frac12\hat b_1\le\hat x_1<1.
\end{cases}
\]
If moreover $\hat b_1,\hat b_1+\hat x_1,\hat b_1+2\hat x_1 \notin \Z$, we obtain
\[
\frac{(-e(-\hat b_2))^{\lfloor\hat b_1\rfloor}(-e(-\hat b_2-2\hat x_2))^{\lfloor\hat b_1+2\hat x_1\rfloor}}{(-e(-\hat b_2-\hat x_2))^{2\lfloor\hat b_1+\hat x_1\rfloor}}=\begin{cases}
1 & \text{if}\; 0\le\hat x_1<\frac12-\frac12\hat b_1, \\
-e(-\hat b_2-2\hat x_2) & \text{if}\; \frac12-\frac12\hat b_1\le\hat x_1<1-\hat b_1, \\
-e(\hat b_2)  & \text{if}\; 1-\hat b_1\le\hat x_1<1-\frac12\hat b_1, \\
e(-2\hat x_2) & \text{if}\; 1-\frac12\hat b_1\le\hat x_1<1.
\end{cases}
\]

Our sum of interest is
\begin{equation*}
\Sigma(\bolda,\boldb,\boldc) = \sum_{\boldx \in (\Z/N\Z)^2} D\big(u(\boldsymbol0,\boldx,\bolda-\boldx,\boldb+\boldx)(\infty)\big).
\end{equation*}
Notice the following symmetries of the sum: it is invariant under $(\bolda,\boldb,\boldc) \mapsto (-\bolda,-\boldb,-\boldc)$ (as $u(\bolda,\boldb,\boldc,\boldd)$ is defined for indices in $(\Z/N\Z)^2/{\pm 1}$) and it is cyclic invariant.
The latter follows from changing the summation for $u(\boldsymbol0,\boldx,\bolda-\boldx,\boldb+\boldx)=u(\boldsymbol0,-\boldx,-\bolda+\boldx,\boldb+\boldx)$ to the one over $\boldy=\boldb+\boldx$ and using the definition of $u(\bolda,\boldb,\boldc,\boldd)$ as the cross-ratio of Weierstrass $\wp$-functions:
\begin{equation*}
\Sigma(\bolda,\boldb,\boldc)
= \sum_{\boldy \in (\Z/N\Z)^2} D\big(u(\boldsymbol0,\boldb-\boldy,\boldc+\boldy,\boldy)(\infty)\big)
= \sum_{\boldy \in (\Z/N\Z)^2} D\big(u(\boldsymbol0,\boldy,\boldb-\boldy,\boldc+\boldy)(\infty)\big)
= \Sigma(\boldb,\boldc,\bolda).
\end{equation*}
For similar reasons $\Sigma(\bolda,\boldb,\boldc)$ is antisymmetric under transpositions:
\begin{align*}
\Sigma(\bolda,\boldb,\boldc)
&= \sum_{\boldx \in (\Z/N\Z)^2} D\big(1-u(\boldsymbol0,\bolda-\boldx,\boldx,\boldb+\boldx)(\infty)\big)
= -\sum_{\boldx \in (\Z/N\Z)^2} D\big(u(\boldsymbol0,\bolda-\boldx,\boldx,-\boldb-\boldx)(\infty)\big)
\\
&= -\sum_{\boldy \in (\Z/N\Z)^2} D\big(u(\boldsymbol0,\boldy,\bolda-\boldy,\boldc+\boldy)(\infty)\big)
= -\Sigma(\bolda,\boldc,\boldb).
\end{align*}
Recall that $\hat a_1,\hat b_1,\hat c_1$ are the representatives of $a_1/N,b_1/N,c_1/N$ in the interval $(0,1)$.
After possibly replacing $(\bolda,\boldb,\boldc)$ by $(-\bolda,-\boldb,-\boldc)$ we may assume that $\hat a_1+\hat b_1+\hat c_1=1$; furthermore, since our goal is to demonstrate that $\Sigma(\bolda,\boldb,\boldc)=0$, after possibly permuting $\bolda,\boldb,\boldc$ we may assume that $0<\hat a_1\le\hat b_1\le\hat c_1<1$. Then we get
\[
0<\tfrac12\hat a_1<\hat a_1\le\tfrac12-\tfrac12\hat b_1<\tfrac12+\tfrac12\hat a_1\le1-\hat b_1<1-\tfrac12\hat b_1<1,
\]
so that
\begin{align*}
\ord_q u(\boldsymbol0,\boldx,\bolda-\boldx,\boldb+\boldx)
&=\begin{cases}
0 & \text{if}\; 0\le\hat x_1\le\frac12\hat a_1, \\
2\hat x_1-\hat a_1 \ne 0 & \text{if}\; \frac12\hat a_1<\hat x_1\le\hat a_1, \\
\hat a_1 \ne 0 & \text{if}\; \hat a_1<\hat x_1<\frac12-\frac12\hat b_1, \\
\hat a_1-\hat b_1+1-2\hat x_1  & \text{if}\; \frac12-\frac12\hat b_1\le\hat x_1\le\frac12+\frac12\hat a_1, \\
-\hat b_1 \ne 0 & \text{if}\; \frac12+\frac12\hat a_1<\hat x_1<1-\hat b_1, \\
\hat b_1-2(1-\hat x_1) \ne 0 & \text{if}\; 1-\hat b_1\le\hat x_1<1-\frac12\hat b_1, \\
0 & \text{if}\; 1-\frac12\hat b_1\le\hat x_1<1.
\end{cases}
\end{align*}
This means that $\ord_q u(\boldsymbol0,\boldx,\bolda-\boldx,\boldb+\boldx)=0$ iff $\hat x_1\in[0,\frac12\hat a_1]\cup\{\frac12(\hat a_1-\hat b_1+1)\}\cup[1-\frac12\hat b_1,1)$.
Furthermore, the constant term of $u(\boldsymbol0,\boldx,\bolda-\boldx,\boldb+\boldx)$ is equal to~$1$ for $\hat x_1\in[0,\frac12\hat a_1)\cup(1-\frac12\hat b_1,1)$, and it is
\[
\begin{cases}
1/(1-e(\hat a_2-2\hat x_2)) & \text{if}\; \hat x_1=\frac12\hat a_1, \\
e(\hat a_2-\hat b_2-2\hat x_2) & \text{if}\; \hat x_1=\frac12(\hat a_1-\hat b_1+1), \\
1-e(\hat b_2+2\hat x_2) & \text{if}\; \hat x_1=1-\frac12\hat b_1.
\end{cases}
\]
No matter whether these values of $\hat x_1$ are in $\frac{1}{N}\Z$ or not, using the relations $D(1-x) = D(1/x) = -D(x)$ we see that the resulting sums over $\hat x_2 \in \frac{1}{N}\Z/\Z$ vanish. For example,
\begin{equation*}
\sum_{\hat x_2\in\frac1N\Z/\Z}D\big(1/(1-e(\hat a_2-2\hat x_2))\big) = \sum_{\hat x_2\in\frac1N\Z/\Z}D\big(e(\hat a_2-2\hat x_2)\big) = 0,
\end{equation*}
because the latter sum involves pairs of complex conjugate roots of unity, apart from possibly $\pm1$, and $D(\overline x)=-D(x)$.
Therefore, $\Sigma(\bolda,\boldb,\boldc)=0$.
\end{proof}

\begin{lem} \label{lem Dsum aux}
Let $N>1$ be an integer, and let $u \neq 1$ be an $N$-th root of unity. Then
\begin{equation*}
\sum_{v:v^N=1} D\Bigl(\frac{1-v}{1-u}\Bigr) = \frac{N}{2} D(u).
\end{equation*}
\end{lem}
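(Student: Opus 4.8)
The plan is to prove an exact formula valid for $u$ anywhere on the unit circle and then specialise to roots of unity, where a correction term conveniently vanishes. Write $u = e^{i\alpha}$ with $\alpha \in (0,2\pi)$, keep the $N$-th roots of unity $v = e^{2\pi i j/N}$ fixed, and set $S(\alpha) = \sum_{v^N=1} D\bigl(\frac{1-v}{1-u}\bigr)$ (the term $v=1$ contributes $D(0)=0$). Since $D$ extends continuously to $\p^1(\C)$ with $D(0)=D(1)=D(\infty)=0$, the function $S$ is continuous on $(0,2\pi)$. I will show that $S(\alpha) = \frac N2\operatorname{Cl}_2(\alpha) - \frac1{2N}\operatorname{Cl}_2(N\alpha)$, where $\operatorname{Cl}_2(\theta) = \sum_{n\ge1} \sin(n\theta)/n^2 = -\int_0^\theta \log|2\sin(t/2)|\,dt$ is the Clausen function, so that $D(e^{i\theta}) = \operatorname{Cl}_2(\theta)$.

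The key step is the derivative computation. Fix $v\ne 1$ and put $z = z_v(\alpha) = \frac{1-v}{1-u}$, so that $1 - z = \frac{v-u}{1-u}$. Using the relation $d(D(z)) = \log|z|\,\darg(1-z) - \log|1-z|\,\darg(z)$ recalled in Section~\ref{G-tmv}, together with the elementary facts $\frac{d}{d\alpha}\arg(1 - e^{i\alpha}) = \frac12$ and $\frac{d}{d\alpha}\arg(v - e^{i\alpha}) = \frac12$ (valid whenever $u\neq v$), the two contributions to $\frac{d}{d\alpha}\arg(1-z) = \frac{d}{d\alpha}[\arg(v-u)-\arg(1-u)]$ cancel, while $\frac{d}{d\alpha}\arg z = \frac{d}{d\alpha}[\arg(1-v)-\arg(1-u)] = -\frac12$. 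Hence $\frac{d}{d\alpha} D(z_v) = \frac12\log|1-z_v| = \frac12\log\frac{|v-u|}{|1-u|}$. Summing over all $v$ and using $\prod_{v^N=1}(u-v) = u^N-1$ gives $S'(\alpha) = \frac12\log|e^{iN\alpha}-1| - \frac N2\log|1-e^{i\alpha}|$.

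It then remains to integrate from the base point $\alpha\to 0^+$. Since every nonzero argument $\frac{1-v}{1-u}$ tends to $\infty$ as $u\to1$, continuity of $D$ at $\infty$ gives $S(0^+)=0$. The primitives $\int_0^\alpha \log|1-e^{it}|\,dt = -\operatorname{Cl}_2(\alpha)$ and $\int_0^\alpha \log|e^{iNt}-1|\,dt = -\frac1N\operatorname{Cl}_2(N\alpha)$ then yield the claimed closed form for $S(\alpha)$. Finally, specialising to $u = e^{2\pi i k/N}$, i.e.\ $\alpha = 2\pi k/N$, we have $\operatorname{Cl}_2(N\alpha)=\operatorname{Cl}_2(2\pi k)=0$, so $S = \frac N2\operatorname{Cl}_2(2\pi k/N) = \frac N2 D(u)$, which is the assertion.

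The main obstacle is analytic rather than algebraic: one must justify term-by-term differentiation and the fundamental theorem of calculus across the finitely many angles $\alpha = 2\pi j/N$ where, for the single index $v = e^{2\pi i j/N}$, the argument $z_v$ passes through $1$. At such a point $\arg(v-u)$ jumps but $D(z_v)$ stays continuous (indeed $D(1)=0$ and $D$ is single-valued), and the derivative acquires only an integrable logarithmic singularity from $\log|v-u|$; thus $S$ is continuous with locally integrable derivative $S'$, and the integration above is legitimate. I would spell out this point carefully, emphasising that it is precisely the nonvanishing correction $-\frac1{2N}\operatorname{Cl}_2(N\alpha)$ off the roots of unity that makes the identity special to the case $u^N=1$.
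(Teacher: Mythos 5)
Your proof is correct, but it takes a genuinely different route from the paper's. The paper argues purely algebraically: it applies the five-term relation for $D$ to the quintuple $(\infty,0,1,v,u)$ and simplifies using $D(1-x)=D(1/x)=D(\bar{x})=-D(x)$ together with $u^{-1}=\bar{u}$, $v^{-1}=\bar{v}$, so that each five-term relation collapses to $2D\bigl(\tfrac{1-v}{1-u}\bigr)=D(u)-D(v)-D(u/v)$; summing over $v$ and using the vanishing of $\sum_{v^N=1}D(v)$ and $\sum_{v^N=1}D(u/v)$ (conjugate roots of unity pair off) gives the lemma in a few lines. You instead parametrise $u=e^{i\alpha}$ and differentiate in $\alpha$: the cancellation $\tfrac{d}{d\alpha}\arg(1-z_v)=0$ together with $\prod_{v^N=1}(u-v)=u^N-1$ yields $S'(\alpha)=\tfrac12\log|e^{iN\alpha}-1|-\tfrac{N}{2}\log|1-e^{i\alpha}|$, and integrating from the base point $S(0^+)=0$ (legitimate, as you note, because each $D(z_v)$ is continuous across the finitely many angles where $z_v=1$ and $S'$ has only integrable logarithmic singularities there) gives the stronger identity
\[
\sum_{v:v^N=1} D\Bigl(\frac{1-v}{1-u}\Bigr) \;=\; \frac{N}{2}\,\mathrm{Cl}_2(\alpha)-\frac{1}{2N}\,\mathrm{Cl}_2(N\alpha)\;=\;\frac{N}{2}\,D(u)-\frac{1}{2N}\,D(u^N),
\]
valid for \emph{every} $u\neq1$ on the unit circle, of which the lemma is the specialisation $u^N=1$. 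I verified your derivative computation, the evaluation of the base point (each $z_v\to\infty$ and $D(\infty)=0$), and the closed form (e.g.\ for $N=2$ it reduces to the known duplication formula for $\mathrm{Cl}_2$), so the argument stands. The trade-off: the paper's proof is shorter and needs nothing beyond the functional equations of $D$, while yours avoids the five-term relation altogether, is more elementary in that sense, and yields a more general statement that isolates the correction term $-\tfrac{1}{2N}D(u^N)$ measuring exactly how the identity fails off the roots of unity --- at the cost of the analytic bookkeeping that you correctly identify and handle.
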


\begin{proof}
We use the $5$-term relation for the Bloch--Wigner dilogarithm with the quintuple $(\infty,0,1,\allowbreak v,u)$:
\begin{equation*}
D(v)  + D\Bigl(\frac{1-u^{-1}}{1-v^{-1}}\Bigr) + D\Bigl(\frac{u-1}{u-v}\Bigr) + D\Bigl(\frac{u}{v}\Bigr) + D\Bigl(\frac{-u}{1-u}\Bigr) = 0.
\end{equation*}
Using the relations $D(1-x) = D(1/x) = D(\bar{x}) = -D(x)$ as well as $u^{-1}=\bar{u}$ and $v^{-1} = \bar{v}$, this can be written
\begin{equation*}
D(v)  + D\Bigl(\frac{1-v}{1-u}\Bigr) + D\Bigl(\frac{1-v}{1-u}\Bigr) + D\Bigl(\frac{u}{v}\Bigr) - D(u) = 0.
\end{equation*}
Summing over $v \ne 1,u$ and using the relation $\sum_{v:v^N = 1} D(v) = 0$, we deduce the required result.
\end{proof}

\begin{lem} \label{lem2 A1 term}
Let $\bolda,\boldc \in (\Z/N\Z)^2$ and the coordinates of $\bolda$ non-zero. Then the double sum
\begin{equation} \label{eq double sum}  
\sum_{\boldx,\boldy \in (\Z/N\Z)^2} D\big(u(\boldsymbol0,\bolda,\boldc+2\boldx,\boldy)(\infty)\big)
\end{equation}
vanishes.
\end{lem}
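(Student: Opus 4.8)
The plan is to follow the template of the proof of Lemma~\ref{lem1 A1 term}: read off the order at infinity and the constant term of the modular unit $u(\boldsymbol0,\bolda,\boldc+2\boldx,\boldy)$ from its expression as a monomial in the functions $\Delta_{u,v}$, discard the configurations where the resulting dilogarithm value is automatically $0$, and evaluate the surviving inner sums over the second coordinates by means of roots of unity. First I would invoke \cite[Lemma 3.4]{Bru20} exactly as in \eqref{eq uabx}. Since the first entry $\boldsymbol0$ is a pole of $\wp$, the cross-ratio collapses to $u(\boldsymbol0,\bolda,\boldc+2\boldx,\boldy)=\frac{\wp(\boldy)-\wp(\bolda)}{\wp(\boldc+2\boldx)-\wp(\bolda)}$, which in $\Delta$-notation takes the shape
\[
u(\boldsymbol0,\bolda,\boldc+2\boldx,\boldy)=\frac{\Delta_{\widehat{\boldy-\bolda}}\,\Delta_{\widehat{\boldy+\bolda}}\,\Delta_{\hat\boldd}^2}{\Delta_{\widehat{\boldd-\bolda}}\,\Delta_{\widehat{\boldd+\bolda}}\,\Delta_{\hat\boldy}^2},\qquad \boldd=\boldc+2\boldx.
\]

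Using $\ord_q \Delta_{u,v}=B_2(\{u\})/2$, the order at infinity of this unit is an explicit piecewise-constant function of $\hat y_1$ (for fixed $\hat a_1$ and $\hat d_1=\widehat{c_1+2x_1}$), with jumps only at $\hat y_1\in\{0,\hat a_1,1-\hat a_1\}$. On the open sub-intervals where this order is positive (resp.\ negative) the constant term is $0$ (resp.\ $\infty$), so $D$ vanishes there because $D(0)=D(\infty)=0$; only the sub-intervals of order $0$ and the boundary values $\hat y_1\in\{0,\hat a_1,1-\hat a_1\}$ (where one of the first indices becomes integral) can contribute. At a boundary value the relevant $\Delta_{u,v}$ acquires its factor $1-e(v)$, so the constant term of $u$ is a root-of-unity expression of the form $\frac{1-e(\alpha)}{1-e(\beta)}$ with exactly one of $\alpha,\beta$ depending linearly on the free second coordinate $y_2$.

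Next, for each such contributing $\hat y_1$ I would carry out the sum over $y_2\in\Z/N\Z$. This is precisely the setting of Lemma~\ref{lem Dsum aux} together with $\sum_{v:v^N=1}D(v)=0$: the sum $\sum_{y_2}D\big(\tfrac{1-v}{1-u}\big)$, where $v$ runs over the $N$th roots of unity as $y_2$ varies, collapses to $\tfrac{N}{2}D(\zeta^{h(\boldx)})$ for an explicit root of unity $\zeta^{h(\boldx)}$ whose argument $h(\boldx)$ depends only on the second coordinate $x_2$ (and on $a_2,c_2$). Finally I would perform the remaining sum over $\boldx$: summing $D(\zeta^{h(\boldx)})$ over $x_2$ with $x_1$ fixed runs over a full coset of roots of unity, so it vanishes either directly by $\sum_{v:v^N=1}D(v)=0$ or, after pairing $x_2$ with $-x_2$, by the conjugation relation $D(\bar z)=-D(z)$; combined with $D(1-z)=D(1/z)=-D(z)$ this forces the whole double sum to be $0$.

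The main obstacle is the second step: the piecewise determination, across all sub-ranges of $\hat x_1$ and $\hat y_1$, of exactly where the order at infinity vanishes and which root-of-unity value the constant term then takes. This bookkeeping is strictly heavier than in Lemma~\ref{lem1 A1 term}, both because of the extra summation variable and because the entry $2\boldx$ behaves differently for $N$ even and odd. It is also the reason the auxiliary Lemma~\ref{lem Dsum aux} is genuinely needed here: in contrast to the single-sum case, the inner $y_2$-sums do \emph{not} vanish on their own but leave a residual $\tfrac{N}{2}D(\cdot)$ that is only annihilated by the outer summation over $\boldx$.
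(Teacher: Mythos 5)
Your proposal is correct and is essentially the paper's own argument: the same separation of variables, writing the unit as $\mathcal{E}(\boldc+2\boldx,\bolda)/\mathcal{E}(\boldy,\bolda)$ with $\mathcal{E}(\boldz,\bolda)=\Delta_{\hat\boldz}^2/(\Delta_{\hat\boldz+\hat\bolda}\Delta_{\hat\boldz-\hat\bolda})$ (the paper states this factorisation directly, you derive it from the degeneration of the cross-ratio at the pole of $\wp$); the same reduction to order-zero configurations via $D(0)=D(\infty)=0$; the same evaluation of the inner $y_2$-sums by Lemma~\ref{lem Dsum aux} together with $\sum_{v:v^N=1}D(v)=0$; and the same annihilation of the residual $\tfrac{N}{2}D(\cdot)$ terms by the outer sum over $x_2$, with the even-$N$ complication caused by $2\boldx$ handled by pairing conjugate roots of unity, exactly as in the paper. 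The only discrepancy is presentational: the paper's constant-term bookkeeping is a seven-case table in which some contributing configurations (e.g.\ $\hat z_1=0$, or $\hat a_1=\hat z_1=\tfrac12$) have \emph{real} constant terms, so $D$ vanishes outright rather than via your ratio form $(1-e(\alpha))/(1-e(\beta))$, but this does not affect the strategy or its correctness.
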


\begin{proof}
To compute the double sum \eqref{eq double sum} notice that
\[
u(\boldsymbol0,\bolda,\boldz,\boldy)=\frac{\mathcal E(\boldz,\bolda)}{\mathcal E(\boldy,\bolda)}
\]
where $\mathcal E(\boldz,\bolda)=\Delta_{\hat\boldz}^2/(\Delta_{\hat\boldz+\hat\bolda}\Delta_{\hat\boldz-\hat\bolda})$,
and the sum can be rearranged to run over $\boldz,\boldy$.
Notice that this rearrangement affects the summation on $\boldz=(z_1,z_2)$ in the case of even $N$, because it becomes $4$ times a sum over $\boldz\in(\Z/N\Z)^2$ subject to the congruence conditions $z_1\equiv c_1$, $z_2\equiv c_2\bmod2$.
Changing $\bolda$ into $-\bolda$ does not change the modular unit $u(\boldsymbol0,\bolda,\boldc+2\boldx,\boldy)$, hence we can assume that the representative $\hat a_1$ of $a_1/N$ satisfies $0 < \hat a_1 \le \frac12 \le 1-\hat a_1 < 1$.

With $0\le\hat z_1<1$ we obtain
\[
\ord_q\mathcal E(\boldz,\bolda)
=\hat a_1(1-\hat a_1)-\min\{\hat a_1,1-\hat a_1,\hat z_1,1-\hat z_1\}
=\hat a_1(1-\hat a_1)-\min\{\hat a_1,\hat z_1,1-\hat z_1\},
\]
while the leading coefficient of $\mathcal E(\boldz,\bolda)$ is equal to
\[
\begin{cases}
\text{(a)}\;\; -(1-e(\hat z_2))^2/e(\hat z_2-\hat a_2)=4e(\hat a_2)\sin^2(\pi\hat z_2) & \text{if}\; \hat z_1=0, \\    
\text{(b)}\;\; -1/e(\hat z_2-\hat a_2)=-e(\hat a_2)e(-\hat z_2) & \text{if}\; 0<\hat z_1<\hat a_1, \\    
\text{(c)}\;\; -1/e(-\hat z_2-\hat a_2)=-e(\hat a_2)e(\hat z_2) & \text{if}\; 0<1-\hat z_1<\hat a_1, \\    
\text{(d)}\;\; 1/(1-e(\hat z_2-\hat a_2)) & \text{if}\; \hat z_1=\hat a_1<1-\hat a_1=1-\hat z_1, \\    
\text{(e)}\;\; 1/(1-e(-\hat z_2-\hat a_2)) & \text{if}\; 1-\hat z_1=\hat a_1<1-\hat a_1=\hat z_1, \\    
\text{(f)}\;\; 1 & \text{if}\; \hat a_1<\min\{\hat z_1,1-\hat z_1\},
\\
\text{(g)}\;\; \frac12e(\hat a_2)/(\cos(2\pi\hat a_2) - \cos(2\pi\hat z_2)) & \text{if}\; \hat a_1=\hat z_1=\frac12
\end{cases}
\]
(the case $1-\hat a_1<\min\{\hat z_1,1-\hat z_1\}$ is excluded from the consideration because $\hat a_1\le\frac12$).
We now want to control when the terms $\mathcal E(\boldz,\bolda)/\mathcal E(\boldy,\bolda)$ in the sum \eqref{eq double sum} have constant terms, that is, when
\begin{equation} \label{eq min=min}
\min\{\hat a_1,\hat z_1,1-\hat z_1\}
=\min\{\hat a_1,\hat y_1,1-\hat y_1\}.
\end{equation}
For each of these situations, call them $(\text{r}_z)\times(\text{s}_y)$ with $\text{r},\text{s}\in\{\text{a},\dots,\text{g}\}$, we want to compute a related sum of the dilogarithms of the products of corresponding constant terms over $\hat z_2,\hat y_2\in\frac1N\Z/\Z$.
Because of condition \eqref{eq min=min}, the case $(\text{a}_z)$ occurs if only $(\text{a}_y)$ occurs, and vice versa;
the corresponding sum of $D(\sin^2(\pi\hat z_2)/\sin^2(\pi\hat y_2))$ over $\hat z_2,\hat y_2\in\frac1N\Z/\Z$ vanishes, because each term is zero (as $D(x)=0$ for $x\in\R$).
Similarly, the case $(\text{g}_z)$ exclusively pairs up with $(\text{g}_y)$, and the dilogarithm arguments are real-valued for this combination as well, leading to the zero value for the sum in question.
The case $(\text{f}_z)$ may only pair up with $(\text{f}_y)$, in which case the sum of $D(1)$ terms is void, or with $(\text{d}_y)$ or $(\text{e}_y)$.
If one of the latter situations occur, for instance $(\text{d}_y)$, we can write our sum as
\[
\sideset{}{'}\sum_{\hat z_2,\hat y_2\in\frac1N\Z/\Z}D(1-e(\hat y_2-\hat a_2))
= -\sideset{}{'}\sum_{\hat z_2,\hat y_2\in\frac1N\Z/\Z}D(e(\hat y_2-\hat a_2))
= -\sideset{}{'}\sum_{\hat z_2\in\frac1N\Z/\Z}\sum_{\hat t\in\frac1N\Z/\Z}D(e(\hat t)),
\]
where $\sum'_{\hat z_2}$ means that we sum over $\hat z_2$ under the constraint $z_2\equiv c_2\bmod2$ if $N$ is even.
The double sum then vanishes because the sum over $\hat t$ does.
Similarly, the case $(\text{f}_y)$ pairs up with $(\text{f}_z)$ (which we already discussed), or with $(\text{d}_z)$ or $(\text{e}_z)$, and we argue as above using the summation
\[
\sideset{}{'}\sum_{\hat z_2\in\frac1N\Z/\Z}D(e(\pm\hat z_2-\hat a_2))=0
\]
followed from
\begin{equation} \label{eq even and odd sum}
\sum_{\hat t\in\frac2N\Z/\Z}D(e(\hat t))
=\sum_{\hat t\in\frac1N+\frac2N\Z/\Z}D(e(\hat t))
=0
\end{equation}
in the case of even $N$ (because conjugate roots of unity $e(\hat t)$ and $e(-\hat t)$, when different from $\pm1\in\R$, combine).
Furthermore, the situations $(\text{d}_z)\times(\text{d}_y)$, $(\text{d}_z)\times(\text{e}_y)$, $(\text{e}_z)\times(\text{d}_y)$ and $(\text{e}_z)\times(\text{e}_y)$ are all treated with the help of Lemma~\ref{lem Dsum aux} applied to the summation over $\hat y_2\in\frac1N\Z/\Z$ and the external summation $\sum'_{\hat z_2\in\frac1N\Z/\Z}$ is performed on the basis of~\eqref{eq even and odd sum} if $N$ is even.
Finally, the cases $(\text{b}_z)$, $(\text{c}_z)$ may only pair up with $(\text{b}_y)$, $(\text{c}_y)$ in view of condition~\eqref{eq min=min}, and we obtain the sum
\[
\sideset{}{'}\sum_{\hat z_2,\hat y_2\in\frac1N\Z/\Z}D(e(\pm\hat z_2\pm\hat y_2))
\]
for an appropriate choice of both `$\pm$', again a vanishing sum.
\end{proof}

Consequently, Lemmas \ref{lem1 A1 term} and \ref{lem2 A1 term} imply the following.

\begin{pro} \label{pro A1 term}
We have $A_1=0$.
\end{pro}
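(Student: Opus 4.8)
The plan is to reduce the vanishing of $A_1$ to the two lemmas just established. In \eqref{eq Dinf term} the factor $\int_0^\infty \darg(g_\boldb/g_\bolda)$ is a fixed (possibly nonzero) number, so it suffices to prove that the dilogarithmic coefficient $\sum_i m_i D(u_i(\infty))$ vanishes identically.

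First I would insert the explicit triangulation recalled at the beginning of this subsection and apply $D(\,\cdot\,(\infty))$ termwise. The coefficient then becomes
\[
\frac{1}{N^2}\,\Sigma(\bolda,\boldb,\boldc)
-\frac{1}{4N^4}\sum_{\boldx,\boldy\in(\Z/N\Z)^2}\big(D(u(\boldsymbol0,\bolda,\boldc+2\boldx,\boldy)(\infty))+D(u(\boldsymbol0,\boldc,\boldb+2\boldx,\boldy)(\infty))+D(u(\boldsymbol0,\boldb,\bolda+2\boldx,\boldy)(\infty))\big),
\]
where $\Sigma(\bolda,\boldb,\boldc)$ denotes the single sum of Lemma \ref{lem1 A1 term}. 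By that lemma, $\Sigma(\bolda,\boldb,\boldc)=0$, since all coordinates of $\bolda,\boldb,\boldc$ are assumed non-zero. It then remains to show that each of the three double sums vanishes. For the first this is exactly Lemma \ref{lem2 A1 term}. The second and third are obtained from the first by the relabellings $(\bolda,\boldc)\mapsto(\boldc,\boldb)$ and $(\bolda,\boldc)\mapsto(\boldb,\bolda)$; since the hypothesis of Lemma \ref{lem2 A1 term} constrains only the index occupying the second slot (the one right after $\boldsymbol0$), and $\boldc$, $\boldb$ have non-zero coordinates by assumption, the lemma applies verbatim to each after relabelling. Hence all three double sums vanish, the whole coefficient is zero, and therefore $A_1=0$.

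The step that deserves the most care is precisely this last matching: one must confirm that Lemma \ref{lem2 A1 term}, stated for the single configuration $u(\boldsymbol0,\bolda,\boldc+2\boldx,\boldy)$, genuinely covers the two cyclically permuted configurations as well. This is harmless here because the proof of that lemma exploits only the non-vanishing of the coordinates of the index in the second slot, while the summations over $\boldc+2\boldx$ (equivalently $\boldz$) and over $\boldy$ are full, up to the even-$N$ congruence bookkeeping already handled inside the lemma. For odd $N$ the triangulation collapses to its single-sum term, and Lemma \ref{lem1 A1 term} alone suffices.
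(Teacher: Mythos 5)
Your proposal is correct and follows essentially the same route as the paper: the paper's own proof is exactly the assembly you spell out, namely that the single sum in the triangulation vanishes by Lemma \ref{lem1 A1 term} while each of the three double sums vanishes by Lemma \ref{lem2 A1 term}, so the dilogarithmic coefficient in \eqref{eq Dinf term} is identically zero. Your relabelling step is legitimate for precisely the reason you give: Lemma \ref{lem2 A1 term} only constrains the second-slot index to have non-zero coordinates, and $\bolda$, $\boldb$, $\boldc$ all satisfy this under the standing assumption.
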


Though proving that $A_1$ vanishes is surprisingly involved, we do not exclude intrinsic reasons behind this degeneracy.

\subsection{The $A_2$ term} \label{A2 term}

We now deal with the $A_2$ term \eqref{eq A2 term}.

\begin{lem} \label{lem A2 term}
If the coordinates of $\boldx,\boldy,\boldz \in (\Z/N\Z)^2$ are non-zero, then
\begin{equation} \label{eq lem A2 term}
\int_0^\infty \darg g_\boldx(\tau) \int_\tau^\infty \eta(g_\boldy, g_\boldz) = \Lambda^{--+}(\boldx,\boldy,\boldz) - \Lambda^{--+}(\boldx,\boldz,\boldy).
\end{equation}
\end{lem}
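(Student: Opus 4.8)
The plan is to unfold everything into iterated integrals of the one-forms $\omega^{\pm}$ and then recognise the result as a difference of $\Lambda^{--+}$-values. The key input is that, since all coordinates of $\boldy$ and $\boldz$ are non-zero, the regularised constants $R_\boldy$ and $R_\boldz$ vanish: by the constant term in \eqref{def log gx} these are supported on $\{y_1=0\}$ and $\{z_1=0\}$ respectively. Hence Lemma~\ref{lem int df infty} (applied to $\dlog|g_\boldy|=\omega_\boldy^+$ and $\dlog|g_\boldz|=\omega_\boldz^+$, which are admissible by Lemmas~\ref{lem admissibility Ekx} and~\ref{lem dlog E2}) gives $\log|g_\boldy|(\tau)=-\int_\tau^\infty\omega_\boldy^+$ and $\log|g_\boldz|(\tau)=-\int_\tau^\infty\omega_\boldz^+$. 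Admissibility of $\eta(g_\boldy,g_\boldz)$ and of the nested integrand then follows exactly as in the proof of Proposition~\ref{pro K2 regulator}, so all the regularised integrals below are well-defined.

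The heart of the matter is the inner integral. Writing $\eta(g_\boldy,g_\boldz)=\log|g_\boldy|\,\omega_\boldz^- - \log|g_\boldz|\,\omega_\boldy^-$ and substituting the primitives, I would compute
\[
\int_\tau^\infty \eta(g_\boldy,g_\boldz)
= -\int_\tau^\infty\Bigl(\int_{\tau_1}^\infty\omega_\boldy^+\Bigr)\omega_\boldz^-(\tau_1)
+\int_\tau^\infty\Bigl(\int_{\tau_1}^\infty\omega_\boldz^+\Bigr)\omega_\boldy^-(\tau_1)
= -\int_\tau^\infty \omega_\boldz^-\omega_\boldy^+ + \int_\tau^\infty \omega_\boldy^-\omega_\boldz^+,
\]
where the last equality uses Proposition~\ref{main pro} to read the one-variable nested expressions as regularised iterated integrals. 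The point requiring care is the ordering convention \eqref{notation practical reg int}: the form evaluated at the outer variable $\tau_1$ is the \emph{first} slot, while $\int_{\tau_1}^\infty\omega_\boldy^+$ ranges over the inner variable and lands in the \emph{second} slot, so the term $\log|g_\boldy|\,\omega_\boldz^-$ produces $-\int_\tau^\infty\omega_\boldz^-\omega_\boldy^+$ and not $-\int_\tau^\infty\omega_\boldy^+\omega_\boldz^-$.

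Finally, I would fold in the outer form $\darg g_\boldx=\omega_\boldx^-$. By Proposition~\ref{pro int 0 infty}, prepending $\omega_\boldx^-$ to a length-two integral from $\tau$ to $\infty$ and integrating from $0$ to $\infty$ yields the length-three integral from $0$ to $\infty$, giving
\[
\int_0^\infty \darg g_\boldx(\tau)\int_\tau^\infty \eta(g_\boldy,g_\boldz)
= -\int_0^\infty \omega_\boldx^-\omega_\boldz^-\omega_\boldy^+ + \int_0^\infty \omega_\boldx^-\omega_\boldy^-\omega_\boldz^+
= \Lambda^{--+}(\boldx,\boldy,\boldz)-\Lambda^{--+}(\boldx,\boldz,\boldy),
\]
which is \eqref{eq lem A2 term}. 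The only genuine obstacle is the combinatorial bookkeeping of the slot ordering in the two nested reductions; everything else is a direct application of the regularisation lemmas, with the vanishing of $R_\boldy,R_\boldz$ ensuring there are no boundary contributions from the primitives.
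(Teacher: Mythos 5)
Your proposal is correct and follows essentially the same route as the paper: expand $\eta(g_\boldy,g_\boldz)$, use Lemma~\ref{lem int df infty} together with the vanishing of the regularised value $\log|g_\boldy|(\infty)=\log|g_\boldz|(\infty)=0$ (which holds precisely because $y_1,z_1\neq 0$) to write the logarithms as $-\int_\tau^\infty\omega^+$, and then read off the resulting nested expressions as regularised iterated integrals. Your extra care with the slot-ordering convention and the appeal to Propositions~\ref{main pro} and~\ref{pro int 0 infty} simply makes explicit what the paper leaves implicit in its final "which implies" step.
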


\begin{proof}
We expand $\eta(g_\boldy, g_\boldz)$ using just the definition. The form $\dlog |g_\boldy|$ is admissible, so Lemma~\ref{lem int df infty} implies
\begin{equation*}
\log |g_\boldy(\tau_1)| = \log |g_\boldy|(\infty) - \int_{\tau_1}^\infty \dlog |g_\boldy|.
\end{equation*}
Noting that $\log |g_\boldy|(\infty) = 0$ here, this leads to
\begin{equation*}
\eta(g_\boldy, g_\boldz)(\tau_1) = - \darg g_\boldz(\tau_1) \int_{\tau_1}^\infty \dlog |g_\boldy| + \darg g_\boldy(\tau_1) \int_{\tau_1}^\infty \dlog |g_\boldz|,
\end{equation*}
which implies \eqref{eq lem A2 term}.
\end{proof}

Expanding $A_2$ in \eqref{eq A2 term} using Lemma \ref{lem A2 term}, we get $A_2 = I_1 + \cdots + I_6$ with
\begin{align*}
I_1 & = \Lambda^{--+}(\boldb,\bolda,\boldb) - \Lambda^{--+}(\boldb,\boldb,\bolda), & I_2 & = \Lambda^{--+}(\boldb,\boldb,\boldc) - \Lambda^{--+}(\boldb,\boldc,\boldb), \\
I_3 & = \Lambda^{--+}(\boldb,\boldc,\bolda) - \Lambda^{--+}(\boldb,\bolda,\boldc), & I_4 & = - \Lambda^{--+}(\bolda,\bolda,\boldb) + \Lambda^{--+}(\bolda,\boldb,\bolda), \\
I_5 & = - \Lambda^{--+}(\bolda,\boldb,\boldc) + \Lambda^{--+}(\bolda,\boldc,\boldb), & I_6 & = - \Lambda^{--+}(\bolda,\boldc,\bolda) + \Lambda^{--+}(\bolda,\bolda,\boldc).
\end{align*}
To simplify this expression for $A_2$, we use shuffle relations between iterated integrals $\Lambda^{\varepsilon_1 \varepsilon_2 \varepsilon_3}$ with $\varepsilon_1,\varepsilon_2,\varepsilon_3 \in \{\pm 1\}$. Consider the relation
\begin{equation*}
\Lambda^-(\boldx) \Lambda^{-+}(\boldy,\boldz) = \Lambda^{--+}(\boldx,\boldy,\boldz) + \Lambda^{--+}(\boldy,\boldx,\boldz) + \Lambda^{-+-}(\boldy,\boldz,\boldx).
\end{equation*}
Specialising to $\boldz=\boldx$ and $\boldy=\boldx$ respectively, we get
\begin{align}
\label{eq shuffle2} \Lambda^{--+}(\boldx,\boldy,\boldx) & =  - \Lambda^{-+-}(\boldy,\boldx,\boldx) - \Lambda^{--+}(\boldy,\boldx,\boldx) + \Lambda^-(\boldx) \Lambda^{-+}(\boldy,\boldx), \\
\label{eq shuffle3} \Lambda^{-+-}(\boldx,\boldz,\boldx) & = -2 \Lambda^{--+}(\boldx,\boldx,\boldz) + \Lambda^-(\boldx) \Lambda^{-+}(\boldx,\boldz).
\end{align}
Similarly,
\begin{equation*}
\Lambda^-(\boldx) \Lambda^{+-}(\boldz,\boldy) = \Lambda^{-+-}(\boldx,\boldz,\boldy) + \Lambda^{+--}(\boldz,\boldx,\boldy) + \Lambda^{+--}(\boldz,\boldy,\boldx)
\end{equation*}
which, taking $\boldy=\boldx$, specialises to
\begin{equation}
\label{eq shuffle6} \Lambda^{-+-}(\boldx,\boldz,\boldx) = -2 \Lambda^{+--}(\boldz,\boldx,\boldx) + \Lambda^-(\boldx) \Lambda^{+-}(\boldz,\boldx).
\end{equation}
Equating the right-hand sides of \eqref{eq shuffle6} and \eqref{eq shuffle3} gives
\begin{equation} \label{eq shuffle7}
\Lambda^{--+}(\boldx,\boldx,\boldz) = \Lambda^{+--}(\boldz,\boldx,\boldx) + \frac12 \Lambda^-(\boldx) (\Lambda^{-+}(\boldx,\boldz) - \Lambda^{+-}(\boldz,\boldx)).
\end{equation}

We now simplify $I_1,\ldots,I_6$. We introduce the shortcut
\begin{equation*}
\Lambda_1(\boldx,\boldy,\boldz) := (\Lambda^{+--} + \Lambda^{-+-} + \Lambda^{--+})(\boldx,\boldy,\boldz).
\end{equation*}
Note that in this way,
\begin{equation} \label{eq Re Lambda}
\re \Lambda(\boldx,\boldy,\boldz) = -\Lambda_1(\boldx,\boldy,\boldz) + \Lambda^{+++}(\boldx,\boldy,\boldz)
\end{equation}
for any $\boldx,\boldy,\boldz$. Using \eqref{eq shuffle7} and \eqref{eq shuffle2}, we have
\begin{align*}
I_1 & = \Lambda^{--+}(\boldb,\bolda,\boldb) - \Lambda^{--+}(\boldb,\boldb,\bolda) \\
\nonumber & = \Lambda^{--+}(\boldb,\bolda,\boldb) - \Lambda^{+--}(\bolda,\boldb,\boldb) - \frac12 \Lambda^-(\boldb) ( \Lambda^{-+}(\boldb,\bolda)-\Lambda^{+-}(\bolda,\boldb) ) \\
\nonumber & = - \Lambda^{-+-}(\bolda,\boldb,\boldb) - \Lambda^{--+}(\bolda,\boldb,\boldb) + \Lambda^-(\boldb) \Lambda^{-+}(\bolda,\boldb) - \Lambda^{+--}(\bolda,\boldb,\boldb)\\
\nonumber & \qquad - \frac12 \Lambda^-(\boldb) ( \Lambda^{-+}(\boldb,\bolda)-\Lambda^{+-}(\bolda,\boldb) ) \\
\nonumber & = - \Lambda_1(\bolda,\boldb,\boldb) + \Lambda^-(\boldb) \Big(\Lambda^{-+}(\bolda,\boldb) - \frac12 \Lambda^{-+}(\boldb,\bolda) + \frac12 \Lambda^{+-}(\bolda,\boldb) \Big).
\end{align*}
The integrals $I_2$, $I_4$ and $I_6$ are obtained from $I_1$ by simply rearranging the letters:
\begin{align*}
I_2 & = + \Lambda_1(\boldc,\boldb,\boldb) - \Lambda^-(\boldb) \Big(\Lambda^{-+}(\boldc,\boldb) - \frac12 \Lambda^{-+}(\boldb,\boldc) + \frac12 \Lambda^{+-}(\boldc,\boldb) \Big), \\
I_4 & = - \Lambda_1(\boldb,\bolda,\bolda) + \Lambda^-(\bolda) \Big(\Lambda^{-+}(\boldb,\bolda) - \frac12 \Lambda^{-+}(\bolda,\boldb) + \frac12 \Lambda^{+-}(\boldb,\bolda) \Big), \\
I_6 & = + \Lambda_1(\boldc,\bolda,\bolda) - \Lambda^-(\bolda) \Big(\Lambda^{-+}(\boldc,\bolda) - \frac12 \Lambda^{-+}(\bolda,\boldc) + \frac12 \Lambda^{+-}(\boldc,\bolda) \Big).
\end{align*}
It remains to treat the terms $I_3$ and $I_5$, involving permutations of $(\bolda,\boldb,\boldc)$. By the shuffle relations, we have
\begin{align}
\label{eq shuffle8} \Lambda^{--+}(\boldb,\boldc,\bolda) & = \Lambda^-(\boldb) \Lambda^{-+}(\boldc,\bolda) - \Lambda^{--+}(\boldc,\boldb,\bolda) - \Lambda^{-+-}(\boldc,\bolda,\boldb), \\
\label{eq shuffle9} \Lambda^{--+}(\bolda,\boldc,\boldb) & = \Lambda^-(\bolda) \Lambda^{-+}(\boldc,\boldb) - \Lambda^{--+}(\boldc,\bolda,\boldb) - \Lambda^{-+-}(\boldc,\boldb,\bolda).
\end{align}
We also have
\begin{align*}
\Lambda^-(\boldb) \Lambda^{-+}(\bolda,\boldc) & = \Lambda^{--+}(\boldb,\bolda,\boldc) + \Lambda^{--+}(\bolda,\boldb,\boldc) + \Lambda^{-+-}(\bolda,\boldc,\boldb), \\
\nonumber \Lambda^-(\bolda) \Lambda^{+-}(\boldc,\boldb) & = \Lambda^{-+-}(\bolda,\boldc,\boldb) + \Lambda^{+--}(\boldc,\bolda,\boldb) + \Lambda^{+--}(\boldc,\boldb,\bolda),
\end{align*}
and thus
\begin{equation} \label{eq shuffle10}
\Lambda^{--+}(\boldb,\bolda,\boldc) + \Lambda^{--+}(\bolda,\boldb,\boldc) = \Lambda^-(\boldb) \Lambda^{-+}(\bolda,\boldc) - \Lambda^-(\bolda) \Lambda^{+-}(\boldc,\boldb) + \Lambda^{+--}(\boldc,\bolda,\boldb) + \Lambda^{+--}(\boldc,\boldb,\bolda).
\end{equation}
Therefore,
\begin{align*}
I_3 + I_5 & = \eqref{eq shuffle8} + \eqref{eq shuffle9} - \eqref{eq shuffle10} \\
\nonumber & = \Lambda^-(\boldb) \Lambda^{-+}(\boldc,\bolda) - \Lambda^{--+}(\boldc,\boldb,\bolda) - \Lambda^{-+-}(\boldc,\bolda,\boldb) \\
& \quad + \Lambda^-(\bolda) \Lambda^{-+}(\boldc,\boldb) - \Lambda^{--+}(\boldc,\bolda,\boldb) - \Lambda^{-+-}(\boldc,\boldb,\bolda) \\
& \quad - \Lambda^-(\boldb) \Lambda^{-+}(\bolda,\boldc) + \Lambda^-(\bolda) \Lambda^{+-}(\boldc,\boldb) - \Lambda^{+--}(\boldc,\bolda,\boldb) - \Lambda^{+--}(\boldc,\boldb,\bolda) \\
& = - \Lambda_1(\boldc,\boldb,\bolda) - \Lambda_1(\boldc,\bolda,\boldb) \\
& \quad + \Lambda^-(\boldb) \Lambda^{-+}(\boldc,\bolda) + \Lambda^-(\bolda) \Lambda^{-+}(\boldc,\boldb) - \Lambda^-(\boldb) \Lambda^{-+}(\bolda,\boldc) + \Lambda^-(\bolda) \Lambda^{+-}(\boldc,\boldb).
\end{align*}

Putting everything together, we obtain
\begin{align} \label{eq1 A2}
A_2 & = - \Lambda_1(\bolda,\boldb,\boldb) + \Lambda_1(\boldc,\boldb,\boldb) - \Lambda_1(\boldb,\bolda,\bolda) + \Lambda_1(\boldc,\bolda,\bolda) - \Lambda_1(\boldc,\boldb,\bolda) - \Lambda_1(\boldc,\bolda,\boldb) \\
\nonumber & \quad + \Lambda^-(\boldb) \Big(\Lambda^{-+}(\bolda,\boldb) + \Lambda^{-+}(\boldc,\bolda) - \Lambda^{-+}(\bolda,\boldc) - \Lambda^{-+}(\boldc,\boldb) \\
\nonumber & \qquad \qquad \quad - \frac12 \Lambda^{-+}(\boldb,\bolda) + \frac12 \Lambda^{+-}(\bolda,\boldb)+ \frac12 \Lambda^{-+}(\boldb,\boldc) - \frac12 \Lambda^{+-}(\boldc,\boldb) \Big) \\
\nonumber & \quad + \Lambda^-(\bolda) \Big(\Lambda^{-+}(\boldb,\bolda) + \Lambda^{-+}(\boldc,\boldb) + \Lambda^{+-}(\boldc,\boldb) - \Lambda^{-+}(\boldc,\bolda) \\
\nonumber & \qquad \qquad \quad + \frac12 \Lambda^{+-}(\boldb,\bolda) - \frac12 \Lambda^{-+}(\bolda,\boldb) + \frac12 \Lambda^{-+}(\bolda,\boldc) - \frac12 \Lambda^{+-}(\boldc,\bolda) \Big).
\end{align}
The terms involving double modular values can be rewritten using the shuffle relations. In our generic situation when the coordinates of the vectors are non-zero, we have $\Lambda^{-+}(\boldx,\boldy) + \Lambda^{+-}(\boldy,\boldx) = \Lambda^-(\boldx) \Lambda^+(\boldy) = 0$ by \eqref{eq Lambda x +-}. Therefore,
\begin{align*}
& \Lambda^{-+}(\bolda,\boldb) + \Lambda^{-+}(\boldc,\bolda) - \Lambda^{-+}(\bolda,\boldc) - \Lambda^{-+}(\boldc,\boldb) \\
\nonumber & \;\quad - \frac12 \Lambda^{-+}(\boldb,\bolda) + \frac12 \Lambda^{+-}(\bolda,\boldb)+ \frac12 \Lambda^{-+}(\boldb,\boldc) - \frac12 \Lambda^{+-}(\boldc,\boldb) \\
\nonumber &\; = (\Lambda^{-+}(\bolda,\boldb) + \Lambda^{+-}(\bolda,\boldb)) + (\Lambda^{-+}(\boldb,\boldc) + \Lambda^{+-}(\boldb,\boldc)) + (\Lambda^{-+}(\boldc,\bolda) + \Lambda^{+-}(\boldc,\bolda)) \\
\nonumber &\; = \im \Lambda(\bolda,\boldb) + \im \Lambda(\boldb,\boldc) + \im \Lambda(\boldc,\bolda).
\end{align*}
Similarly,
\begin{align*}
& \Lambda^{-+}(\boldb,\bolda) + \Lambda^{-+}(\boldc,\boldb) + \Lambda^{+-}(\boldc,\boldb) - \Lambda^{-+}(\boldc,\bolda) \\
\nonumber & \;\quad + \frac12 \Lambda^{+-}(\boldb,\bolda) - \frac12 \Lambda^{-+}(\bolda,\boldb) + \frac12 \Lambda^{-+}(\bolda,\boldc) - \frac12 \Lambda^{+-}(\boldc,\bolda) \\
\nonumber & \; = - \im \Lambda(\bolda,\boldb) - \im \Lambda(\boldb,\boldc) - \im \Lambda(\boldc,\bolda).
\end{align*}
Copying into \eqref{eq1 A2} gives the following proposition.

\begin{pro} \label{pro A2 term}
Let $\bolda,\boldb,\boldc \in (\Z/N\Z)^2$ such that $\bolda+\boldb+\boldc=\boldsymbol0$, with all the coordinates of $\bolda,\boldb,\boldc$ non-zero. Then
\begin{align*}
A_2 & = - \Lambda_1(\bolda,\boldb,\boldb) + \Lambda_1(\boldc,\boldb,\boldb) - \Lambda_1(\boldb,\bolda,\bolda) + \Lambda_1(\boldc,\bolda,\bolda) - \Lambda_1(\boldc,\boldb,\bolda) - \Lambda_1(\boldc,\bolda,\boldb) \\
\nonumber & \quad + (\Lambda^-(\boldb)-\Lambda^-(\bolda)) \im \bigl(\Lambda(\bolda,\boldb) + \Lambda(\boldb,\boldc) + \Lambda(\boldc,\bolda) \bigr).
\end{align*}
\end{pro}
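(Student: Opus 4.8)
The plan is to expand the definition \eqref{eq A2 term} of $A_2$ by bilinearity and reduce everything to the length-three iterated integrals $\Lambda^{\varepsilon_1\varepsilon_2\varepsilon_3}$ of the forms $\omega_\boldx^\pm$. Writing $\darg(g_\boldb/g_\bolda)=\omega_\boldb^- - \omega_\bolda^-$ and splitting the three-term symbol as $\eta(g_\bolda,g_\boldb)+\eta(g_\boldb,g_\boldc)+\eta(g_\boldc,g_\bolda)$, I apply Lemma \ref{lem A2 term} to each of the six pairings. This produces $A_2=I_1+\cdots+I_6$, where $I_1,I_2,I_4,I_6$ carry a repeated letter (one of $\bolda$ or $\boldb$ appears twice) while $I_3,I_5$ involve the genuinely distinct triple $(\bolda,\boldb,\boldc)$. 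The remaining difficulty is purely combinatorial: to collapse these six expressions, each a difference of two $\Lambda^{--+}$'s, into the compact shape asserted in the proposition.

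For the repeated-letter terms I would use the specialised shuffle identities \eqref{eq shuffle2} and \eqref{eq shuffle7}. Each such $I_j$ has the shape $\Lambda^{--+}(\boldx,\boldy,\boldx)-\Lambda^{--+}(\boldx,\boldx,\boldy)$ with $\boldx\in\{\bolda,\boldb\}$; rewriting the second integral via \eqref{eq shuffle7} and the first via \eqref{eq shuffle2} turns their combination into $\pm\Lambda_1(\boldy,\boldx,\boldx)$ plus a correction equal to $\Lambda^-(\boldx)$ times a fixed $\mathbf{Q}$-combination of the length-two values $\Lambda^{-+}$ and $\Lambda^{+-}$. Carrying this out for $I_1$ gives the model computation, and $I_2,I_4,I_6$ are obtained from it by permuting letters and tracking signs.

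The mixed terms $I_3$ and $I_5$ I would treat together, since individually they do not simplify. Applying \eqref{eq shuffle8} to $I_3$ and \eqref{eq shuffle9} to $I_5$, and then subtracting the auxiliary relation \eqref{eq shuffle10}, collapses $I_3+I_5$ to $-\Lambda_1(\boldc,\boldb,\bolda)-\Lambda_1(\boldc,\bolda,\boldb)$ together with length-two corrections of the same type. Substituting all six reductions into \eqref{eq1 A2} isolates the six $\Lambda_1$ contributions displayed in the proposition, while the surviving terms organise into $\Lambda^-(\boldb)$ and $\Lambda^-(\bolda)$, each multiplied by a bracket of double modular values.

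The final step, where the bookkeeping is most delicate, is to recognise these leftover double-value brackets. Since every coordinate of $\bolda,\boldb,\boldc$ is non-zero, \eqref{eq Lambda x +-} gives $\Lambda^+(\boldx)=0$, so the length-one shuffle yields $\Lambda^{-+}(\boldx,\boldy)+\Lambda^{+-}(\boldy,\boldx)=\Lambda^-(\boldx)\Lambda^+(\boldy)=0$, that is, $\Lambda^{+-}(\boldx,\boldy)=-\Lambda^{-+}(\boldy,\boldx)$. Using this to swap arguments, each antisymmetrised pair $\Lambda^{-+}(\boldx,\boldy)-\Lambda^{-+}(\boldy,\boldx)$ equals $\Lambda^{-+}(\boldx,\boldy)+\Lambda^{+-}(\boldx,\boldy)=\im\Lambda(\boldx,\boldy)$. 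The main obstacle is verifying that the $\tfrac12$-weighted entries inside the $\Lambda^-(\boldb)$- and $\Lambda^-(\bolda)$-brackets recombine \emph{exactly}, via this relation, into $\im\Lambda(\bolda,\boldb)+\im\Lambda(\boldb,\boldc)+\im\Lambda(\boldc,\bolda)$, with opposite overall signs for the two brackets; granting this, factoring out the common $\Lambda^-(\boldb)-\Lambda^-(\bolda)$ yields the stated formula.
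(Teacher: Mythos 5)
Your proposal is correct and follows essentially the same route as the paper's own proof: the same decomposition $A_2=I_1+\cdots+I_6$ via Lemma \ref{lem A2 term}, the same use of \eqref{eq shuffle2} and \eqref{eq shuffle7} for the repeated-letter terms and of \eqref{eq shuffle8}, \eqref{eq shuffle9}, \eqref{eq shuffle10} for $I_3+I_5$, and the same final recombination of the length-two corrections into $\im\Lambda(\bolda,\boldb)+\im\Lambda(\boldb,\boldc)+\im\Lambda(\boldc,\bolda)$ using $\Lambda^+(\boldx)=0$ and $\Lambda^{+-}(\boldx,\boldy)=-\Lambda^{-+}(\boldy,\boldx)$. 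The "delicate" bookkeeping you defer is exactly the computation the paper carries out explicitly, and it closes as you anticipate.
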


\subsection{The $A_3$ term}

Finally, we treat the $A_3$ term \eqref{eq A3 term}. We leave the required admissibility properties of the differential forms to the reader.

\begin{lem} \label{lem A3 term}
If the coordinates of $\boldx,\boldy,\boldz \in (\Z/N\Z)^2$ are non-zero, then
\begin{equation} \label{eq lem A3 term}
\int_0^\infty \log |g_\boldx| \, \alpha(g_\boldy, g_\boldz) = - \Lambda^{+++}(\boldz,\boldy,\boldx) - \Lambda^{+++}(\boldz,\boldx,\boldy) + \Lambda^{+++}(\boldy,\boldz,\boldx) + \Lambda^{+++}(\boldy,\boldx,\boldz).
\end{equation}
\end{lem}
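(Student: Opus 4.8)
The plan is to reduce the left-hand side to the totally real length-three iterated integrals $\Lambda^{+++}$ by expanding the integrand and then invoking the shuffle relation together with Proposition~\ref{pro int 0 infty}. First I would unfold the definition of $\alpha$, namely
\[
\alpha(g_\boldy,g_\boldz) = -\log|g_\boldy|\,\dlog|g_\boldz| + \log|g_\boldz|\,\dlog|g_\boldy| = -\log|g_\boldy|\,\omega_\boldz^+ + \log|g_\boldz|\,\omega_\boldy^+,
\]
so that the integrand splits as
\[
\log|g_\boldx|\,\alpha(g_\boldy,g_\boldz) = -\log|g_\boldx|\,\log|g_\boldy|\,\omega_\boldz^+ + \log|g_\boldx|\,\log|g_\boldz|\,\omega_\boldy^+ .
\]

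The key simplification comes from the hypothesis that all coordinates are non-zero: in particular the first coordinate of each index is non-zero, so the constant term of \eqref{def log gx} vanishes and the regularised value of $\log|g_\boldx|$ (and of $\log|g_\boldy|,\log|g_\boldz|$) at $\infty$ is zero. As $\dlog|g_\boldx| = \omega_\boldx^+$ is admissible by Lemmas~\ref{lem admissibility Ekx} and \ref{lem dlog E2}, Lemma~\ref{lem int df infty} then yields $\log|g_\boldx(\tau)| = -\int_\tau^\infty \omega_\boldx^+$, and similarly for $\boldy$ and $\boldz$. I would rewrite each product of two logarithms accordingly and apply the length-one shuffle relation for $\int_\tau^\infty$, e.g.
\[
\log|g_\boldx(\tau)|\,\log|g_\boldy(\tau)| = \Bigl(\int_\tau^\infty \omega_\boldx^+\Bigr)\Bigl(\int_\tau^\infty \omega_\boldy^+\Bigr) = \int_\tau^\infty \omega_\boldx^+\omega_\boldy^+ + \int_\tau^\infty \omega_\boldy^+\omega_\boldx^+ ,
\]
and likewise with $\boldy$ replaced by $\boldz$ in the second product.

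Finally I would integrate from $0$ to $\infty$. Every resulting term has the form $\int_0^\infty \omega_{\boldx_1}^+(\tau)\bigl(\int_\tau^\infty \omega_{\boldx_2}^+\omega_{\boldx_3}^+\bigr)$, which by \eqref{eq int 0 infty} of Proposition~\ref{pro int 0 infty} equals $\int_0^\infty \omega_{\boldx_1}^+\omega_{\boldx_2}^+\omega_{\boldx_3}^+ = \Lambda^{+++}(\boldx_1,\boldx_2,\boldx_3)$. Thus the first product contributes $-\Lambda^{+++}(\boldz,\boldx,\boldy)-\Lambda^{+++}(\boldz,\boldy,\boldx)$ and the second $+\Lambda^{+++}(\boldy,\boldx,\boldz)+\Lambda^{+++}(\boldy,\boldz,\boldx)$, and summing gives exactly \eqref{eq lem A3 term}. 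The computation is otherwise routine; the only genuine points requiring attention are the vanishing of the regularised constant terms (precisely where the non-vanishing of the first coordinates is used) and the correct ordering of the indices when passing through the shuffle and through Proposition~\ref{pro int 0 infty}, so I expect the bookkeeping of these four ordered triples to be the one place where a sign or a permutation could slip.
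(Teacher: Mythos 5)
Your proof is correct and follows essentially the same route as the paper: expand $\alpha$, use the vanishing of the regularised values of $\log|g_\boldx|$, $\log|g_\boldy|$, $\log|g_\boldz|$ at $\infty$ (which is exactly where the non-vanishing of the coordinates enters) to convert each product of logarithms into a sum of two iterated integrals, and finish with Proposition~\ref{pro int 0 infty}. The only difference is the intermediate step: the paper obtains $\log|g_\boldx|\log|g_\boldy| = \int_\tau^\infty \bigl(\dlog|g_\boldy|\dlog|g_\boldx| + \dlog|g_\boldx|\dlog|g_\boldy|\bigr)$ by applying Lemma~\ref{lem int df infty} to $d\bigl(\log|g_\boldx|\log|g_\boldy|\bigr)$, whereas you write each logarithm as a single regularised integral and invoke the shuffle relation for $\int_\tau^\infty$ --- an equivalent mechanism, justified by the paper's statement that all three regularisations ($\int_0^\infty$, $\int_0^\tau$ and $\int_\tau^\infty$) satisfy the shuffle relations.
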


\begin{proof}
By definition
\begin{equation} \label{eq lem A3 term 2}
\int_0^\infty \log |g_\boldx| \, \alpha(g_\boldy, g_\boldz) = \int_0^\infty - \dlog |g_\boldz| \cdot \log |g_\boldx| \log |g_\boldy| + \dlog |g_\boldy| \cdot \log |g_\boldx| \log |g_\boldz|.
\end{equation}
Recall that the regularised value of the various $\log |g_\boldx|$ at $\infty$ is zero. Therefore
\begin{align*}
\log |g_\boldx(\tau)| \log |g_\boldy(\tau)| & = (\log |g_\boldx| \log |g_\boldy|)(\infty) - \int_\tau^\infty d(\log |g_\boldx| \log |g_\boldy|) \\
& = - \int_\tau^{\infty} \dlog |g_\boldy| \cdot \log |g_\boldx| + \dlog |g_\boldx| \cdot \log |g_\boldy| \\
& = \int_\tau^{\infty} \dlog |g_\boldy| \dlog |g_\boldx| + \dlog |g_\boldx| \dlog |g_\boldy|,
\end{align*}
so that \eqref{eq lem A3 term 2} continues as
\begin{align*}
\int_0^\infty \log |g_\boldx| \, \alpha(g_\boldy, g_\boldz) & = - \int_0^\infty \dlog |g_\boldz(\tau)| \int_\tau^{\infty} \dlog |g_\boldy| \dlog |g_\boldx| + \dlog |g_\boldx| \dlog |g_\boldy| \\
& \quad + \int_0^\infty \dlog |g_\boldy(\tau)| \int_\tau^{\infty} \dlog |g_\boldz| \dlog |g_\boldx| + \dlog |g_\boldx| \dlog |g_\boldz|. \qedhere
\end{align*}
\end{proof}

Using Lemma \ref{lem A3 term}, the term $A_3$ can be written as a sum of six expressions of type \eqref{eq lem A3 term}:
\begin{align} \label{eq A3 1}
3 A_3 & = - \Lambda^{+++}(\boldb,\bolda,\boldb) - \Lambda^{+++}(\boldb,\boldb,\bolda) + \Lambda^{+++}(\bolda,\boldb,\boldb) + \Lambda^{+++}(\bolda,\boldb,\boldb) \\
\nonumber & \quad - \Lambda^{+++}(\boldc,\boldb,\boldb) - \Lambda^{+++}(\boldc,\boldb,\boldb) + \Lambda^{+++}(\boldb,\boldc,\boldb) + \Lambda^{+++}(\boldb,\boldb,\boldc) \\
\nonumber & \quad - \Lambda^{+++}(\bolda,\boldc,\boldb) - \Lambda^{+++}(\bolda,\boldb,\boldc) + \Lambda^{+++}(\boldc,\bolda,\boldb) + \Lambda^{+++}(\boldc,\boldb,\bolda) \displaybreak[2]\\
\nonumber & \quad + \Lambda^{+++}(\boldb,\bolda,\bolda) + \Lambda^{+++}(\boldb,\bolda,\bolda) - \Lambda^{+++}(\bolda,\boldb,\bolda) - \Lambda^{+++}(\bolda,\bolda,\boldb) \\
\nonumber & \quad + \Lambda^{+++}(\boldc,\boldb,\bolda) + \Lambda^{+++}(\boldc,\bolda,\boldb) - \Lambda^{+++}(\boldb,\boldc,\bolda) - \Lambda^{+++}(\boldb,\bolda,\boldc) \\
\nonumber & \quad + \Lambda^{+++}(\bolda,\boldc,\bolda) + \Lambda^{+++}(\bolda,\bolda,\boldc) - \Lambda^{+++}(\boldc,\bolda,\bolda) - \Lambda^{+++}(\boldc,\bolda,\bolda).
\end{align}
Using the shuffle relations
\begin{equation*}
    0 = \Lambda^+(\boldx) \Lambda^{++}(\boldy,\boldz) = \Lambda^{+++}(\boldx,\boldy,\boldz) + \Lambda^{+++}(\boldy,\boldx,\boldz) + \Lambda^{+++}(\boldy,\boldz,\boldx),
\end{equation*}
the six lines in \eqref{eq A3 1} can be simplified, respectively, to
\begin{align*}
3 \Lambda^{+++}(\bolda,\boldb,\boldb), & & -3 \Lambda^{+++}(\boldc,\boldb,\boldb), & & 2 \Lambda^{+++}(\boldc,\bolda,\boldb) + \Lambda^{+++}(\boldc,\boldb,\bolda), \\
3 \Lambda^{+++}(\boldb,\bolda,\bolda), & & 2 \Lambda^{+++}(\boldc,\boldb,\bolda) + \Lambda^{+++}(\boldc,\bolda,\boldb), & & -3 \Lambda^{+++}(\boldc,\bolda,\bolda).
\end{align*}
In this way we obtain the following expression for $A_3$.

\begin{pro} \label{pro A3 term}
Let $\bolda,\boldb,\boldc \in (\Z/N\Z)^2$ such that $\bolda+\boldb+\boldc=\boldsymbol0$, with all the coordinates of $\bolda,\boldb,\boldc$ non-zero. Then
\begin{equation*}
A_3 = \Lambda^{+++}(\bolda,\boldb,\boldb) - \Lambda^{+++}(\boldc,\boldb,\boldb) + \Lambda^{+++}(\boldc,\bolda,\boldb) + \Lambda^{+++}(\boldc,\boldb,\bolda) + \Lambda^{+++}(\boldb,\bolda,\bolda) - \Lambda^{+++}(\boldc,\bolda,\bolda).
\end{equation*}
\end{pro}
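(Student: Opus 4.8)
The plan is to reduce the whole of $A_3$ to repeated applications of the single integral evaluated in Lemma \ref{lem A3 term}, and then to collapse the resulting triple integrals by purely formal shuffle manipulations. First I would expand the integrand of \eqref{eq A3 term}. Writing $\log|g_\boldb/g_\bolda| = \log|g_\boldb| - \log|g_\bolda|$ and using that $\alpha(f \wedge g)$ is additive and alternating in its wedge argument, I split
\[
3A_3 = \int_0^\infty \big(\log|g_\boldb| - \log|g_\bolda|\big)\,\big(\alpha(g_\bolda,g_\boldb) + \alpha(g_\boldb,g_\boldc) + \alpha(g_\boldc,g_\bolda)\big)
\]
into six integrals of the shape $\int_0^\infty \log|g_\boldx|\,\alpha(g_\boldy,g_\boldz)$, one for each $\boldx \in \{\boldb,\bolda\}$ (the $\boldx=\bolda$ terms entering with a minus sign) and each $(\boldy,\boldz) \in \{(\bolda,\boldb),(\boldb,\boldc),(\boldc,\bolda)\}$.

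Next I would evaluate each of these six integrals directly by Lemma \ref{lem A3 term}, which rewrites $\int_0^\infty \log|g_\boldx|\,\alpha(g_\boldy,g_\boldz)$ as the signed sum of four values $-\Lambda^{+++}(\boldz,\boldy,\boldx) - \Lambda^{+++}(\boldz,\boldx,\boldy) + \Lambda^{+++}(\boldy,\boldz,\boldx) + \Lambda^{+++}(\boldy,\boldx,\boldz)$. Substituting the six choices and collecting terms produces exactly the expression \eqref{eq A3 1} for $3A_3$ as an alternating sum of triple Eisenstein values $\Lambda^{+++}$ with arguments drawn from $\{\bolda,\boldb,\boldc\}$, organised into the six displayed lines.

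The heart of the simplification is the shuffle relation, valid by Proposition \ref{pro 0 shuffle},
\[
\Lambda^+(\boldx)\,\Lambda^{++}(\boldy,\boldz) = \Lambda^{+++}(\boldx,\boldy,\boldz) + \Lambda^{+++}(\boldy,\boldx,\boldz) + \Lambda^{+++}(\boldy,\boldz,\boldx),
\]
which lists the three ways of shuffling the one-letter word $\boldx$ into the two-letter word $\boldy\boldz$. Because the coordinates of $\bolda,\boldb,\boldc$ are non-zero, \eqref{eq Lambda x +-} gives $\Lambda^+(\boldx)=0$, so the left-hand side vanishes and those three orderings sum to zero. In each of the six lines of \eqref{eq A3 1} the four terms assemble into precisely one such vanishing triple (with a doubled leading term), so each line collapses to a single multiple of one $\Lambda^{+++}$: the six lines reduce, respectively, to $3\Lambda^{+++}(\bolda,\boldb,\boldb)$, $-3\Lambda^{+++}(\boldc,\boldb,\boldb)$, $2\Lambda^{+++}(\boldc,\bolda,\boldb)+\Lambda^{+++}(\boldc,\boldb,\bolda)$, $3\Lambda^{+++}(\boldb,\bolda,\bolda)$, $2\Lambda^{+++}(\boldc,\boldb,\bolda)+\Lambda^{+++}(\boldc,\bolda,\boldb)$, and $-3\Lambda^{+++}(\boldc,\bolda,\bolda)$. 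Summing these and dividing by $3$ yields the claimed formula.

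I expect the only genuine difficulty to be bookkeeping rather than mathematics: one must track the sign convention of Lemma \ref{lem A3 term} consistently across all six substitutions, and, for each of the six lines, select the correct instance of the cyclic relation so that the two unwanted orderings cancel against the doubled extremal term and lines \eqref{eq A3 1}(3) and \eqref{eq A3 1}(5) combine to clean multiples of $3$. Beyond the admissibility of the integrand (already used to write down $A_3$) and the vanishing $\Lambda^+(\boldx)=0$, no analytic input is needed, so the argument is entirely formal once Lemma \ref{lem A3 term} is in hand.
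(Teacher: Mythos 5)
Your proposal is correct and follows essentially the same route as the paper: split $3A_3$ into six integrals of the form $\int_0^\infty \log|g_\boldx|\,\alpha(g_\boldy,g_\boldz)$, evaluate each by Lemma \ref{lem A3 term} to obtain \eqref{eq A3 1}, then collapse each line using the vanishing shuffle relation $0 = \Lambda^+(\boldx)\Lambda^{++}(\boldy,\boldz) = \Lambda^{+++}(\boldx,\boldy,\boldz) + \Lambda^{+++}(\boldy,\boldx,\boldz) + \Lambda^{+++}(\boldy,\boldz,\boldx)$, which holds since $\Lambda^+(\boldx)=0$ under the non-vanishing hypothesis. Your line-by-line reductions (including the asymmetric shape of lines three and five, which only become multiples of $3$ after being summed) agree exactly with the paper's.
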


Putting together Propositions \ref{pro A1 term}, \ref{pro A2 term} and \ref{pro A3 term}, we obtain an expression for $\mathcal{G}(\bolda,\boldb)$. The terms of type $\Lambda_1$ and $\Lambda^{+++}$ collect thanks to \eqref{eq Re Lambda}. This results in the following final formula.

\begin{thm} \label{thm r32 formula}
Let $\bolda,\boldb,\boldc \in (\Z/N\Z)^2$ such that $\bolda+\boldb+\boldc=\boldsymbol0$. Assume that all the coordinates of $\bolda$, $\boldb$ and $\boldc$ are non-zero. Then
\begin{align*}
\mathcal{G}(\bolda,\boldb)
& = \re \bigl( \Lambda(\bolda,\boldb,\boldb) - \Lambda(\boldc,\boldb,\boldb) + \Lambda(\boldb,\bolda,\bolda) - \Lambda(\boldc,\bolda,\bolda) + \Lambda(\boldc,\boldb,\bolda) + \Lambda(\boldc,\bolda,\boldb) \\
& \qquad \quad - (\Lambda(\boldb) - \Lambda(\bolda)) (\Lambda(\bolda,\boldb) + \Lambda(\boldb,\boldc) + \Lambda(\boldc,\bolda)) \bigr).
\end{align*}
\end{thm}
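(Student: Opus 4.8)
The plan is to assemble the decomposition $\mathcal{G}(\bolda,\boldb)=A_1+A_2+A_3$ together with the three evaluations already obtained: $A_1=0$ by Proposition \ref{pro A1 term}, the formula for $A_2$ from Proposition \ref{pro A2 term}, and the formula for $A_3$ from Proposition \ref{pro A3 term}. Since $A_1$ vanishes, only the contributions of $A_2$ and $A_3$ must be merged, and the whole task reduces to repackaging the resulting $\Lambda_1$-, $\Lambda^{+++}$- and mixed terms into the real parts of triple modular values together with the single/double product in the statement.

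First I would handle the totally holomorphic part. The six $\Lambda_1$-terms of $A_2$ match, index for index, the six $\Lambda^{+++}$-terms of $A_3$. Applying \eqref{eq Re Lambda} in the form $\re\Lambda(\boldx,\boldy,\boldz)=-\Lambda_1(\boldx,\boldy,\boldz)+\Lambda^{+++}(\boldx,\boldy,\boldz)$ to each matching pair, the combination $-\Lambda_1(\boldx,\boldy,\boldz)+\Lambda^{+++}(\boldx,\boldy,\boldz)$ collapses to $\re\Lambda(\boldx,\boldy,\boldz)$ (and each oppositely signed pair to $-\re\Lambda(\boldx,\boldy,\boldz)$). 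Tracking the signs through the six triples $(\bolda,\boldb,\boldb),(\boldc,\boldb,\boldb),(\boldb,\bolda,\bolda),(\boldc,\bolda,\bolda),(\boldc,\boldb,\bolda),(\boldc,\bolda,\boldb)$ yields exactly the first line of the asserted formula.

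It then remains to convert the mixed term $(\Lambda^-(\boldb)-\Lambda^-(\bolda))\im\bigl(\Lambda(\bolda,\boldb)+\Lambda(\boldb,\boldc)+\Lambda(\boldc,\bolda)\bigr)$ of $A_2$ into the product appearing in the statement. Here I would use that, because all coordinates of $\bolda$ and $\boldb$ are non-zero, Proposition \ref{pro single Eis} together with \eqref{eq Lambda x +-} give $\Lambda(\boldx)=i\Lambda^-(\boldx)$, so that $\Lambda(\boldx)$ is purely imaginary. Setting $c=\Lambda^-(\boldb)-\Lambda^-(\bolda)\in\R$ and $Z=\Lambda(\bolda,\boldb)+\Lambda(\boldb,\boldc)+\Lambda(\boldc,\bolda)$, one has $\Lambda(\boldb)-\Lambda(\bolda)=ic$ and hence $\re\bigl((\Lambda(\boldb)-\Lambda(\bolda))Z\bigr)=\re(icZ)=-c\,\im Z$. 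Thus the mixed term equals $-\re\bigl((\Lambda(\boldb)-\Lambda(\bolda))Z\bigr)$, which is precisely the second line of the formula; adding the two pieces completes the proof.

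The substantive difficulties lie entirely upstream, in Propositions \ref{pro A1 term}--\ref{pro A2 term}; within this concluding step the only points demanding attention are the term-by-term sign bookkeeping in the $\Lambda_1/\Lambda^{+++}$ cancellation and the purely-imaginary-to-real conversion of the mixed term, both of which are routine once Proposition \ref{pro single Eis} is available.
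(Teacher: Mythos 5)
Your proposal is correct and is essentially the paper's own proof: the paper likewise obtains the theorem by combining Propositions \ref{pro A1 term}, \ref{pro A2 term} and \ref{pro A3 term}, collecting the $\Lambda_1$- and $\Lambda^{+++}$-terms pairwise via \eqref{eq Re Lambda}, and using the vanishing of $\Lambda^+(\boldx)$ (so that $\Lambda(\boldx)=i\Lambda^-(\boldx)$ is purely imaginary) to rewrite the mixed term as $-\re\bigl((\Lambda(\boldb)-\Lambda(\bolda))(\Lambda(\bolda,\boldb)+\Lambda(\boldb,\boldc)+\Lambda(\boldc,\bolda))\bigr)$. Your sign bookkeeping in both steps checks out against Propositions \ref{pro A2 term} and \ref{pro A3 term}.
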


\begin{cor}
The Goncharov regulator $\mathcal{G}(\bolda,\boldb)$ interpolates as a differentiable function of $\bolda, \boldb$ in the domain $\{(\bolda,\boldb) \in (\R/\Z)^4 : a_k, b_k, a_k+b_k \neq 0 \;\text{for}\; k=1,2\}$.
\end{cor}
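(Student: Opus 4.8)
The plan is to read off the interpolation directly from the explicit formula of Theorem~\ref{thm r32 formula}. Setting $\boldc = -\bolda-\boldb$, the right-hand side of that formula is a finite $\Q$-linear combination of the real parts of triple Eisenstein values $\Lambda(\boldx,\boldy,\boldz)$ with $\boldx,\boldy,\boldz \in \{\bolda,\boldb,\boldc\}$, together with products of a single modular value $\Lambda(\boldx)$ and a double modular value $\Lambda(\boldy,\boldz)$. The essential point is that every one of these building blocks is defined not only at the torsion arguments $\bolda,\boldb \in (\Z/N\Z)^2$, but for arbitrary arguments in $(\R/\Z)^2 \setminus \{\boldsymbol0\}$, so the formula itself provides a candidate interpolation of $\mathcal{G}(\bolda,\boldb)$ to continuous parameters.

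First I would recall that, as explained in Example~\ref{MEV diff} and in the discussion following \eqref{eq Lambda x +-}, each multiple Eisenstein value $(\boldx_1,\ldots,\boldx_n) \mapsto \Lambda(\boldx_1,\ldots,\boldx_n)$ is differentiable on the domain $(\R/\Z \setminus \{0\})^{2n}$; this comes from realising it as the regularised iterated integral of a family of forms that is differentially admissible at both $0$ and $\infty$, to which Proposition~\ref{gen reg int param} applies. The single modular value $\boldx \mapsto \Lambda(\boldx)$ is differentiable on $\{x_1, x_2 \neq 0\}$ by the closed expression of Proposition~\ref{pro single Eis}, being there a product of affine functions of the fractional parts $\{x_1\}, \{x_2\}$. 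Since $\re$ is linear, taking real parts preserves differentiability.

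Next I would account for the third argument $\boldc = -\bolda-\boldb$. The map $(\bolda,\boldb) \mapsto \boldc$ is affine, hence smooth, so composing it with the differentiable functions above is harmless provided the coordinates of $\boldc$ stay away from $0$, which is exactly the condition $a_k+b_k \neq 0$ for $k=1,2$. Together with $a_k \neq 0$ and $b_k \neq 0$, these are precisely the defining inequalities of the domain $\{(\bolda,\boldb) \in (\R/\Z)^4 : a_k, b_k, a_k+b_k \neq 0,\ k=1,2\}$. On this domain all three arguments $\bolda,\boldb,\boldc$ avoid the coordinate hyperplanes, so each term in the formula of Theorem~\ref{thm r32 formula} is differentiable, and hence so is their finite combination.

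Finally, since the formula of Theorem~\ref{thm r32 formula} computes the genuine Goncharov regulator integral $\mathcal{G}(\bolda,\boldb)$ at every torsion point $(\bolda,\boldb) \in (\Z/N\Z)^2$ of the domain (identifying $(\Z/N\Z)^2$ with $(\tfrac1N\Z/\Z)^2$), the differentiable function just constructed is the desired interpolation; in particular this also exhibits the level-$N$ values for varying $N$ as restrictions of one and the same function of $\bolda,\boldb \in (\R/\Z)^2$. I do not expect a serious obstacle here, since the corollary is essentially a formal consequence of Theorem~\ref{thm r32 formula} combined with the earlier differentiability results. The only point genuinely requiring care is that the differentiability of the MEVs survives the substitution $\boldc = -\bolda-\boldb$, i.e. that the underlying differential forms remain differentially admissible as the elliptic parameters vary; this is already guaranteed by the hypotheses of Propositions~\ref{pro reg int param} and~\ref{gen reg int param}, after which a finite $\Q$-linear combination of differentiable functions is differentiable.
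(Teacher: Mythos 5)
Your proposal is correct and follows essentially the same route as the paper: the corollary is read off directly from Theorem~\ref{thm r32 formula}, whose right-hand side is a finite combination of single, double and triple Eisenstein values at $\bolda,\boldb,\boldc=-\bolda-\boldb$, each of which is defined and differentiable for continuous parameters away from the coordinate hyperplanes by Proposition~\ref{pro single Eis}, Example~\ref{MEV diff} and Proposition~\ref{gen reg int param}. Your additional remarks (smoothness of the affine substitution $\boldc=-\bolda-\boldb$ and the matching of the domain conditions) are exactly the points the paper leaves implicit.
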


\section{The Borisov--Gunnells relations} \label{BG-rels}

Borisov and Gunnells \cite{BG03} have shown that certain pairwise products of Eisenstein series on the group $\Gamma_1(N)$ satisfy linear dependence relations analogous to the Manin relations for modular symbols. We show in Theorem \ref{thm BG Ek} below an explicit version of the result of Borisov and Gunnells \cite[Theorem 6.2]{BG03} in weight 3 and for Eisenstein series on $\Gamma(N)$. We then deduce in Theorems~\ref{thm BG Gk 1} and \ref{thm BG Gk 2} similar relations for Eisenstein series with rational Fourier coefficients.

\begin{thm} \label{thm BG Ek}
Let $\boldx,\boldy,\boldz \in (\R/\Z)^2 \setminus \{\boldsymbol0\}$ such that $\boldx+\boldy+\boldz=\boldsymbol0$. Then
\begin{equation} \label{eq BG Gk}
E^{(1)}_\boldz E^{(2)}_\boldy - E^{(1)}_\boldy E^{(2)}_\boldx - E^{(1)}_\boldz E^{(2)}_\boldx + E^{(1)}_\boldy E^{(2)}_\boldz = E^{(3)}_\boldx - \frac12 E^{(3)}_\boldy - \frac12 E^{(3)}_\boldz.
\end{equation}
\end{thm}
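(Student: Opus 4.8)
The plan is to prove \eqref{eq BG Gk} by comparing $q$-expansions, after first reducing to the torsion case so that every object in sight is a genuine holomorphic modular form. By Lemma \ref{diff property Ek} each function $\boldx\mapsto E^{(k)}_\boldx(\tau)$ is continuous on $(\R/\Z)^2\setminus\{\boldsymbol0\}$, so both sides of \eqref{eq BG Gk} are continuous in $(\boldx,\boldy)$ on the open set $U$ where $\boldx$, $\boldy$ and $\boldx+\boldy$ are all nonzero. Since the torsion points $\boldx,\boldy\in(\frac1N\Z/\Z)^2$ are dense in $U$, it suffices to treat torsion arguments. For such arguments, Lemma \ref{lem modularity Ekx} (and the remark following it) shows that each $E^{(k)}_{\bullet}$ with nonzero argument is a holomorphic modular form of weight $k$ on $\Gamma(N)$; hence both sides of \eqref{eq BG Gk} are weight-$3$ modular forms, and equality is equivalent to equality of their $q$-expansions, which I would read off from Lemma \ref{lem Fourier Ekx}. (Alternatively, one may compare the generalised Fourier expansions of Lemma \ref{lem Fourier Ekx} directly for all $\boldx$, since such an expansion determines the function.) This splits the problem into a constant-term identity and an identity for the positive Fourier coefficients.

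The substantive part is the matching of the positive coefficients, where the product $E^{(1)}_\boldz E^{(2)}_\boldy$ contributes a convolution in the Fourier exponents that must collapse onto the single series on the right. The cleanest way I see to produce this collapse is to work directly with the Eisenstein--Kronecker lattice sums of Definition \ref{def E Ehat}: specialising $K_k(0,x_0,k)$ at $s=k$ gives $E^{(k)}_\boldx(\tau)=-\frac{(k-1)!}{(-2\pi i)^k}\sum_{w\in L\setminus\{0\}}\exp\!\big(A(L)^{-1}(w\overline{x}_0-\overline{w}x_0)\big)\,w^{-k}$, with $L=\Z+\tau\Z$ and $x_0=x_1\tau+x_2$. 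The additive character $\chi_w(x_0)=\exp\!\big(A(L)^{-1}(w\overline{x}_0-\overline{w}x_0)\big)$ is additive in $x_0$ and $L$-periodic, and the hypothesis $\boldx+\boldy+\boldz=\boldsymbol0$ forces $x_0^{(x)}+x_0^{(y)}+x_0^{(z)}\in L$, so that $\chi_w$ of the three arguments multiply to $1$ for each fixed $w$. A partial-fraction identity in the two lattice variables $w_1,w_2$ along $w_1+w_2$ fixed then rewrites the products $w_1^{-1}w_2^{-2}$ in terms of single powers; this is precisely the bilinear (Fay-type) relation underlying the Borisov--Gunnells relations, and the coefficients $1,-\frac12,-\frac12$ on the right of \eqref{eq BG Gk} emerge from this rewriting.

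The remaining piece is the constant term, which I expect to be elementary but fiddly. Using the formulas for $a_0(E^{(k)}_\boldx)$ in Lemma \ref{lem Fourier Ekx}, the constant-term comparison reduces to an identity among the Bernoulli polynomials $B_1,B_2,B_3$ evaluated at fractional parts, supplemented by the cotangent-type value $-\frac12\,\frac{1+e(x_2)}{1-e(x_2)}$ of $a_0(E^{(1)}_\boldx)$ that appears when a first coordinate vanishes. Here one must keep track of which coordinates of $\boldx,\boldy,\boldz$ are zero, since the constant term of $E^{(1)}$ changes form along those loci.

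I expect the convolution collapse of the second paragraph to be the main obstacle. Because the weight-$1$ and weight-$2$ lattice sums are not absolutely convergent at $s=k$, the partial-fraction manipulation cannot be performed naively; I would carry the spectral parameter $s$ throughout (using the holomorphic continuation in $s$ of $K_a(x,x_0,s;L)$ guaranteed for $a\ge1$, valid absolutely for $\re(s)>1+\tfrac{a}2$) and specialise only at the end, or equivalently verify the resulting convolution identity for the Fourier coefficients term by term. Organising this recombination correctly, and handling the case distinctions according to the vanishing of coordinates, is where the real work lies; the reduction to the torsion case, the modularity, and the constant-term identity are routine by comparison.
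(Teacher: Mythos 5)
Your route is genuinely different from the paper's, so let me first situate it. The paper never touches $q$-expansions or lattice-sum manipulations: it quotes Weil's addition formula for the \emph{elliptic-variable} series, rewritten as $(\hat{E}^{(1)}_\bolda + \hat{E}^{(1)}_\boldb + \hat{E}^{(1)}_{-\bolda-\boldb})(\hat{E}^{(2)}_\bolda - \hat{E}^{(2)}_{-\bolda-\boldb}) = \frac12 (\hat{E}^{(3)}_\bolda - \hat{E}^{(3)}_{-\bolda-\boldb})$, restricts to $N$-torsion, and takes a discrete Fourier transform in both $\bolda$ and $\boldb$ against the Weil pairing, using inclusion--exclusion over the degenerate pairs ($\bolda$, $\boldb$ or $\bolda+\boldb$ zero) where Weil's identity fails; the entire right-hand side $E^{(3)}_\boldx - \frac12 E^{(3)}_\boldy - \frac12 E^{(3)}_\boldz$ of \eqref{eq BG Gk} arises from those correction sums. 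Only the final passage from torsion points to arbitrary real parameters by continuity is shared with your plan. Your proposal to prove the identity directly on the $E^{(k)}$-side by a partial-fraction collapse of the double lattice sums is, in principle, the classical Eisenstein--Weil method and is Fourier-dual to what the paper imports as a black box; so the strategy is not wrong, but it means reproving Weil's formula from scratch rather than citing it.

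There are, however, two concrete gaps. First, your proposed regularisation --- ``carry the spectral parameter $s$ throughout and specialise only at the end'' --- is incompatible with the very step it is meant to justify: once the summand is $\overline{w}^{a}/|w|^{2s}$ with $s$ non-integral, the rational-function identity $\frac{1}{w_1 w_2^2} = \frac{1}{(w_1+w_2)w_2^2} + \frac{1}{(w_1+w_2)^2 w_1} + \frac{1}{(w_1+w_2)^2 w_2}$ no longer applies to the summands at all, so analytic continuation in $s$ cannot retroactively legitimise a rearrangement that is unavailable at generic $s$. The rigorous classical options are ordered (Eisenstein) summation with justified rearrangement, or your fallback of verifying the Fourier-coefficient convolution identity term by term --- but that is exactly the computation the proposal defers. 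Second, the collapse itself is subtler than your sketch suggests, and this is where all the content of the theorem sits. The diagonal terms $w_1+w_2=0$ of the four double sums produce weight-$3$ series with arguments $\boldy-\boldz$, $\boldx-\boldy$, $\boldx-\boldz$, $\boldz-\boldy$, which are \emph{not} the arguments $\boldx,\boldy,\boldz$ on the right of \eqref{eq BG Gk}; the actual right-hand side, with its coefficients $1,-\frac12,-\frac12$, emerges only after the off-diagonal terms are resummed (using $\chi_w(x_0+y_0+z_0)=1$), the resulting incomplete inner sums are completed, and the excluded-term corrections are tracked across all four products so that the genuine product terms cancel. This bookkeeping is precisely what the paper's proof packages into Weil's formula plus the inclusion--exclusion step \eqref{eq in ex}. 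Saying that the coefficients ``emerge from this rewriting'' names the goal rather than establishing it; as it stands the proposal is a plausible programme, not a proof.
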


\begin{proof}
Our starting point is an addition formula due to Weil \cite[IV, \S2, eq.~(10)]{Wei76}:
\begin{multline} \label{eq addition formula Weil}
    (E_2^*(x) - E_2^*(x'))(E_1^*(x+x')-E_1^*(x)-E_1^*(x'))+E_3^*(x)-E_3^*(x')=0 \\
    (x,x' \in \C/(\Z+\tau\Z), \; x,x',x+x' \neq 0),
\end{multline}
where, in Weil's notations, $E_k^*(x) = K_k(x,0,k;\tau)$. (Weil states the identity for series denoted by $E_k(x)$, but they can be expressed in terms of $E_k^*(x)$ \cite[VI, \S 2]{Wei76}.) In terms of $\hat{E}_\boldx$ (see Definition \ref{def E Ehat}), the identity \eqref{eq addition formula Weil} can be rewritten
\begin{equation} \label{eq Weil Ehat}
(\hat{E}^{(1)}_\bolda + \hat{E}^{(1)}_\boldb + \hat{E}^{(1)}_{-\bolda-\boldb})(\hat{E}^{(2)}_\bolda - \hat{E}^{(2)}_{-\bolda-\boldb}) = \frac12 (\hat{E}^{(3)}_\bolda - \hat{E}^{(3)}_{-\bolda-\boldb}) \qquad (\bolda,\boldb \in (\R/\Z)^2, \; \bolda,\boldb,\bolda+\boldb \neq \boldsymbol0).
\end{equation}
Our original source of \eqref{eq addition formula Weil} was a nice geometric interpretation given by Khuri-Makdisi \cite[eq.~(4.39)]{Khu12}: this identity expresses the slope of the line passing through 3 points $P,Q,R$ on $\mathcal{E}_\tau = \C/(\Z+\tau\Z)$, where $P+Q+R=0$. Another proof is given in \cite[p.~177--178]{KR17}.

Now, after restricting to $N$-torsion points, the Eisenstein series $E^{(k)}_{x_1,x_2}$ is essentially the discrete Fourier transform of $\hat{E}^{(k)}_{a_1,a_2}$. In the sequel, we implicitly identify $(\Z/N\Z)^2$ with a subset of $(\R/\Z)^2$ by mapping a pair $(\overline{x}_1,\overline{x}_2)$ to the class of $(x_1/N, x_2/N)$. Moreover, let us introduce the Weil pairing on $\mathcal{E}_\tau[N] \cong (\Z/N\Z)^2$:
\begin{equation*}
e_N \colon (\Z/N\Z)^2 \times (\Z/N\Z)^2 \to \C^\times, \qquad (\bolda,\boldx) \mapsto e\left(\frac{a_2 x_1 - a_1 x_2}{N}\right).
\end{equation*}
For $k \geq 1$, the relation between $E^{(k)}$ and $\hat{E}^{(k)}$ is
\begin{equation*}
\sum_{\bolda \in (\Z/N\Z)^2} e_N(\bolda,\boldx) \hat{E}^{(k)}_\bolda = (-1)^{k+1} N^k E^{(k)}_\boldx \qquad \left(\boldx \in (\Z/N\Z)^2\right).
\end{equation*}
This can be proved directly from the definitions of $E^{(k)}_\bolda$ and $\hat{E}^{(k)}_\boldx$ (Definition \ref{def E Ehat}).

This leads us to taking the Fourier transform of \eqref{eq Weil Ehat} with respect to both $\bolda$ and $\boldb$. However, it is important to note that \eqref{eq Weil Ehat} only holds when $\bolda$, $\boldb$ and $\bolda+\boldb$ are non-zero. For example, when $\bolda=\boldsymbol0$, the left-hand side of \eqref{eq Weil Ehat} is zero while the right-hand side may not be. We thus take the Fourier transform of both sides of \eqref{eq Weil Ehat} separately, and then use the inclusion-exclusion principle:
\begin{equation} \label{eq in ex}
\sum_{\substack{\bolda,\boldb \in (\Z/N\Z)^2 \\ \bolda, \boldb, \bolda+\boldb \neq \boldsymbol0}} = \sum_{\bolda,\boldb \in (\Z/N\Z)^2} - \sum_{\substack{\bolda = \boldsymbol0 \\ \boldb \in (\Z/N\Z)^2}} - \sum_{\substack{\bolda \in (\Z/N\Z)^2 \\ \boldb = \boldsymbol0}} - \sum_{\substack{\bolda \in (\Z/N\Z)^2 \\ \boldb = -\bolda}} + 2 \sum_{\bolda=\boldb=\boldsymbol0}.
\end{equation}
We will denote by $\mathcal{L}_{\bolda,\boldb}$ the left-hand side of \eqref{eq Weil Ehat}, and by $\mathcal{R}_{\bolda,\boldb}$ its right-hand side. Let $\boldx,\boldy,\boldz \in (\Z/N\Z)^2$ be as in the statement of Theorem \ref{thm BG Ek}. Noting that $\mathcal{L}_{\bolda,\boldb}$ is zero when $\bolda$, $\boldb$ or $\bolda+\boldb$ is zero, we have
\begin{align} \label{eq FT Lab}
& \sum_{\substack{\bolda,\boldb \in (\Z/N\Z)^2 \\ \bolda, \boldb, \bolda+\boldb \neq \boldsymbol0}} e_N(\bolda,\boldx) e_N(-\boldb,\boldy) \times \mathcal{L}_{\bolda,\boldb} \\
\nonumber & = \sum_{\substack{\bolda,\boldb \in (\Z/N\Z)^2}} e_N(\bolda,\boldx) e_N(-\boldb,\boldy) (\hat{E}^{(1)}_\bolda + \hat{E}^{(1)}_\boldb + \hat{E}^{(1)}_{-\bolda-\boldb})(\hat{E}^{(2)}_\bolda - \hat{E}^{(2)}_{-\bolda-\boldb}) \\
\nonumber & = \sum_{\substack{\bolda,\boldb \in (\Z/N\Z)^2}} e_N(\bolda,\boldx) e_N(-\boldb,\boldy) (- \hat{E}^{(1)}_\bolda \hat{E}^{(2)}_{-\bolda-\boldb} + \hat{E}^{(1)}_\boldb \hat{E}^{(2)}_\bolda - \hat{E}^{(1)}_\boldb \hat{E}^{(2)}_{-\bolda-\boldb} + \hat{E}^{(1)}_{-\bolda-\boldb} \hat{E}^{(2)}_\bolda) \\
\nonumber & = N^3 \left(E^{(1)}_{\boldx+\boldy} E^{(2)}_\boldy + E^{(1)}_\boldy E^{(2)}_\boldx - E^{(1)}_{\boldx+\boldy} E^{(2)}_\boldx - E^{(1)}_\boldy E^{(2)}_{\boldx+\boldy}\right).
\end{align}
We compute the Fourier transform of $\mathcal{R}_{\bolda,\boldb}$ similarly, keeping in mind the correction terms \eqref{eq in ex}:
\begin{align}
\label{eq FT Rab} 
& \sum_{\substack{\bolda,\boldb \in (\Z/N\Z)^2 \\ \bolda, \boldb, \bolda+\boldb \neq \boldsymbol0}} e_N(\bolda,\boldx) e_N(-\boldb,\boldy) \times \mathcal{R}_{\bolda,\boldb} \\
\nonumber & = - \frac12  \Biggl(\sum_{\substack{\bolda = \boldsymbol0 \\ \boldb \in (\Z/N\Z)^2}} + \sum_{\substack{\bolda \in (\Z/N\Z)^2 \\ \boldb = \boldsymbol0}} + \sum_{\substack{\bolda \in (\Z/N\Z)^2 \\ \boldb = -\bolda}}\Biggr) e_N(\bolda,\boldx) e_N(-\boldb,\boldy) (\hat{E}^{(3)}_\bolda - \hat{E}^{(3)}_{-\bolda-\boldb}) \\
\nonumber & = N^3 \left( - E^{(3)}_\boldx + \frac12 E^{(3)}_\boldy - \frac12 E^{(3)}_{\boldx+\boldy} \right).
\end{align}
The identity \eqref{eq BG Gk} now follows from \eqref{eq FT Lab}, \eqref{eq FT Rab} and the relation $E^{(k)}_{\boldx+\boldy} = (-1)^k E^{(k)}_\boldz$.

So far we have established the result for $N$-torsion points. Since both sides of the identity are continuous in $\boldx,\boldy,\boldz$ by Lemma \ref{diff property Ek}, the result is true in general.
\end{proof}

Let us now consider Eisenstein series with rational Fourier coefficients, and investigate the Borisov--Gunnell type relations for them. As the following lemma shows, $G^{(k);N}_\boldx$ is essentially the partial Fourier transform of $E^{(k)}_\boldx$ with respect to the second parameter.

\begin{lem} \label{lem ekab}
For $x_1,u \in \Z/N\Z$, we have
\begin{equation*}
\sum_{x_2 \in \Z/N\Z} e\left(-\frac{ux_2}{N}\right) E^{(k)}_{x_1,x_2} = - N^{2-k} G^{(k);N}_{x_1,u}.
\end{equation*}
\end{lem}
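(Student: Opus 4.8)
The plan is to compute the partial discrete Fourier transform directly from the Fourier expansion of $E^{(k)}_{x_1,x_2}$ in Lemma~\ref{lem Fourier Ekx}, and recognise the result as the defining series \eqref{def GkN} for $G^{(k);N}_{x_1,u}$. The key technical point is that summing $e(-ux_2/N)$ against the term $e(mx_2)n^{k-1}q^{mn}$ in \eqref{eq Fourier Ekx} collapses the sum over $x_2 \in \Z/N\Z$ to a congruence condition on $m$, by orthogonality of characters. Throughout I identify $\boldx = (x_1,x_2) \in (\Z/N\Z)^2$ with $(x_1/N, x_2/N) \in (\frac1N\Z/\Z)^2$, so that $e(mx_2)$ in \eqref{eq Fourier Ekx} means $e(mx_2/N)$ and $n \equiv x_1 \bmod 1$ becomes $n \in \frac1N\Z$ with $Nn \equiv x_1 \bmod N$.

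\textbf{Main computation.} First I would substitute \eqref{eq Fourier Ekx} and interchange the (finite) sum over $x_2$ with the sum over $m,n$. For the first sum in \eqref{eq Fourier Ekx}, writing $n = n'/N$ with $n' \geq 1$ and $n' \equiv x_1 \bmod N$, the inner sum over $x_2 \in \Z/N\Z$ is
\[
\sum_{x_2 \in \Z/N\Z} e\Bigl(\frac{(m-u)x_2}{N}\Bigr) = N \cdot \un_{m \equiv u \bmod N},
\]
by orthogonality. The exponent $(n')^{k-1}/N^{k-1}$ produces the factor $N^{1-k}$, and $q^{mn} = q^{mn'/N}$ matches the $q$-power in \eqref{def GkN}. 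Thus the first sum contributes $-N^{2-k}\sum_{m \equiv u,\, n' \equiv x_1} m^{k-1}q^{mn'/N}$, which is exactly $-N^{2-k}$ times the first sum in $G^{(k);N}_{x_1,u}$. The second sum in \eqref{eq Fourier Ekx} (with $e(-mx_2)$ and $n \equiv -x_1$) similarly forces $m \equiv -u \bmod N$ and produces the sign $(-1)^{k+1}\cdot(-1) = (-1)^k$ relative to the overall $-N^{2-k}$ factor (the extra $-1$ coming from the overall minus sign in \eqref{eq Fourier Ekx} versus the $+$ convention in \eqref{def GkN}); this matches the second sum in \eqref{def GkN}.

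\textbf{Constant terms.} The remaining step is to verify that the Fourier transform of $a_0(E^{(k)}_{x_1,x_2})$ matches $-N^{2-k}a_0(G^{(k);N}_{x_1,u})$ in every case of Lemma~\ref{lem Fourier Ekx} and Definition~\ref{def Gkx}. For $k \geq 2$ this is the main obstacle, though still routine: the constant term $a_0(E^{(k)}_{x_1,x_2}) = B_k(\{x_1/N\})/k$ is \emph{independent of $x_2$}, so its Fourier transform is $N \cdot B_k(\{x_1/N\})/k \cdot \un_{u=0}$, which must be matched against $-N^{2-k}\cdot(-N^{k-1}B_k(\{x_1/N\})/k)\,\un_{u=0} = N\,B_k(\{x_1/N\})/k\,\un_{u=0}$ — these agree. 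For $k=1$ the case analysis in $a_0(E^{(1)}_{x_1,x_2})$ depends on whether $x_1=0$, and when $x_1=0$ the transform of $-\frac12\frac{1+e(x_2/N)}{1-e(x_2/N)}$ over $x_2 \neq 0$ (plus the $x_2=0$ term giving $0$) must be summed explicitly; I expect this to reproduce $-N \cdot (-B_1(\{u/N\}))\,\un_{x_1=0}$ via the standard Fourier expansion of the sawtooth $B_1(\{\,\cdot\,\})$, namely $\sum_{x_2}e(-ux_2/N)\cdot\frac12\frac{1+e(x_2/N)}{1-e(x_2/N)} = N B_1(\{u/N\})$. Verifying this $k=1$, $x_1=0$ constant-term identity (and the symmetric $x_1\neq 0, u\equiv 0$ bookkeeping) is the one place requiring genuine care, so I would state it as the crux and reduce it to a known Fourier expansion rather than grinding through the finite geometric sum by hand.
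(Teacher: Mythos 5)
Your proposal is correct and follows essentially the same route as the paper, whose proof is exactly the one-line "direct computation using the $q$-expansions" that you carry out: orthogonality of characters collapses the $x_2$-sum to the congruence conditions in \eqref{def GkN}, and the constant terms are checked case by case. The main computation and the $k \geq 2$ constant-term matching are right; the only flaw is a sign slip in your final displayed identity. As written it is false: for $u \not\equiv 0$ one has
\begin{equation*}
\frac12 \sum_{x_2 \neq 0} e\Bigl(-\frac{ux_2}{N}\Bigr)\,\frac{1+e(x_2/N)}{1-e(x_2/N)}
= \frac12\,(N-2u) = -N B_1\bigl(\{\tfrac{u}{N}\}\bigr)
\end{equation*}
(and the sum vanishes for $u \equiv 0$, by the symmetry $x_2 \mapsto -x_2$ of $i\cot(\pi x_2/N)$). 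Consequently the transform of $-\tfrac12\frac{1+e(x_2/N)}{1-e(x_2/N)}$ equals $+N B_1(\{\tfrac{u}{N}\})\,\un_{u \neq 0}$, which is precisely the target $-N\cdot(-B_1(\{\tfrac{u}{N}\}))\,\un_{u\neq 0}$ you identified; so your matching is correct once the auxiliary identity is stated with the right sign.
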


\begin{proof}
This is a direct computation using the $q$-expansions \eqref{def GkN} and \eqref{eq Fourier Ekx}.
\end{proof}

The Borisov--Gunnells relation for $G^{(k)}_\boldx$ is as follows. We first state the case when the first coordinates are non-zero.

\begin{thm} \label{thm BG Gk 1}
Let $x_1,y_1,u_2,v_2 \in (\R/\Z) \setminus \{0\}$ such that $x_1+y_1, u_2-v_2 \neq 0$. Then
\begin{equation*}
G^{(1)}_{x_1+y_1,u_2} G^{(2)}_{y_1,v_2-u_2} + G^{(1)}_{y_1,v_2} G^{(2)}_{x_1,u_2} - G^{(1)}_{x_1+y_1,v_2} G^{(2)}_{x_1,u_2-v_2} - G^{(1)}_{y_1,v_2-u_2} G^{(2)}_{x_1+y_1,u_2} = 0.
\end{equation*}
\end{thm}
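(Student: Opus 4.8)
The plan is to derive this identity by taking a partial (finite) Fourier transform of the Borisov--Gunnells relation for $E^{(k)}$ from Theorem~\ref{thm BG Ek}, now in the \emph{second} (elliptic) coordinates, mirroring one step further the passage from Weil's addition formula to Theorem~\ref{thm BG Ek}. First I would work at $N$-torsion points, fixing first coordinates $x_1,y_1\in\Z/N\Z$ and setting $z_1=-x_1-y_1$, and apply Theorem~\ref{thm BG Ek} to the triple $\boldx=(x_1,x_2)$, $\boldy=(y_1,y_2)$, $\boldz=(z_1,-x_2-y_2)$. This triple is admissible (all three vectors nonzero) as soon as $x_1,y_1,x_1+y_1\neq0$, irrespective of $x_2,y_2$, and it produces an identity of functions of $(x_2,y_2)\in(\Z/N\Z)^2$ whose left-hand side is a sum of four products $E^{(1)}E^{(2)}$ and whose right-hand side is a combination of three $E^{(3)}$'s.

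Next I would sum this identity against the character $e(-(\alpha x_2+\beta y_2)/N)$ over $x_2,y_2\in\Z/N\Z$. After the substitution $w=-x_2-y_2$ (which decouples the coupled argument $z_2=-x_2-y_2$), every left-hand product factorises into two single sums, each of which Lemma~\ref{lem ekab} turns into a $G^{(k);N}$. A direct computation then gives the left-hand transform as
\[
N\bigl(G^{(1);N}_{z_1,-\alpha}G^{(2);N}_{y_1,\beta-\alpha}-G^{(1);N}_{y_1,\beta}G^{(2);N}_{x_1,\alpha}-G^{(1);N}_{z_1,-\beta}G^{(2);N}_{x_1,\alpha-\beta}+G^{(1);N}_{y_1,\beta-\alpha}G^{(2);N}_{z_1,-\alpha}\bigr).
\]
On the right-hand side the three $E^{(3)}$ terms transform, via the same substitution, into expressions carrying a factor $\sum_{x_2}e(-\beta x_2/N)$, $\sum_{x_2}e(-\alpha x_2/N)$ and $\sum_{x_2}e(-(\alpha-\beta)x_2/N)$ respectively; these character sums vanish exactly when $\beta\neq0$, $\alpha\neq0$ and $\alpha\neq\beta$. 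Hence, under the hypotheses $\alpha,\beta\neq0$ and $\alpha\neq\beta$ (which will match $u_2,v_2\neq0$ and $u_2-v_2\neq0$), the entire right-hand transform is zero.

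It then remains to rewrite the $z_1$-indexed factors using the parity $G^{(k);N}_{-\boldx}=(-1)^kG^{(k);N}_\boldx$, which is immediate from \eqref{def GkN} together with $B_k(1-t)=(-1)^kB_k(t)$: this replaces $G^{(1);N}_{z_1,\cdot}$ by $-G^{(1);N}_{x_1+y_1,\cdot}$ and $G^{(2);N}_{z_1,\cdot}$ by $G^{(2);N}_{x_1+y_1,\cdot}$. Cancelling the overall factor $N$ and an overall sign, and specialising $\alpha=u_2$, $\beta=v_2$, yields precisely the claimed identity for $G^{(k);N}$ at $N$-torsion points. Finally I would transfer it to the interpolated series via \eqref{eq Gk GkN}, whose powers of $N$ are common to all four products and therefore cancel, and then extend from the (dense, as $N$ varies) set of torsion quadruples to all $x_1,y_1,u_2,v_2\in\R/\Z\setminus\{0\}$ with $x_1+y_1,u_2-v_2\neq0$ by continuity, using the smoothness of $\boldx\mapsto G^{(k)}_\boldx$ provided by Lemma~\ref{lem diff Gk}, exactly as at the end of the proof of Theorem~\ref{thm BG Ek}.

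The main obstacle is the bookkeeping in the Fourier transform: keeping the four left-hand products and their shifted second arguments straight through the change of variables $w=-x_2-y_2$, and applying the parity relation for $G^{(k);N}$ with the correct signs so that the final combination matches the stated one. By contrast, the vanishing of the right-hand side, which is the conceptual point isolating the pure product relation, is governed entirely by the three elementary character sums above and forces no further constraints beyond those in the hypothesis.
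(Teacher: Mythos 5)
Your proposal is correct and follows essentially the same route as the paper's proof: reduce to $N$-torsion points, take the partial Fourier transform of the identity of Theorem~\ref{thm BG Ek} in the second coordinates against $e(-(u_2x_2+v_2y_2)/N)$, convert the left-hand side into $G^{(k);N}$-products via Lemma~\ref{lem ekab}, and observe that the right-hand $E^{(3)}$ terms are killed by the character sums precisely under the hypotheses $u_2,v_2,u_2-v_2\neq 0$, finishing by continuity. The only cosmetic difference is that you isolate the parity relation $G^{(k);N}_{-\boldx}=(-1)^kG^{(k);N}_{\boldx}$ as a separate step, whereas the paper absorbs those signs directly into the displayed computation.
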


\begin{proof}
As for Theorem \ref{thm BG Ek}, it suffices to treat the case of $N$-torsion points. In this case, the identity takes the form
\begin{equation*}
    G^{(1);N}_{x_1+y_1,u_2} G^{(2);N}_{y_1,v_2-u_2} + G^{(1);N}_{y_1,v_2} G^{(2);N}_{x_1,u_2} - G^{(1);N}_{x_1+y_1,v_2} G^{(2);N}_{x_1,u_2-v_2} - G^{(1);N}_{y_1,v_2-u_2} G^{(2);N}_{x_1+y_1,u_2} = 0
\end{equation*}
with $x_1,y_1,u_2,v_2 \in (\Z/N\Z) \setminus \{0\}$ such that $x_1+y_1, u_2-v_2 \neq 0$. Now the idea is to apply the partial Fourier transform to the identity \eqref{eq BG Gk}. Using Lemma \ref{lem ekab}, the transform of the left-hand side $\mathcal{L}_{x_2,y_2}$ of \eqref{eq BG Gk} can be computed as
\begin{align} \label{eq BG proof}
& \sum_{x_2,y_2 \in \Z/N\Z} e\left(-\frac{u_2 x_2 + v_2 y_2}{N}\right) \times \mathcal{L}_{x_2,y_2} \\
\nonumber & = -N \left(G^{(1);N}_{x_1+y_1,u_2} G^{(2);N}_{y_1,v_2-u_2} + G^{(1);N}_{y_1,v_2} G^{(2);N}_{x_1,u_2} - G^{(1);N}_{x_1+y_1,v_2} G^{(2);N}_{x_1,u_2-v_2} - G^{(1);N}_{y_1,v_2-u_2} G^{(2);N}_{x_1+y_1,u_2}\right).
\end{align}
Moreover, the transform of the right-hand side vanishes, as for example
\begin{equation*}
\sum_{x_2,y_2 \in \Z/N\Z} e\left(-\frac{u_2 x_2 + v_2 y_2}{N}\right) E^{(3)}_{z_1,-x_2-y_2} \stackrel{t=-x_2-y_2}{=} \sum_{x_2,t \in \Z/N\Z} e\left(-\frac{(u_2-v_2) x_2 - v_2 t}{N}\right) E^{(3)}_{z_1,t} = 0
\end{equation*}
thanks to our assumption $u_2-v_2 \neq 0$.
\end{proof}

The case when the first coordinate of $\boldx$, $\boldy$ or $\boldz$ is zero requires special care. We will not state it in general, but content ourselves with the following result.

\begin{thm} \label{thm BG Gk 2}
Let $u_1,u_2 \in \R/\Z$ with $u_1 \neq 0$. Then
\begin{equation*}
G^{(1)}_{u_1,u_2} G^{(2)}_{u_1,-u_2} - G^{(1)}_{u_1,-u_2} G^{(2)}_{u_1,u_2} = G^{(3)}_{0,u_2}.
\end{equation*}
\end{thm}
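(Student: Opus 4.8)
The plan is to mirror the proof of Theorem~\ref{thm BG Gk 1}: reduce to $N$-torsion points by continuity, and then realise the identity as a partial Fourier transform of the weight~$3$ relation \eqref{eq BG Gk} of Theorem~\ref{thm BG Ek}. The new feature is that I would place the vanishing first coordinate on the vector $\boldx$ carrying the distinguished term $E^{(3)}_\boldx$ (the one with coefficient $1$ on the right of \eqref{eq BG Gk}), so that this term, and only this term, survives the transform and produces $G^{(3)}_{0,u_2}$. Concretely, for $u_1,u_2 \in \Z/N\Z$ with $u_1,u_2 \neq 0$, I would substitute $\boldx = (0,x_2)$, $\boldy = (u_1,y_2)$ and $\boldz = -\boldx-\boldy = (-u_1,-x_2-y_2)$ into \eqref{eq BG Gk}, and apply $\sum_{x_2,y_2 \in \Z/N\Z} e(-u_2 x_2/N)$ to both sides.

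On the left of \eqref{eq BG Gk} I would evaluate the four transforms using Lemma~\ref{lem ekab} together with the symmetry $G^{(k);N}_{-\boldx} = (-1)^k G^{(k);N}_\boldx$ (immediate from \eqref{def GkN}). The two terms $E^{(1)}_\boldz E^{(2)}_\boldy$ and $E^{(1)}_\boldy E^{(2)}_\boldz$, in which both factors retain first coordinate $\pm u_1$, combine after the change of summation $w = -x_2-y_2$ into $\mp N\big(G^{(1);N}_{u_1,u_2} G^{(2);N}_{u_1,-u_2} - G^{(1);N}_{u_1,-u_2} G^{(2);N}_{u_1,u_2}\big)$, that is, $N$ times the antisymmetric product we are after. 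The remaining two terms $-E^{(1)}_\boldy E^{(2)}_\boldx$ and $-E^{(1)}_\boldz E^{(2)}_\boldx$ each produce the same multiple of $G^{(1);N}_{u_1,0} G^{(2);N}_{0,u_2}$ and cancel. On the right, the transforms of $E^{(3)}_\boldy$ and $E^{(3)}_\boldz$ vanish because the inner sum $\sum_{x_2} e(-u_2 x_2/N) = 0$ (this is where $u_2 \neq 0$ enters), while $E^{(3)}_\boldx = E^{(3)}_{0,x_2}$ contributes $-G^{(3);N}_{0,u_2}$. Matching the two sides yields the level-$N$ identity
\begin{equation*}
G^{(1);N}_{u_1,u_2} G^{(2);N}_{u_1,-u_2} - G^{(1);N}_{u_1,-u_2} G^{(2);N}_{u_1,u_2} = \tfrac1N\, G^{(3);N}_{0,u_2},
\end{equation*}
which I would transport to the interpolated series $G^{(k)}$ through the scaling relation \eqref{eq Gk GkN}, keeping track of the powers of $N$ exactly as in Theorem~\ref{thm BG Gk 1}.

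The main obstacle is the degenerate index $x_2 = 0$, where $\boldx = \boldsymbol0$ and \eqref{eq BG Gk} is not a priori available (indeed $E^{(2)}_{\boldsymbol0}$ is not even defined). I would resolve this by verifying that \eqref{eq BG Gk} extends trivially across this locus. At $x_2 = 0$ one has $\boldz = -\boldy$, so by $E^{(k)}_{-\boldy} = (-1)^k E^{(k)}_\boldy$ the left-hand side collapses to $-E^{(1)}_\boldy E^{(2)}_\boldy + E^{(1)}_\boldy E^{(2)}_\boldy = 0$, the ill-defined factor $E^{(2)}_\boldx = E^{(2)}_{\boldsymbol0}$ entering only through the prefactor $E^{(1)}_\boldy + E^{(1)}_\boldz$, which vanishes; on the right, $E^{(3)}_{\boldsymbol0} = 0$ and likewise $-\tfrac12 E^{(3)}_\boldy - \tfrac12 E^{(3)}_\boldz = 0$. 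Hence both sides of \eqref{eq BG Gk} vanish at $x_2 = 0$, the full sum over $(x_2,y_2) \in (\Z/N\Z)^2$ is legitimate, and, equivalently, the $E^{(2)}_{\boldsymbol0}$ contributions of the two middle terms cancel against one another in the Fourier transform.

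Finally, for the passage from $N$-torsion to arbitrary parameters I would use that both sides are continuous in $(u_1,u_2)$ on $\{u_1 \neq 0,\ u_2 \neq 0\}$: the left-hand side by the smoothness of $\boldx \mapsto G^{(k)}_\boldx$ there (Lemma~\ref{lem diff Gk}), and the right-hand side $G^{(3)}_{0,u_2}$ directly from its $q$-expansion, whose constant term is zero and whose coefficients vary continuously with $u_2$. Since the $N$-torsion points (over all $N$) are dense in this region, the identity for every $N$ forces it for all real $u_1,u_2 \neq 0$. The boundary case $u_2 = 0$ is immediate, as both sides then vanish identically ($G^{(3)}_{0,0} = 0$ and the antisymmetric product degenerates), completing the proof.
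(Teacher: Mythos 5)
Your proposal is correct and follows essentially the same route as the paper's proof: reduce to $N$-torsion points by continuity, specialise the weight-3 relation \eqref{eq BG Gk} to $\boldx=(0,x_2)$, $\boldy=(u_1,y_2)$, apply the partial Fourier transform $\sum_{x_2,y_2} e(-u_2 x_2/N)$ via Lemma~\ref{lem ekab}, and observe that both sides of \eqref{eq BG Gk} vanish on the degenerate locus $x_2=0$ so that the full sum is legitimate. The only cosmetic difference is in the right-hand side: the paper cancels the $E^{(3)}_\boldy$ and $E^{(3)}_\boldz$ contributions against each other by summing over $y_2$ for fixed $x_2$ (which needs no hypothesis on $u_2$), whereas you invoke $\sum_{x_2} e(-u_2 x_2/N)=0$, which requires $u_2\neq 0$, and then correctly dispose of the trivial case $u_2=0$ separately.
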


\begin{proof}
Again, it suffices to show that
\begin{equation*}
G^{(1);N}_{u_1,u_2} G^{(2);N}_{u_1,-u_2} - G^{(1);N}_{u_1,-u_2} G^{(2);N}_{u_1,u_2} = \frac1N G^{(3);N}_{0,u_2} \qquad (u_1,u_2 \in \Z/N\Z, \; u_1 \neq 0).
\end{equation*}
We use \eqref{eq BG Gk} with $x_1=0$, $y_1 = u_1$ and $x_2 \neq 0$. The left-hand side of \eqref{eq BG Gk} is
\begin{equation*}
\mathcal{L}_{x_2, y_2} = -E^{(1)}_{u_1,x_2+y_2} E^{(2)}_{u_1,y_2} + (E^{(1)}_{u_1,x_2+y_2} - E^{(1)}_{u_1,y_2}) E^{(2)}_{0,x_2} + E^{(1)}_{u_1,y_2} E^{(2)}_{u_1,x_2+y_2}.
\end{equation*}
Note that $\mathcal{L}_{x_2, y_2}$ is zero when $x_2=0$. Using \eqref{eq BG proof} with $v_2=0$, we have
\begin{align*}
\sum_{\substack{x_2 \neq 0 \\ y_2 \in \Z/N\Z}} e\left(-\frac{u_2 x_2}{N}\right) \times \mathcal{L}_{x_2,y_2} & = \sum_{x_2, y_2 \in \Z/N\Z} e\left(-\frac{u_2 x_2}{N}\right) \times \mathcal{L}_{x_2,y_2} \\
& = -N \left(G^{(1);N}_{u_1,u_2} G^{(2);N}_{u_1,-u_2} + G^{(1);N}_{u_1,0} G^{(2);N}_{0,u_2} - G^{(1);N}_{u_1,0} G^{(2);N}_{0,u_2} - G^{(1);N}_{u_1,-u_2} G^{(2);N}_{u_1,u_2}\right).
\end{align*}
A similar computation gives
\begin{equation*}
\sum_{\substack{x_2 \neq 0 \\ y_2 \in \Z/N\Z}} e\left(-\frac{u_2 x_2}{N}\right) \times \mathcal{R}_{x_2,y_2} = - G^{(3);N}_{0,u_2}. \qedhere
\end{equation*}
\end{proof}

\section{Differentiating the Goncharov regulator} \label{G-reg}

All elliptic parameters $\boldx=(x_1,x_2)$ etc., $\bolda=(a_1,a_2)$ etc.\ considered below are generic, not hitting the integers.
Apart from the already established
\[
\frac{\partial}{\partial x_2}\Lambda(\boldx)
=2\pi i(\{x_1\}-\tfrac12)=2\pi i\,E^{(1)}_\boldx(\infty)
\]
we need to consider similar partial derivatives for the regularised multiple integrals
\begin{align*}
\Lambda(\boldx,\boldy)
&=(2\pi i)^2\int_{0}^{\infty}\omega_\boldx^{(2)}(\tau_1)\omega_\boldy^{(2)}(\tau_2)
\\ \intertext{and}
\Lambda(\boldx,\boldy,\boldz)
&=(2\pi i)^3\int_{0}^{\infty}\omega_\boldx^{(2)}(\tau_1)\omega_\boldy^{(2)}(\tau_2)\omega_\boldz^{(2)}(\tau_3),
\end{align*}
where from now on we set $\omega_\boldx^{(k)}(\tau)=E^{(k)}_\boldx(\tau)\,d\tau$,
$\omega_{\boldx;\boldy}^{(k;m)}(\tau)=E^{(k)}_\boldx(\tau)E^{(m)}_\boldy(\tau)\,d\tau$, etc.
Using \eqref{Lambda diff} and $E^{(1)}_\boldx(0)=0$, which follows from Lemma \ref{lem modularity Ekx} with $k=1$ and $\gamma = (\begin{smallmatrix}
    0 & -1 \\ 1 & 0
\end{smallmatrix})$, we obtain
\begin{align*}
\frac{\partial}{\partial x_2}\Lambda(\boldx,\boldy)
&=(2\pi i)^2\int_{0}^{\infty}\omega_{\boldx;\boldy}^{(1;2)}(\tau_1),
\\
\frac{\partial}{\partial y_2}\Lambda(\boldx,\boldy)
&=2\pi i\,\Lambda(\boldx)E^{(1)}_\boldy(\infty)
-(2\pi i)^2\int_{0}^{\infty}\omega_{\boldx;\boldy}^{(2;1)}(\tau_1),
\\
\frac{\partial}{\partial x_2}\Lambda(\boldx,\boldy,\boldz)
&=(2\pi i)^3\int_{0}^{\infty}\omega_{\boldx;\boldy}^{(1;2)}(\tau_1)\omega_\boldz^{(2)}(\tau_2),
\\
\frac{\partial}{\partial y_2}\Lambda(\boldx,\boldy,\boldz)
&=(2\pi i)^3\int_{0}^{\infty}\omega_\boldx^{(2)}(\tau_1)\omega_{\boldy;\boldz}^{(1;2)}(\tau_2)
-(2\pi i)^3\int_{0}^{\infty}\omega_{\boldx;\boldy}^{(2;1)}(\tau_1)\omega_\boldz^{(2)}(\tau_2),
\\
\frac{\partial}{\partial z_2}\Lambda(\boldx,\boldy,\boldz)
&=2\pi i\,\Lambda(\boldx,\boldy)E^{(1)}_\boldz(\infty)
-(2\pi i)^3\int_{0}^{\infty}\omega_\boldx^{(2)}(\tau_1)\omega_{\boldy;\boldz}^{(2;1)}(\tau_2).
\end{align*}
Therefore,
\begin{align*}
&
\delta_{a_2}\big((\Lambda(\bolda)-\Lambda(\boldb))(\Lambda(\bolda,\boldb)+\Lambda(\boldb,\boldc)+\Lambda(\boldc,\bolda))\big)
\\ &\;
=E^{(1)}_\bolda(\infty)(\Lambda(\bolda,\boldb)+\Lambda(\boldb,\boldc)+\Lambda(\boldc,\bolda))\big)
\\ &\quad
+(\Lambda(\bolda)-\Lambda(\boldb))
\cdot\bigg(
2\pi i \int_{0}^{\infty}\omega_{\bolda;\boldb}^{(1;2)}
-\Lambda(\boldb)E^{(1)}_\boldc(\infty)
+2\pi i \int_{0}^{\infty}\omega_{\boldb;\boldc}^{(2;1)}
\\ &\quad\qquad
-2\pi i \int_{0}^{\infty}\omega_{\boldc;\bolda}^{(1;2)}
+\Lambda(\boldc)E^{(1)}_\bolda(\infty)
-2\pi i \int_{0}^{\infty}\omega_{\boldc;\bolda}^{(2;1)}\bigg)
\displaybreak[2]\\ &\;
=E^{(1)}_\bolda(\infty)\big(\Lambda(\bolda,\boldb)+\Lambda(\boldb,\boldc)+\Lambda(\boldc,\bolda)+(\Lambda(\bolda)-\Lambda(\boldb))\Lambda(\boldc)\big)
\\ &\quad
-E^{(1)}_\boldc(\infty)(\Lambda(\bolda)-\Lambda(\boldb))\Lambda(\boldb)
\\ &\quad
-2\pi i (\Lambda(\bolda)-\Lambda(\boldb))
\int_{0}^{\infty}(\omega_{\bolda;\boldc}^{(1;2)}+\omega_{\boldc;\bolda}^{(1;2)}-\omega_{\bolda;\boldb}^{(1;2)}-\omega_{\boldc;\boldb}^{(1;2)})
\end{align*}
where $\boldc=-(\bolda+\boldb)$,
while the $\delta_{a_2}$-derivative of
\[
\Lambda(\bolda,\boldb,\boldb) - \Lambda(\boldc,\boldb,\boldb) + \Lambda(\boldb,\bolda,\bolda) - \Lambda(\boldc,\bolda,\bolda) + \Lambda(\boldc,\boldb,\bolda) + \Lambda(\boldc,\bolda,\boldb)
\]
is as follows:
\begin{align*}
&
(2\pi i)^2\int_{0}^{\infty}\omega_{\bolda;\boldb}^{(1;2)}\omega_\boldb^{(2)}
+(2\pi i)^2\int_{0}^{\infty}\omega_{\boldc;\boldb}^{(1;2)}\omega_\boldb^{(2)}
\\ &\quad
+(2\pi i)^2\int_{0}^{\infty}\omega_\boldb^{(2)}\omega_{\bolda;\bolda}^{(1;2)}
-(2\pi i)^2\int_{0}^{\infty}\omega_{\boldb;\bolda}^{(2;1)}\omega_\bolda^{(2)}
\\ &\quad
+\Lambda(\boldb,\bolda)E^{(1)}_\bolda(\infty)
-(2\pi i)^2\int_{0}^{\infty}\omega_\boldb^{(2)}\omega_{\bolda;\bolda}^{(2;1)}
\\ &\quad
+(2\pi i)^2\int_{0}^{\infty}\omega_{\boldc;\bolda}^{(1;2)}\omega_\bolda^{(2)}
-(2\pi i)^2\int_{0}^{\infty}\omega_\boldc^{(2)}\omega_{\bolda;\bolda}^{(1;2)}
\\ &\quad
+(2\pi i)^2\int_{0}^{\infty}\omega_{\boldc;\bolda}^{(2;1)}\omega_\bolda^{(2)}
-\Lambda(\boldc,\bolda)E^{(1)}_\bolda(\infty)
+(2\pi i)^2\int_{0}^{\infty}\omega_\boldc^{(2)}\omega_{\bolda;\bolda}^{(2;1)}
\\ &\quad
-(2\pi i)^2\int_{0}^{\infty}\omega_{\boldc;\boldb}^{(1;2)}\omega_\bolda^{(2)}
+\Lambda(\boldc,\boldb)E^{(1)}_\bolda(\infty)
-(2\pi i)^2\int_{0}^{\infty}\omega_\boldc^{(2)}\omega_{\boldb;\bolda}^{(2;1)}
\\ &\quad
-(2\pi i)^2\int_{0}^{\infty}\omega_{\boldc;\bolda}^{(1;2)}\omega_\boldb^{(2)}
+(2\pi i)^2\int_{0}^{\infty}\omega_\boldc^{(2)}\omega_{\bolda;\boldb}^{(1;2)}
-(2\pi i)^2\int_{0}^{\infty}\omega_{\boldc;\bolda}^{(2;1)}\omega_\boldb^{(2)}
\displaybreak[2]\\ &\;
=E^{(1)}_\bolda(\infty)\,\big(\Lambda(\boldb,\bolda)-\Lambda(\boldc,\bolda)+\Lambda(\boldc,\boldb)\big)
\\ &\quad
+(2\pi i)^2\int_{0}^{\infty}(\omega_{\bolda;\boldc}^{(1;2)}+\omega_{\boldc;\bolda}^{(1;2)}-\omega_{\bolda;\boldb}^{(1;2)}-\omega_{\boldc;\boldb}^{(1;2)})(\omega_\bolda^{(2)}-\omega_\boldb^{(2)}).
\end{align*}
Recall that
\[
\omega_{\bolda;\boldc}^{(1;2)}+\omega_{\boldc;\bolda}^{(1;2)}-\omega_{\bolda;\boldb}^{(1;2)}-\omega_{\boldc;\boldb}^{(1;2)}
=\big(E^{(1)}_\bolda E^{(2)}_\boldc+E^{(1)}_\boldc E^{(2)}_\bolda-(E^{(1)}_\bolda+E^{(1)}_\boldc)E^{(2)}_\boldb\big)\,d\tau.
\]
The latter expression is subject to the Borisov--Gunnells type relation in weight 3,
\[
E^{(1)}_\bolda E^{(2)}_\boldc+E^{(1)}_\boldc E^{(2)}_\bolda-(E^{(1)}_\bolda+E^{(1)}_\boldc)E^{(2)}_\boldb
=E^{(3)}_\boldb-\tfrac12E^{(3)}_\bolda-\tfrac12E^{(3)}_\boldc,
\]
implying
\[
\omega_{\bolda;\boldc}^{(1;2)}+\omega_{\boldc;\bolda}^{(1;2)}-\omega_{\bolda;\boldb}^{(1;2)}-\omega_{\boldc;\boldb}^{(1;2)}
=\omega_\boldb^{(3)}-\tfrac12\omega_\bolda^{(3)}-\tfrac12\omega_\boldc^{(3)}.
\]
In addition, the shuffle relations imply
\begin{align*}
&
\Lambda(\bolda,\boldb)+\Lambda(\boldb,\boldc)+\Lambda(\boldc,\bolda)+(\Lambda(\bolda)-\Lambda(\boldb))\Lambda(\boldc)
+(\Lambda(\boldb,\bolda)-\Lambda(\boldc,\bolda)+\Lambda(\boldc,\boldb))
\\ &\;
=\Lambda(\bolda)\Lambda(\boldb)+\Lambda(\boldb)\Lambda(\boldc)+(\Lambda(\bolda)-\Lambda(\boldb))\Lambda(\boldc)
=\Lambda(\bolda)(\Lambda(\boldb)+\Lambda(\boldc)).
\end{align*}
Combining the above derivations and using the fact that the quantities
\begin{align*}
E^{(1)}_\bolda(\infty)\Lambda(\bolda)(\Lambda(\boldb)+\Lambda(\boldc))
&=(2\pi i)^2(\{a_1\}-\tfrac12)^2(\{a_2\}-\tfrac12)
\\ &\quad\times
\big((\{b_1\}-\tfrac12)(\{b_2\}-\tfrac12)+(\{c_1\}-\tfrac12)(\{c_2\}-\tfrac12)\big)
\end{align*}
and
\begin{align*}
E^{(1)}_\boldc(\infty)(\Lambda(\bolda)-\Lambda(\boldb))\Lambda(\boldb)
&=(2\pi i)^2(\{b_1\}-\tfrac12)(\{b_2\}-\tfrac12)(\{c_1\}-\tfrac12)
\\ &\quad\times
\big((\{a_1\}-\tfrac12)(\{a_2\}-\tfrac12)-(\{b_1\}-\tfrac12)(\{b_2\}-\tfrac12)\big)
\end{align*}
are purely real, we finally arrive at
\begin{align}
\label{eq dGab final} \frac1{2\pi}\frac{\partial}{\partial a_2} \mathcal{G}(\bolda,\boldb)
&=\im\Big(
2\pi i(\Lambda(\bolda)-\Lambda(\boldb))
\int_{0}^{\infty}(\omega_\boldb^{(3)}-\tfrac12\omega_\bolda^{(3)}-\tfrac12\omega_\boldc^{(3)})
\\
\nonumber &\qquad
-(2\pi i)^2\int_{0}^{\infty}(\omega_\boldb^{(3)}-\tfrac12\omega_\bolda^{(3)}-\tfrac12\omega_\boldc^{(3)})(\omega_\bolda^{(2)}-\omega_\boldb^{(2)})\Big)
\\
\nonumber &=-4\pi^2\im\Big(\int_{0}^{\infty}(\omega_\bolda^{(2)}-\omega_\boldb^{(2)})(\omega_\boldb^{(3)}-\tfrac12\omega_\bolda^{(3)}-\tfrac12\omega_\boldc^{(3)})\Big);
\end{align}
in the final step we applied the shuffle relations again.

\section{Using the Rogers--Zudilin method} \label{RZ}

To handle the integrals $\int_0^\infty \omega^{(2)}_\boldu \omega^{(3)}_\boldv$ in \eqref{eq dGab final}, we use the Rogers--Zudilin method.

\subsection{The setup}

For weights $\ell \geq k \geq 2$, we want to work out the integral
\begin{equation*}
I^{(k,\ell)}_{\boldu, \boldv} = \int_0^\infty E^{(k)}_\boldu(iy) \widetilde{E}^{(\ell)}_\boldv(iy) \, dy \qquad (\boldu, \boldv \in (\R/\Z)^2)
\end{equation*}
in terms of $L$-values. Here $\widetilde{E}^{(\ell)}_\boldv$ denotes the Eichler integral of $E^{(\ell)}_\boldv$, that is, the unique primitive of $2\pi i E^{(\ell)}_\boldv(\tau) \, d\tau$ whose regularised value at $\infty$ is zero. The function $E^{(k)}_\boldu(\tau) \widetilde{E}^{(\ell)}_\boldv(\tau)$ is admissible, so that $I^{(k,\ell)}_{\boldu, \boldv}$ is well-defined.

Recall the modularity with respect to $\sigma = (\begin{smallmatrix} 0 & -1 \\ 1 & 0 \end{smallmatrix})$ (Lemma \ref{lem modularity Ekx}):
\begin{equation} \label{eq Ek sigma}
E^{(k)}_\boldu(iy) = (E^{(k)}_{\boldu \sigma^{-1}} |_k \sigma)(iy) = (iy)^{-k} E^{(k)}_{u\sigma^{-1}}\bigl(\frac{i}{y}\bigr) = (-i)^k y^{-k} E^{(k)}_{-\boldu \sigma}\bigl(\frac{i}{y}\bigr) = i^k y^{-k} E^{(k)}_{\boldu \sigma}\bigl(\frac{i}{y}\bigr).
\end{equation}

\subsection{The computation}

We have
\begin{equation*}I^{(k,\ell)}_{\boldu,\boldv} = i^k \int_0^\infty E^{(k)}_{\boldu \sigma}\bigl(\frac{i}{y}\bigr) \widetilde{E}^{(\ell)}_\boldv(iy) \, \frac{dy}{y^k}.
\end{equation*}
Write $E^{(k)}_{\boldu\sigma}(i/y) = C_1 + S_1(y)$ and $\widetilde{E}^{(\ell)}_\boldv(iy) = C_2 y + S_2(y)$, where $S_1(y)$, respectively $S_2(y)$, decays exponentially as $y \to 0^+$, respectively $y \to +\infty$. Explicitly,
\begin{align*}
C_1 & = a_0(E^{(k)}_{\boldu\sigma}), \\
C_2 & = -2\pi a_0(E^{(\ell)}_\boldv), \\
S_1(y) & = \sum_{\substack{m_1 \geq 1 \\ n_1 \in \R_{>0}}} (a(m_1) b(n_1) + (-1)^k a(-m_1) b(-n_1)) n_1^{k-1} e^{-2\pi m_1 n_1/y}, \\
S_2(y) & = \sum_{\substack{m_2 \geq 1 \\ n_2 \in \R_{>0}}} (c(m_2) d(n_2) + (-1)^\ell c(-m_2) d(-n_2)) \frac{n_2^{\ell-2}}{m_2} e^{-2\pi m_2 n_2 y},
\end{align*}
where the functions $a,b,c,d \colon \R \to \C$ are defined by
\begin{align*}
a(m) & = -e(-mu_1), & c(m) & = -e(mv_2), \\
b(n) & = \un_{n \equiv u_2 \bmod{1}}, & d(n) & = \un_{n \equiv v_1 \bmod{1}}.
\end{align*}
We can write $I^{(k,\ell)}_{\boldu,\boldv} = T_1 + T_2 + T_3$ with
\begin{align*}
T_1 & = i^k \int_0^\infty S_1(y) S_2(y) \frac{dy}{y^k}, \\
T_2 & = i^k C_1 \int_0^\infty \widetilde{E}^{(\ell)}_\boldv(iy) \frac{dy}{y^k}, \\
T_3 & = i^k C_2 \int_0^\infty E^{(k)}_{\boldu\sigma}\bigl(\frac{i}{y}\bigr) \frac{dy}{y^{k-1}},
\end{align*}
where each term $T_i$ is understood as the regularised value of the corresponding Mellin transform (actually the integral $T_1$ converges exponentially at $0$ and $\infty$). The terms $T_2$ and $T_3$ essentially boil down to $L$-values of Eisenstein series, and will be dealt with later.

We compute $T_1$ using the Rogers--Zudilin method. We first consider the terms $a(m_1) b(n_1)$ and $c(m_2) d(n_2)$ inside the series $S_1$ and $S_2$ respectively:
\begin{align*}
& \int_0^\infty \left(\sum_{\substack{m_1 \geq 1 \\ n_1 \in \R_{>0}}} a(m_1) b(n_1) n_1^{k-1} e^{-2\pi m_1 n_1/y}\right) \left(\sum_{\substack{m_2 \geq 1 \\ n_2 \in \R_{>0}}} c(m_2) d(n_2) \frac{n_2^{\ell-2}}{m_2} e^{-2\pi m_2 n_2 y}\right) \frac{dy}{y^k} \\
&\quad = \sum_{\substack{m_1 \geq 1 \\ n_1 \in \R_{>0}}} \sum_{\substack{m_2 \geq 1 \\ n_2 \in \R_{>0}}} a(m_1) b(n_1) c(m_2) d(n_2) n_1^{k-1} \cdot \frac{n_2^{\ell-2}}{m_2} \int_0^\infty e^{-2\pi (m_2 n_2 y + \frac{m_1 n_1}{y})} \frac{dy}{y^k} \\
&\quad\; \stackrel{y \to \frac{n_1}{m_2} \cdot y}{=} \sum_{\substack{m_1 \geq 1 \\ n_1 \in \R_{>0}}} \sum_{\substack{m_2 \geq 1 \\ n_2 \in \R_{>0}}} a(m_1) b(n_1) c(m_2) d(n_2) n_2^{\ell-2} m_2^{k-2} \int_0^\infty e^{-2\pi (n_1 n_2 y + \frac{m_1 m_2}{y})} \frac{dy}{y^k} \\
&\quad = \int_0^\infty \left(\sum_{m_1, m_2 \geq 1} a(m_1) c(m_2) m_2^{k-2} e^{-2\pi m_1 m_2/y}\right) \left(\sum_{n_1, n_2 \in \R_{>0}} b(n_1) d(n_2) n_2^{\ell-2} e^{-2\pi n_1 n_2 y}\right) \frac{dy}{y^k}.
\end{align*}
This computation will be summarised with the formal transformation $ab \otimes cd \to ac \otimes bd$.

Now the term $T_1$ is a linear combination of four terms, involving substitutions $(m_i, n_i) \to (-m_i, -n_i)$ for $i=1,2$. As a shortcut, write $f^-(x)= f(-x)$ for a function $f \colon \R \to \C$. Then the computation of $T_1$ can be written formally
\begin{align*}
& (ab + (-1)^k a^- b^-) \otimes (cd + (-1)^\ell c^- d^-) \\
& \quad \to ac \otimes bd + (-1)^\ell ac^- \otimes bd^- + (-1)^k a^- c \otimes b^- d + (-1)^{k+\ell} a^- c^- \otimes b^- d^-.    
\end{align*}
This linear combination does not produce Eisenstein series: for example $\sum b(n_1) d(n_2) q^{n_1 n_2}$ has no modularity property, because of the lack of parity conditions in $b$ and $d$. To get Eisenstein series, we have to take the imaginary part of $T_1$; this corresponds to considering the Beilinson regulator map with values in \emph{real} Deligne--Beilinson cohomology. Noting that $\bar{a} = a^-$, $\bar{c} = c^-$, $\bar{b}=b$ and $\bar{d} = d$, we see that $T_1 - \overline{T_1}$ can be computed as
\begin{align*}
& (ab + (-1)^k a^- b^-) \otimes (cd + (-1)^\ell c^- d^-) + (-1)^{k-1} (a^- b + (-1)^k a b^-) \otimes (c^- d + (-1)^\ell c d^-) \\ &\;
\to ac \otimes bd + (-1)^\ell ac^- \otimes bd^- + (-1)^k a^- c \otimes b^- d + (-1)^{k+\ell} a^- c^- \otimes b^- d^- \\
& \;\quad + (-1)^{k-1} a^- c^- \otimes bd + (-1)^{k+\ell-1} a^- c \otimes bd^- - a c^- \otimes b^- d + (-1)^{\ell-1} a c \otimes b^- d^- \\
&\; = (ac + (-1)^{k-1} a^- c^-) \otimes (bd + (-1)^{\ell-1} b^- d^-) + (-1)^k (a^- c + (-1)^{k-1} a c^-) \otimes (b^- d + (-1)^{\ell-1} bd^-).
\end{align*}
Up to the constant terms, we recognise the sum of two pairwise products of Eisenstein series of weights $k-1$ and $\ell-1$, respectively. Denoting by $f^0 = f - a_0(f)$ the rapidly decreasing part of $f$, we have
\begin{equation} \label{eq T1 1}
    \im(T_1) = \frac{i^{-k-1}}{2} \int_0^\infty \left(H^{(k-1),0}_{u_1,v_2}\bigl(\frac{i}{y}\bigr) G^{(\ell-1),0}_{v_1,-u_2}(iy) - H^{(k-1),0}_{u_1,-v_2}\bigl(\frac{i}{y}\bigr) G^{(\ell-1),0}_{v_1,u_2}(iy) \right) \frac{dy}{y^k},
\end{equation}
where for $\boldx = (x_1,x_2) \in (\R/\Z)^2$, the Eisenstein series $H^{(k)}_\boldx$ is given by 
\begin{equation*}
H^{(k)}_\boldx(\tau) = a_0(H^{(k)}_\boldx) + \sum_{m,n \geq 1} \bigl(e(m x_1 + n x_2) + (-1)^k e(-m x_1 - n x_2) \bigr) n^{k-1} q^{mn},
\end{equation*}
with
\begin{align*}
a_0(H^{(1)}_\boldx) & = \begin{cases} 0 & \textrm{if } \boldx = \boldsymbol0, \\
\frac12 \frac{1+e(x_2)}{1-e(x_2)} & \textrm{if } x_1=0 \textrm{ and } x_2 \neq 0,\\
\frac12 \frac{1+e(x_1)}{1-e(x_1)} & \textrm{if } x_1 \neq 0 \textrm{ and } x_2=0,\\
\frac12 \left(\frac{1+e(x_1)}{1-e(x_1)} + \frac{1+e(x_2)}{1-e(x_2)}\right) & \textrm{if } x_1 \neq 0 \textrm{ and } x_2 \neq 0,
\end{cases} \\
(k \geq 2) \qquad a_0(H^{(k)}_\boldx) & = (-1)^k \hat{\zeta}(-x_2,1-k).
\end{align*}

The Eisenstein series $G^{(k)}_\boldx$ and $H^{(k)}_\boldx$ are related as follows.

\begin{lem} \label{lem Gk Hk}
Let $k \geq 1$ and $\boldx = (x_1,x_2) \in (\R/\Z)^2$, with $x_1 \neq 0$ in the case $k=2$. Then $H^{(k)}_\boldx |_k \sigma = G^{(k)}_\boldx$. In particular, we have $H^{(k)}_\boldx(i/y) = (iy)^k G^{(k)}_\boldx(iy)$ for any $y>0$.
\end{lem}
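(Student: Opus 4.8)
The plan is to reduce to the case of $N$-torsion points $\boldx$ and to exploit the fact that $G^{(k)}_\boldx$ and $H^{(k)}_\boldx$ are partial discrete Fourier transforms of the Eisenstein series $E^{(k)}$, but in \emph{complementary} elliptic variables: $G^{(k);N}$ is the transform with respect to the second coordinate, which is exactly Lemma~\ref{lem ekab}, whereas $H^{(k);N}$ will turn out to be the transform with respect to the first coordinate. The bridge between the two is the $\sigma$-modularity of $E^{(k)}$ from Lemma~\ref{lem modularity Ekx}, namely $E^{(k)}_{x_1,x_2}|_k\sigma = E^{(k)}_{(x_1,x_2)\sigma} = E^{(k)}_{x_2,-x_1}$, together with the observation that $\sigma$ interchanges the two elliptic coordinates up to sign (since $\boldx\sigma=(x_2,-x_1)$). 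As the two Fourier transforms live in these two coordinates, $\sigma$ should swap $G$ and $H$.

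First I would record the companion of Lemma~\ref{lem ekab}: a direct computation with the $q$-expansions \eqref{def GkN} and \eqref{eq Fourier Ekx} shows that $\sum_{x_1\in\Z/N\Z}e(-wx_1/N)E^{(k)}_{x_1,x_2}$ equals a normalising constant times $H^{(k);N}_{x_2,-w}$ (after the rescaling $\tau\mapsto\tau/N$ implicit in \eqref{eq Gk GkN}); the weight $n^{k-1}$ and the phase $e(mx_1+nx_2)$ characteristic of $H$ arise precisely because the summation variable carrying the exponent is the one constrained by the first coordinate of $E^{(k)}$. With both Fourier descriptions available, I would apply $|_k\sigma$ to the identity of Lemma~\ref{lem ekab}, move the slash inside the finite sum, and replace $E^{(k)}_{x_1,x_2}|_k\sigma$ by $E^{(k)}_{x_2,-x_1}$. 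The result is a Fourier transform of $E^{(k)}$ in its \emph{first} slot, hence by the companion identity a multiple of $H^{(k);N}_{-x_1,-u}$; using $H^{(k)}_{-\boldx}=(-1)^kH^{(k)}_\boldx$ (clear from the defining series, the constant terms agreeing via $\hat\zeta(-x_2,1-k)=(-1)^k\hat\zeta(x_2,1-k)$) together with $\sigma^2=-I$ and $f|_k\sigma^2=(-1)^kf$, the two sign changes cancel and I land on $H^{(k)}_\boldx|_k\sigma=G^{(k)}_\boldx$ with the stated index $\boldx$.

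The main obstacle is bookkeeping rather than conceptual. I must track the powers of $N$ and the rescalings relating the interpolated series to their level-$N$ versions through \eqref{eq Gk GkN}, and above all I must match the constant terms, which are invisible from the generic Fourier coefficients: I would check separately that the transform of the $a_0(E^{(k)})$ reproduces $a_0(H^{(k)}_\boldx)=(-1)^k\hat\zeta(-x_2,1-k)$ for $k\geq2$ and the four-case formula for $k=1$, relying on the special-value identities between $\hat\zeta(\,\cdot\,,1-k)$ and Bernoulli polynomials already used in Lemma~\ref{lem Fourier Ekx}. The genuinely excluded case $k=2$, $x_1=0$ must be set aside from the outset, since there $G^{(2)}_{0,x_2}$ fails to be modular. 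Having proved the identity for all admissible $N$-torsion $\boldx$, I would pass to arbitrary $\boldx\in(\R/\Z)^2$ by continuity, using the smoothness of $\boldx\mapsto E^{(k)}_\boldx$ away from the bad locus (Lemma~\ref{diff property Ek}) and the corresponding continuity of $G^{(k)}_\boldx$ and $H^{(k)}_\boldx$ in the relevant region.

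The ``in particular'' clause is then immediate: unwinding the slash, $H^{(k)}_\boldx|_k\sigma=G^{(k)}_\boldx$ reads $\tau^{-k}H^{(k)}_\boldx(-1/\tau)=G^{(k)}_\boldx(\tau)$, and evaluating at $\tau=iy$, where $-1/(iy)=i/y$, yields $H^{(k)}_\boldx(i/y)=(iy)^kG^{(k)}_\boldx(iy)$ for every $y>0$.
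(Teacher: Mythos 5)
Your proposal is correct, but it follows a genuinely different route from the paper, whose proof of Lemma \ref{lem Gk Hk} is essentially a citation: for $\boldx \in (\tfrac1N\Z/\Z)^2$ the identity $H^{(k)}_\boldx |_k \sigma = G^{(k)}_\boldx$ is quoted from \cite[Lemme 3.10]{Bru17}, and the general case is then deduced by continuity --- only this last step coincides with yours. What you do instead is re-derive the torsion case from ingredients internal to the present paper: Lemma \ref{lem ekab} (the Fourier transform of $E^{(k)}$ in the second elliptic coordinate produces $G^{(k);N}$), a companion identity (the transform in the first coordinate produces $H^{(k)}$, after the rescaling $\tau \mapsto \tau/N$ implicit in \eqref{eq Gk GkN}), and the equivariance $E^{(k)}_{\boldx}|_k\sigma = E^{(k)}_{\boldx\sigma}$ of Lemma \ref{lem modularity Ekx}. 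The chain does close up as you claim: slashing Lemma \ref{lem ekab} by $\sigma$ turns the sum over the second slot into a first-slot Fourier transform at second coordinate $-x_1$, the companion identity identifies this with a multiple of $H^{(k)}_{(-x_1/N,\,-u/N)}(\tau/N)$, and the two signs coming from $H^{(k)}_{-\boldx} = (-1)^k H^{(k)}_\boldx$ and $f|_k\sigma^2 = (-1)^k f$ cancel, yielding first $(-1)^k H^{(k)}_\boldx = G^{(k)}_\boldx|_k\sigma$ and then the stated identity after one more slash by $\sigma$; the exclusion of $k=2$, $x_1=0$ is consistent throughout, since with $x_1 \neq 0$ no term $E^{(2)}_{\boldsymbol0}$ ever appears. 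The real cost of your route, which you correctly flag, is the constant-term bookkeeping in the companion identity: for $k \geq 2$ it reduces to $\sum_{t \bmod N} e(-wt/N)\,\zeta(t/N,1-k) = N^{1-k}\hat\zeta(-w/N,1-k)$ together with $(-1)^k\hat\zeta(-y,1-k) = \hat\zeta(y,1-k)$ (a consequence of the Hurwitz formula), while $k=1$ requires matching the four-case formula for $a_0(H^{(1)}_\boldx)$; none of this is difficult, but it is precisely the content the paper outsources to \cite{Bru17}. In exchange, your argument makes the lemma self-contained and explains structurally why $\sigma$ exchanges $G$ and $H$: they are the two partial Fourier transforms of $E^{(k)}$, and right multiplication by $\sigma$ swaps the two elliptic coordinates up to sign.
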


\begin{proof}
In the case $\boldx \in (\frac{1}{N}\Z/\Z)^2$, this is \cite[Lemme 3.10]{Bru17}.
The general case follows since both sides are continuous in $\boldx$.
\end{proof}

We compute \eqref{eq T1 1} by `completing' the Eisenstein series $H^{(k-1)}$ and $G^{(\ell-1)}$, and separating the contribution from the constant terms, using also Lemma \ref{lem Gk Hk}:
\begin{align}
\label{eq T1 2}   & \int_0^\infty H^{(k-1),0}_{\bolda}\bigl(\frac{i}{y}\bigr) G^{(\ell-1),0}_{\boldb}(iy) y^s \frac{dy}{y} \\
\nonumber   &\; = \mathcal{M}\Bigl(H^{(k-1)}_\bolda\bigl(\frac{i}{y}\bigr) G^{(\ell-1)}_\boldb(iy), s \Bigr) - a_0(H^{(k-1)}_{\bolda}) \mathcal{M}(G^{(\ell-1)}_{\boldb}, s ) - a_0(G^{(\ell-1)}_{\boldb}) \mathcal{M} \Bigl(H^{(k-1)}_{\bolda}\bigl(\frac{i}{y}\bigr), s \Bigr) \\ 
\nonumber   &\; = i^{k-1} \mathcal{M}(G^{(k-1)}_\bolda G^{(\ell-1)}_\boldb, s+k-1) \\
\nonumber   & \;\quad - a_0(H^{(k-1)}_{\bolda}) \mathcal{M}(G^{(\ell-1)}_{\boldb}, s) - i^{k-1} a_0(G^{(\ell-1)}_{\boldb}) \mathcal{M}(G^{(k-1)}_{\bolda}, s+k-1).
\end{align}

Putting \eqref{eq T1 1} and \eqref{eq T1 2} together, we get the following formula for the imaginary part of $T_1$:
\begin{align*}
    \im(T_1) & = T'_1 + T'_2 + T'_3, \\
    T'_1 & = - \frac12 \mathcal{M}^*\bigl(G^{(k-1)}_{u_1,v_2} G^{(\ell-1)}_{v_1,-u_2} - G^{(k-1)}_{u_1,-v_2} G^{(\ell-1)}_{v_1,u_2}, 0\bigr), \\
    T'_2 & = \frac{i^{1-k}}{2} a_0(H^{(k-1)}_{u_1,v_2}) \mathcal{M}(G^{(\ell-1)}_{v_1,-u_2}, 1-k) - i^{1-k} a_0(H^{(k-1)}_{u_1,-v_2}) \mathcal{M}(G^{(\ell-1)}_{v_1,u_2}, 1-k), \\
    T'_3 & = \frac12 a_0(G^{(\ell-1)}_{v_1,-u_2}) \mathcal{M}^*(G^{(k-1)}_{u_1,v_2}, 0) - \frac12 a_0(G^{(\ell-1)}_{v_1,u_2}) \mathcal{M}^*(G^{(k-1)}_{u_1,-v_2}, 0).
\end{align*}
If $\boldu$ and $\boldv$ are $N$-torsion (and $u_1,v_1 \neq 0$), the main term $T'_1$ is the (completed) $L$-value of a modular form of weight $k+\ell-2$ and level $\Gamma(N)$ with rational Fourier coefficients.

\subsection{The constant terms}

We henceforth assume that $k=2$, which is enough for our purpose. Also, we put ourselves in the generic situation where the coordinates of $\boldu$ and $\boldv$ are non-zero. In this case, the Eisenstein series appearing in $T'_1$ have no constant term (see Definition \ref{def Gkx}), so that the Mellin transform in $T'_1$ is holomorphic at $s=0$. Moreover $a_0(G^{(\ell-1)}_{v_1,-u_2}) = 0$ and thus $T'_3 = 0$.

Let us compute $\im(T_2)$. The Mellin transform of the Eichler integral $\widetilde{E}^{(\ell)}_\boldv$ is given by
\begin{equation*}
    \mathcal{M}(\widetilde{E}^{(\ell)}_\boldv, s) = \frac{2\pi}{s} \mathcal{M}(E^{(\ell)}_\boldv, s+1) = \frac{2\pi}{s} \Bigl(-\frac{a_0(E^{(\ell)}_\boldv)}{s+1} + \mathcal{M}^*(E^{(\ell)}_\boldv, 0) + O_{s \to -1}(s+1)\Bigr),
\end{equation*}
from which we deduce
\begin{equation*}
T_2 = - a_0(E^{(2)}_{u \sigma}) \mathcal{M}^*(\widetilde{E}^{(\ell)}_\boldv, -1) = \pi B_2(\{u_2\}) \bigl(\mathcal{M}^*(E^{(\ell)}_\boldv, 0) - a_0(E^{(\ell)}_\boldv)\bigr).
\end{equation*}
Using Lemma \ref{Mellin Ek Gk} and equation \eqref{eq hat zeta 0}, this leads to
\begin{align*}
    \im(T_2) & = -\frac{\pi i}{2} B_2(\{u_2\}) \lim_{s \to 0} \Gamma(s) \bigl( -\zeta(v_1, s-\ell+1) + (-1)^\ell \zeta(-v_1, s-\ell+1) \bigr) \bigl( \hat{\zeta}(v_2,s) - \hat{\zeta}(-v_2,s) \bigr) \\
    & = \frac{\pi i}{2} B_2(\{u_2\}) \frac{1+e(v_2)}{1-e(v_2)} \lim_{s \to 0} \Gamma(s) \bigl( \zeta(v_1, s-\ell+1) + (-1)^{\ell+1} \zeta(-v_1, s-\ell+1) \bigr).
\end{align*}
For the term $T_3$, we rewrite it using \eqref{eq Ek sigma}:
\begin{equation*}
    T_3 = -2\pi a_0(E^{(\ell)}_\boldv) \int_0^\infty E^{(2)}_\boldu(iy) y \, dy = -2\pi \frac{B_\ell(\{v_1\})}{\ell} \mathcal{M}^*(E^{(2)}_\boldu, 2).
\end{equation*}
Moreover,
\begin{align*}
    \lim_{\substack{s \to 2 \\ s \in \R}} \im(\mathcal{M}(E^{(2)}_\boldu, s)) & = (2\pi)^{-2} \lim_{\substack{s \to 2 \\ s \in \R}} \bigl(-\zeta(u_1,s-1) \im(\hat{\zeta}(u_2,s)) - \zeta(-u_1,s-1) \im(\hat{\zeta}(-u_2,s)) \bigr) \\
    & = (2\pi)^{-2} \im(\hat{\zeta}(u_2,2)) \times \pi i \frac{1+e(u_1)}{1-e(u_1)}.
\end{align*}
Therefore,
\begin{equation*}
    \im(T_3) = -\frac{i}2 \frac{B_\ell(\{v_1\})}{\ell} \cdot \im(\hat{\zeta}(u_2,2)) \cdot \frac{1+e(u_1)}{1-e(u_1)}.
\end{equation*}

It remains to compute $T'_2$. We have $T'_2 = A + B$ with
\begin{align*}
    A & = \frac{i}4 \frac{1+e(u_1)}{1-e(u_1)} (\mathcal{M}(G^{(\ell-1)}_{v_1,u_2},-1) - \mathcal{M}(G^{(\ell-1)}_{v_1,-u_2},-1)), \\
    B & = -\frac{i}4 \frac{1+e(v_2)}{1-e(v_2)} (\mathcal{M}(G^{(\ell-1)}_{v_1,u_2},-1) + \mathcal{M}(G^{(\ell-1)}_{v_1,-u_2},-1)).
\end{align*}
Let us compute $A$. Using Lemma \ref{Mellin Ek Gk}, we obtain
\begin{align*}
  & \mathcal{M}(G^{(\ell-1)}_{v_1,u_2},-1) - \mathcal{M}(G^{(\ell-1)}_{v_1,-u_2},-1) \\
  &\; = 2\pi \lim_{s \to -1} \Gamma(s) \bigl(\zeta(v_1,s-\ell+2) + (-1)^\ell \zeta(-v_1,s-\ell+2)\bigr) \bigl(\zeta(u_2,s)-\zeta(u_2,-s)\bigr) \\
  &\; = -\frac{4\pi}{\ell} B_\ell(\{v_1\}) \lim_{s \to -1} \Gamma(s) (\zeta(u_2,s)-\zeta(-u_2,s)).
\end{align*}
Now using the Hurwitz formula \cite[eq.~(6)]{Bru17}, we have
\begin{equation*}
    \zeta(u_2,s) - \zeta(-u_2,s) = (2\pi)^{s-1} \Gamma(1-s) (e^{-\pi i(1-s)/2}-e^{\pi i(1-s)/2}) (\hat{\zeta}(u_2,1-s)-\hat{\zeta}(-u_2,1-s)),
\end{equation*}
which gives
\begin{equation*}
    \lim_{s \to -1} \Gamma(s) (\zeta(u_2,s) - \zeta(-u_2,s)) = - \frac{1}{2\pi} \im(\hat{\zeta}(u_2,2)).
\end{equation*}
Therefore,
\begin{equation*}
    A = \frac{i}{2} \frac{B_\ell(\{v_1\})}{\ell} \frac{1+e(u_1)}{1-e(u_1)} \im(\hat{\zeta}(u_2,2)).
\end{equation*}
Similarly, the term $B$ is equal to
\begin{equation*}
    B = - \frac{\pi i}{2} \frac{1+e(v_2)}{1-e(v_2)} B_2(\{u_2\}) \lim_{s \to 0} \Gamma(s) \bigl(\zeta(v_1,s-\ell+1) + (-1)^{\ell+1} \zeta(-v_1,s-\ell+1)\bigr).
\end{equation*}
Collecting everything, we see that $\im(T_2)+B=0$ and $\im(T_3)+A=0$. Thus, $\im(I^{(2,\ell)}_{\boldu,\boldv}) = T'_1$, as summarised in the following theorem.

\begin{thm} \label{thm RZ Iuv}
Let $\ell \geq 2$ be an integer, and $u=(u_1,u_2)$, $v=(v_1,v_2)$ in $(\R/\Z)^2$, where all $u_i$ and $v_i$ are non-zero. Then
\begin{equation*}
\im(I^{(2,\ell)}_{\boldu,\boldv}) = - \frac12 \mathcal{M}\bigl(G^{(1)}_{u_1,v_2} G^{(\ell-1)}_{v_1,-u_2} - G^{(1)}_{u_1,-v_2} G^{(\ell-1)}_{v_1,u_2}, 0\bigr).
\end{equation*}
\end{thm}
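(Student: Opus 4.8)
The plan is to follow the Rogers--Zudilin strategy already set up above, organising $I^{(2,\ell)}_{\boldu,\boldv}$ into a single main term and several constant-term contributions, and then verifying that, after passing to imaginary parts, everything except the main term cancels. First I would invoke the $\sigma$-modularity \eqref{eq Ek sigma} to rewrite
\[
I^{(2,\ell)}_{\boldu,\boldv} = i^2 \int_0^\infty E^{(2)}_{\boldu\sigma}\bigl(\tfrac{i}{y}\bigr) \widetilde{E}^{(\ell)}_\boldv(iy) \, \frac{dy}{y^2},
\]
and split each factor into its constant part and its exponentially decaying part, as in $E^{(2)}_{\boldu\sigma}(i/y)=C_1+S_1(y)$ and $\widetilde{E}^{(\ell)}_\boldv(iy)=C_2 y + S_2(y)$. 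Multiplying out gives the decomposition $I^{(2,\ell)}_{\boldu,\boldv}=T_1+T_2+T_3$ recorded above, where $T_1$ carries the product $S_1 S_2$ while $T_2$, $T_3$ isolate the two constant terms $C_1$, $C_2$. The decisive observation is that only the imaginary part produces honest Eisenstein series: the raw series attached to $T_1$ lacks the parity symmetry needed for modularity, so I would work with $\im(\,\cdot\,)$ from the outset.

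The heart of the argument is the treatment of $T_1$. Here I would apply the Rogers--Zudilin change of variables $y \mapsto (n_1/m_2)\,y$ inside the absolutely (exponentially) convergent double integral, which effects the formal swap $ab\otimes cd \to ac\otimes bd$ on the Fourier data $a,b,c,d$. Forming $T_1-\overline{T_1}$ then recombines the four sign-twisted copies arising from $(m_i,n_i)\mapsto(-m_i,-n_i)$ into genuine products of Eisenstein series of weights $k-1$ and $\ell-1$, namely the combination $H^{(k-1),0}_{u_1,v_2}G^{(\ell-1),0}_{v_1,-u_2}-H^{(k-1),0}_{u_1,-v_2}G^{(\ell-1),0}_{v_1,u_2}$ of \eqref{eq T1 1}. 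Converting the $H$-series into $G$-series via Lemma \ref{lem Gk Hk} and separating the contributions of the constant terms $a_0(H^{(k-1)})$ and $a_0(G^{(\ell-1)})$ yields $\im(T_1)=T'_1+T'_2+T'_3$, with $T'_1 = -\tfrac12 \mathcal{M}^*\bigl(G^{(k-1)}_{u_1,v_2}G^{(\ell-1)}_{v_1,-u_2}-G^{(k-1)}_{u_1,-v_2}G^{(\ell-1)}_{v_1,u_2},0\bigr)$ the desired main term.

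It then remains to show that all remaining pieces cancel. Specialising to $k=2$ with every coordinate of $\boldu,\boldv$ nonzero, the Eisenstein series in $T'_1$ have no constant term, so $\mathcal{M}$ is holomorphic at $s=0$ and $\mathcal{M}^*=\mathcal{M}$ there; moreover $a_0(G^{(\ell-1)}_{v_1,-u_2})=0$ forces $T'_3=0$. I would then compute $\im(T_2)$, $\im(T_3)$ and the two pieces $A,B$ of $T'_2$ explicitly, expressing each as a limit of $\Gamma$-factors times Hurwitz and periodic zeta values via Lemma \ref{Mellin Ek Gk}, the evaluation \eqref{eq hat zeta 0}, and the Hurwitz functional equation. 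Matching these shows $\im(T_2)+B=0$ and $\im(T_3)+A=0$, whence $\im(I^{(2,\ell)}_{\boldu,\boldv})=T'_1$, which is the asserted formula.

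The main obstacle is the bookkeeping in the Rogers--Zudilin step: one must track all four sign-twisted summands under $(m_i,n_i)\mapsto(-m_i,-n_i)$ and verify that $T_1-\overline{T_1}$ reassembles them with exactly the right parity factors $(-1)^{k-1}$ and $(-1)^{\ell-1}$, so as to yield Eisenstein series rather than formal Dirichlet series lacking modularity. The second delicate point is analytic: the Mellin transforms in $T_2$, $T_3$, $A$, $B$ are evaluated at the non-convergent points $s=0,\pm 1$, where either $\Gamma$ or the zeta factors have poles, so that the cancellations $\im(T_2)+B=0$ and $\im(T_3)+A=0$ hinge on correctly pairing the leading Laurent coefficients through the Hurwitz functional equation.
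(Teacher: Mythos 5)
Your proposal is correct and follows essentially the same route as the paper: the $\sigma$-modularity rewrite, the splitting $I^{(2,\ell)}_{\boldu,\boldv}=T_1+T_2+T_3$, the Rogers--Zudilin swap $ab\otimes cd\to ac\otimes bd$ applied to $T_1-\overline{T_1}$, the conversion of $H$-series to $G$-series via Lemma \ref{lem Gk Hk}, and the final cancellations $\im(T_2)+B=0$, $\im(T_3)+A=0$ are exactly the steps carried out in Section \ref{RZ}. Nothing essential is missing or different.
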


\section{Getting to the $L$-value} \label{L-value}

In Section \ref{G-reg}, we established that the $a_2$-derivative of the (interpolated) Goncharov regulator of $\tilde{\xi}(\bolda,\boldb)$ is
\begin{equation*}
\frac{\partial}{\partial a_2} \mathcal{G}(\bolda,\boldb) = -4 \pi^2 \im \Big(\int_0^\infty\big(E^{(2)}_\boldb(\tau)-E^{(2)}_\bolda(\tau)\big) \big(\widetilde{E}^{(3)}_\boldb(\tau)-\tfrac12\widetilde{E}^{(3)}_\bolda(\tau)-\tfrac12\widetilde{E}^{(3)}_\boldc(\tau)\big)\,d\tau\Big),
\end{equation*}
see formula \eqref{eq dGab final}.
This holds in the domain where all the coordinates of $\bolda,\boldb,\boldc \in (\R/\Z)^2$ are non-zero, with $\bolda+\boldb+\boldc=\boldsymbol0$ as usual. Using Theorem \ref{thm RZ Iuv}, we have
\begin{align} \label{eq G 1}
\frac{\partial}{\partial a_2} \mathcal{G}(\bolda,\boldb) = 2\pi^2 & \mathcal{M}\Big((G^{(1)}_{b_1,b_2} G^{(2)}_{b_1,-b_2} - G^{(1)}_{b_1,-b_2} G^{(2)}_{b_1,b_2}) - \frac12 (G^{(1)}_{b_1,a_2} G^{(2)}_{a_1,-b_2} - G^{(1)}_{b_1,-a_2} G^{(2)}_{a_1,b_2}) \\
\nonumber & \qquad - \frac12 (G^{(1)}_{b_1,c_2} G^{(2)}_{c_1,-b_2} - G^{(1)}_{b_1,-c_2} G^{(2)}_{c_1,b_2}) - (G^{(1)}_{a_1,b_2} G^{(2)}_{b_1,-a_2} - G^{(1)}_{a_1,-b_2} G^{(2)}_{b_1,a_2}) \\
\nonumber & \qquad + \frac12 ( G^{(1)}_{a_1,a_2} G^{(2)}_{a_1,-a_2} - G^{(1)}_{a_1,-a_2} G^{(2)}_{a_1,a_2} ) + \frac12 ( G^{(1)}_{a_1,c_2} G^{(2)}_{c_1,-a_2} - G^{(1)}_{a_1,-c_2} G^{(2)}_{c_1,a_2} ), 0 \Big).
\end{align}
Let us write $f = f_1 + \dots + f_6$ for the modular form inside \eqref{eq G 1}. We rewrite $f$ using Theorems \ref{thm BG Gk 1} and \ref{thm BG Gk 2}. Theorem \ref{thm BG Gk 2} gives
\begin{equation*}
\mathcal{M}(f_1, 0) = \mathcal{M}(G^{(3)}_{0,b_2}, 0), \qquad
\mathcal{M}(f_5, 0) = \frac12 \mathcal{M}(G^{(3)}_{0,a_2}, 0).
\end{equation*}

Using Theorem \ref{thm BG Gk 1} with $x_1=c_1$, $y_1=a_1$, $u_2=a_2$ and $v_2=-c_2$, we have
\begin{equation} \label{eq G 3}
G^{(1)}_{-b_1,a_2} G^{(2)}_{a_1,b_2} + G^{(1)}_{a_1,-c_2} G^{(2)}_{c_1,a_2} - G^{(1)}_{-b_1,-c_2} G^{(2)}_{c_1,-b_2} = G^{(1)}_{a_1,b_2} G^{(2)}_{b_1,-a_2};
\end{equation}
and with $x_1=c_1$, $y_1=b_1$, $u_2=b_2$ and $v_2=-c_2$, we obtain
\begin{equation} \label{eq G 4}
G^{(1)}_{b_1,-c_2} G^{(2)}_{c_1,b_2} - G^{(1)}_{-a_1,-c_2} G^{(2)}_{c_1,-a_2} - G^{(1)}_{b_1,a_2} G^{(2)}_{-a_1,b_2} = G^{(1)}_{a_1,-b_2} G^{(2)}_{b_1,a_2}.
\end{equation}
Combining \eqref{eq G 3} and \eqref{eq G 4}, we have
\begin{equation*}
f_2 + f_3 + f_6 = - \frac12 \times \eqref{eq G 3} + \frac12 \times \eqref{eq G 4} = - \frac12 G^{(1)}_{a_1,b_2} G^{(2)}_{b_1,-a_2} + \frac12 G^{(1)}_{a_1,-b_2} G^{(2)}_{b_1,a_2} = \frac12 f_4.
\end{equation*}
Therefore,
\begin{align} \label{eq G 5}
\frac{\partial}{\partial a_2} \mathcal{G}(\bolda,\boldb) & = 2\pi^2 \mathcal{M} \Big(G^{(3)}_{0,b_2} + \frac12 G^{(3)}_{0,a_2} + \frac32 f_4, 0 \Big) \\
\nonumber & = -3\pi^2 \mathcal{M} (G^{(1)}_{a_1, b_2} G^{(2)}_{b_1, -a_2} - G^{(1)}_{a_1,-b_2} G^{(2)}_{b_1, a_2}, 0) + \pi^2 \mathcal{M}(G^{(3)}_{0,a_2} + 2 G^{(3)}_{0,b_2}, 0).
\end{align}

To find the $a_2$-antiderivative of the right-hand side of \eqref{eq G 5}, we use Lemma \ref{lem diff Gk}. We have formally
\begin{align} \label{eq G 6}
\mathcal{M}(G^{(1)}_{a_1,-b_2} G^{(2)}_{b_1, a_2}, 0) & = \int_0^\infty G^{(1)}_{a_1,-b_2}(iy) G^{(2)}_{b_1, a_2}(iy) \frac{dy}{y} \\
\nonumber & = -\frac{1}{2\pi} \int_0^\infty G^{(1)}_{a_1,-b_2}(iy) \frac{\partial}{\partial a_2} G^{(1)}_{b_1, a_2}(iy) \frac{dy}{y^2} \\
\nonumber & = -\frac{1}{2\pi} \frac{\partial}{\partial a_2} \mathcal{M}(G^{(1)}_{a_1,-b_2} G^{(1)}_{b_1, a_2}, -1).
\end{align}

\begin{lem} \label{lem Lambda G3}
For $x \in \R/\Z$, $x \neq 0$, we have $\mathcal{M}(G^{(3)}_{0,x},0) = -2\zeta'(-2) B_1(\{x\})$.
\end{lem}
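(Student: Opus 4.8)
The plan is to apply the explicit Mellin transform formula \eqref{eq Mellin Gk} and then read off the value at $s=0$. Specializing Lemma \ref{Mellin Ek Gk} to $k=3$ and $\boldx = (0,x)$, and using that $-x_1=x_1=0$ so that the two Hurwitz factors in the first variable coincide, I would obtain
\begin{equation*}
\mathcal{M}(G^{(3)}_{0,x}, s) = (2\pi)^{-s}\, \Gamma(s)\, \zeta(0, s-2) \bigl( \zeta(x,s) - \zeta(-x,s) \bigr),
\end{equation*}
where the minus sign between the two terms comes from $(-1)^3 = -1$. The decisive observation is that $\zeta(0,w) = \sum_{n\ge 1} n^{-w}$ is simply the Riemann zeta function, so $\zeta(0,s-2)$ equals $\zeta(s-2)$ and in particular vanishes at $s=0$ by the trivial zero $\zeta(-2)=0$.

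This simple zero cancels the simple pole of $\Gamma(s)$ at $s=0$, which is precisely why $\mathcal{M}(G^{(3)}_{0,x},s)$ is holomorphic there and the unstarred value $\mathcal{M}(G^{(3)}_{0,x},0)$ is literally well-defined (no Laurent regularization is needed). To evaluate it I would expand the three relevant factors to first order near $s=0$. Since $\Gamma(s) = s^{-1} + O(1)$ and $\zeta(s-2) = \zeta'(-2)\,s + O(s^2)$, the product satisfies $\Gamma(s)\,\zeta(s-2) = \zeta'(-2) + O(s)$; moreover $(2\pi)^{-s}\to 1$, and the last factor tends to $\zeta(x,0)-\zeta(-x,0)$.

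For this last factor I would invoke the standard Hurwitz values already used in the proof of Proposition \ref{pro single Eis}: for $x\neq 0$ one has $\zeta(x,0) = \tfrac12 - \{x\}$ and $\zeta(-x,0) = \tfrac12 - \{-x\} = \{x\} - \tfrac12$, whence
\begin{equation*}
\zeta(x,0) - \zeta(-x,0) = 1 - 2\{x\} = -2\,B_1(\{x\}).
\end{equation*}
Multiplying the three limits yields $\mathcal{M}(G^{(3)}_{0,x},0) = \zeta'(-2)\cdot\bigl(-2\,B_1(\{x\})\bigr) = -2\,\zeta'(-2)\,B_1(\{x\})$, as claimed. The computation is entirely routine; the only points that deserve attention are the bookkeeping of the pole/zero cancellation at $s=0$ (so that the unstarred Mellin value makes sense) and reading off the Hurwitz values $\zeta(\pm x,0)$ with the correct fractional-part conventions, so there is no genuine obstacle here.
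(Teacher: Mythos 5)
Your proposal is correct and follows essentially the same route as the paper: specialize \eqref{eq Mellin Gk} to $k=3$, $\boldx=(0,x)$, note that $\zeta(0,s-2)=\zeta(s-2)$ whose trivial zero at $s=0$ cancels the pole of $\Gamma(s)$ giving the limit $\zeta'(-2)\bigl(\zeta(x,0)-\zeta(-x,0)\bigr)$, and finish with the Hurwitz value $\zeta(x,0)=-B_1(\{x\})$. The extra bookkeeping you supply (the first-order expansions and the fractional-part identity $\{-x\}=1-\{x\}$) is exactly what the paper leaves implicit.
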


\begin{proof}
Using \eqref{eq Mellin Gk}, we obtain
\begin{equation*}
\mathcal{M}(G^{(3)}_{0,x},0) = \lim_{s \to 0} \Gamma(s) \zeta(s-2) (\zeta(x,s)-\zeta(-x,s)) = \zeta'(-2)(\zeta(x,0)-\zeta(-x,0)).
\end{equation*}
We conclude using the evaluation $\zeta(x,0) = -B_1(\{x\})$ \cite[Section 2, p.~1123]{Bru17}.
\end{proof}

From \eqref{eq G 5}, \eqref{eq G 6} and Lemma \ref{lem Lambda G3}, we get
\begin{equation} \label{eq G 7}
\frac{\partial}{\partial a_2} \mathcal{G}(\bolda,\boldb) = -\frac{3\pi}{2} \frac{\partial}{\partial a_2} \mathcal{M}(G^{(1)}_{a_1, b_2} G^{(1)}_{b_1, -a_2} + G^{(1)}_{a_1,-b_2} G^{(1)}_{b_1, a_2}, -1) + \frac{\zeta(3)}{2} \big( B_1(a_2) + 2 B_1(b_2)\big).
\end{equation}
This identity holds in the domain
\begin{equation*}
D_0 = \{(\bolda, \boldb) : 0 < a_1, a_2, b_1, b_2 < 1, \; a_1+b_1 \neq 1, \; a_2 + b_2 \neq 1 \},
\end{equation*}
which has four connected components:
\begin{align*}
D_{++} &= \{a_1+b_1 > 1, \; a_2+b_2 > 1\}, & D_{+-} &= \{a_1+b_1 > 1, \; a_2+b_2 < 1\}, \\
D_{-+} &= \{a_1+b_1 < 1, \; a_2+b_2 > 1\}, & D_{--} &= \{a_1+b_1 < 1, \; a_2+b_2 < 1\}. 
\end{align*}
We can integrate \eqref{eq G 7} on each of these domains, with possibly different integration constants. So for $\square \in \{++, +-, -+, --\}$ and $(\bolda,\boldb) \in D_\square$, we have
\begin{align}
\nonumber \mathcal{G}(\bolda,\boldb) & = -\frac{3\pi}2 \mathcal{M}(G^{(1)}_{a_1, b_2} G^{(1)}_{b_1, -a_2} + G^{(1)}_{a_1, -b_2} G^{(1)}_{b_1, a_2}, -1) \\
\label{eq Gab 1} & \quad + \frac{\zeta(3)}{4} \big(B_2(a_2) + B_2(b_2) + 4 B_1(a_2) B_1(b_2) \big) + C_\square(\bolda,\boldb),
\end{align}
where $C_\square(\bolda,\boldb)$ does not depend on $a_2$. For convenience, write
\begin{equation*}
L(\bolda,\boldb) = -\frac{3\pi}2 \mathcal{M}(G^{(1)}_{a_1, b_2} G^{(1)}_{b_1, -a_2} + G^{(1)}_{a_1, -b_2} G^{(1)}_{b_1, a_2}, -1).
\end{equation*}
To get further, note that the symmetry $(\bolda,\boldb) \to (\boldb,\bolda)$ leaves stable the connected components $D_\square$. And we have
\begin{equation*}
\mathcal{G}(\bolda,\boldb) = \mathcal{G}(\boldb,\bolda) \qquad ( (\bolda,\boldb) \in D_\square),
\end{equation*}
which follows from the identity of cocycles $\tilde\xi(\bolda,\boldb) = \tilde\xi(\boldb,\bolda)$, or from the expression of $\mathcal{G}(\bolda,\boldb)$ in terms of triple modular values. Taking into account $L(\bolda,\boldb)=L(\boldb,\bolda)$, we see from \eqref{eq Gab 1} that $C_\square(\bolda,\boldb)$ is symmetric in $\bolda,\boldb$. Therefore $C_\square(\bolda,\boldb)$ does not depend on $b_2$ either, and we can write
\begin{equation*}
C_\square(\bolda,\boldb) = C'_\square(a_1, b_1).
\end{equation*}
(The function $C'_\square(\alpha,\beta)$ is defined either on the domain $\alpha+\beta>1$ or on the domain $\alpha+\beta<1$, depending on the first sign in $\square$.)

Now, let us use the matrix $\sigma = (\begin{smallmatrix} 0 & -1 \\ 1 & 0 \end{smallmatrix})$ acting as
\begin{equation*}
(\bolda,\boldb) = (a_1,a_2,b_1,b_2) \to (\bolda\sigma, \boldb\sigma) = (a_2,-a_1,b_2,-b_1).
\end{equation*}
It permutes the connected components of the domain by $D_{++} \to D_{+-} \to D_{--} \to D_{-+} \to D_{++}$. With the regulator, we have
\begin{align*}
\mathcal{G}(\bolda,\boldb) & = \int_0^\infty r_3(2)(\tilde\xi(\bolda,\boldb)) = \int_\infty^0 r_3(2)(\tilde\xi(\bolda,\boldb)) | \sigma \\
& = - \int_0^\infty r_3(2)(\tilde\xi(\bolda,\boldb) | \sigma) = - \int_0^\infty r_3(2)(\tilde\xi(\bolda\sigma,\boldb\sigma)) = - \mathcal{G}(\bolda\sigma,\boldb\sigma).
\end{align*}
One also checks that $L(\bolda,\boldb) = -L(\bolda\sigma, \boldb\sigma)$, using the identity of Eisenstein series $G^{(1)}_{x_1,x_2} = G^{(1)}_{x_2,x_1} = - G^{(1)}_{-x_1,-x_2}$. Therefore,
\begin{align*}
0 & = \mathcal{G}(\bolda,\boldb) + \mathcal{G}(\bolda\sigma,\boldb\sigma) \\
& = \frac{\zeta(3)}{4} \bigl(B_2(a_2) + B_2(b_2) + 4 B_1(a_2) B_1(b_2) \bigr) + C'_\square(a_1, b_1) \\
& \quad + \frac{\zeta(3)}{4} \bigl(B_2(a_1) + B_2(b_1) + 4 B_1(a_1) B_1(b_1) \bigr) + C'_{\sigma(\square)}(a_2, b_2).
\end{align*}
This identity can be rewritten as
\begin{align*}
& \frac{\zeta(3)}{4} \bigl(B_2(a_1) + B_2(b_1) + 4 B_1(a_1) B_1(b_1) \bigr) + C'_\square(a_1, b_1) \\
& \qquad = - \frac{\zeta(3)}{4} \bigl(B_2(a_2) + B_2(b_2) + 4 B_1(a_2) B_1(b_2) \bigr) - C'_{\sigma(\square)}(a_2, b_2).
\end{align*}
The left-hand side depends only on $a_1,b_1$, while the right-hand side depends only on $a_2,b_2$. Therefore, they do not depend on $(\bolda,\boldb)$ in $D_\square$ and we can write
\begin{align*}
C'_\square(a_1, b_1) & = - \frac{\zeta(3)}{4} \bigl(B_2(a_1) + B_2(b_1) + 4 B_1(a_1) B_1(b_1) \bigr) + C''_\square, \\
C'_{\sigma(\square)}(a_2, b_2) & = - \frac{\zeta(3)}{4} \bigl(B_2(a_2) + B_2(b_2) + 4 B_1(a_2) B_1(b_2) \bigr) - C''_\square.
\end{align*}
Reporting into \eqref{eq Gab 1} we have, for $(\bolda,\boldb) \in D_\square$,
\begin{align} \label{eq Gab C''}
\mathcal{G}(\bolda,\boldb) & = -\frac{3\pi}{2} \mathcal{M}(G^{(1)}_{a_1, b_2} G^{(1)}_{b_1, -a_2} + G^{(1)}_{a_1, -b_2} G^{(1)}_{b_1, a_2}, -1) \\
\nonumber & \quad - \frac{\zeta(3)}{4} \bigl(B_2(a_1) + B_2(b_1) + 4 B_1(a_1) B_1(b_1) \\
\nonumber & \qquad \qquad \quad - B_2(a_2) - B_2(b_2) - 4 B_1(a_2) B_1(b_2) \bigr) + C''_\square.
\end{align}
Substituting $(\bolda,\boldb) \to (\bolda\sigma,\boldb\sigma)$ in this relation, we see that $C''_{\sigma(\square)} = - C''_\square$ for any component $\square \in \{++, +-, -+, --\}$.

Finally, let us take $\bolda=\boldb$ in \eqref{eq Gab C''}. Since the cocycle $\tilde\xi(\bolda,\bolda)$ is zero, we have $\mathcal{G}(\bolda,\bolda)=0$. Specialising even further to $\bolda=\boldb=(\alpha,\alpha)$ with $\alpha \in (0,1)$, $\alpha \neq \frac12$, the $L$-value part in \eqref{eq Gab C''} vanishes since $G^{(1)}_{\alpha,-\alpha} = 0$. Moreover, the $\zeta(3)$ part also vanishes. It follows that $C''_{++} = C''_{--} = 0$, hence $C''_\square = 0$ for every $\square$. We have thus shown the following.

\begin{thm} \label{thm Gab final}
For any $\bolda,\boldb \in (\R/\Z)^2$ such that the coordinates of $\bolda$, $\boldb$ and $\bolda+\boldb$ are non-zero, we have
\begin{align} \label{eq Gab final}
\mathcal{G}(\bolda,\boldb) & = -\frac{3\pi}{2} \mathcal{M}(G^{(1)}_{a_1, b_2} G^{(1)}_{b_1, -a_2} + G^{(1)}_{a_1, -b_2} G^{(1)}_{b_1, a_2}, -1) \\
\nonumber & \quad - \frac{\zeta(3)}{4} \big(B_2(a_1) + B_2(b_1) + 4 B_1(a_1) B_1(b_1) \\
\nonumber & \qquad \qquad \quad - B_2(a_2) - B_2(b_2) - 4 B_1(a_2) B_1(b_2) \big).
\end{align}
\end{thm}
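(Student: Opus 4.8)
The plan is to integrate the derivative formula \eqref{eq dGab final} with respect to $a_2$, using the Rogers--Zudilin and Borisov--Gunnells machinery to put the integrand into a recognisable form, and then to fix the resulting integration constants by symmetry. First I would apply Theorem \ref{thm RZ Iuv} to each of the six integrals $\int_0^\infty\omega_\boldu^{(2)}\omega_\boldv^{(3)}$ appearing in \eqref{eq dGab final}, rewriting the imaginary part of the regulator derivative as the Mellin transform \eqref{eq G 1} of a modular form $f=f_1+\dots+f_6$, each $f_i$ being a difference of products $G^{(1)}G^{(2)}$ of interpolated weight-$1$ and weight-$2$ Eisenstein series.

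The next step is to collapse $f$ using the weight-$3$ Borisov--Gunnells relations of Section \ref{BG-rels}. The diagonal terms $f_1$ and $f_5$ reduce via Theorem \ref{thm BG Gk 2} to Mellin transforms of $G^{(3)}_{0,b_2}$ and $G^{(3)}_{0,a_2}$, while Theorem \ref{thm BG Gk 1}, applied twice (equations \eqref{eq G 3} and \eqref{eq G 4}), yields the collapse $f_2+f_3+f_6=\tfrac12 f_4$. This brings $\frac{\partial}{\partial a_2}\mathcal{G}(\bolda,\boldb)$ to the compact form \eqref{eq G 5}, involving only the single product $G^{(1)}_{a_1,b_2}G^{(2)}_{b_1,-a_2}-G^{(1)}_{a_1,-b_2}G^{(2)}_{b_1,a_2}$ together with the pure weight-$3$ pieces $G^{(3)}_{0,a_2}$ and $G^{(3)}_{0,b_2}$. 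I would then recognise \eqref{eq G 5} as an exact $a_2$-derivative: Lemma \ref{lem diff Gk} identifies the weight-$2$ factor $G^{(2)}_{b_1,\pm a_2}$ as a $\delta_{a_2}$-derivative of the weight-$1$ series $G^{(1)}_{b_1,\pm a_2}$, so that the $L$-value part integrates to $-\tfrac{3\pi}{2}\mathcal{M}(G^{(1)}_{a_1,b_2}G^{(1)}_{b_1,-a_2}+G^{(1)}_{a_1,-b_2}G^{(1)}_{b_1,a_2},-1)$, cf.\ \eqref{eq G 6}, while Lemma \ref{lem Lambda G3} evaluates the $G^{(3)}_{0,\cdot}$ contributions through $\zeta'(-2)=-\zeta(3)/(4\pi^2)$, producing the $\tfrac{\zeta(3)}{2}\bigl(B_1(a_2)+2B_1(b_2)\bigr)$ term in \eqref{eq G 7}. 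Antidifferentiating $B_1(a_2)$ then gives the $B_2(a_2)+4B_1(a_2)B_1(b_2)$ shape of the answer.

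The main obstacle is the last step: the antiderivative is determined only up to an $a_2$-independent function $C_\square$ on each of the four connected components $D_{\pm\pm}$ of the parameter domain, and all of these must be shown to vanish, since the derivative computation discards precisely this information. I would pin them down in three moves. The symmetry $\mathcal{G}(\bolda,\boldb)=\mathcal{G}(\boldb,\bolda)$ (from the cocycle identity $\tilde\xi(\bolda,\boldb)=\tilde\xi(\boldb,\bolda)$), together with $L(\bolda,\boldb)=L(\boldb,\bolda)$, forces each $C_\square$ to be symmetric and hence independent of $b_2$ as well, so that $C_\square=C'_\square(a_1,b_1)$. Next, the $\sigma$-action $(\bolda,\boldb)\mapsto(\bolda\sigma,\boldb\sigma)$ reverses the integration path, sending $\mathcal{G}\mapsto-\mathcal{G}$ while $L\mapsto-L$ (the latter from $G^{(1)}_{x_1,x_2}=G^{(1)}_{x_2,x_1}=-G^{(1)}_{-x_1,-x_2}$); feeding this into \eqref{eq Gab 1} yields a separation-of-variables identity whose two sides depend on the disjoint pairs $(a_1,b_1)$ and $(a_2,b_2)$, forcing $C'_\square(a_1,b_1)=-\tfrac{\zeta(3)}{4}\bigl(B_2(a_1)+B_2(b_1)+4B_1(a_1)B_1(b_1)\bigr)+C''_\square$ with a genuine constant $C''_\square$ satisfying $C''_{\sigma(\square)}=-C''_\square$. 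Finally, specialising to $\bolda=\boldb=(\alpha,\alpha)$, where $\mathcal{G}=0$ because $\tilde\xi(\bolda,\bolda)=0$ and where both the $L$-value (since $G^{(1)}_{\alpha,-\alpha}=0$) and the Bernoulli combination vanish, gives $C''_{++}=C''_{--}=0$, whence $C''_\square=0$ for every $\square$ and the claimed formula \eqref{eq Gab final} follows.
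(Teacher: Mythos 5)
Your proposal is correct and follows essentially the same route as the paper's own proof: Rogers--Zudilin (Theorem \ref{thm RZ Iuv}) applied to \eqref{eq dGab final}, collapse of $f_1,\dots,f_6$ via the Borisov--Gunnells relations (Theorems \ref{thm BG Gk 1} and \ref{thm BG Gk 2}) to reach \eqref{eq G 5}, antidifferentiation in $a_2$ via Lemma \ref{lem diff Gk} and Lemma \ref{lem Lambda G3}, and then the elimination of the integration constants $C_\square$ by the symmetry $\mathcal{G}(\bolda,\boldb)=\mathcal{G}(\boldb,\bolda)$, the $\sigma$-antisymmetry, and the specialisation $\bolda=\boldb=(\alpha,\alpha)$. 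All steps, including the key structural points (separation of variables forcing $C'_\square$, the relation $C''_{\sigma(\square)}=-C''_\square$, and the vanishing at the diagonal), coincide with the paper's argument.
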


Theorem \ref{main thm 1} follows by specialising Theorem \ref{thm Gab final} to the case of $N$-torsion points. More precisely, using \eqref{eq Gk GkN}, we have the relation, for $\boldx,\boldy \in (\Z/N\Z)^2$, 
\begin{equation} \label{eq Mellin GkN}
    \mathcal{M}(G^{(1)}_{\boldx/N} G^{(1)}_{\boldy/N}, -1) = \frac{1}{N} \mathcal{M}(G^{(1);N}_\boldx G^{(1);N}_\boldy, -1) = -\frac{2\pi}{N} L'(G^{(1);N}_\boldx G^{(1);N}_\boldy, -1).
\end{equation}


\section{Relation to the Beilinson regulator} \label{B-reg}

In \cite[Conjecture 9.3]{Bru20}, the first author conjectured that the elements $\xi((0,a),(0,b))$, $a,b \in \Z/N\Z$, are proportional to the Beilinson elements in $K_4^{(3)}(Y_1(N))$. For $\bolda, \boldb \in (\Z/N\Z)^2$, let $\Eis^{0,0,1}(\bolda,\boldb)$ denote the associated Beilinson element in $K_4^{(3)}(Y(N))$ \cite[Definition 2.3.6]{Wan20}. There is an explicit representative $\Eis^{0,0,1}_{\mathcal{D}}(\bolda,\boldb)$ of the Beilinson regulator of $\Eis^{0,0,1}(\bolda,\boldb)$ \cite[Proposition 2.4.2]{Wan20}. This is a differential $1$-form on $Y(N)(\C)$, and Weijia Wang proved the following explicit formula \cite[Example 6.1.5]{Wan20}, for $\bolda,\boldb \neq \boldsymbol 0$:
\begin{equation*}
\mathcal{B}(\bolda,\boldb) := \int_0^\infty \Eis^{0,0,1}_{\mathcal{D}}(\bolda,\boldb)
= \frac{9\pi}{N^3} \mathcal{M}(G^{(1);N}_{a_2,-b_1} G^{(1);N}_{b_2,a_1} + G^{(1);N}_{a_2,b_1} G^{(1);N}_{b_2,-a_1}, -1).
\end{equation*}
Using the relations $G^{(1);N}_{x_1,x_2} = G^{(1);N}_{x_2,x_1} = - G^{(1);N}_{-x_1,-x_2}$, as well as \eqref{eq Mellin GkN}, we obtain
\begin{equation*}
\mathcal{B}(\bolda,\boldb) = - \frac{9\pi}{N^2} \mathcal{M}(G^{(1)}_{a_1, b_2} G^{(1)}_{b_1, -a_2} + G^{(1)}_{a_1, -b_2} G^{(1)}_{b_1, a_2}, -1) \qquad (\bolda, \boldb \neq \boldsymbol 0),
\end{equation*}
where we identify $\Z/N\Z$ and $\frac{1}{N}\Z/\Z$. The modular form on the right-hand side matches with the one in \eqref{eq Gab final}, and from comparing the two expressions we deduce Theorem \ref{main thm 2}.

As explained in the introduction, Theorem \ref{main thm 2} gives evidence for the conjectural coincidence of the motivic cohomology classes $\xi(\bolda,\boldb)$ and $\pm \frac{N^2}{3} \Eis^{0,0,1}(\bolda,\boldb)$. This was formulated in \cite[Conjecture 9.3]{Bru20} for the modular curve $Y_1(N)$, taking indices of the form $(0,x)$ with $x \in \Z/N\Z$, however we expect it to hold also for $Y(N)$ with general indices in $(\Z/N\Z)^2$. Note a different from $\pm N^2/3$ factor $N^2/6$ in Theorem \ref{main thm 2}: this is due to the fact that the Beilinson regulator, which is used to define $\Eis^{0,0,1}_{\mathcal{D}}(\bolda,\boldb)$, is expected to be $\pm 2$ times the Goncharov regulator $r_3(2)$, via De Jeu's map. De Jeu has proved this compatibility for general curves under some assumptions \cite[Theorem 5.4]{Jeu00}; see the discussion in \cite[Section 5.4]{Bru20}.

\section{Conclusion}

One important application of Theorem~\ref{main thm 1} is to proving the longstanding conjecture of Boyd and Rodriguez Villegas on the Mahler measure \cite{BZ20} of the three-variable polynomial $P = (1+x)\*(1+y)+z$, namely $\mathrm m(P)=-2L'(E,-1)$, where $E$ is the elliptic curve over $\Q$ defined by the affine equation $(1+x)(1+y)(1+\frac{1}{x})(1+\frac{1}{y}) = 1$. To do this, the starting point is the work of Lal\'\i n \cite{Lal15} expressing this Mahler measure as a Goncharov regulator on the elliptic curve $E$:
\begin{equation} \label{eq mP regE}
\mathrm m((1+x)(1+y)+z)=\frac1{4\pi^2}\int_{\gamma_E^+} r_3(2)(\xi_P),
\end{equation}
where $\xi_P$ is a degree $2$ cohomology class in the weight $3$ Goncharov complex of $E$, and $\gamma_E^+$ is a generator of $H_1(E(\C),\Z)^+$, the subgroup of invariants under complex conjugation in the homology of $E$. What allows one to compute the regulator integral \eqref{eq mP regE} is that $E$ is actually isomorphic to the modular curve $X_1(15)$, and using this identification, the class $\xi_P$ has the simple expression $\xi_P=20\xi((0,4),(0,6))-20\xi((0,6),(0,7))$. The details of this are given by the first author in \cite{Bru23}.

\medskip
Though our Theorems \ref{main thm 1} and \ref{main thm 2} do not cover the boundary cases, where some coordinates of $\bolda,\boldb,\bolda+\boldb\in(\Z/N\Z)^2$ are zero, they indicate some interesting behaviour when the parameters approach the boundary. The rational multiple of $\zeta(3)$ in Theorems \ref{main thm 1} and \ref{main thm 2} has discontinuities at the boundary due to the Bernoulli polynomial $B_1$, which may have to be replaced by the sawtooth wave or by regularised values as in Propositions \ref{pro single Eis} and \ref{pro K2 regulator}. It is also not clear, to begin with, whether the Goncharov regulator $\mathcal{G}(\bolda,\boldb)$ can be interpolated as a continuous function along the boundary. It would be interesting to gain a more conceptual understanding of these continuity issues; some numerical experiments may shed light on that.

\medskip
In essence, the explicit relation between the regulator integrals $\mathcal{G}(\bolda,\boldb)$ and $\mathcal{B}(\bolda,\boldb)$ should be enough to prove \cite[Conjecture 9.3]{Bru20} at the level of Deligne--Beilinson cohomology (as well as its more general version for the modular curve $Y(N)$). At the motivic level, however, the conjecture looks more difficult and seems to require new ideas; a Hodge theoretic interpretation of the computations in this article would be already very interesting.

\end{document}